\DeclareMathAlphabet{\mathpzc}{OT1}{pzc}{m}{it}
\tikzset{
    on each segment/.style={
        decorate,
        decoration={
            show path construction,
            moveto code={},
            lineto code={
                \path [#1]
                (\tikzinputsegmentfirst) -- (\tikzinputsegmentlast);
            },
            curveto code={
                \path [#1] (\tikzinputsegmentfirst)
                .. controls
                (\tikzinputsegmentsupporta) and (\tikzinputsegmentsupportb)
                ..
                (\tikzinputsegmentlast);
            },
            closepath code={
                \path [#1]
                (\tikzinputsegmentfirst) -- (\tikzinputsegmentlast);
            },
        },
    },
}
\def\cA{\mathscr{A}}
\def\cB{\mathscr{B}}
\def\cC{\mathscr{C}}
\def\cD{\mathscr{D}}
\def\cE{\mathscr{E}}
\def\cF{\mathscr{F}}
\def\cG{\mathscr{G}}
\def\cH{\mathscr{H}}
\def\cI{\mathscr{I}}
\def\cK{\mathscr{K}}
\def\cP{\mathscr{P}}
\def\cU{\mathscr{U}}
\def\cV{\mathscr{V}}
\def\cZ{\mathscr{Z}}
\def\BC{\mathbb{C}}
\def\BR{\mathbb{R}}
\def\BZ{\mathbb{Z}}
\def\fa{\mathfrak{a}}
\def\fb{\mathfrak{b}}
\def\fc{\mathfrak{c}}
\def\fr{\mathfrak{r}}
\def\2silt{\operatorname{2-silt}}
\def\add{\operatorname{add}}
\def\adots{\mathinner{\mkern1mu\raise1.0pt\vbox{\kern7.0pt\hbox{.}}\mkern2mu\raise4.0pt\hbox{.}\mkern2mu\raise7.0pt\hbox{.}\mkern1mu}}
\def\ast{{\textstyle *}}
\def\D{\cD}
\def\dddots{\mathinner{\mkern1mu\raise10.0pt\vbox{\kern7.0pt\hbox{.}}\mkern2mu\raise5.3pt\hbox{.}\mkern2mu\raise1.0pt\hbox{.}\mkern1mu}}
\def\dddotssmall{\mathinner{\mkern1mu\raise7.0pt\vbox{\kern7.0pt\hbox{.}}\mkern-1mu\raise4pt\hbox{.}\mkern-1mu\raise1.0pt\hbox{.}\mkern1mu}}
\def\dim{\operatorname{dim}}
\def\DPic{\operatorname{DPic}}
\def\dual{\operatorname{D}}
\def\Ext{\operatorname{Ext}}
\newcommand{\greenstart}{\operatorname{enter}}
\newcommand{\greenend}{\operatorname{exit}}
\def\H{\operatorname{H}}
\def\Hom{\operatorname{Hom}}
\def\hyper{[hyper]}
\def\id{\operatorname{id}}
\def\index{\operatorname{index}}
\def\inf{\operatorname{inf}}
\def\K{\cK}
\def\K0{\operatorname{K}_0}
\def\Kb{\cK^b}
\def\Ker{\operatorname{Ker}}
\def\Ksp{\operatorname{K}_0^{\operatorname{sp}}}
\def\lim{\operatorname{lim}}
\def\linearspan{\operatorname{span}}
\newcommand\LTensor[1]{\overset{{\rm L}}{\underset{#1}{\otimes}}}
\def\mod{\operatorname{mod}}
\def\Mod{\operatorname{Mod}}
\def\mrig{\operatorname{mrig}}
\def\mut{1}
\def\opp{\operatorname{o}}
\def\proj{\operatorname{proj}}
\def\PSL2{\operatorname{PSL}_2}
\def\RHom{\operatorname{RHom}}
\def\SL2{\operatorname{SL}_2}
\def\somefiniteness{[fp]}
\def\sup{\operatorname{sup}}
\newcommand\Tensor[1]{\underset{#1}{\otimes}}
\def\top{\operatorname{top}}
\def\intt{\operatorname{int}}
\def\Cart{C} 
\newcommand\silt[2][]{P^{#1}\big(#2\big)}
\newcommand\siltlittle[2][]{P^{#1}(#2)}
\newcommand\simple[1][]{S(#1)}
\numberwithin{equation}{section}
\renewcommand{\theequation}{\arabic{section}.\arabic{equation}}
\renewcommand{\thesubsection}{(\arabic{section}.\roman{subsection})}
\renewcommand{\labelenumi}{(\roman{enumi})}
\newtheorem{Lemma}{Lemma}[section]
\newtheorem{Theorem}[Lemma]{Theorem}
\newtheorem{Proposition}[Lemma]{Proposition}
\newtheorem{Corollary}[Lemma]{Corollary}
\theoremstyle{definition}
\newtheorem{Definition}[Lemma]{Definition}
\newtheorem{Setup}[Lemma]{Setup}
\newtheorem{Remark}[Lemma]{Remark}
\newtheorem{Observation}[Lemma]{Observation}
\newtheorem{Notation}[Lemma]{Notation}
\newtheorem*{bfhpg*}{}
\newenvironment{VarDescription}[1]%
  {\begin{list}{}{%
    \settowidth{\labelwidth}{\textbf{#1:}}%
    \setlength{\leftmargin}{\labelwidth}\addtolength{\leftmargin}{\labelsep}}}%
  {\end{list}}
\begin{document}

\setlength{\parindent}{0pt}
\setlength{\parskip}{7pt}

\title[Derived actions of green groupoids of 2-Calabi--Yau categories]{Green groupoids of 2-Calabi--Yau categories, derived Picard actions, and
hyperplane arrangements}

\author{Peter J\o rgensen}

\address{School of Mathematics and Statistics,
Newcastle University, Newcastle upon Tyne NE1 7RU, United Kingdom}
\email{peter.jorgensen@ncl.ac.uk}

\urladdr{http://www.staff.ncl.ac.uk/peter.jorgensen}

\author[Milen Yakimov]{Milen Yakimov}

\address{Department of Mathematics, Louisiana State University, Baton Rouge, LA 70803, U.S.A.}
\email{yakimov@math.lsu.edu}



\keywords{Cluster category, cluster tilting object, Deligne groupoid, Gorenstein singularity, maximal rigid object, silting theory, tilting theory}

\subjclass[2010]{13F60, 18B40, 18E10, 18E30, 20F36}


\begin{abstract} 
We present a construction of (faithful) group actions via derived equivalences in the 
general categorical setting of algebraic 2-Calabi--Yau triangulated categories. 

To each algebraic 2-Calabi--Yau category $\cC$ satisfying standard mild assumptions, we associate a 
groupoid $ \cG_{ \cC }$, named the green groupoid of $\cC$, defined in an intrinsic homological way.
Its objects are given by 
a set of representatives $\mrig \cC$ of the equivalence classes of basic maximal rigid objects of $\cC$, 
arrows are given by mutation, and relations are given by equating monotone (green) paths in the silting order.
In this generality we construct a homomorphsim from the green groupoid $ \cG_{ \cC }$ to the derived Picard groupoid of the collection 
of endomorphism rings of representatives of $\mrig \cC$ in a Frobenius model of $\cC$; the latter 
canonically acts by triangle equivalences between the derived categories of the rings. 

We prove that the constructed representation of the green groupoid $ \cG_{ \cC }$ is faithful if the 
index chamber decompositions of the split Grothendieck groups of basic maximal rigid objects of $\cC$
come from hyperplane arrangements. If $\Sigma^2 \cong \id$ and  $\cC$ has 
finitely many equivalence classes of basic maximal rigid objects, we prove that 
$ \cG_{ \cC }$  is isomorphic to a Deligne groupoid of a hyperplane arrangement 
and that the representation of this groupoid is faithful. 
\end{abstract}

\maketitle

\setcounter{section}{-1}
\section{Introduction}
\label{sec:introduction}
In 1987 Happel \cite{H0} proved that the Bernstein--Gelfand--Ponomarev reflection functors provide derived equivalences between 
path algebras of Dynkin quivers. Subsequently, derived equivalences have played a profound role in many areas, such as 
mirror symmetry, ca\-te\-go\-ri\-fi\-ca\-ti\-on in representation theory, birational algebraic geometry, cluster algebras, and others. 

Motivated by 4 dimensional TQFT and mirror symmetry, (faithful) actions of mapping class groups of surfaces and fundamental 
groups of complements of hyperplane arrangements via derived equivalences have been constructed 
in a wide range of situations:

{\bf{Algebraic Geometry:}} 
\begin{enumerate}
\item[(1)] Seidel and Thomas \cite{ST} constructed braid group actions on the derived categories
of the minimal resolutions of quotient singularities $\BC/G$ for finite groups $G \subset SL_2(\BC)$. Faithfulness 
was established by them in certain cases and by Brav and Thomas \cite{BT} in full generality.
The method of \cite{ST} for constructing braid group actions on derived categories 
by spherical twists was much applied and developed later.
\item[(2)] Donovan and Wemyss \cite{DW,W} constructed actions of Deligne groupoids via derived equivalences between 
between 3-fold flopping contractions (the minimal models for certain complete local 3-dimensional hypersurface singularities), 
or equivalently between the corresponding maximal modification
algebras. Faithfulness was proved by Hirano and Wemyss \cite{HW}. August \cite{Au2} proved analogs of these results in the parallel 
picture of contraction algebras and constructed derived equivalence functors intertwining this picture to that in \cite{DW,W}. 
\end{enumerate}

{\bf{Representation Theory:}}
\begin{enumerate}
\item[(3)] Miyachi and Yekutieli \cite{MY} constructed faithful actions of the automorphism groups of certain AR quivers on the 
bounded derived categories of basic finite dimensional hereditary algebras and proved that those generate the corresponding derived Pickard groups
together with the images of the ordinary Picard groups. 
\item[(4)] Khovanov and Seidel \cite{KS} constructed a faithful braid group action on the bounded derived category of the path algebra of the double of the $A_n$ quiver, 
modulo a graded ideal $I$ such that $J^3 \subsetneq I \subsetneq J^2$ for the Jacobson radical $J$ of the path algebra. 
\item[(5)] Rouquier and Zimmermann \cite{RZ} constructed braid group actions on the bounded derived categories of Brauer tree algebras. 
\end{enumerate}
{\bf{Topology:}} 
\begin{enumerate}
\item[(6)] Lipshitz, Ozsv\'ath and Thurston \cite{LOT} constructed a faithful action of the mapping class group of a surface with boundary 
on the bounded derived category of a certain finite dimensional algebra $B$ over $\BZ_2$ constructed from bordered Floer homology.
\end{enumerate}

{\bf{Cluster algebras:}}
\begin{enumerate}
\item[(7)] Keller and Yang \cite{KY} extended Happel's result \cite{H0} to derived equivalences between the Ginzburg dg algebras for 
mutations of quivers with potentials. Grant and Marsh \cite{GM} applied them to get actions of the braid groups of type 
ADE on the derived categories of the Ginzburg dg algebras in those mutation classes and proved that the actions are 
faithful in the $A$ case. 
\item[(8)] Qui \cite{Q2} constructed faithful actions of certain subgroups of mapping class groups of decorated marked surfaces (generated
by braid twists) on derived categories of Ginzburg dg algebras and proved that those groups are isomorphic to the spherical twist groups of the derived categories.
\end{enumerate}

From another perspective, the above results are thought of as statements that the groups of autoequivalences of certain (bounded) derived categories
are large, as opposed to other cases when such were proved to be small, e.g. the Bondal--Orlov theorem \cite{BO} that $\D^b( \mathrm{coh} \, X)$ 
has no nontrivial autoequivalences for projective varieties $X$ with ample canonical or anticanonical sheaf.

In this paper we present a construction of (faithful) group actions via derived equivalences in the general categorical setting of algebraic 2-Calabi--Yau triangulated categories.

{\bf{Assumptions.}} For an algebraically closed field $k$, let $\cC$ be a $k$-linear $2$-Calabi--Yau triangulated category satisfying the following standard mild assumptions:
\begingroup
\renewcommand{\labelenumi}{(\alph{enumi})}
\begin{enumerate}
\setlength\itemsep{4pt}

  \item  $\cC$ is algebraic, i.e. it has a Frobenius model $\cE$.
  
  \item  $\cC$ has finite dimensional $\Hom$-spaces over $k$ and split idempotents.

  \item  $\cC$ has non-zero basic maximal rigid objects, i.e.\ its maximal rigid subcategories are non-zero and have finitely many indecomposable objects.

\end{enumerate}
\endgroup
Such categories naturally arise in different situations of both geometric and algebraic origin.
For all local complete 3-dimensional (commutative) noetherian Gorenstein isolated singularities $R$, 
Burban, Iyama, Keller and Reiten \cite{BIKR} proved that the stable module category of the category of maximal Cohen--Macauley modules of $R$ has the above properties
when it has non-zero rigid objects.
By work of Eisenbud \cite{E} the same holds for the categories coming from all local complete odd dimensional hypersurface singularities.
Additive categorifications of cluster algebras \cite{BIRS,BMRRT} are also constructed in this framework.

Our results do not use any specific information about the Frobenius model $\cE$ and are uniformly applicable to $2$-Calabi--Yau categories
coming from different origins.

{\bf{The green groupoid $\cG_{ \cC }$.}} We associate a groupoid (i.e.\ a category with all morphisms invertible) $\cG_{ \cC }$ to each such category $\cC$.  It is defined in an intrinsic homological way; the Frobenius models do not play a role in it, see Definition \ref{def:green_groupoid}.  The object set of $ \cG_{ \cC }$ is a 
set of representatives $\mrig \cC$ of the equivalence classes of basic maximal rigid objects of $\cC$. 
The generators for the morphisms of $\cG_{ \cC }$ are the one-step mutations between the   
basic maximal rigid objects of $\cC$ in the sense of Iyama and Yoshino \cite{IY}. 
The relations of $\cG_{ \cC }$ are obtained by equating green paths 
\[
  m(1) \xrightarrow{} m(2) \xrightarrow{} \cdots \xrightarrow{} m(p)
\]
in $\mrig \cC$ with the same initial and terminal objects.  In more detail, for $\ell \in \mrig \cC$ there is a 
bijection $m \mapsto \silt[\ell]{m}$ between $\mrig \cC$ and the equivalence classes 
of 2-step silting complexes over $\cC(\ell, \ell)$, see \cite{AIR}. Following \cite[2.7]{Br} and \cite[thm.\ 4.10]{BST}, 
a green path in $\mrig \cC$ is a sequence of one-step mutations as above such that 
$\silt[m(1)]{m(1)} > \silt[m(1)]{m(2)} > \cdots > \silt[m(1)]{m(p)}$ 
in the silting partial order.

The relations of $\cG_{ \cC }$ have intrinsic homological meaning and this is at the heart of our methods to construct representations 
of $\cG_{ \cC }$ by derived equivalences and to control their faithfulness. For this reason it is essential to work with the full groupoid as opposed 
to its vertex groups.

Due to the intrinsic homological definition of the green groupoid $\cG_{ \cC }$, we believe that 
it will be useful in studying other properties of Calabi--Yau categories.

{\bf{The derived Picard groupoid $\cP_{ \cE }$.}} We identify the sets of objects of $\cC$ and $\cE$. Following 
Miyachi--Yekutieli \cite{MY,Y} and Rouquier--Zimmermann \cite{RZ}, 
define the derived Picard groupoid of $\cE$ to be the groupoid with object set $\mrig \cC$ and morphism sets given by 
the isomorphism classes of two-sided tilting complexes ${}_{ \cE( m,m ) }T_{ \cE( \ell,\ell ) }$ in $\D\big( \cE( m,m )^{ \opp } \Tensor{k} \cE( \ell,\ell ) \big)$
for $\ell, m \in \mrig \cC$. (Derived Picard groups were considered in \cite{Y,RZ} and the general construction appeared in \cite{MY}.)


{\bf{$g$-vector fans of split Grothendieck groups.}} For a basic maximal rigid object $\ell \in \cC$, 
denote by $\Ksp( \add_{ \cC }\ell )_{ \BR }$ the extension to $\BR$ of the split Grothendieck  group of $\add_{ \cC }\ell$ (a finite dimensional real vector space). Following Dehy and Keller \cite{DK}, consider the $g$-vector fan of $\cC$ with respect to $\ell$, 
\[
  G^{ \ell }_{ \cC } = \{\, B^{ \ell }( m ) \mid \mbox{$m \in \mrig \cC$} \,\},
\]
where each chamber $B^{ \ell }( m )$ is a simplicial cone with extremal rays given by the indices of the indecomposable summands of $m$ with respect to $\ell$.  
We say that such a fan comes from a hyperplane arrangement in $\Ksp( \add_{ \cC }\ell )_{ \BR }$ if its 
cones are precisely the cones of the hyperplane arrangement.

{\bf{Representations.}} Our first main theorem constructs groupoid representations and proves faithfulness
in a general situation.

{\bf Theorem A} (=Theorems \ref{thm:G11}, \ref{thm:green-Delign}, and \ref{thm:H18}).
\begin{enumerate}
\item {\em{For all 2-Calabi--Yau categories $\cC$ satisfying assumptions (a-c), there exists a unique functor
$\cG_{ \cC } \xrightarrow{ G } \cP_{ \cE }$, which is the identity on objects and determined on morphisms by
\[
  G( \alpha ) = \cE( m,m^{ \ast } )
\]
for all one-step mutations $m \xrightarrow{ \alpha } m^{ \ast }$. 
For every green path, $\mu = m(1) \xrightarrow{} \cdots \xrightarrow{} m(p)$, we have
\[
  G( \mu ) = \cE\big( m(1),m(p) \big).
\]
}}
\item {\em{If a 2-Calabi--Yau category $\cC$ has the two properties that for all basic maximal rigid objects $\ell \in \cC$, 
\begin{VarDescription}{hyper\quad}
\setlength\itemsep{2pt}

  \item[{\rm [hyper]}] the $g$-vector fan $G^{ \ell }_{ \cC }$ comes from a hyperplane arrangement, and
  
  \item[{\rm [fp]}] each finitely generated $\cC(\ell,\ell)$-module is finitely presented as an $\cE(\ell,\ell)$-module, 

\end{VarDescription}
then the green groupoid $\cG_{ \cC }$ of $\cC$ is isomorphic to the Deligne groupoid (see \cite{D}) of each of the hyperplane arrangements in {\rm [hyper]}, and the functor $G$ in part (i) is faithful.}}
\end{enumerate}

Note the the green groupoid $\cG_{ \cC }$ only depends on the 2-Calabi--Yau category $\cC$, while 
the functor $\cG_{ \cC } \xrightarrow{ F } \cP_{ \cE }$ depends on the choice of Frobenius model $\cE$. We expect that it is possible to gain significant knowledge about the green groupoid $\cG_{ \cC }$ by using the functors $F$ from Theorem A(i) for different Frobenius models $\cE$.

{\bf{Green groupoids and hyperplane arrangements.}} Condition [fp] in Theorem A(ii) is very mild. Our second 
main theorem shows that condition [hyper] in Theorem A(ii) is satisfied in wide generality. As a consequence,
this result gives an effective method for identifying green groupoids $\cG_{ \cC }$ with Deligne groupoids
for a very general class of  2-Calabi--Yau categories.

{\bf Theorem B} (=Theorem \ref{thm:gfan-to-hyperplane}).
{\em{If a 2-Calabi--Yau category $\cC$ has finitely many isomorphism classes of basic maximal rigid objects and 
$\Sigma^2 n \cong n$ for all rigid objects $n$ of $\cC$, then the following hold:}}
\begin{enumerate}
\setlength\itemsep{4pt}
\item {\em{$\cC$ satisfies condition {\rm [hyper]} in Theorem A(ii), i.e.\ the $g$-vector fan $G^{ \ell }_{ \cC }$ comes from a simplicial hyperplane arrangement in 
$\Ksp( \add_{ \cC }\ell )_\BR$ for all $\ell \in \mrig \cC$. The green groupoid $\cG_{ \cC }$ is isomorphic to the Deligne 
groupoids of those hyperplane arrangements.}} 
\item {\em{The Cartan matrix $C_\ell$ of every 2-Calabi--Yau tilted algebra $\cC(\ell, \ell)$ (see Equation \eqref{equ:Cartan}) is nondegenerate, and furthermore, the symmetrization $C_\ell + C_\ell^T$ is positive definite.}}
\end{enumerate}
All 2-Calabi--Yau categories coming from local complete odd dimensional hypersurface singularities \cite{E}
have the property $\Sigma^2 \cong \id$ and thus 
satisfy the second condition in Theorem B. Both conditions in Theorem B are satisfied by the  2-Calabi--Yau categories considered in \cite{Au2,HW,W}. 

The faithfulness result in Theorem B(i) is not applicable to the cluster categories of \cite{BMRRT}.  However, Theorem A(i) is applicable to them and it is quite possible that 
faithfulness holds in that general case too.

Theorem B(ii) provides a huge supply of symmetric finite dimensional algebras $\cC(\ell, \ell)$ with $\tau^2 \cong \id$ and nondegenerate Cartan 
matrices. This construction is of independent interest due to the role of these algebras in representation theory. The conclusion of Theorem B(ii) is violated if the first condition is dropped: Plamondon \cite[ex.\ 4.3]{Pl} constructed 2-Calabi--Yau categories $\cC$ with $\Sigma^2 n \cong n$ for each $n$, for which the tilted algebras $\cC(\ell, \ell)$ have degenerate Cartan matrices.

The positivity result
in Theorem B(ii) gives rise to positive definite symmetric bilinear forms on $\Ksp( \add_{ \cC }\ell )_{ \BR }$, see Equation \eqref{equ:ell-form}, which we prove to be invariant under the Dehy--Keller linear isomorphisms 
\[
\Ksp( \add_{ \cC }\ell )_{ \BR } \stackrel{\cong}{\rightarrow} \Ksp( \add_{ \cC } m)_{ \BR }
\]
for $\ell,m \in \mrig \cC$ (Proposition \ref{prop:gfan-to-hyperplane2}(ii) and Lemma \ref{lem:equiv-index}). Since there is also 
a canonical skewsymmetric bilinear form on $\Ksp( \add_{ \cC }\ell )_{ \BR }$ (cf.\ \cite{Pa}), and both forms have
categorical incarnations, it appears that there is 
a sort of categorical K\"ahler geometry in the background for the 2-Calabi--Yau categories $\cC$ satisfying the two 
conditions in Theorem B.

We expect that many interesting classes of groups can be realized as green groups (vertex groups of green groupoids) of various 
2-Calabi--Yau categories $\cC$ and that Theorem A(i) can be used to produce actions of those on derived categories.
In this direction, there is already a vast array of Frobenius models of 2-Calabi--Yau categories 
constructed to date (\cite{BKM,BIRS,BIKR,GLS2} and many others) to which Theorem A(i) is applicable. The faithfulness 
of the groupoid actions in Theorem A(i) likely holds in wider generality than that established in Theorem A(ii).

We illustrate the construction of the green groupoid and Theorem A for cluster categories. Section \ref{sec:A2}
explicitly describes this groupoid for the cluster category $\cC(A_2)$ of type $A_2$ and the algebras $\cE(\ell,\ell)$ 
entering in the derived Picard groupoid for the Frobenius model of Baur, King, and Marsh \cite{BKM} of $\cC(A_2)$.
The algebras $\cE(\ell,\ell)$ are isomorphic to each other and are dimer algebras. The green groupoid
of $\cC(A_2)$ is shown to be a 5 point blow-up of the braid group $\cB_2$, while the green groups of $\cC(A_2)$ are shown to 
be isomorphic to  $\cB_2$.

\section{Setup}
\label{sec:setup}
\begin{Setup}
\label{set:blanket}
The rest of the paper uses the following setup.
\begin{enumerate}
\setlength\itemsep{4pt}

  \item  $k$ is an algebraically closed field.
  
  \item  $\cE$ is a $k$-linear Frobenius category (see Remark \ref{rmk:Frobenius}).  We assume that it has a fixed object $r$ such that $\cP = \add_{ \cE }( r )$ is the full subcategory of projective-injective objects.
  
  \item  $\cC = \underline{ \cE } = \cE/[\cP]$ is the stable category, where $[\cP]$ denotes the ideal of morphisms factoring through an object of $\cP$.  Note that:

  \item  $\cC$ is a $k$-linear triangulated category.  We assume that it has finite dimensional $\Hom$-spaces over $k$, split idempotents, and is $2$-Calabi--Yau.

\end{enumerate}
We will use $\cC( -,- )$ as a shorthand for $\Hom_{ \cC }( -,- )$, and if $\alpha$ is a morphism in $\cE$ then $\underline{ \alpha }$ denotes its image in $\cC$.  The suspension functor in a triangulated category will be denoted by $\Sigma$.  The word {\em mutation} means {\em mutation at an indecomposable object}.  Module categories and derived categories consist of right modules unless otherwise stated.
\end{Setup}


\begin{Remark}
\label{rmk:Frobenius}
Recall that a {\em Frobenius category} is an exact category, which has enough projective and enough injective objects and satisfies $\cP = \cI$, where $\cP$ is the full subcategory of projective objects, $\cI$ the full subcategory of injective objects.  The objects in $\cP = \cI$ are called {\em projective-injective objects}.  See \cite[app.\ A]{K} and \cite[\S 2, p.\ 100]{Q} for exact categories in general, \cite[sec.\ I.2]{H} for Frobenius categories in particular.
\end{Remark}

\begin{Definition}
[Maximal rigid objects]
\label{def:maximal_rigid}
An object $\ell \in \cC$ is {\em maximal rigid} if it is rigid, that is $\cC( \ell,\Sigma \ell ) = 0$, and maximal with this property in the sense that $\cC\big( \ell \oplus m,\Sigma( \ell \oplus m ) \big) = 0 \Rightarrow m \in \add_\cC \ell$, see \cite[p.\ 954]{BMV}.

We say that $\cC$ is {\em maximal rigid finite} if it has finitely many isomorphism classes of basic maximal rigid objects.

Note that if $\cC$ has a cluster tilting object, then its maximal rigid objects are precisely its cluster tilting objects, see \cite[thm.\ 2.6]{ZZ}.  If $\cC$ has no cluster tilting objects, then it may still have maximal rigid objects, see \cite[sec.\ 2]{BMV}.
\end{Definition}

\begin{Definition}
[Good objects]
\label{def:good}
Let $\ell$ be an object of $\cC$, or equivalently, an object of $\cE$ (recall that $\cC$ and $\cE$ have the same objects).  We say that $\ell$ is {\em good} if $\ell \cong \ell' \oplus r$ in $\cE$ for some $\ell' \in \cE$, where $r$ is as in Setup \ref{set:blanket}(ii).
\end{Definition}

\begin{Definition}
\label{def:mrig}
\begin{itemize}
\setlength\itemsep{4pt}

  \item  $\mrig \cC =$ a set containing one good object from each isomorphism class of basic maximal rigid objects of $\cC$, see \cite[def.\ 2.1]{ZZ}.  Note that $\mrig \cC$ is finite if and only if $\cC$ is maximal rigid finite. 
  
  \item  If $\underline{A}$ is a finite dimensional $k$-algebra, then $\2silt \underline{A} =$ the set of isomorphism classes of basic $2$-term silting complexes in the homotopy category $\Kb( \proj \underline{A} )$, see \cite[sec.\ 3]{AIR}.

\end{itemize}
\end{Definition}

\section{The derived Picard groupoid}
\label{sec:DPic}

The following definition uses $\D\big( \cE( m,m )^{ \opp } \Tensor{k} \cE( \ell,\ell ) \big)$, the derived category of $\cE( m,m )$-$\cE( \ell,\ell )$-bimodules.  Module structures are henceforth denoted by subscripts.

\begin{Definition}
[The derived Picard groupoid]
\label{def:derived_Picard_groupoid}
The {\em derived Picard groupoid} (see \cite[sec.\ 5]{MY})  $\cP_{ \cE }$ of $\cE$ has:
\begin{itemize}
\setlength\itemsep{4pt}

  \item  Objects: The elements of $\mrig \cC$.

  \item  Morphisms: If $\ell, m \in \mrig \cC$, then $\cP_{ \cE }( \ell,m )$ is the set of isomorphism classes of two-sided tilting complexes ${}_{ \cE( m,m ) }T_{ \cE( \ell,\ell ) }$ in $\D\big( \cE( m,m )^{ \opp } \Tensor{k} \cE( \ell,\ell ) \big)$, see \cite[def.\ 3.4]{R}.  

\end{itemize}
The composition of $T \in \cP_{ \cE }( \ell,m )$ and $U \in \cP_{ \cE }( m,n )$ is the isomorphism class of $U \LTensor{ \cE( m,m ) } T$.

The vertex group $\cP_{ \cE }( m,m )$ is $\DPic \cE( m,m )$, the derived Picard group of the endomorphism algebra $\cE( m,m )$, see \cite[sec.\ 1]{RZ} and \cite[sec.\ 0]{Y}.

The notation glosses over the dependence of $\cP_{ \cE }$ on the choice of one good object from each isomorphism class of basic maximal rigid objects of $\cC$.  
\end{Definition}

\section{The green groupoid}
\label{sec:green}

This section does not use the Frobenius model $\cE$, but works under Setup \ref{set:blanket}(iv) alone (in which case $\mrig \cC$ should just be a set containing one object from each isomorphism class of basic maximal rigid objects of $\cC$).

The following result is due to \cite[cor.\ 2.5]{ZZ}, which follows the idea of \cite[cor.\ 5.2]{GLS}.

\begin{Lemma}
\label{lem:ZZ}
Let $\ell \in \cC$ be a maximal rigid object.  For each rigid object $m \in \cC$ there is a triangle in $\cC$ with $\ell_i \in \add_{ \cC }( \ell )$:
\[
  \ell_1 \xrightarrow{} \ell_0 \xrightarrow{} m \xrightarrow{} \Sigma \ell_1.
\]
\end{Lemma}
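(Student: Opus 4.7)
The plan is to construct the desired triangle from a right $\add_{\cC}\ell$-approximation of $m$ and then deduce that the kernel of the approximation lies in $\add_{\cC}\ell$ by invoking the maximal rigidity of $\ell$. Since $\cC$ has finite dimensional Hom spaces, there exists a (not necessarily minimal) right $\add_{\cC}\ell$-approximation $v\colon \ell_0 \to m$, which I complete to a triangle
\[
\ell_1 \xrightarrow{\,u\,} \ell_0 \xrightarrow{\,v\,} m \xrightarrow{\,h\,} \Sigma\ell_1.
\]
The entire task then reduces to showing that $\ell \oplus \ell_1$ is rigid, since by the maximality assumption this forces $\ell_1 \in \add_{\cC}\ell$. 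So I must verify the three vanishings $\cC(\ell,\Sigma\ell_1) = 0$, $\cC(\ell_1,\Sigma\ell) = 0$, and $\cC(\ell_1,\Sigma\ell_1) = 0$.

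For the first, I apply $\cC(\ell,-)$ to the triangle. By the approximation property the map $\cC(\ell,\ell_0) \to \cC(\ell,m)$ is surjective, and $\cC(\ell,\Sigma\ell_0) = 0$ since $\ell$ is rigid and $\ell_0 \in \add_{\cC}\ell$. The portion $\cC(\ell,\ell_0) \twoheadrightarrow \cC(\ell,m) \to \cC(\ell,\Sigma\ell_1) \to \cC(\ell,\Sigma\ell_0) = 0$ of the long exact sequence then forces $\cC(\ell,\Sigma\ell_1) = 0$. Invoking the $2$-Calabi--Yau duality $\cC(X,\Sigma Y) \cong D\cC(Y,\Sigma X)$ I also obtain $\cC(\ell_1,\Sigma\ell) = 0$ and, in particular, $\cC(\ell_1,\Sigma\ell_0) = 0$.

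The main obstacle is the self-rigidity $\cC(\ell_1,\Sigma\ell_1) = 0$, and it is here that the rigidity of $m$ must enter. I apply $\cC(\ell_1,-)$ to the triangle: the vanishing $\cC(\ell_1,\Sigma\ell_0) = 0$ just established shows that the connecting map $\cC(\ell_1,m) \twoheadrightarrow \cC(\ell_1,\Sigma\ell_1)$, sending $\psi \mapsto h\psi$, is surjective. So it suffices to prove $h\psi = 0$ for every $\psi\colon \ell_1 \to m$. For this I apply $\cC(-,m)$ to the same triangle: since $\cC(\Sigma^{-1}m,m) = \cC(m,\Sigma m) = 0$ by rigidity of $m$, the map $\cC(\ell_0,m) \xrightarrow{u^{\ast}} \cC(\ell_1,m)$ is surjective. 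Hence any $\psi$ can be written as $\psi = \phi u$ for some $\phi\colon \ell_0 \to m$, and then $h\psi = (h\phi)u$. But $h\phi$ lies in $\cC(\ell_0,\Sigma\ell_1) = 0$ (from the previous step, since $\ell_0 \in \add_{\cC}\ell$), so $h\psi = 0$.

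Combining the three vanishings, $\ell \oplus \ell_1$ is rigid. The maximality of $\ell$ gives $\ell_1 \in \add_{\cC}\ell$, completing the triangle in the required form. The expected difficulty is concentrated in the self-rigidity step, where the approximation property, the $2$-Calabi--Yau duality, and the rigidity of $m$ must be combined through two separate long exact sequences; the other two vanishings are essentially immediate.
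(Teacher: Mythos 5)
Your proof is correct, and it is essentially the argument that the paper implicitly invokes: the paper does not prove this lemma itself but cites \cite[cor.\ 2.5]{ZZ} (following \cite[cor.\ 5.2]{GLS}), and the standard proof there is exactly the one you give — take a right $\add_{\cC}\ell$-approximation, extend to a triangle, and show the cocone is rigid (using the approximation property, the $2$-Calabi--Yau duality, and the rigidity of $m$) so that maximal rigidity forces it into $\add_{\cC}\ell$.
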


\begin{Definition}
\label{def:silt}
Let $\ell \in \cC$ be a basic maximal rigid object and set $\underline{A} = \cC( \ell,\ell )$.  If $m \in \cC$ is rigid, then the triangle in Lemma \ref{lem:ZZ} induces the complex
\[
  \silt[\ell]{m} = \big( \cC( \ell,\ell_1 ) \xrightarrow{} \cC( \ell,\ell_0 ) \big),
\] 
which is well-defined up to isomorphism in $\Kb( \proj \underline{A} )$.
\end{Definition}

The following result is due to \cite[thm.\ 4.7 and cor.\ 4.8]{AIR} for cluster tilting objects and \cite[thm.\ 2.16 and rmk.\ 2.17]{Au1} for maximal rigid objects.

\begin{Lemma}
\label{lem:AIR}
Let $\ell \in \cC$ be a fixed basic maximal rigid object and set $\underline{A} = \cC( \ell,\ell )$.  The assignment $m \mapsto \siltlittle[\ell]{m}$ preserves the number of indecomposable summands, and
induces a well-defined bijection
\[
  \mrig \cC \xrightarrow{} \2silt \underline{A},
\]
which is compatible with mutation.
\end{Lemma}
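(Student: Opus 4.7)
\textbf{Proof plan for Lemma \ref{lem:AIR}.} The plan is to follow the Adachi--Iyama--Reiten strategy in the cluster-tilting case \cite{AIR} and adapt it to maximal rigid objects along the lines of August \cite{Au1}. Set $\underline{A} = \cC(\ell,\ell)$ throughout. The first step is to verify that the complex $\silt[\ell]{m}$ is well defined up to isomorphism in $\Kb(\proj\underline{A})$: any two triangles of the form in Lemma \ref{lem:ZZ} associated to $m$ admit a comparison morphism, and the induced map between the complexes $\cC(\ell,\ell_1) \to \cC(\ell,\ell_0)$ is a chain isomorphism up to homotopy, since the functor $\cC(\ell,-)$ sends $\add_\cC \ell$ to $\proj \underline{A}$ faithfully enough for Yoneda's lemma to pin down the comparison.

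The second step is to show that $\silt[\ell]{m}$ is indeed a $2$-term silting complex when $\ell\oplus m$ is rigid. Applying $\cC(\ell,-)$ to the defining triangle and to $\Sigma \silt[\ell]{m}$, and using that $\cC$ is $2$-Calabi--Yau together with $\cC(\ell\oplus m,\Sigma(\ell\oplus m))=0$, one computes $\Hom_{\Kb(\proj\underline{A})}\big(\silt[\ell]{m},\Sigma \silt[\ell]{m}\big)=0$, giving the $\Ext^{>0}$-vanishing required for silting. The generation part of the silting axiom (that $\silt[\ell]{m}$ thickly generates $\Kb(\proj\underline{A})$) is the place where maximal rigidity of $\ell\oplus m$ is used: any simple $\underline{A}$-module must appear in the thick closure, and Lemma \ref{lem:ZZ} applied to all summands of $\ell$ themselves produces enough complexes to witness this, exactly as in \cite[thm.\ 2.16]{Au1}. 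The preservation of the number of indecomposable summands is then extracted by taking a minimal such triangle, so that $\ell_0$ and $\ell_1$ share no common indecomposable summand; this minimality, combined with rigidity, identifies $\indec$-summands of $m$ with $\indec$-summands of the $2$-term complex.

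The third step is to construct the inverse bijection. Given a $2$-term silting complex $T = (Q^{-1} \xrightarrow{f} Q^0)$ in $\Kb(\proj\underline{A})$, lift $f$ to a morphism $\widetilde{f} \colon \ell_1 \to \ell_0$ in $\add_\cC \ell$ via the fully faithful Yoneda embedding $\add_\cC \ell \hookrightarrow \proj\underline{A}$, and define $m$ as the cone of $\widetilde{f}$ in $\cC$. One checks that $m$ is rigid (using the silting property of $T$ and $2$-Calabi--Yau duality) and that $\ell \oplus m$ is maximal rigid (using that $T$ thickly generates $\Kb(\proj\underline{A})$, which forces simple $\underline{A}$-modules to be realized by summands, hence forces any further rigid extension to already lie in $\add_\cC(\ell\oplus m)$). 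This assignment is inverse to $m\mapsto \silt[\ell]{m}$ by construction, giving the bijection.

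Finally, compatibility with mutation follows because both sides are defined via the same exchange triangles: mutating $\ell \oplus m$ at an indecomposable summand $m_j \not\in \add_\cC\ell$ uses an approximation triangle in $\cC$ that, upon applying $\cC(\ell,-)$, becomes precisely the exchange triangle defining Adachi--Iyama--Reiten mutation of $\silt[\ell]{m}$ at the corresponding summand; the rigid side of the mutation is controlled by \cite[thm.\ 5.3]{IY} and matches silting mutation verbatim. The main obstacle is the generation step for silting in the purely maximal rigid (non-cluster-tilting) regime, since one no longer has the equivalence $\cC/[\Sigma \ell] \simeq \mod \underline{A}$; here one must invoke August's refinement \cite[rmk.\ 2.17]{Au1}, which replaces this equivalence by a direct argument using maximality of rigid extensions to hit all simple $\underline{A}$-modules inside the thick closure of $\silt[\ell]{m}$.
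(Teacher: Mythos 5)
The paper gives no written-out proof of this lemma; it cites [AIR, thm.\ 4.7 and cor.\ 4.8] for the cluster tilting case and [Au1, thm.\ 2.16 and rmk.\ 2.17] for maximal rigid objects. Your proposal is an outline in the right spirit, but step~2 contains a genuine error: you invoke $\cC(\ell \oplus m, \Sigma(\ell \oplus m)) = 0$. That would force $\cC(\ell,\Sigma m)=0$, and then, by the $2$-Calabi--Yau symmetry together with maximality of $\ell$, that $m \in \add_\cC \ell$, i.e.\ $m \cong \ell$. For a general $m \in \mrig\cC$ this vanishing simply fails; the only input available is $\cC(m,\Sigma m)=0$. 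The correct mechanism is the identity
\[
[\add_\cC\ell](\Sigma^{-1}m,m) \cong \Hom_{\Kb(\proj\underline{A})}\big(\silt[\ell]{m},\Sigma\silt[\ell]{m}\big)
\]
(the paper's Lemma \ref{lem:Plamondon}, after Plamondon and the Derksen--Fei $E$-invariant), whose left-hand side vanishes because $\cC(\Sigma^{-1}m,m)\cong\cC(m,\Sigma m)=0$ with no hypothesis relating $m$ to $\ell$. The same slip recurs when you phrase maximality in terms of extensions lying in $\add_\cC(\ell\oplus m)$; maximal rigidity of $m$ concerns $\add_\cC m$ alone.

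Step~3 is also too quick. For the cone $m$ of a lifted $2$-term presilting complex $T$, the Plamondon-type identity only gives $[\add_\cC\ell](\Sigma^{-1}m,m)=0$, i.e.\ vanishing of the morphisms $\Sigma^{-1}m\to m$ that factor through $\add_\cC\ell$, which is strictly weaker than $\cC(\Sigma^{-1}m,m)=0$. Upgrading this to genuine rigidity of $m$ (and to maximal rigidity when $T$ is silting) is exactly the substance of the cited results; it is obtained via the approximation triangle $\Sigma^{-1}m\to\ell_1\to\ell_0\to m$ and maximality of $\ell$, not ``by construction''. These two places need to be repaired before the sketch would amount to a proof.
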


\begin{Remark}
The last sentence of the lemma makes sense because basic maximal rigid objects and basic $2$-term silting complexes have unique mutations.

For a basic maximal rigid object this is \cite[cor.\ 3.3]{ZZ}.  For a basic $2$-term silting complex, there is both a left and a right mutation, see \cite[def.\ 2.30]{AI}, and both are silting complexes, but precisely one of them is a $2$-term silting complex.
\end{Remark}

\begin{Definition}
[Silting order]
\label{def:silting_order}
Recall from \cite[def.\ 2.10]{AI} that if $P,Q$ are isomorphism classes of silting objects in $\Kb( \proj \underline{A} )$, then
\[
  P \geqslant Q \; \Leftrightarrow \; \Hom_{ \Kb( \proj \underline{A} ) }( P,\Sigma^{>0}Q ) = 0.
\]
In particular, if $P,Q \in \2silt \underline{A}$, then
\[
  P \geqslant Q \; \Leftrightarrow \; \Hom_{ \Kb( \proj \underline{A} ) }( P,\Sigma Q ) = 0.
\]
The sign ``$>$'' means ``$\geqslant$ but not $=$''.
\end{Definition}

\begin{Definition}
[The quiver $\Gamma_{ \cC }$]
\label{def:Gamma}
Let $\Gamma_{ \cC }$ be the quiver with:
\begin{itemize}
\setlength\itemsep{4pt}

  \item  Vertices: The elements of $\mrig \cC$.

  \item  Arrows:
$
  \xymatrix {
  m
    \ar[r]<0.5ex> &
    m^{ \ast }
    \ar[l]<0.5ex>
            }
$
when $m$ and $m^{ \ast }$ differ by mutation in $\cC$.

\end{itemize}
This quiver can be obtained from the mutation graph of maximal rigid objects of $\cC$ by replacing each edge with two arrows, one in each direction.

If $m \xrightarrow{ \alpha } m^{ \ast }$ is an arrow, then the {\em source} and {\em target} are $s( \alpha ) = m$ and $t( \alpha ) = m^{ \ast }$.
\begin{itemize}
\setlength\itemsep{4pt}

  \item  A {\em path of length $p$} in $\Gamma_{ \cC }$ is an expression $\mu = \alpha_p \cdots \alpha_1$ where the $\alpha_i$ are arrows of $\Gamma_{ \cC }$ and $t( \alpha_i ) = s( \alpha_{ i+1 } )$ for each $i$.
  
  \item  The {\em source} and {\em target} of $\mu$ are $s( \mu ) = s( \alpha_1 )$ and $t( \mu ) = t( \alpha_p )$.  

  \item  If $\mu = \alpha_p \cdots \alpha_1$ and $\nu = \beta_q \cdots \beta_1$ are paths with $t( \mu ) = s( \nu )$, then their {\em composition} is the path $\nu \circ \mu = \beta_q \cdots \beta_1 \alpha_p \cdots \alpha_1$.
  
  \item  A path from $m$ to $m'$ is {\em minimal} if there is no path from $m$ to $m'$ of strictly smaller length.

\end{itemize}
\end{Definition}

\begin{Definition}
[Green paths]
\label{def:green_path}
Following \cite[2.7]{Br} and \cite[thm.\ 4.10]{BST}, a {\em green path in $\Gamma_{ \cC}$} is a path
\[
  m(1) \xrightarrow{} m(2) \xrightarrow{} \cdots \xrightarrow{} m(p-1) \xrightarrow{} m(p)
\]
such that
\[
  \silt[m(1)]{m(1)} > \silt[m(1)]{m(2)} > \cdots > \silt[m(1)]{m(p-1)} > \silt[m(1)]{m(p)}.
\]
\end{Definition}

The following definition uses free groupoids and categories modulo equivalence relations.  See \cite[secs.\ II.7+8]{MacLane} for background information.

\begin{Definition}
[The green groupoid]
\label{def:green_groupoid}
Let $\cF_{ \cC }$ be the free groupoid over $\Gamma_{ \cC }$, that is, the groupoid with:
\begin{itemize}
\setlength\itemsep{4pt}

  \item  Objects: The vertices of $\Gamma_{ \cC }$ (= the elements of $\mrig \cC$).

  \item  Morphisms: Compositions of the arrows of $\Gamma_{ \cC }$ and their formal inverses, modulo the obvious relations.

\end{itemize}
By abuse of notation, a path $\mu$ in $\Gamma_{ \cC }$ from $m$ to $m'$ will be considered to be an element of $\cF_{ \cC }( m,m' )$. 

Let $\equiv$ denote the equivalence relations on the sets $\cF_{ \cC }( m,m' )$ where all morphisms given by green paths from a fixed $m$ to a fixed $m'$ are equivalent to each other, while any other morphism is equivalent only to itself.  Let $\sim$ denote the equivalence relations on the sets $\cF_{ \cC }( m,m' )$ induced by $\equiv$ and the requirement that $\mu \sim \mu' \Rightarrow \nu \mu \lambda \sim \nu \mu' \lambda$ when these compositions make sense.

The {\em green groupoid of $\cC$} is
\[
  \cG_{ \cC } = \cF_{ \cC } / \sim.
\]
Note that $\cG_{ \cC }$ has the same objects as $\cF_{ \cC }$, and that there is a canonical functor $\cF_{ \cC } \xrightarrow{ Q_{ \cC } } \cG_{ \cC }$, which is the identity on objects and has the universal property
\[
\vcenter{
  \xymatrix @+0.5pc {
    \cF_{ \cC } \ar^-{ Q_{ \cC } }[r] \ar[d] & \cG_{ \cC } \ar^-{\exists!}@{-->}[dl] \\
    \cA \\
                    }
        }
\]
when $\cA$ is a category and the vertical arrow is a functor constant on the equivalence classes of $\sim$ (equivalently: Constant on the equivalence classes of $\equiv$).
\end{Definition}


\begin{Definition}
[The green group]
\label{def:green_group}
If the mutation graph of maximal rigid objects of $\cC$ is connected, then so is the quiver $\Gamma_{ \cC }$ and the groupoids $\cF_{ \cC }$ and $\cG_{ \cC }$.  In this case, all the the vertex groups $\cG_{ \cC }( m,m )$ are isomorphic by \cite[p.\ 8]{Higgins}, and we define
the {\em green group of $\cC$} to be any of these vertex groups.
\end{Definition}

\section{Silting}
\label{sec:silting}

This section does not use the Frobenius model $\cE$, but works under Setup \ref{set:blanket}(iv) alone.

The following result is due to \cite[prop.\ 3.10.2]{Pl} for cluster tilting objects in cluster categories.  The proof carries over to the present case.  The $\Hom$ space on the right hand side of the isomorphism is an instance of the $E$-invariant of Derksen and Fei \cite[def.\ 3.3]{DF}.

\begin{Lemma}
\label{lem:Plamondon}
Let $\ell \in \cC$ be a basic maximal rigid object, set $\underline{A} = \cC( \ell,\ell )$, and let $[\add_\cC \ell]$ denote the ideal of $\cC$ of morphisms factoring through an object of $\add_{ \cC }( \ell )$.  If $m,n \in \cC$ are rigid, then
\[
  [\add_\cC \ell]( \Sigma^{ -1 }m,n ) \cong \Hom_{ \Kb( \proj \underline{A} ) }\big( \silt[\ell]{m},\Sigma \silt[\ell]{n} \big).
\]
\end{Lemma}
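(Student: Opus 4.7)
The plan is to exhibit both sides as the same explicit quotient of $\cC(\ell_1, \ell_0')$. Fix the defining triangles $\ell_1 \xrightarrow{a} \ell_0 \xrightarrow{b} m \xrightarrow{c} \Sigma \ell_1$ and $\ell_1' \xrightarrow{a'} \ell_0' \xrightarrow{b'} n \xrightarrow{c'} \Sigma \ell_1'$ from Lemma \ref{lem:ZZ}. Via the fully faithful Yoneda equivalence $\cC(\ell, -)\colon \add_\cC \ell \xrightarrow{\sim} \proj \underline A$, the silting complexes $\silt[\ell]{m}$, $\silt[\ell]{n}$ correspond to the 2-term complexes $\tilde m = (\ell_1 \xrightarrow{a} \ell_0)$ and $\tilde n = (\ell_1' \xrightarrow{a'} \ell_0')$ in $\Kb(\add_\cC \ell)$ in degrees $-1, 0$, so the right-hand side of the claimed isomorphism becomes $\Hom_{\Kb(\add_\cC \ell)}(\tilde m, \Sigma \tilde n)$.

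First I would compute this Hom space directly. Since $\tilde m$ is supported in degrees $-1, 0$ while $\Sigma \tilde n$ is supported in degrees $-2, -1$, a chain map has only one potentially nonzero component $\psi\colon \ell_1 \to \ell_0'$, and both chain conditions hold automatically (each has a zero object on one side). The homotopies are parameterized by pairs $(\tilde h^{-1}\colon \ell_1 \to \ell_1',\ \tilde h^0\colon \ell_0 \to \ell_0')$, yielding the relation $\psi \sim \psi' \Leftrightarrow \psi - \psi' \in a^* \cC(\ell_0, \ell_0') + a'_* \cC(\ell_1, \ell_1')$. Hence
\[
\Hom_{\Kb(\add_\cC \ell)}(\tilde m, \Sigma \tilde n) \;=\; \cC(\ell_1, \ell_0') \big/ \bigl[a^* \cC(\ell_0, \ell_0') + a'_* \cC(\ell_1, \ell_1')\bigr].
\]

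Next I would identify the left-hand side with the same quotient. The essential input is rigidity of $\ell$, which via $\cC(\ell, \Sigma \ell) = 0$ gives $\cC(\ell_i, \Sigma X) = 0$ for every $X \in \add_\cC \ell$. Applying $\cC(-, L)$ to the rotated triangle $\Sigma^{-1} m \xrightarrow{-\Sigma^{-1} c} \ell_1 \xrightarrow{a} \ell_0$ for any $L \in \add_\cC \ell$, the vanishing $\cC(\Sigma^{-1} \ell_0, L) = \cC(\ell_0, \Sigma L) = 0$ forces every morphism $\Sigma^{-1} m \to L$ to lift through $-\Sigma^{-1} c$. Consequently, $[\add_\cC \ell](\Sigma^{-1} m, n)$ equals the image of $(-\Sigma^{-1} c)^*\colon \cC(\ell_1, n) \to \cC(\Sigma^{-1} m, n)$, which by the long exact sequence obtained from the $m$-triangle is $\cC(\ell_1, n) / a^* \cC(\ell_0, n)$. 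Expanding $\cC(\ell_i, n)$ using the long exact sequences from the $n$-triangle, where rigidity again kills $\cC(\ell_i, \Sigma \ell_1')$ and so makes $(b')_*$ surjective, produces precisely the quotient of $\cC(\ell_1, \ell_0')$ displayed above.

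The two quotients are then manifestly equal, and the canonical isomorphism is realized by the assignment $\psi \mapsto b' \circ \psi \circ (-\Sigma^{-1} c)$: the summand $a^* \cC(\ell_0, \ell_0')$ maps to zero because $a \circ (-\Sigma^{-1} c) = 0$ in the triangle, and $a'_* \cC(\ell_1, \ell_1')$ maps to zero because $b' a' = 0$. Surjectivity is the factorization argument of the previous paragraph, and injectivity follows by unwinding the same exact sequences. I expect the main obstacle to be the bookkeeping: carefully tracking the signs introduced by the shift functor $\Sigma$ in the chain-map and homotopy formulas, and verifying that rigidity is applied in the precise form $\cC(\ell_i, \Sigma \ell_j') = 0$ available for summands of $\ell^{\oplus k}$, rather than any stronger vanishing involving $m$ or $n$ themselves, which is generally unavailable because $\ell \oplus m$ need not be rigid.
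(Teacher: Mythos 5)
Your argument is correct, and it is essentially the standard computation that the paper (which cites Plamondon and says "the proof carries over") expects the reader to supply. You identify both sides as the quotient $\cC(\ell_1,\ell_0')\big/\bigl[a^*\cC(\ell_0,\ell_0') + a'_*\cC(\ell_1,\ell_1')\bigr]$: the right-hand side is a direct chain-map/homotopy count for $2$-term complexes of projectives, and the left-hand side reduces to this via the two long exact sequences, with rigidity of $\ell$ used exactly where needed — once to get $\cC(\Sigma^{-1}\ell_0,L)=0$ so that every morphism $\Sigma^{-1}m \to L$ lifts through $-\Sigma^{-1}c$, and once to get $\cC(\ell_i,\Sigma\ell_1')=0$ so that $(b')_*$ is surjective. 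Your closing caution about the precise form of rigidity used (only $\cC(\ell_i,\Sigma\ell_j')=0$, never anything involving $m$ or $n$) is exactly the right thing to watch, since $\ell\oplus m$ is typically not rigid.
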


The following Proposition provides an intrinsic characterization of the
silting partial order in terms of the category $\cC$ without a reference to tilted
algebras.

\begin{Proposition}
\label{prop:G6}
Let $\ell \in \cC$ be a basic maximal rigid object.  Let $m,n \in \cC$ be maximal rigid objects and let
\[
  m_1 \xrightarrow{} m_0 \xrightarrow{} n \xrightarrow{ \underline{ \delta } } \Sigma m_1
\]
be a triangle in $\cC$ with $m_i \in \add_{ \cC }( m )$ (at least one such triangle exists by Lemma \ref{lem:ZZ}).  Then
\begin{equation}
\label{equ:G6a}
  \silt[\ell]{m} \geqslant \silt[\ell]{n} \; \Leftrightarrow \; \cC( \ell,\underline{ \delta } ) = 0.
\end{equation}
\end{Proposition}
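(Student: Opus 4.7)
The plan is to reduce the equivalence to a pair of $\Hom$-vanishing statements in $\cC$ via Lemma~\ref{lem:Plamondon} and long exact sequences, then to prove the two implications separately using the rigidity of $m$ for ($\Leftarrow$) and $2$-Calabi--Yau duality for ($\Rightarrow$). First, Definition~\ref{def:silting_order} (applied to the $2$-term silting complexes $\siltlittle[\ell]{m}, \siltlittle[\ell]{n}$ of Lemma~\ref{lem:AIR}) combined with Lemma~\ref{lem:Plamondon} gives
\[
\silt[\ell]{m} \geqslant \silt[\ell]{n} \iff [\add_\cC \ell](\Sigma^{-1}m, n) = 0.
\]
Applying $\cC(\ell, -)$ to the triangle in the statement produces the exact sequence
\[
\cC(\ell, m_0) \xrightarrow{\cC(\ell, \underline{\beta})} \cC(\ell, n) \xrightarrow{\cC(\ell, \underline{\delta})} \cC(\ell, \Sigma m_1),
\]
where $\underline{\beta}\colon m_0 \to n$ is the middle morphism of the triangle, so $\cC(\ell, \underline{\delta}) = 0$ is equivalent to the surjectivity of $\cC(\ell, \underline{\beta})$, and by additivity equivalently of $\cC(\ell', \underline{\beta})$ for every $\ell' \in \add_\cC \ell$.

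For ($\Leftarrow$), I would take any $f \in [\add_\cC \ell](\Sigma^{-1}m, n)$ factored as $\Sigma^{-1}m \xrightarrow{h} \ell' \xrightarrow{g} n$ with $\ell' \in \add_\cC \ell$, and lift $g = \underline{\beta} \circ g''$ through $m_0 \to n$ using the hypothesis, so that $f = \underline{\beta}(g''h)$. The composition $g''h$ lies in $\cC(\Sigma^{-1}m, m_0) \cong \cC(m, \Sigma m_0)$, which vanishes by the rigidity of $m$ (since $m_0 \in \add_\cC m$), giving $f = 0$.

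For ($\Rightarrow$), I would use $2$-Calabi--Yau duality to recast both sides as composition-vanishing statements. Fixing a triangle $\ell_1 \xrightarrow{\gamma} \ell_0 \to m \xrightarrow{\epsilon} \Sigma \ell_1$ with $\ell_i \in \add_\cC \ell$ (Lemma~\ref{lem:ZZ}), the hypothesis $[\add_\cC \ell](\Sigma^{-1}m, n) = 0$ translates, via the LES of $\cC(-, n)$ on this triangle together with rigidity of $\ell$, into the surjectivity of $\cC(\gamma, n)$; dualising via $2$-Calabi--Yau and using the LES of $\cC(n, -)$ on a shift of the same triangle, this becomes $\cC(n, \Sigma\epsilon) = 0$, i.e.\ $\Sigma\epsilon \circ \psi = 0$ for every $\psi\colon n \to \Sigma m$. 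Dually, $\cC(\ell, \underline{\delta}) = 0$ is equivalent to the vanishing of $\Sigma\phi' \circ \underline{\delta}\colon n \to \Sigma^2\ell$ for every $\phi'\colon m_1 \to \Sigma\ell$. The LES of $\cC(-, \Sigma\ell)$ on the triangle for $m$, combined with rigidity of $\ell$, shows that every morphism $m \to \Sigma\ell$ factors through $\epsilon$; consequently, embedding $m_1 \hookrightarrow m^{(k)}$ and decomposing componentwise, every such $\phi'$ factors as $\phi' = \Sigma\psi \circ \tilde\epsilon$, where $\tilde\epsilon\colon m_1 \to \Sigma\ell_1^{(k)}$ is built from copies of $\epsilon$ and $\psi\colon \ell_1^{(k)} \to \ell$. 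Then $\Sigma\phi' \circ \underline{\delta} = \Sigma^2\psi \circ \Sigma\tilde\epsilon \circ \underline{\delta}$, and $\Sigma\tilde\epsilon \circ \underline{\delta}$ decomposes into pieces of the form $\Sigma\epsilon \circ \psi_i$ with $\psi_i\colon n \to \Sigma m$, each of which vanishes by the recast hypothesis. The main obstacle is aligning these two $2$-Calabi--Yau dualisations precisely, so that the componentwise vanishing provided by $\cC(n, \Sigma\epsilon) = 0$ feeds exactly into the decomposition $\phi' = \Sigma\psi \circ \tilde\epsilon$ coming from the triangle for $m$.
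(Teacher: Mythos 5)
Your proof is correct, but it takes a genuinely different — and considerably more circuitous — route than the paper's, especially for the forward implication.

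You and the paper agree on the first reduction: via Lemma~\ref{lem:Plamondon}, $\silt[\ell]{m} \geqslant \silt[\ell]{n}$ is equivalent to $[\add_\cC \ell](\Sigma^{-1}m,n) = 0$. From there the two proofs diverge. The paper applies $2$-Calabi--Yau duality \emph{once}, recasting $[\add_\cC\ell](\Sigma^{-1}m,n)=0$ as the statement that $\underline\nu\,\underline\lambda = 0$ for all $\underline\lambda \in \cC(\ell,n)$ and all $\underline\nu \in \cC(n,\Sigma m')$ with $m' \in \add_\cC(m)$. The equivalence with $\cC(\ell,\underline\delta) = 0$ is then immediate in both directions: taking $\underline\nu = \underline\delta$ gives one direction, and the other follows because the rigidity of $m$ (specifically $\cC(m_0,\Sigma m') = 0$) forces every $\underline\nu$ to factor through $\underline\delta$. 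One duality, one factoring lemma, and both implications drop out of the same biconditional chain.

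Your argument splits into two asymmetric halves. Your ($\Leftarrow$) is actually \emph{shorter} than the paper's: you avoid $2$-Calabi--Yau duality entirely and use the triangle on the source side directly, lifting $g$ through $\underline\beta$ and killing $g''h \in \cC(\Sigma^{-1}m,m_0)$ by rigidity of $m$ — this is the $2$-CY-dual of the paper's factoring step, carried out upstairs instead of downstairs. But your ($\Rightarrow$) introduces a second reference triangle $\ell_1 \xrightarrow{\gamma} \ell_0 \to m \xrightarrow{\epsilon} \Sigma\ell_1$, applies $2$-Calabi--Yau duality twice (once to pass from surjectivity of $\cC(\gamma,n)$ to $\cC(n,\Sigma\epsilon)=0$, once to recast $\cC(\ell,\underline\delta)=0$ in terms of maps $m_1 \to \Sigma\ell$), and then pastes the two via a componentwise decomposition of $m_1 \hookrightarrow m^{(k)}$. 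All of this is sound — the factorisation of $\phi'$ through $\tilde\epsilon$ is justified by the LES argument you cite, and the diagonal decomposition does land the vanishing in the right place — but it is doing a fair amount of work to establish something the paper gets essentially for free (taking $\underline\nu = \underline\delta$). If you want to keep your cleaner ($\Leftarrow$), a natural streamlining of ($\Rightarrow$) would be to $2$-CY-dualise once and then observe that $\underline\delta$ itself lies in $\cC(n,\Sigma m_1)$, rather than routing through the auxiliary triangle for $m$.
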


\begin{proof}
By Definition \ref{def:silting_order} the condition $\silt[\ell]{m} \geqslant \silt[\ell]{n}$ is equivalent to
\[
  \Hom_{ \Kb( \proj \underline{A} ) }\big( \silt[\ell]{m},\Sigma \silt[\ell]{n} \big) = 0,
\]  
which by Lemma \ref{lem:Plamondon} is equivalent to $[\add_\cC \ell]( \Sigma^{ -1 }m,n ) = 0$.  It is easy to see that this is equivalent to:
\begin{enumerate}
\setlength\itemsep{4pt}

  \item  Given $\underline{ \lambda } \in \cC( \ell,n )$ and $m' \in \add_{ \cC }( m )$, the induced map $\cC( \Sigma^{ -1 }m',\ell ) \xrightarrow{ \cC( \Sigma^{ -1 }m',\underline{ \lambda } ) } \cC( \Sigma^{ -1 }m',n )$ is $0$.
  
\end{enumerate}
The map in (i) is part of the following commutative square, which exists by the $2$-Calabi--Yau condition.
\[
\vcenter{
  \xymatrix @-0.9pc {
    \cC( \Sigma^{ -1 }m',\ell ) \ar^-{ \cC( \Sigma^{ -1 }m',\underline{ \lambda } ) }[rrrr] \ar^{\mbox{\rotatebox{270}{$\cong$}}}[ddd] &&&& \cC( \Sigma^{ -1 }m',n ) \ar^{\mbox{\rotatebox{270}{$\cong$}}}[ddd] \\
\\
\\
    \dual\!\cC( \ell,\Sigma m' ) \ar_-{ \dual\!\cC( \underline{ \lambda },\Sigma m' ) }[rrrr] &&&& \dual\!\cC( n,\Sigma m' ) \\
                    }
        }
\]
Hence (i) is equivalent to:
\begin{enumerate}
\setcounter{enumi}{1}
\setlength\itemsep{4pt}

  \item  Given $\underline{ \lambda } \in \cC( \ell,n )$ and $m' \in \add_{ \cC }( m )$, the induced map $\cC( n,\Sigma m' ) \xrightarrow{ \cC( \underline{ \lambda },\Sigma m' ) } \cC( \ell,\Sigma m' )$ is $0$.

\end{enumerate}
This is again equivalent to:
\begin{enumerate}
\setcounter{enumi}{2}
\setlength\itemsep{4pt}

  \item  Given $m' \in \add_{ \cC }( m )$, in each diagram
\[
\vcenter{
  \xymatrix @+0.3pc {
    && \ell \ar_-{ \underline{ \lambda } }[d] \\
    m_1 \ar[r] & m_0 \ar[r] & n \ar^-{ \underline{ \delta } }[r] \ar_-{ \underline{ \nu } }[d] & \Sigma m_1, \ar@{-->}[dl] \\
    && \Sigma m' \\
                    }
        }
\]
we have $\underline{ \nu }\underline{ \lambda } = 0$.

\end{enumerate}
Finally, (iii) is equivalent to the right hand side of \eqref{equ:G6a}, that is $\cC( \ell,\underline{ \delta } ) = 0$.  Namely, on the one hand, we can choose $\underline{ \nu } = \underline{ \delta }$ in the diagram, so (iii) implies $\cC( \ell,\underline{ \delta } ) = 0$.  On the other hand, the dashed arrow exists to give a commutative triangle because $\cC( m_0,\Sigma m' ) = 0$ since $m$ is rigid and $m_0,m' \in \add_{ \cC }( m )$, so $\cC( \ell,\underline{ \delta } ) = 0$ implies (iii).
\end{proof}

\section{Tilting}
\label{sec:tilting}

The following result is essentially due to \cite[prop.\ 4]{Pa}.  We include a proof because our setup is slightly different, involving maximal rigid objects.

\begin{Proposition}
\label{pro:G1}
If $\ell,m \in \cC$ are good maximal rigid objects, then $\cE( \ell,m )$, viewed as a complex of $\cE( m,m )$-$\cE( \ell,\ell )$-bimodules, is a two-sided tilting complex (see \cite[def.\ 3.4]{R}). 
\end{Proposition}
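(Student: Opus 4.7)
The plan is to show that $T := \cE(\ell, m)$, viewed as a right module over $A := \cE(\ell, \ell)$, is a classical tilting module of projective dimension at most one whose endomorphism ring is naturally isomorphic to $B := \cE(m, m)$. By Rickard's Morita theory, ${}_B T_A$ with its natural bimodule structure is then automatically a two-sided tilting complex in $\D(B^\opp \Tensor{k} A)$.

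First, I would produce a projective resolution of $T$ of length at most one. Apply Lemma \ref{lem:ZZ} to the maximal rigid $\ell$ and the rigid $m$: there is a triangle $\ell_1 \to \ell_0 \to m \to \Sigma \ell_1$ in $\cC$ with $\ell_i \in \add_\cC \ell$. Lifting its last morphism produces a conflation $0 \to \ell_1 \to L_0 \to m \to 0$ in $\cE$, where $L_0 = \ell_0 \oplus p$ for some $p \in \cP$; goodness of $\ell$ gives $\cP \subseteq \add_\cE \ell$, hence $L_0 \in \add_\cE \ell$. Applying the left-exact functor $\cE(\ell, -)$ and using $\Ext^1_\cE(\ell, \ell_1) = \cC(\ell, \Sigma \ell_1) = 0$ from rigidity of $\ell$ yields a short exact sequence of right $A$-modules
\[
0 \to \cE(\ell, \ell_1) \to \cE(\ell, L_0) \to T \to 0,
\]
whose outer terms lie in $\add A$, so $\pd_A T \leq 1$. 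Next apply $\Hom_A(-, T)$ to this resolution: since the Yoneda functor gives an equivalence $\add_\cE \ell \simeq \add A$, the relevant terms become $\cE(L_0, m) \to \cE(\ell_1, m)$. Separately, apply $\cE(-, m)$ to the same conflation and use $\Ext^1_\cE(m, m) = 0$ (rigidity of $m$) to get
\[
0 \to \cE(m, m) \to \cE(L_0, m) \to \cE(\ell_1, m) \to 0.
\]
Comparing these sequences yields a natural isomorphism $B \cong \End_A(T)$ together with $\Ext^1_A(T, T) = 0$; higher Ext vanishing follows from $\pd_A T \leq 1$.

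Third, I count indecomposable summands to upgrade ``partial tilting'' to ``tilting''. Decompose $m = m_1 \oplus \cdots \oplus m_n$ in $\cE$ into indecomposables; each summand $\cE(\ell, m_j)$ of $T$ has local endomorphism ring $\cE(m_j, m_j)$ (since $m_j$ is indecomposable in $\cE$) and is therefore indecomposable, and the same computation shows these summands are pairwise non-isomorphic. Since $\ell$ and $m$ are both good basic maximal rigid, they have the same number $n$ of indecomposable summands in $\cE$ (equal projective-injective part $r$, and equal numbers of non-projective-injective summands by the standard invariance result for basic maximal rigid objects in 2-Calabi--Yau categories), which coincides with the number of isomorphism classes of simple $A$-modules. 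Thus $T$ is a partial tilting module attaining the maximal number of non-isomorphic indecomposable summands, hence a tilting module over the artin algebra $A$ by the Bongartz completion argument. Rickard's theorem now delivers the required two-sided tilting complex.

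The main point that needs care is to check that the two-sided structure Rickard provides, via the abstract identification $B \cong \End_A(T)$, agrees with the natural left $\cE(m,m)$-action on $\cE(\ell, m)$ by post-composition. This is built into the construction in the second step: the isomorphism $B \to \End_A(T)$ constructed there is literally ``$b \mapsto $ left multiplication by $b$,'' so the bimodule structure coming from tilting theory is the native one. Beyond this bookkeeping I do not expect genuine obstacles.
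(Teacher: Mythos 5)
Your first two steps are sound and essentially reproduce the paper's verification of criteria (a) and (b) of Miyachi's characterization \cite[prop.\ 1.8]{M}: the projective resolution of $T=\cE(\ell,m)$ over $A=\cE(\ell,\ell)$ and the comparison of long exact sequences that yields the canonical isomorphism $B\cong\End_A(T)$ together with $\Ext^1_A(T,T)=0$ are exactly the paper's computations, and your closing observation that the bimodule structure is the native one because the isomorphism $B\to\End_A(T)$ is post-composition is correct. The overall route — package the result as ``classical tilting module with prescribed endomorphism ring, then invoke Morita theory'' rather than verify Miyachi's criterion directly — is a reasonable alternative framing.

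However, your third step has a genuine gap. The appeal to Bongartz completion and the counting characterization of tilting modules (``a partial tilting module with the maximal number of pairwise non-isomorphic indecomposable summands is tilting'') requires $A=\cE(\ell,\ell)$ to be an artin algebra, and you say so explicitly. But Setup \ref{set:blanket} imposes finite-dimensionality of $\Hom$-spaces only on $\cC$, not on $\cE$: in the paper's main class of examples, $\cE=\operatorname{CM}(R)$ for a complete local Gorenstein ring $R$, so $A$ is an $R$-order and is not artin. Over such $A$ there is no Bongartz completion and the counting criterion is unavailable. (A subsidiary point: your decomposition of $m$ into indecomposables of $\cE$ with local endomorphism rings presupposes $\cE$ is Krull--Schmidt, which is also not part of the setup.) The fix is to drop the counting argument and verify axiom (T3) directly: the dual of Lemma \ref{lem:F9_10} gives a conflation $0\to\ell\to m^0\to m^1\to 0$ with $m^i\in\add_\cE m$, and applying $\cE(\ell,-)$ together with rigidity of $\ell$ gives a short exact sequence $0\to A\to\cE(\ell,m^0)\to\cE(\ell,m^1)\to 0$ with outer terms in $\add T$. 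Equivalently — and this is what the paper does — verify Miyachi's criterion (c), that $A\to\RHom_{B^\opp}(T,T)$ is an isomorphism, by the argument dual to your step two. Either version works in the generality required.
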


\begin{proof}
We will prove this by verifying that $\cE( \ell,m )$ satisfies criteria (a)--(c) of \cite[prop.\ 1.8]{M}.  We write $A = \cE( \ell,\ell )$ and $B = \cE( m,m )$.

By Lemma \ref{lem:F9_10} there is a conflation
\begin{equation}
\label{equ:G1b}
  0 \xrightarrow{} \ell_1 \xrightarrow{ \lambda_1 } \ell_0 \xrightarrow{} m \xrightarrow{} 0
\end{equation}
with $\ell_i \in \add_{ \cE }( \ell )$, which induces a short exact sequence
\begin{equation}
\label{equ:G1c}
  0 \xrightarrow{} \cE( \ell,\ell_1 ) \xrightarrow{ \cE( \ell,\lambda_1 ) } \cE( \ell,\ell_0 ) \xrightarrow{} \cE( \ell,m ) \xrightarrow{} 0.
\end{equation}
Hence, viewed as a complex of $A$-right modules, $\cE( \ell,m )$ is quasi-isomorphic to the complex
\[
  \cE( \ell,\ell_1 ) \xrightarrow{ \cE( \ell,\lambda_1 ) } \cE( \ell,\ell_0 ),
\]
which is a bounded complex of finitely generated projective $A$-right modules, therefore perfect.  It follows that, viewed as a complex of $A$-right modules, $\cE( \ell,m )$ is perfect.  A dual argument shows that, viewed as a complex of $B$-left modules, $\cE( \ell,m )$ is perfect.  This establishes criterion (a) of \cite[prop.\ 1.8]{M}.

The conflation \eqref{equ:G1b} induces a triangle $\Sigma^{ -1 }m \xrightarrow{} \ell_1 \xrightarrow{ \underline{ \lambda }_1 } \ell_0 \xrightarrow{} m$ in $\cC$.  We have $\cC( \Sigma^{ -1 }m,m ) = 0$ because $m$ is rigid in $\cC$, so $\cC( \underline{ \lambda }_1,m )$ is surjective whence $\cE( \lambda_1,m )$ is surjective by Lemma \ref{lem:portmanteau}(viii).  Hence \eqref{equ:G1b} induces a short exact sequence, which is the first column of the following diagram.  The second column is a long exact sequence obtained from \eqref{equ:G1c}.
\[
\vcenter{
  \xymatrix @-0.5pc {
    0 \ar[d] && 0 \ar[d] \\  
    \cE( m,m ) \ar[d] \ar^-{\varepsilon}[rr] && \Hom_A \big( \cE( \ell,m ),\cE( \ell,m ) \big) \ar[d] \\
    \cE( \ell_0,m ) \ar_{ \cE( \lambda_1,m ) }[d] \ar^-{\cong}[rr] && \Hom_A \big( \cE( \ell,\ell_0 ),\cE( \ell,m ) \big) \ar[d] \\
    \cE( \ell_1,m ) \ar[d] \ar^-{\cong}[rr] && \Hom_A \big( \cE( \ell,\ell_1 ),\cE( \ell,m ) \big) \ar[d] \\
    0 && \Ext_A^1 \big( \cE( \ell,m ),\cE( \ell,m ) \big) \ar[d] \\
      && 0 \\
                    }
        }
\]
The horizontal maps are obtained from Lemma \ref{lem:portmanteau}(iv), which also gives that the two last ones are bijective.  Hence by the diagram $\varepsilon$ is bijective and $\Ext_A^1\!\big( \cE( \ell,m ),\cE( \ell,m ) \big) = 0$.  Combining with Lemma \ref{lem:F9_10}(iii) shows that the canonical map
\[
  B \xrightarrow{} \RHom_A\!\big( \cE( \ell,m ),\cE( \ell,m ) \big)
\]
is an isomorphism.  This establishes criterion (b) of \cite[prop.\ 1.8]{M}.

A dual argument shows that the canonical map
\[
  A \xrightarrow{} \RHom_{ B^{ \opp } }\!\big( \cE( \ell,m ),\cE( \ell,m ) \big)
\]
is an isomorphism.  This establishes criterion (c) of \cite[prop.\ 1.8]{M}.
\end{proof}

\begin{Lemma}
\label{lem:G10}
Let $\ell,m,n \in \cC$ be maximal rigid objects, and suppose that $\ell$ is good and basic and $m$ is good.  Then
\[
  \silt[\ell]{m} \geqslant \silt[\ell]{n}
  \; \Rightarrow \;
  \cE( m,n ) \LTensor{ \cE( m,m ) } \cE( \ell,m ) \cong \cE( \ell,n ),
\]
where the isomorphism is in $\D\big( \cE( n,n )^{ \opp } \Tensor{k} \cE( \ell,\ell ) \big)$, the derived category of $\cE( n,n )$-$\cE( \ell,\ell )$-bimodules.
\end{Lemma}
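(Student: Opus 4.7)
The strategy is to compute $\cE(m,n) \LTensor{\cE(m,m)} \cE(\ell,m)$ via a projective resolution of the right $\cE(m,m)$-module $\cE(m,n)$. First I would apply Lemma \ref{lem:ZZ} to the maximal rigid object $m$ and the rigid object $n$ to obtain a triangle $m_1 \to m_0 \to n \xrightarrow{\underline{\delta}} \Sigma m_1$ in $\cC$ with $m_i \in \add_{\cC}(m)$, and lift it to a conflation
\[
0 \to m_1 \to m_0 \to n \to 0
\]
in $\cE$ (enlarging $m_0$ by a projective-injective summand if necessary; such a summand still lies in $\add_{\cE}(m)$ because $m$ is good, so that $r \in \add_{\cE}(m)$). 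Since $m$ is rigid, $\Ext^1_{\cE}(m,m_1) \cong \cC(m, \Sigma m_1) = 0$, so applying $\cE(m,-)$ yields a short exact sequence
\[
0 \to \cE(m,m_1) \to \cE(m,m_0) \to \cE(m,n) \to 0
\]
whose first two terms form a projective resolution of $\cE(m,n)$ as a right $\cE(m,m)$-module.

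Tensoring this resolution with $\cE(\ell,m)$ and using the natural isomorphism $\cE(m,X) \otimes_{\cE(m,m)} \cE(\ell,m) \xrightarrow{\sim} \cE(\ell,X)$, $f \otimes g \mapsto f \circ g$, valid for $X \in \add_{\cE}(m)$, I would represent $\cE(m,n) \LTensor{\cE(m,m)} \cE(\ell,m)$ by the two-term complex $\cE(\ell,m_1) \to \cE(\ell,m_0)$ in degrees $-1$ and $0$. To compute its cohomology, I would apply $\cE(\ell,-)$ to the conflation to obtain the long exact sequence
\[
0 \to \cE(\ell,m_1) \to \cE(\ell,m_0) \to \cE(\ell,n) \xrightarrow{\partial} \Ext^1_{\cE}(\ell,m_1) \cong \cC(\ell, \Sigma m_1),
\]
and identify the connecting map $\partial$ as the composite of the canonical surjection $\cE(\ell,n) \twoheadrightarrow \cC(\ell,n)$ with $\cC(\ell, \underline{\delta})$. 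The hypothesis $\silt[\ell]{m} \geqslant \silt[\ell]{n}$ then forces $\cC(\ell, \underline{\delta}) = 0$ by Proposition \ref{prop:G6}, whence $\partial = 0$. Consequently the two-term complex has $H^{-1} = 0$ and $H^0 \cong \cE(\ell,n)$.

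Finally, to promote this to an isomorphism in $\D(\cE(n,n)^{\opp} \Tensor{k} \cE(\ell,\ell))$, I would use the evident bimodule map $\phi\colon \cE(m,n) \otimes_{\cE(m,m)} \cE(\ell,m) \to \cE(\ell,n)$, $f \otimes g \mapsto f \circ g$. The vanishing of higher Tor's (immediate from the two-term resolution together with the left-exactness argument above) implies that the underived tensor equals the derived tensor, and tracing the identifications shows that $\phi$ realises the quasi-isomorphism constructed in the previous paragraph. The main technical obstacle is pinning down the description of $\partial$ in terms of $\underline{\delta}$, so that Proposition \ref{prop:G6} becomes directly applicable; once this standard but delicate Frobenius-category calculation is carried out, the bimodule compatibility is automatic from the naturality of $\phi$.
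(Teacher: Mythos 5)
Your proof is correct and follows essentially the same route as the paper's: obtain a conflation $0 \to m_1 \to m_0 \to n \to 0$ with $m_i \in \add_{\cE}(m)$ (the paper packages the lift-to-$\cE$ step as Lemma \ref{lem:F9_10}), tensor the resulting projective resolution of $\cE(m,n)_B$ with $\cE(\ell,m)$, identify the tensors via Lemma \ref{lem:portmanteau}(v), and invoke Proposition \ref{prop:G6} to kill the obstruction to degree-$0$ concentration. The only cosmetic difference is that you identify the connecting map $\partial$ in the long exact sequence explicitly, whereas the paper sidesteps that by passing directly from surjectivity of $\cC(\ell,\underline\mu_0)$ to surjectivity of $\cE(\ell,\mu_0)$ via Lemma \ref{lem:portmanteau}(viii), which slightly shortens the "delicate Frobenius-category calculation" you flag.
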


In the case where $\cE$ is the category of maximal Cohen--Macaulay modules over the base of a 3-fold flopping contraction, 
the isomorphism in the lemma first appeared in \cite[thm.\ 4.6(2)]{HW}.  That paper used the partial order of tilting modules over 
$\cE( \ell,\ell )$, which is not available for our general Frobenius category $\cE$.  Instead we use the partial order of $2$-term silting objects over $\cC( \ell,\ell )$.

\begin{proof}[Proof of Lemma \ref{lem:G10}]
Set $B = \cE( m,m )$ and consider $\cE( m,n ) \LTensor{ \cE( m,m ) } \cE( \ell,m ) = \cE( m,n ) \LTensor{ B } \cE( \ell,m )$.  Its zeroth homology is $\cE( m,n ) \Tensor{ B } \cE( \ell,m )$, so it is enough to show:
\begin{enumerate}
\setlength\itemsep{4pt}

  \item  $\cE( m,n ) \Tensor{ B } \cE( \ell,m ) \cong \cE( \ell,n )$ as $\cE( n,n )$-$\cE( \ell,\ell )$-bimodules.
  
  \item  $\H\!\big( \cE( m,n ) \LTensor{ B } \cE( \ell,m ) \big)$ is concentrated in degree $0$.

\end{enumerate}
	
By Lemma \ref{lem:F9_10} there exists a conflation in $\cE$ with $m_i \in \add_{ \cE }( m )$:
\begin{equation}
\label{equ:G10a}
  0 \xrightarrow{} m_1 \xrightarrow{} m_0 \xrightarrow{ \mu_0 } n \xrightarrow{} 0,
\end{equation}
which induces a short exact sequence of $B$-right modules:
\begin{equation}
\label{equ:G10b}
  0 \xrightarrow{} \cE( m,m_1 ) \xrightarrow{} \cE( m,m_0 ) \xrightarrow{} \cE( m,n ) \xrightarrow{} 0.
\end{equation}
This is an augmented projective resolution of $\cE( m,n )$ over $B$, whence $\cE( m,n ) \LTensor{ B } \cE( \ell,m )$ is isomorphic to
\begin{equation}
\label{equ:G10c}
  \cE( m,m_1 ) \Tensor{ B } \cE( \ell,m ) \xrightarrow{} \cE( m,m_0 ) \Tensor{ B } \cE( \ell,m )
\end{equation}
in $\cD\big( \cE( \ell,\ell ) \big)$.  Hence condition (ii) is equivalent to:
\begingroup
\renewcommand{\labelenumi}{(\roman{enumi})'}
\begin{enumerate}
\setcounter{enumi}{1}
\setlength\itemsep{4pt}
  
  \item  The homology of \eqref{equ:G10c} is concentrated in degree $0$.

\end{enumerate}
\endgroup

The conflation \eqref{equ:G10a} also induces a triangle in $\cC$:
\[
  m_1 \xrightarrow{} m_0 \xrightarrow{ \underline{ \mu }_0 } n \xrightarrow{ \underline{ \delta } } \Sigma m_1,
\]
and $m_i \in \add_{ \cC }( m )$ by Lemma \ref{lem:portmanteau}(ii).  Hence $\silt[\ell]{m} \geqslant \silt[\ell]{n}$ implies $\cC( \ell,\underline{ \delta } ) = 0$ by Proposition \ref{prop:G6}.  It follows that $\cC( \ell,\underline{ \mu }_0 )$ is surjective, so $\cE( \ell,\mu_0 )$ is surjective by Lemma \ref{lem:portmanteau}(viii).  This gives the exact sequence in the second column of the following diagram, and the exact sequence in the first column is induced by \eqref{equ:G10b}.
\[
\vcenter{
  \xymatrix @-0.5pc {
    && 0 \ar[d] \\
    \cE( m,m_1 ) \Tensor{B} \cE( \ell,m ) \ar^-{ \cong }[rr] \ar[d] && \cE( \ell,m_1 ) \ar[d] \\
    \cE( m,m_0 ) \Tensor{B} \cE( \ell,m ) \ar^-{ \cong }[rr] \ar[d] && \cE( \ell,m_0 ) \ar^{ \cE( \ell,\mu_0 ) }[d] \\
    \cE( m,n ) \Tensor{B} \cE( \ell,m ) \ar^-{ \pi }[rr] \ar[d] && \cE( \ell,n ) \ar[d] \\
    0 && 0 \\
                    }
        }
\]
The horizontal maps are obtained from Lemma \ref{lem:portmanteau}(v), which also gives that the two first ones are bijective.  Hence the diagram shows that $\pi$ is bijective, proving (i), and that the homology of \eqref{equ:G10c} is concentrated in degree $0$, proving (ii)'.
\end{proof}

\section{The action of the green groupoid on derived categories}
\label{sec:action}

\begin{Remark}
\label{rmk:G11}
For the following theorem and its proof, recall the derived Picard groupoid $\cP_{ \cE }$, the free groupoid $\cF_{ \cC }$ over the quiver $\Gamma_{ \cC }$, and the green groupoid $\cG_{ \cC }$, see Definitions \ref{def:derived_Picard_groupoid}, \ref{def:Gamma}, and \ref{def:green_groupoid}.
All three groupoids have the same sets of objects, namely $\mrig \cC$.  
\end{Remark}

\begin{Theorem}
\label{thm:G11}
\begin{enumerate}
\setlength\itemsep{4pt}

  \item  There is a unique functor $\cF_{ \cC } \xrightarrow{ F } \cP_{ \cE }$, which is the identity on objects and determined on morphisms by
\[
  F( \alpha ) = \mbox{the isomorphism class of } \cE( m,m^{ \ast } )
\]
when $m \xrightarrow{ \alpha } m^{ \ast }$ is an arrow in $\Gamma_{ \cC }$.

  \item  If $\mu = m(1) \xrightarrow{} \cdots \xrightarrow{} m(p)$ is a green path in $\Gamma_{ \cC }$, then
\[
  F( \mu ) = \mbox{the isomorphism class of } \cE\big( m(1),m(p) \big).
\]

  \item  The functor $F$ is constant on the equivalence classes of the relations $\sim$ from Definition \ref{def:green_groupoid}, hence has a unique factorisation $G$ through the green groupoid:
\[
\vcenter{
  \xymatrix @+0.5pc {
    \cF_{ \cC } \ar^-{ Q_{ \cC } }[r] \ar_-{F}[d] & \cG_{ \cC } \: \lefteqn{= \cF_{ \cC }/\sim } \ar^<<<<<{G}@{-->}[dl] \\
    \cP_{ \cE }. \\
                    }
        }
\]

\end{enumerate}

\end{Theorem}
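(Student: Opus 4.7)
The plan is to prove the three parts in order: (i) is essentially formal given Proposition \ref{pro:G1}, (ii) does the main work via induction using Lemma \ref{lem:G10}, and (iii) then falls out of (ii) together with the universal property of $\cG_{\cC}$ stated in Definition \ref{def:green_groupoid}.

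For part (i), fix an arrow $m \xrightarrow{\alpha} m^{\ast}$ in $\Gamma_{\cC}$. Since $m, m^{\ast} \in \mrig \cC$, both are good basic maximal rigid objects of $\cC$, so Proposition \ref{pro:G1} shows that $\cE(m, m^{\ast})$, viewed as a complex of $\cE(m^{\ast}, m^{\ast})$-$\cE(m, m)$-bimodules, is a two-sided tilting complex. Its isomorphism class is therefore a morphism in $\cP_{\cE}(m, m^{\ast})$, which is automatically invertible since $\cP_{\cE}$ is a groupoid. The universal property of the free groupoid $\cF_{\cC}$ over $\Gamma_{\cC}$ then produces a unique functor $F$ extending this assignment on arrows.

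For part (ii), I would induct on $p$. The case $p = 1$ is trivial: $\mu$ is the identity morphism at $m(1)$, which $F$ sends to the identity of $\cP_{\cE}(m(1), m(1))$, namely the isomorphism class of the regular bimodule $\cE(m(1), m(1))$. For the inductive step, let $\mu' = m(1) \to \cdots \to m(p-1)$ be the truncation of $\mu$, which is still a green path because the relevant silting inequalities are preserved, and let $\alpha\colon m(p-1) \to m(p)$ be the final arrow. By the inductive hypothesis and part (i),
\[
  F(\mu) \;=\; F(\alpha) \circ F(\mu') \;=\; \bigl[\cE(m(p-1), m(p)) \LTensor{\cE(m(p-1), m(p-1))} \cE(m(1), m(p-1))\bigr],
\]
using that composition in $\cP_{\cE}$ is the derived tensor product. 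The green path condition yields $\silt[m(1)]{m(p-1)} > \silt[m(1)]{m(p)}$ in the silting order, so Lemma \ref{lem:G10}, applied with $\ell = m(1)$, $m = m(p-1)$, $n = m(p)$, identifies this derived tensor product with $\cE(m(1), m(p))$ as a bimodule, closing the induction.

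For part (iii), by the universal property of $\cG_{\cC} = \cF_{\cC}/\sim$ in Definition \ref{def:green_groupoid}, it suffices to check that $F$ is constant on the equivalence classes of $\equiv$. Each class consisting of a non-green-path morphism is a singleton, so there is nothing to verify. For a class of green paths with fixed source $m$ and target $m'$, part (ii) says that every such path is sent to the isomorphism class of $\cE(m, m')$, so $F$ is indeed constant on that class. The factorisation $G$ through $Q_{\cC}$ is then obtained from the universal property, and it is unique. The genuine technical work is all concentrated in step (ii), where the interplay between the silting order and the derived tensor product is mediated by Lemma \ref{lem:G10}; everything else is a formal unpacking of universal properties.
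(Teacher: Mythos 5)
Your proposal is correct and takes essentially the same approach as the paper: part (i) is the free groupoid universal property combined with Proposition \ref{pro:G1}; part (ii) is the chain of derived tensor products collapsed by Lemma \ref{lem:G10}, using the green path's descending silting order to verify the lemma's hypothesis at each step; part (iii) reduces to $\equiv$ and then to part (ii). The paper compresses part (ii) to ``repeated application of Lemma \ref{lem:G10}'' while you make the induction explicit (including the observation that truncating a green path yields a green path), but the underlying argument is identical.
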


\begin{proof}
(i):  This holds by the following observations.
\begin{itemize}
\setlength\itemsep{4pt}

  \item  By Definitions \ref{def:derived_Picard_groupoid} and \ref{def:Gamma}, the groupoids $\cF_{ \cC }$ and $\cP_{ \cE }$ have the same sets of objects, namely $\mrig \cC$.

  \item  By Definitions \ref{def:Gamma} and \ref{def:green_groupoid}, the groupoid $\cF_{ \cC }$ is free on the quiver $\Gamma_{ \cC }$, which has the arrows $m \xrightarrow{} m^{ \ast }$.

  \item  By Proposition \ref{pro:G1} there is a two-sided tilting complex ${}_{ \cE( m^*,m^* ) }\cE( m,m^{ \ast } )_{ \cE( m,m ) }$, whose isomorphism class is a morphism in $\cP_{ \cE }( m,m^{ \ast } )$.  

\end{itemize}

(ii):  If $\mu = m(1) \xrightarrow{ \alpha_1 } \cdots \xrightarrow{ \alpha_{ p-1 } } m(p)$ is a green path in $\Gamma_{ \cC }$ and we set $A_i = \cE\big( m(i),m(i) \big)$, then 
\begin{align*}
  F( \mu ) & = F( \alpha_{ p-1 } \circ \cdots \circ \alpha_1 ) \\
  & = F( \alpha_{ p-1 } ) \circ \cdots \circ F( \alpha_1 ) \\
  & \cong \cE\big( m(p-1),m(p) \big) \LTensor{ A_{ p-1 } } \cdots \LTensor{ A_2 } \cE\big( m(1),m(2) \big) \\
  & \cong \cE\big( m(1),m(p) \big),
\end{align*}
where the last $\cong$ is by repeated application of Lemma \ref{lem:G10}.

(iii):  To show that $F$ is constant on the equivalence classes of $\sim$, it is enough to show that $F$ is constant on the equivalence classes of the relations $\equiv$ from Definition \ref{def:green_groupoid}, but this holds by part (ii).
\end{proof}

\begin{Notation}
\label{not:G11}
By abuse of notation, we henceforth write e.g.\ $F( m \xrightarrow{ \alpha } m^{ \ast } ) = \cE( m,m^{ \ast } )$, ignoring that the output of $F$ is a morphism in $\cP_{ \cE }$, hence only given up to isomorphism.
\end{Notation}

\begin{Corollary}
\label{cor:G11i}
For each $m \in \mrig \cC$, there is a group homomorphism $\cG_{ \cC }( m,m ) \xrightarrow{} \DPic \cE( m,m )$.
\end{Corollary}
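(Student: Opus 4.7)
The plan is to deduce this immediately from Theorem \ref{thm:G11}(iii). That theorem produces a functor $G \colon \cG_{\cC} \to \cP_{\cE}$ which is the identity on objects. First I would recall that any functor between groupoids restricts, for each object $m$, to a homomorphism of vertex groups $\cG_{\cC}(m,m) \to \cP_{\cE}(m,m)$: indeed, $G$ sends identities to identities and preserves composition, so the restriction is a monoid homomorphism between two endomorphism monoids that happen to be groups (since both $\cG_{\cC}$ and $\cP_{\cE}$ are groupoids).

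Next I would identify the target. By Definition \ref{def:derived_Picard_groupoid}, the vertex group $\cP_{\cE}(m,m)$ equals $\DPic \cE(m,m)$, the derived Picard group of the endomorphism algebra $\cE(m,m)$. Combining this identification with the restriction of $G$ to vertex groups at $m$ yields the desired homomorphism
\[
  \cG_{\cC}(m,m) \xrightarrow{\; G|_{(m,m)} \;} \cP_{\cE}(m,m) = \DPic \cE(m,m).
\]

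There is essentially no obstacle here — the content is entirely contained in Theorem \ref{thm:G11}(iii); the corollary is just the observation that a functor of groupoids restricts to group homomorphisms on vertex groups, together with the definitional unpacking $\cP_{\cE}(m,m)=\DPic \cE(m,m)$. I would therefore write the proof in a single short sentence, simply invoking Theorem \ref{thm:G11}(iii), Definition \ref{def:derived_Picard_groupoid}, and functoriality.
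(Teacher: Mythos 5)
Your proposal is correct and matches the paper's proof exactly: restrict the functor $G$ of Theorem \ref{thm:G11}(iii) to the vertex group at $m$, then identify $\cP_{\cE}(m,m)$ with $\DPic\cE(m,m)$ via Definition \ref{def:derived_Picard_groupoid}.
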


\begin{proof}
The functor $\cG_{ \cC } \xrightarrow{ G } \cP_{ \cE }$ of Theorem \ref{thm:G11} induces a homomorphism $\cG_{ \cC }( m,m ) \xrightarrow{ G(-) } \cP_{ \cE }( m,m )$  of vertex groups whose target is $\DPic \cE( m,m )$ by Definition \ref{def:derived_Picard_groupoid}.
\end{proof}

Note that if the mutation graph of maximal rigid objects of $\cC$ is connected, then $\cG_{ \cC }( m,m )$ is the green group of $\cC$, which is unique up to isomorphism, see Definition \ref{def:green_group}.

\begin{Remark}
We will show later that $F$, and hence $G$, are essentially never trivial.  Indeed, suppose the mild technical condition \somefiniteness\ of Definition \ref{def:somefiniteness} is satisfied.  If $\Gamma_{ \cC }$ contains the configuration $
  \xymatrix {
  n
    \ar^{ \beta }[r]<0.5ex> &
    m
    \ar^{ \alpha }[l]<0.5ex>
            }
$
and we consider the path $\mu = \beta\alpha$ from $m$ to $m$ in $\Gamma_{ \cC }$, then $F( \mu )$ is an element of infinite order in the group $\DPic\big( \cE( m,m ) \big)$ by Observation \ref{obs:F22}.
\end{Remark}

\section{The $g$-vector fan}
\label{sec:g-vectors}

This section does not use the Frobenius model $\cE$, but works under Setup \ref{set:blanket}(iv) alone (in which case $\mrig \cC$ should just be a set containing one object from each isomorphism class of basic maximal rigid objects of $\cC$).

Section \ref{sec:green_paths_geometry} will investigate the geometrical properties of green paths in terms of the $g$-vector fan.  To prepare, this section will recall a number of known results on the $g$-vector fan of $\cC$ with respect to a basic maximal rigid object.  

None of the material of this section is original; see in particular \cite[sec.\ 6]{DIJ}.

\begin{Remark}
\label{rmk:K}
Let $\ell \in \cC$ be a basic maximal rigid object and set $\underline{A} = \cC( \ell,\ell )$.

Let $\ell \cong \ell_1 \oplus \cdots \oplus \ell_d$ be an indecomposable decomposition in $\cC$.  Applying the functor $\cC( \ell,- )$ gives an indecomposable decomposition $\underline{ A }_{ \underline{ A } } \cong P_1 \oplus \cdots \oplus P_d$ in $\mod \underline{ A }$.

The split Grothendieck group $\Ksp( \add_{ \cC }\ell )$ is a free abelian group with basis $\{ [\ell_1], \ldots, [\ell_d] \}$.

The homotopy category $\Kb( \proj \underline{ A } )$ has the silting object $\underline{ A }_{ \underline{ A } }$ by \cite[exa.\ 2.2(a)]{AI}, so the triangulated Grothendieck group $\K0\!\big( \Kb( \proj \underline{ A } ) \big)$ is a free abelian group with basis $\{ [P_1], \ldots, [P_d] \}$ by \cite[thm.\ 2.27]{AI}.  Note that square brackets denote class in the relevant Grothendieck group.

There are $\BR$-vector spaces
\begin{align*}
  \Ksp( \add_{ \cC }\ell )_{ \BR } & = \Ksp( \add_{ \cC }\ell ) \Tensor{ \BZ } \BR, \\
  \K0\!\big( \Kb( \proj\,\underline{ A } ) \big)_{ \BR } & = \K0\!\big( \Kb( \proj\,\underline{ A } ) \big) \Tensor{ \BZ } \BR
\end{align*}
and an isomorphism 
\begin{equation}
\label{equ:Ka}
  \xymatrix @+0.5pc {
    \Ksp( \add_{ \cC }\ell )_{ \BR } \ar_-{ \Phi }^-{\sim}[r] & \K0\!\big( \Kb( \proj\,\underline{ A } ) \big)_{ \BR }
                    }
  \;\;,\;\; \Phi( [\ell_i] ) = [P_i].
\end{equation}
\end{Remark}

\begin{Definition}
[The index]
\label{def:index}
Let $\ell \in \cC$ be a basic maximal rigid object, $m \in \cC$ a rigid object, and recall the triangle in Lemma \ref{lem:ZZ}.   The {\em index of $m$ with respect to $\ell$} is
\[
  \index_{ \ell }( m ) = [\ell_0] - [\ell_1].
\] 
It is a well-defined element of $\Ksp( \add_{ \cC }\ell )$.
\end{Definition}

\begin{Definition}
[The $g$-vector fan of $\cC$]
\label{def:cone_B}
Let $\ell \in \cC$ be a basic maximal rigid object with indecomposable decomposition $\ell \cong \ell_1 \oplus \cdots \oplus \ell_d$.

If $m \in \cC$ is rigid with indecomposable decomposition $m \cong m_1 \oplus \cdots \oplus m_e$, then the {\em chamber of $m$ with respect to $\ell$} is
\begin{equation}
\label{equ:Bnotation}
  B^{ \ell }( m ) = \Big\{\, \sum_{i} a_i \index_{ \ell }( m_i ) \,\Big|\, a_i \in \BR_{ \geqslant 0 } \Big\},
\end{equation}
which is a cone in $\Ksp( \add_{ \cC }\ell )_{ \BR }$.

The {\em g-vector fan of $\cC$ with respect to $\ell$} is the set of chambers
\[
  G^{ \ell }_{ \cC } = \{\, B^{ \ell }( m ) \mid \mbox{$m \in \mrig \cC$} \,\}
\]
in $\Ksp( \add_{ \cC }\ell )_{ \BR }$.  The {\em positive and negative chambers} are
\begin{align*}
  B^{ \ell }_+ & = B^{ \ell }( \ell ) = \Big\{\, \sum_{i} a_i [\ell_i] \,\Big|\, a_i \in \BR_{ \geqslant 0 } \Big\}, \\
  B^{ \ell }_- & = B^{ \ell }( \Sigma \ell ) = \Big\{\, \sum_{i} -a_i [\ell_i] \,\Big|\, a_i \in \BR_{ \geqslant 0 } \Big\}.
\end{align*}
\end{Definition}

\begin{Definition}
[The $g$-vector fan of $\underline{ A }$]
\label{def:cone_C}
Let $\underline{ A }$ be a basic finite dimensional $k$-algebra with indecomposable decomposition $\underline{ A }_{ \underline{ A } } \cong P_1 \oplus \cdots \oplus P_d$.

If $Q \in \Kb( \proj\,\underline{ A } )$ has indecomposable decomposition $Q \cong Q_1 \oplus \cdots \oplus Q_e$, then the {\em chamber of $Q$} is
\[
  C( Q ) = \Big\{\, \sum_{i} a_i [Q_i] \,\Big|\, a_i \in \BR_{ \geqslant 0 } \Big\},
\]
which is a cone in $\K0\!\big( \Kb( \proj\,\underline{ A } ) \big)_{ \BR }$.

The {\em g-vector fan of $\underline{ A }$} is the set of chambers
\[
  \{\, C( Q ) \mid \mbox{$Q \in \2silt \underline{ A }$} \,\}
\]
in $\K0\!\big( \Kb( \proj\,\underline{ A } ) \big)_{ \BR }$.  The {\em positive and negative chambers} are
\begin{align*}
  C_+ & = C( \underline{ A } ) = \Big\{\, \sum_{i} a_i [P_i] \,\Big|\, a_i \in \BR_{ \geqslant 0 } \Big\}, \\
  C_- & = C( \Sigma \underline{ A } ) = \Big\{\, \sum_{i} -a_i [P_i] \,\Big|\, a_i \in \BR_{ \geqslant 0 } \Big\}.
\end{align*}
\end{Definition}

\begin{Definition}
[Simplicial cones and chamber decompositions]
A {\em simplicial cone} in $\BR^d$ is a subset of the form 
\[
\{\, a_1 v_1 + \cdots + a_d v_d \mid a_i \in \BR_{ \geqslant 0 } \} \subset \BR^d,
\]
where $\{ v_1, \ldots, v_d \}$ in $\BR^d$ are linearly independent vectors.

A {\em chamber decomposition} of $\BR^d$ is a finite collection of simplicial cones $S = \{ B_1, \ldots, B_N \}$ whose interiors are pairwise disjoint, such that each codimension one face of a cone $B_i$ is the face of exactly one other $B_j$ and $\cup_i B_i = \BR^d$.
\end{Definition}

For a chamber decomposition $S=\{ B_1, \ldots, B_N \}$ of $\BR^d$, we automatically have $\intt(B_i) \cap \intt (B_j) = \varnothing$ for $i \neq j$, where $\intt X$ denotes the interior of a subset $X \subseteq \BR^d$.
Furthermore 
\begin{equation}
\label{cone-decomp}
\BR^d = \intt(B_1) \sqcup \ldots \sqcup \intt(B_N) \sqcup F
\end{equation}
where $F$ is the union of all codimension one faces of $S$.

\begin{Lemma}
\label{lem:K}
Let $\ell \in \cC$ be a basic maximal rigid object and set $\underline{A} = \cC( \ell,\ell )$.
\begin{enumerate}
\setlength\itemsep{4pt}

  \item  There is a commutative square
\[
\vcenter{
  \xymatrix @+0.5pc {
    \{ \mbox{rigid objects in $\cC$} \} \ar^-{ \siltlittle[\ell]{-} }[r] \ar_{ \index_{ \ell } }[d] & \Kb( \proj\,\underline{ A } ) \ar^{ [-] }[d] \\
    \Ksp( \add_{ \cC }\ell )_{ \BR } \ar_-{ \Phi }^-{\sim}[r] & \K0\!\big( \Kb( \proj\,\underline{ A } ) \big)_{ \BR } \lefteqn{.}
                    }
        }
\]

  \item  If $m \in \mrig \cC$ then $\Phi \big( B^{ \ell }( m ) \big) = C\big( \siltlittle[\ell]{m} \big)$.
   
\end{enumerate}
\end{Lemma}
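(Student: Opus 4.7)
The plan is to deduce part (i) directly from the definitions of $\silt[\ell]{-}$, $\index_\ell$, and $\Phi$, and then obtain part (ii) by combining (i) with additivity of $\silt[\ell]{-}$. For (i), I would take a rigid $m \in \cC$ and fix an Iyama--Yoshino triangle $\ell_1 \to \ell_0 \to m \to \Sigma \ell_1$ with $\ell_i \in \add_\cC \ell$, as supplied by Lemma \ref{lem:ZZ}. By Definition \ref{def:index}, $\index_\ell(m) = [\ell_0] - [\ell_1]$, and by Definition \ref{def:silt} the complex $\silt[\ell]{m}$ is $\cC(\ell,\ell_1) \to \cC(\ell,\ell_0)$ in $\Kb(\proj \underline{A})$. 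Since $\Phi$ is defined by $\Phi([\ell_i]) = [\cC(\ell,\ell_i)]$ on the basis given by the indecomposable summands of $\ell$, and $\cC(\ell,-)$ carries direct sums to direct sums, the same formula holds for every $\ell_i \in \add_\cC \ell$ by additivity. Therefore
\[
  \Phi(\index_\ell(m)) = [\cC(\ell,\ell_0)] - [\cC(\ell,\ell_1)] = [\silt[\ell]{m}],
\]
which yields the commutativity in (i).

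For (ii), I would decompose $m \cong m_1 \oplus \cdots \oplus m_e$ into indecomposable summands of the basic maximal rigid object $m$. Choosing an Iyama--Yoshino triangle for each $m_j$ and taking direct sums produces an Iyama--Yoshino triangle for $m$; applying $\cC(\ell,-)$ summand by summand then yields $\silt[\ell]{m} \cong \bigoplus_j \silt[\ell]{m_j}$ in $\Kb(\proj \underline{A})$. By Lemma \ref{lem:AIR}, the assignment $m \mapsto \siltlittle[\ell]{m}$ preserves the number of indecomposable summands, so each $\silt[\ell]{m_j}$ is indecomposable and the displayed decomposition is the indecomposable decomposition of the $2$-term silting complex $\silt[\ell]{m}$. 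Consequently
\[
  C(\silt[\ell]{m}) = \Big\{\, \sum_j a_j [\silt[\ell]{m_j}] \,\Big|\, a_j \in \BR_{\geqslant 0} \Big\}.
\]
Since $\Phi$ is an $\BR$-linear isomorphism, applying it to \eqref{equ:Bnotation} sends $B^\ell(m)$ to the cone generated by the images $\Phi(\index_\ell(m_j))$. By part (i), each such image equals $[\silt[\ell]{m_j}]$, giving $\Phi(B^\ell(m)) = C(\silt[\ell]{m})$.

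I do not expect any real obstacle; both parts are essentially bookkeeping against the definitions. The only point requiring a moment of care is the additivity of $\silt[\ell]{-}$ on rigid objects used in (ii), which rests on the fact that Iyama--Yoshino triangles for direct sums can be taken as direct sums of Iyama--Yoshino triangles, together with Lemma \ref{lem:AIR} to identify the resulting summands as the indecomposable decomposition of $\silt[\ell]{m}$.
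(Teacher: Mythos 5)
Your proof is correct and follows essentially the same route as the paper's: part (i) is direct from Definitions \ref{def:silt} and \ref{def:index} together with Equation \eqref{equ:Ka}, and part (ii) combines (i) with Lemma \ref{lem:AIR} (to match indecomposable summands) and Definitions \ref{def:cone_B} and \ref{def:cone_C}. The paper's proof is just a terse pointer to these ingredients; you have merely spelled out the additivity of $\siltlittle[\ell]{-}$, which is indeed the only point that requires any verification.
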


\begin{proof}
(i): Combine Definitions \ref{def:silt} and \ref{def:index} and Equation \eqref{equ:Ka}.

(ii): Combine part (i) with Lemma \ref{lem:AIR} and Definitions \ref{def:cone_B} and \ref{def:cone_C}.
\end{proof}

\begin{Proposition}
\label{pro:cones1}
Let $\ell \in \cC$ be a basic maximal rigid object.
\begin{enumerate}
\setlength\itemsep{4pt}

  \item  If $m \in \mrig \cC$ then $B^{ \ell }( m )$ is a simplicial cone.
  
  \item  If $m \neq m'$ are in $\mrig \cC$ then the interiors of $B^{ \ell }( m )$ and $B^{ \ell }( m' )$ are disjoint.
  
  \item  Two objects $m,m^{ \ast } \in \mrig \cC$ differ by mutation in $\cC$ if and only if $B^{ \ell }( m ), B^{ \ell }( m^{ \ast } )$ are {\em neighbours}, that is, intersect in a common face $F$ of codimension one.

  \item  If $m,m^{ \ast } \in \mrig \cC$ differ by mutation in $\cC$, then the face $F$ from (iii) generates a hyperplane $\linearspan_{ \BR }( F )$, which defines two closed half spaces in $\Ksp( \add_{ \cC }\ell )_{ \BR }$.  One half space (called {\em positive}) contains $B^{ \ell }_+$, and the other half space (called {\em negative}) contains $B^{ \ell }_-$.  There is the following dichotomy:
\medskip
\begin{enumerate}
\setlength\itemsep{4pt}

  \item  Either $B^{ \ell }( m )$ is contained in the positive half space, and $\siltlittle[\ell]{m} > \siltlittle[\ell]{ m^{ \ast } }$, 
  
  \item  or $B^{ \ell }( m )$ is contained in the negative half space, and $\siltlittle[\ell]{m} < \siltlittle[\ell]{ m^{ \ast } }$.

\end{enumerate}

  \item  The category $\cC$ is maximal rigid finite if and only if $\{ B^{ \ell }( m ) \mid m \in \mrig \cC \}$ is a chamber decomposition of $\K0( \add_{ \cC } \ell )_{ \BR }$.

%
%
%
%
  
\end{enumerate}
\end{Proposition}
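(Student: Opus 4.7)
The plan is to transport every claim from the $g$-vector fan on the $\Ksp(\add_{\cC}\ell)_{\BR}$ side to the $g$-vector fan of $2$-term silting complexes over $\underline{A} = \cC(\ell,\ell)$ via the linear isomorphism $\Phi$ of Remark~\ref{rmk:K}. By Lemma~\ref{lem:K}(ii), $\Phi$ carries $B^{\ell}(m)$ to $C(\silt[\ell]{m})$, and by Lemma~\ref{lem:AIR} the assignment $m \mapsto \silt[\ell]{m}$ is a bijection $\mrig \cC \to \2silt \underline{A}$ that preserves the number of indecomposable summands and commutes with mutation. Thus each of (i)--(v) becomes a question about $2$-term silting complexes, to which the standard silting technology (as assembled in \cite[sec.\ 6]{DIJ}) applies.

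For (i), let $m \in \mrig \cC$. Since $\silt[\ell]{m}$ is a basic $2$-term silting complex over $\underline{A}$, its indecomposable summands form a $\BZ$-basis of $\K0(\Kb(\proj\,\underline{A}))$ by \cite[thm.\ 2.27]{AI}, so $C(\silt[\ell]{m})$ is a simplicial cone of maximal dimension; applying $\Phi^{-1}$ gives simpliciality of $B^{\ell}(m)$. For (ii), if $m,m' \in \mrig \cC$ are distinct, then $\silt[\ell]{m}\not\cong\silt[\ell]{m'}$, and the interiors of $C(\silt[\ell]{m})$ and $C(\silt[\ell]{m'})$ are disjoint — a known property of the $g$-vector fan of $2$-term silting complexes (the classes $[Q_i]$ of summands of a silting complex determine the complex up to isomorphism). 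Transfer by $\Phi^{-1}$.

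For (iii), Lemma~\ref{lem:AIR} reduces the claim to the corresponding statement about $2$-term silting complexes: single mutation corresponds to sharing a codimension one face, since a single mutation in $\2silt\underline{A}$ swaps exactly one indecomposable summand of $\silt[\ell]{m}$ for a different one, so the two cones agree on the face spanned by the unchanged summands. For (iv), the hyperplane $\linearspan_{\BR}(F)$ separates $\Phi\big(B^{\ell}_+\big) = C_+$ from $\Phi\big(B^{\ell}_-\big) = C_-$, so the positive/negative labelling is well-defined; the dichotomy that $B^{\ell}(m)$ lies on one side or the other follows from the simplicial-fan structure just established in (i)--(iii). Finally, the identification of the positive side with $\silt[\ell]{m} > \silt[\ell]{m^{\ast}}$ is the silting-theoretic content of Proposition~\ref{prop:G6}: that proposition characterises the silting order in terms of the connecting morphism of the mutation triangle, which is exactly what distinguishes the two sides of the shared hyperplane.

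For (v), if $\cC$ is maximal rigid finite then $\mrig \cC$ and hence $\2silt \underline{A}$ are finite; combined with (i)--(iv) and the fact that one can walk from any maximal rigid object to $\ell$ by mutations (so that the chambers cover $\Ksp(\add_{\cC}\ell)_{\BR}$), one obtains a chamber decomposition. Conversely, a chamber decomposition has only finitely many chambers. The main obstacle in this last part is the covering property, that the $g$-vector chambers exhaust the whole space in the finite case; this is the nontrivial ingredient and is precisely the content of the relevant result of \cite[sec.\ 6]{DIJ}, which I would cite rather than reprove.
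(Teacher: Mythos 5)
Your overall strategy matches the paper's exactly: transfer every claim through the linear isomorphism $\Phi$ of Remark~\ref{rmk:K}, using Lemma~\ref{lem:K}(ii) together with the bijection $m \mapsto \silt[\ell]{m}$ of Lemma~\ref{lem:AIR}, so that each part of the proposition becomes the corresponding statement about the $g$-vector fan of $2$-term silting complexes over $\underline{A}$. The paper does nothing more than this transfer and then cites the known silting-side results: \cite[prop.\ 3.5(2)]{ZZ} for (i), \cite[cor.\ 6.7(b)]{DIJ} for (ii) and (iii), \cite[prop.\ 6.10 and thm.\ 6.11]{DIJ} for (iv), and \cite[thm.\ 4.7]{Asai} for (v).

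The place where your proposal goes astray is (iv). You claim that ``the identification of the positive side with $\silt[\ell]{m} > \silt[\ell]{m^{\ast}}$ is the silting-theoretic content of Proposition~\ref{prop:G6}.'' It is not. Proposition~\ref{prop:G6} is a purely homological reformulation of the order relation, namely $\silt[\ell]{m} \geqslant \silt[\ell]{n}$ if and only if $\cC(\ell,\underline{\delta}) = 0$ for the connecting morphism $\underline{\delta}$ of an approximating triangle; nothing in it mentions the split Grothendieck group, indices, chambers, or hyperplanes. Bridging that homological criterion to the geometric statement that $B^{\ell}(m)$ lies in the closed half space containing $B^{\ell}_+$ is precisely the nontrivial content of \cite[prop.\ 6.10 and thm.\ 6.11]{DIJ}, and is what the paper outsources there. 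Your earlier sub-step, that $\linearspan_{\BR}(F)$ separates $C_+$ from $C_-$, is likewise not a formal consequence of the simplicial-fan structure established in (i)--(iii) and also requires the Demonet--Iyama--Jasso wall--chamber machinery. A smaller issue: in (ii) the parenthetical that the classes of the indecomposable summands determine the silting complex up to isomorphism is exactly \cite[cor.\ 6.7]{DIJ} and should be flagged as a cited result rather than asserted as folklore. With those citations in place your argument becomes the paper's.
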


\begin{proof}
Each of the properties (i)--(v) is obtained by combining Lemma \ref{lem:K}(ii) with the corresponding property of the chambers $C\big( \siltlittle[\ell]{m} \big)$, noting that $\siltlittle[\ell]{-}$ gives the bijective correspondence $\mrig \cC \xrightarrow{} \2silt \underline{ A }$ of Lemma \ref{lem:AIR}.  For (i), the corresponding property of the $C\big( \siltlittle[\ell]{m} \big)$ is \cite[prop.\ 3.5(2)]{ZZ}, for (ii) and (iii) it is \cite[cor.\ 6.7(b)]{DIJ}, for (iv) it is \cite[prop.\ 6.10 and thm.\ 6.11]{DIJ}, and for (v) it is \cite[thm.\ 4.7]{Asai}.
\end{proof}

Let us be more systematic about the terminology, in particular the terms introduced in the proposition:

\begin{Definition}
\label{def:fan_terminology}
\begin{itemize}
\setlength\itemsep{4pt}

  \item  Two chambers are {\em neighbours} if they intersect in a codimension one face.

  \item  A {\em gallery of length $p-1$} from $B$ to $B'$ is a sequence of chambers $B = B(1) \xrightarrow{} \cdots \xrightarrow{} B(p) = B'$ such that $B(i)$ and $B(i+1)$ are neighbours for each $i$.  

  \item  A gallery from $B$ to $B'$ is {\em minimal} if there is no gallery from $B$ to $B'$ of strictly smaller length.

  \item  In Proposition \ref{pro:cones1}(iv), the two half spaces are called the {\em positive half space with respect to $\ell$ of the mutation $m \xrightarrow{} m^{ \ast }$,} respectively the {\em negative half space with respect to $\ell$ of the mutation $m \xrightarrow{} m^{ \ast }$.}

\end{itemize}
\end{Definition}

\begin{Observation}
\label{obs:path1}
If $\ell \in \cC$ is a basic maximal rigid object, then by Proposition \ref{pro:cones1}(iii) the following are equivalent.
\begin{enumerate}
\setlength\itemsep{4pt}

  \item  $\mu = m(1) \xrightarrow{} \cdots \xrightarrow{} m(p)$ is a path in $\Gamma_{ \cC }$.
  
  \item  $B^{ \ell }( \mu ) = B^{ \ell }\big( m(1) \big) \xrightarrow{} \cdots \xrightarrow{} B^{ \ell }\big( m(p) \big)$ is a gallery in $G^{ \ell }_{ \cC }$.  

\end{enumerate}
%
%
%
\end{Observation}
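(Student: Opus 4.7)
The plan is to argue this by direct unravelling of the relevant definitions, reducing the equivalence to a single step-by-step comparison. The statement concerns chains of length $p$, but since both ``being a path'' and ``being a gallery'' are local conditions (imposed on each consecutive pair), it suffices to establish the equivalence for a single step $m(i) \to m(i+1)$, and then iterate over $i = 1,\dots,p-1$.

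First I would observe that by Definition \ref{def:Gamma}, the condition that $m(i) \to m(i+1)$ is an arrow of $\Gamma_{\cC}$ is, by definition, the condition that $m(i)$ and $m(i+1)$ differ by a mutation in $\cC$. On the other hand, by Definition \ref{def:fan_terminology}, the condition that $B^{\ell}(m(i)) \to B^{\ell}(m(i+1))$ is a step in a gallery is precisely the condition that $B^{\ell}(m(i))$ and $B^{\ell}(m(i+1))$ are neighbours, i.e.\ they intersect in a codimension one face.

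The bridge between these two is exactly Proposition \ref{pro:cones1}(iii), which asserts that for $m, m^{\ast} \in \mrig \cC$, differing by mutation is equivalent to $B^{\ell}(m)$ and $B^{\ell}(m^{\ast})$ being neighbours. Applying this pointwise along the sequence gives the equivalence of (i) and (ii), and no further argument is required. There is no real obstacle here; the observation is essentially a restatement of Proposition \ref{pro:cones1}(iii) in the language of paths and galleries, and its role in the paper is to record this dictionary for use in the subsequent geometric analysis of green paths.
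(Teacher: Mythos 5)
Your proposal is correct and matches the paper's intent exactly: the Observation is presented in the paper as an immediate consequence of Proposition \ref{pro:cones1}(iii), and your step-by-step unravelling via Definitions \ref{def:Gamma} and \ref{def:fan_terminology} is precisely the argument the authors have in mind.
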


\section{Geometrical properties of green paths}
\label{sec:green_paths_geometry}

This section does not use the Frobenius model $\cE$, but works under Setup \ref{set:blanket}(iv) alone (in which case $\mrig \cC$ should just be a set containing one object from each isomorphism class of basic maximal rigid objects of $\cC$).

This section investigates the geometrical properties of green paths in terms of the $g$-vector fan. Recall that a hyperplane arrangement in $\BR^d$ is a finite collection 
of hyperplanes in $\BR^d$. A hyperplane arrangement $\cH$ in $\BR^d$ is called simplicial if it is central (every hyperplane of $\cH$ contains $0$), essential (the intersection of all hyperplanes of $\cH$ is $\{0\}$), and each cone defined by $\cH$ is simplicial.

\begin{Lemma}
\label{lem:G8}
The following are equivalent.
\begin{enumerate}
\setlength\itemsep{4pt}

  \item  $\mu = m(1) \xrightarrow{} \cdots \xrightarrow{} m(p)$ is a green path in $\Gamma_{ \cC }$.

  \item  $B^{ m(1) }\big( \mu \big) = B^{ m(1) }\big( m(1) \big) \xrightarrow{} \cdots \xrightarrow{} B^{ m(1) }\big( m(p) \big)$ is a gallery in $G^{ m(1) }_{ \cC }$ such that each $B^{ m(1) }\big( m(i) \big)$ is contained in the positive half space with respect to $m(1)$ of the mutation $m(i) \xrightarrow{} m(i+1)$.   

\end{enumerate}
\end{Lemma}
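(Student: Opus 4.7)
The plan is to reduce the lemma to a direct application of two results already established: Observation 7.7 (paths in $\Gamma_{\cC}$ correspond to galleries in the $g$-vector fan) and the dichotomy in Proposition 7.5(iv) (relating the positive/negative half-space of a mutation to the silting order on the neighbouring chambers).

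First I would fix $\ell = m(1)$ throughout, which is legitimate since $m(1)$ is a basic maximal rigid object and everything on the right-hand side is expressed relative to $m(1)$. Observation 7.7 then gives the equivalence between ``$\mu$ is a path in $\Gamma_{\cC}$'' and ``$B^{m(1)}(\mu)$ is a gallery in $G^{m(1)}_{\cC}$'', independently of the green-ness condition. So the lemma really reduces to checking that, under this bijection, the extra condition in (i) (strictly decreasing silting order $\silt[m(1)]{m(1)} > \silt[m(1)]{m(2)} > \cdots > \silt[m(1)]{m(p)}$) matches the extra condition in (ii) (each $B^{m(1)}(m(i))$ is in the positive half-space of the mutation $m(i) \xrightarrow{} m(i+1)$).

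Next, for each index $i \in \{1, \ldots, p-1\}$, I would apply Proposition 7.5(iv) with $\ell = m(1)$ and with $m = m(i)$, $m^{\ast} = m(i+1)$. This produces precisely the dichotomy
\[
\silt[m(1)]{m(i)} > \silt[m(1)]{m(i+1)} \iff B^{m(1)}(m(i)) \subseteq \text{positive half-space of } m(i) \to m(i+1),
\]
and the opposite strict inequality corresponds to $B^{m(1)}(m(i))$ lying in the negative half-space. Because the dichotomy is exhaustive and exclusive, the condition in (ii) holds for all $i$ if and only if the strict inequality $\silt[m(1)]{m(i)} > \silt[m(1)]{m(i+1)}$ holds for all $i$, which by Definition 4.4 is exactly the green path condition in (i).

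There is no real obstacle: the proof is essentially bookkeeping, combining Observation 7.7 with an index-by-index application of Proposition 7.5(iv). The only mild subtlety worth flagging is that the half-spaces and the silting order appearing in Proposition 7.5(iv) are all taken with respect to the \emph{same} basic maximal rigid object $\ell$, and here that object must be $m(1)$ throughout the entire path (not the current $m(i)$), matching how ``green path'' was defined in Definition 4.4.
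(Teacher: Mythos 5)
Your proof is correct and follows the paper's own argument exactly: combine Observation \ref{obs:path1} (paths correspond to galleries) with an index-by-index application of Proposition \ref{pro:cones1}(iv) (the half-space/silting-order dichotomy), all taken with respect to the fixed object $m(1)$, and match the result against Definition \ref{def:green_path}. The subtlety you flag --- keeping $\ell = m(1)$ fixed rather than letting it vary --- is indeed the one thing worth noting, and you have it right.
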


\begin{proof}
By Proposition \ref{pro:cones1}(iv), condition (ii) is equivalent to $B^{ m(1) }( \mu )$ being a gallery in $G^{ m(1) }_{ \cC }$ such that $\silt[m(1)]{m(i)} > \silt[m(1)]{m(i+1)}$ for each $i$.  By Definition \ref{def:green_path} and Observation \ref{obs:path1} this is equivalent to (i).
\end{proof}

\begin{Definition}
[Condition {\hyper}]
\label{def:condition_hyper}
We say that condition \hyper\ is satisfied if,
for each basic maximal rigid object $\ell \in \cC$, the cones of  the $g$-vector fan $G^{ \ell }_{ \cC }$ are those of a hyperplane arrangement $\cH^{ \ell }$.  
\end{Definition}

Every 2-Calabi--Yau category $\cC$ that satisfies condition \hyper\ is automatically  maximal rigid finite (i.e.\ $|\mrig \cC|< \infty$). 

Moreover, if $\cC$ satisfies \hyper, then the corresponding hyperplane arrangements $\cH^{ \ell }$ are necessarily simplicial. 
Indeed, in this case the hyperplanes of $\cH^{ \ell }$ are the hyperplanes determined by all faces of the cones $B^{\ell}(m)$, so
$\cH^{ \ell }$  is central. Obviously $\cH^{ \ell }$ is essential if the number $d$ of indecomposable summands of $\ell$ equals $1$. 
Consider the case $d>1$. If $\cH^{ \ell }$ is not essential, then there exists an indecomposable rigid object $n \in \cC$ such that 
every $m \in \mrig \cC$ has a direct summand isomorphic to $n$. This is impossible since we can mutate $m$ at that summand 
to get rid of it, because $d >1$. 

\begin{Lemma}
\label{lem:H5}
If condition {\rm \hyper} is satisfied, then the following are equivalent.
\begin{enumerate}
\setlength\itemsep{4pt}

  \item  $\mu = m(1) \xrightarrow{} \cdots \xrightarrow{} m(p)$ is a minimal path in $\Gamma_{ \cC }$.
  
  \item  $\mu = m(1) \xrightarrow{} \cdots \xrightarrow{} m(p)$ is a green path in $\Gamma_{ \cC }$.
  
  \item  $B^{ m(1) }( \mu ) = B^{ m(1) }\big( m(1) \big) \xrightarrow{} \cdots \xrightarrow{} B^{ m(1) }\big( m(p) \big)$ is a minimal gallery in $G^{ m(1) }_{ \cC }$.

  \item  If $\ell \in \cC$ is a basic maximal rigid object, then $B^{ \ell }( \mu ) = B^{ \ell }\big( m(1) \big) \xrightarrow{} \cdots \xrightarrow{} B^{ \ell }\big( m(p) \big)$ is a minimal gallery in $G^{ \ell }_{ \cC }$.

\end{enumerate}
\end{Lemma}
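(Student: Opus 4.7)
The plan is to split the four-way equivalence into two halves: the chain $(i) \Leftrightarrow (iii) \Leftrightarrow (iv)$ is purely formal and does not actually use \hyper, whereas the bridge $(ii) \Leftrightarrow (iii)$ is where the hyperplane arrangement hypothesis does the work.

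First I would dispose of $(i) \Leftrightarrow (iii) \Leftrightarrow (iv)$ using Observation \ref{obs:path1}. For any basic maximal rigid object $\ell$, the assignment $\mu \mapsto B^{\ell}(\mu)$ is a length-preserving bijection between paths in $\Gamma_{\cC}$ and galleries in $G^{\ell}_{\cC}$. Since minimality is purely a length condition, $\mu$ is a minimal path iff $B^{\ell}(\mu)$ is a minimal gallery, for \emph{any} choice of $\ell$; this gives all three equivalences at once.

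Next I would prove $(ii) \Leftrightarrow (iii)$, working inside the simplicial central essential hyperplane arrangement $\cH^{m(1)}$ supplied by \hyper. Two observations set up the geometry. First, $B^{m(1)}(m(1)) = B^{m(1)}_{+}$, so the gallery starts at the positive chamber. Second, by Proposition \ref{pro:cones1}(iv) and Definition \ref{def:fan_terminology}, the ``positive half-space'' associated to a mutation $m(i) \to m(i+1)$ is precisely the closed half-space of the crossed hyperplane of $\cH^{m(1)}$ that contains $B^{m(1)}_{+}$; hence by Lemma \ref{lem:G8}, condition $(ii)$ translates to the geometric statement that every crossing of the gallery goes from the $B^{m(1)}_{+}$-side to the opposite side of its hyperplane. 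The key combinatorial input is then the standard parity/separation lemma for central hyperplane arrangements: every gallery crosses each separating hyperplane an odd number of times and each non-separating one an even number of times, so its length is bounded below by the number of separating hyperplanes, with equality iff no hyperplane is crossed twice.

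For $(ii) \Rightarrow (iii)$, a hypothetical second crossing of some hyperplane $H$ would have to go from the $B^{m(1)}_{-}$-side back to the $B^{m(1)}_{+}$-side, contradicting the green condition at that step; so no hyperplane is crossed twice, and the gallery attains the lower bound, i.e.\ is minimal. For $(iii) \Rightarrow (ii)$, a minimal gallery crosses each separating hyperplane exactly once and no non-separating hyperplane; since $B^{m(1)}_{+}$ sits on the positive side of every hyperplane of $\cH^{m(1)}$ and no crossing is ever reversed, the chamber immediately preceding the $i$-th crossing still lies on the positive side of the $i$-th hyperplane, which is the green condition via Lemma \ref{lem:G8}. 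The main obstacle is simply the clean invocation of the parity/monotonicity argument for hyperplane arrangements---not previously used in the paper---combined with the observation that the initial chamber is forced to be $B^{m(1)}_{+}$, which is the structural fact that makes ``green'' coincide with ``minimal starting from the positive chamber''.
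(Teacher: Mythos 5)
Your proof is correct and follows essentially the same route as the paper: you reduce to the length-preserving bijection of Observation~\ref{obs:path1} for $(i)\Leftrightarrow(iii)\Leftrightarrow(iv)$, translate the green condition into the statement that each crossed hyperplane $H_i$ is always approached from the $B^{m(1)}_+$-side, observe this is equivalent to the $H_i$ being pairwise distinct, and invoke the standard characterization of minimal galleries in a hyperplane arrangement. The ``parity/separation lemma'' you appeal to is precisely the content of the citation \cite[lem.~4.2]{Paris2} that the paper uses, so there is no gap; your side remark that $(i)\Leftrightarrow(iii)\Leftrightarrow(iv)$ does not require {\hyper} is also correct.
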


\begin{proof}
(iii)$\Leftrightarrow$(i)$\Leftrightarrow$(iv): See Definitions \ref{def:Gamma} and \ref{def:fan_terminology} and Observation \ref{obs:path1}.

(ii)$\Leftrightarrow$(iii):  Let $\mu = m(1) \xrightarrow{} \cdots \xrightarrow{} m(p)$ be a path in $\Gamma_{ \cC }$ and consider the gallery $B^{ m(1) }( \mu ) = B^{ m(1) }\big( m(1) \big) \xrightarrow{} \cdots \xrightarrow{} B^{ m(1) }\big( m(p) \big)$, see Observation \ref{obs:path1}.  Combining Proposition \ref{pro:cones1}(iii)+(iv) with Definition \ref{def:condition_hyper} shows:
\begin{itemize}
\setlength\itemsep{4pt}

  \item  $B^{ m(1) }\big( m(i) \big)$ and $B^{ m(1) }\big( m(i+1) \big)$ intersect in a codimension one face, which is contained in a hyperplane $H_i \in \cH^{ m(1) }$.

  \item  $H_i$ separates the positive and the negative half spaces with respect to $m(1)$ of the mutation $m(i) \xrightarrow{} m(i+1)$. 

\end{itemize}
Hence there following statements are equivalent:
\begingroup
\renewcommand{\labelenumi}{(\alph{enumi})}
\begin{enumerate}
\setlength\itemsep{4pt}

  \item  Each $B^{ m(1) }\big( m(i) \big)$ is contained in the positive half space with respect to $m(1)$ of the mutation $m(i) \xrightarrow{} m(i+1)$.

  \item  Each $B^{ m(1) }\big( m(i) \big)$ is on the same side of $H_i$ as $B^{ m(1) }_+$.

\end{enumerate}
\endgroup
Condition (a) is equivalent to (ii) by Lemma \ref{lem:G8}.  On the other hand, the first chamber in the gallery $B^{ m(1) }( \mu )$ is $B^{ m(1) }\big( m(1) \big) = B^{ m(1) }_+$.  Hence condition (b) is equivalent to the condition that the hyperplanes $H_1, \ldots, H_{ p-1 }$ are pairwise different.  By \cite[lem.\ 4.2]{Paris2} this is equivalent to (iii).
\end{proof}

\begin{Theorem}
\label{thm:green-Delign}
If condition {\rm \hyper} is satisfied and $\ell \in \cC$ is a basic maximal rigid object, then the green groupoid $\cG_{ \cC }$ is isomorphic to the Deligne groupoid 
(see \cite{D}) of the hyperplane arrangement $\cH^{ \ell }$ from Definition \ref{def:condition_hyper}.
\end{Theorem}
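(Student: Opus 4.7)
The plan is to identify $\cG_{\cC}$ and the Deligne groupoid of $\cH^\ell$ as quotients of one and the same free groupoid. For this I will work with the combinatorial presentation of the Deligne groupoid available for a simplicial arrangement: it is the quotient of the free groupoid on the chamber adjacency quiver (two arrows per codimension-one adjacency, one in each direction) by the relations equating any two minimal galleries with the same source and target. Under condition \hyper\ the arrangement $\cH^\ell$ is simplicial and $\cC$ is maximal rigid finite, as already observed after Definition \ref{def:condition_hyper}, so this presentation applies here.

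Step one is to match the generating quivers. By Proposition \ref{pro:cones1}(v) the chambers of $G^\ell_\cC$ are precisely the cones $B^\ell(m)$ for $m \in \mrig \cC$, and these coincide with the chambers of $\cH^\ell$ by condition \hyper. The assignment $m \mapsto B^\ell(m)$ is therefore a bijection of vertex sets, and by Proposition \ref{pro:cones1}(iii) it sends pairs $\{m,m^{\ast}\}$ of mutation-related objects to pairs of neighbouring chambers. Hence this bijection extends to an isomorphism from the quiver $\Gamma_{\cC}$ onto the adjacency quiver of $\cH^\ell$, which in turn induces an isomorphism $\cF_\cC \xrightarrow{\sim} \cF^\ell$ of their free groupoids, where $\cF^\ell$ denotes the free groupoid on the adjacency quiver of $\cH^\ell$.

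Step two is to match the defining relations. By Lemma \ref{lem:H5} a path $\mu$ in $\Gamma_{\cC}$ is green if and only if the corresponding gallery $B^\ell(\mu)$ in $\cH^\ell$ is minimal. Consequently the equivalence relation on $\cF_\cC$ used in Definition \ref{def:green_groupoid} to form $\cG_\cC$ translates under the isomorphism of step one to exactly the equivalence relation on $\cF^\ell$ that identifies minimal galleries sharing endpoints. Passing to quotients on both sides produces the sought isomorphism from $\cG_\cC$ onto the Deligne groupoid of $\cH^\ell$. The main obstacle is less the combinatorial matching above than the invocation of the combinatorial presentation of the Deligne groupoid in the first place: Deligne's original definition is as the fundamental groupoid of the complement of the complexified arrangement, and reducing it to this presentation for simplicial arrangements rests on Deligne's $K(\pi,1)$ theorem together with the Salvetti model, which I would cite (in the form used by Paris and already invoked in the proof of Lemma \ref{lem:H5}) rather than reprove.
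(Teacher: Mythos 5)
Your proof is correct and takes essentially the same route as the paper: both translate via $m \mapsto B^\ell(m)$, both invoke Observation \ref{obs:path1} and Lemma \ref{lem:H5} to match paths with galleries and green paths with minimal galleries, and both defer the combinatorial presentation of the Deligne groupoid to Paris. One small historical quibble: the Deligne groupoid of \cite{D} is in fact defined combinatorially (via positive minimal galleries for a simplicial arrangement), and the $K(\pi,1)$ theorem is the statement that this combinatorial object computes the fundamental group of the complexified complement — so you do not actually need to pass through the topological definition; the presentation you use is the definition, which is how the paper (and Paris) treat it.
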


The idea to relate groupoids of derived autoequivalences to Deligne groupoids first appeared in \cite{DW}.

\begin{proof}[Proof of Theorem \ref{thm:green-Delign}]
Recall from Definitions \ref{def:Gamma} and \ref{def:cone_B} the quiver $\Gamma_{ \cC }$ and the $g$-vector fan $G_{ \cC }^{ \ell }$.  Applying $B^{ \ell }( - )$ as defined in Equation \eqref{equ:Bnotation} has the following effects:
\begin{itemize}
\setlength\itemsep{4pt}

  \item  Vertices $m$ in $\Gamma_{ \cC }$ are mapped bijectively to chambers $B^{ \ell }( m )$  in $G_{ \cC }^{ \ell }$.
  
  \item  (Green) paths $\mu$ in $\Gamma_{ \cC }$ are mapped bijectively to (minimal) galleries $B^{ \ell }( \mu )$ in $G^{ \ell }_{ \cC }$, see Observation \ref{obs:path1} and Lemma \ref{lem:H5}.

\end{itemize}
Hence, applying $B^{ \ell }( - )$ translates $\Gamma_{ \cC }$ and $\cG_{ \cC }$, as defined in Section \ref{sec:green}, to $\Gamma( \cH^{ \ell } )$ and the Deligne groupoid of $\cH^{ \ell }$. We refer the reader to \cite[sec.\ 2]{Paris1} for  a detailed exposition.
\end{proof}

\begin{Lemma}
\label{lem:G12}
Assume that $\cC$ is maximal rigid finite.  Given $m,m' \in \mrig \cC$, there is a green path $m = m(1) \xrightarrow{} \cdots \xrightarrow{} m(p) = m'$ in $\Gamma_{ \cC }$.  
\end{Lemma}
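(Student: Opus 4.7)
The plan is to construct a green path from $m$ to $m'$ geometrically, using the $g$-vector fan of $\cC$ with respect to $\ell = m$. By Proposition~\ref{pro:cones1}(v), the maximal rigid finiteness of $\cC$ gives that $G^m_\cC = \{B^m(n) \mid n \in \mrig\cC\}$ is a chamber decomposition of $\Ksp(\add_\cC m)_\BR$, with distinguished chamber $B^m(m) = B^m_+$. Combining Observation~\ref{obs:path1} with Lemma~\ref{lem:G8}, it suffices to produce a gallery $B^m(m) = B^m(m(1)) \to \cdots \to B^m(m(p)) = B^m(m')$ in this chamber decomposition such that, for every $i$, the chamber $B^m(m(i))$ lies in the positive closed half space with respect to $m$ of the mutation $m(i) \to m(i+1)$.

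To produce such a gallery, I would choose generic interior points $p_0 \in \intt B^m(m)$ and $p_1 \in \intt B^m(m')$ and take the straight line segment $\gamma$ from $p_0$ to $p_1$. By a standard dimension count, possible because there are only finitely many chambers and each has only finitely many faces, the points $p_0$ and $p_1$ can be chosen so that $\gamma$ avoids every codimension-$\geq 2$ face of the decomposition and is not contained in any hyperplane spanned by a codimension-one face. Then $\gamma$ traverses a finite sequence of chambers, crossing at each step through the relative interior of a codimension-one face $F_i$ between $B^m(m(i))$ and $B^m(m(i+1))$; this yields a gallery and hence, by Observation~\ref{obs:path1}, a path $\mu = m(1) \to \cdots \to m(p)$ in $\Gamma_\cC$.

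To verify that $\mu$ is green, set $H_i = \linearspan_\BR(F_i)$. A line segment meets any hyperplane in at most one point (unless contained in it), so $\gamma$ crosses $H_i$ exactly once, namely at its passage from $B^m(m(i))$ to $B^m(m(i+1))$ through $F_i$. Since $p_0$ is an interior point of $B^m(m) = B^m_+$, which is contained in the closed positive half space of $H_i$ by Proposition~\ref{pro:cones1}(iv), and since $B^m_+$ is a simplicial cone that cannot straddle $H_i$, $p_0$ lies strictly in the open positive half space. Hence the initial subsegment of $\gamma$ up to the $i$-th crossing stays strictly on the positive side, and a terminal portion of this subsegment lies in $\intt B^m(m(i))$. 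Since $B^m(m(i))$ is a simplicial cone with facet $F_i \subset H_i$, its interior is entirely on one side of $H_i$; this side must therefore be the positive one, so $B^m(m(i))$ is contained in the positive half space as required. Applying Lemma~\ref{lem:G8}, $\mu$ is green.

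The main obstacle is securing the genericity of $\gamma$ so that the resulting sequence of chamber crossings is a bona fide gallery in which each hyperplane $H_i$ is met only once; after that, the hyperplane-by-hyperplane analysis works uniformly and relies on nothing more than $\gamma$ being a straight segment. In particular, condition \hyper\ is not needed, because the $H_i$ are handled in isolation rather than as members of a global arrangement; this is precisely what makes the chamber decomposition produced by Proposition~\ref{pro:cones1}(v) sufficient to conclude.
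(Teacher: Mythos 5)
Your proof is correct, but it takes a genuinely different route from the paper's. The paper argues purely inside silting theory: starting from $m(1)=m$ it inductively appends one-step mutations, at each stage invoking \cite[prop.\ 2.36]{AI} to produce a left mutation $P$ of $\silt[m(1)]{m(r)}$ sandwiched between $\silt[m(1)]{m(r)}$ and $\siltlittle[m(1)]{m'}$ in the silting order, then uses Lemma~\ref{lem:AIR} to transport $P$ back to a mutation $m(r+1)$; maximal rigid finiteness enters only to guarantee termination of the descent. Your argument is instead geometric: you use Proposition~\ref{pro:cones1}(v) (ultimately Asai's wall--chamber theorem) to get the chamber decomposition, push a generic straight segment from $\intt B^m_+$ to $\intt B^m(m')$, and verify greenness hyperplane by hyperplane via Proposition~\ref{pro:cones1}(iv) and Lemma~\ref{lem:G8}, exploiting that a line segment meets any hyperplane at most once and that a simplicial cone with a facet in a hyperplane lies entirely on one side of it. Both proofs are legitimate; the paper's is self-contained in $\2silt$-combinatorics and avoids the chamber-decomposition machinery, while yours is more transparent geometrically and, as you correctly observe, isolates the hyperplanes one at a time so that condition~\hyper\ (which would be needed, e.g., for the full equivalence of green paths and minimal galleries in Lemma~\ref{lem:H5}) plays no role. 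One small presentational point: you should perhaps note explicitly that the hyperplanes $H_i$ you encounter are pairwise distinct, which is an automatic consequence of the segment meeting each at most once; this is not needed for the argument but matches the minimal-gallery phrasing elsewhere in the paper.
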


\begin{proof}
Set $m(1) = m$ and $\underline{ A } = \cC\big( m(1),m(1) \big)$, and consider
\begin{equation}
\label{equ:G12a}
  \mbox{a green path $m(1) \xrightarrow{} \cdots \xrightarrow{} m(r)$ such that $\silt[m(1)]{m(r)} \geqslant \silt[m(1)]{m'}$.}
\end{equation}
This exists for $r=1$ because $\silt[m(1)]{m(1)} = \underline{ A }$ is maximal in $\2silt \underline{ A }$ whence $\silt[m(1)]{m(1)} \geqslant \silt[m(1)]{m'}$.  Suppose that \eqref{equ:G12a} has been defined for some $r \geqslant 1$.

If $m(r) = m'$ then \eqref{equ:G12a} gives the path claimed in the lemma with $p = r$.  

If $m(r) \neq m'$, then \eqref{equ:G12a} can be extended by one step: Lemma \ref{lem:AIR} implies $\silt[m(1)]{m(r)} \neq \siltlittle[m(1)]{m'}$, so we have $\silt[m(1)]{m(r)} > \siltlittle[m(1)]{m'}$.  By \cite[prop.\ 2.36]{AI} there is a left mutation $P$ of $\silt[m(1)]{m(r)}$ such that $\silt[m(1)]{m(r)} > P \geqslant \siltlittle[m(1)]{m'}$.  Since $\silt[m(1)]{m(r)}$ and $\silt[m(1)]{m'}$ are $2$-term silting complexes, so is $P$.  By Lemma \ref{lem:AIR} there is a mutation $m(r+1) \in \mrig \cC$  of $m(r)$ in $\cC$ such that $\silt[m(1)]{m(r)} > \silt[m(1)]{m(r+1)} \geqslant \silt[m(1)]{m'}$, extending \eqref{equ:G12a} by one step.

This process must terminate with $m(r) = m'$ for some $r$, for otherwise it would produce infinitely many pairwise non-isomorphic objects $\silt[m(1)]{m(i)} \in \2silt \underline{A}$, hence by Lemma \ref{lem:AIR} infinitely many pairwise non-isomorphic objects $m(i) \in \mrig \cC$, contradicting that $\cC$ is maximal rigid finite.  
\end{proof}

\begin{Definition}
[$\greenstart$ and $\greenend$]
\label{def:Alpha_Omega}
If $\mu = m(1) \xrightarrow{} \cdots \xrightarrow{} m(p)$ is a green path in $\Gamma_{ \cC }$, then we set
\begin{align*}
  \greenstart( \mu ) & = \{\, n \in \mrig \cC \mid \mbox{there is a green path $m(1) \xrightarrow{} n \xrightarrow{} \cdots \xrightarrow{} m(p)$ in $\Gamma_{ \cC }$} \,\}, \\[0.5mm]
  \greenend( \mu ) & = \{\, n \in \mrig \cC \mid \mbox{there is a green path $m(1) \xrightarrow{} \cdots \xrightarrow{} n \xrightarrow {} m(p)$ in $\Gamma_{ \cC }$} \,\}.
\end{align*}
Note that the green paths appearing in the displayed formulae are only required to coincide with $\mu$ at the first and last vertices.  The elements of $\greenstart( \mu )$ are mutations of $m(1)$ and the elements of $\greenend( \mu )$ are mutations of $m(p)$.
\end{Definition}

\begin{Lemma}
\label{lem:G13}
Assume that $\cC$ is maximal rigid finite.  Let $\mu = m(1) \xrightarrow{} \cdots \xrightarrow{} m(p)$ be a green path in $\Gamma_{ \cC }$, and let $n \in \mrig \cC$ be a mutation of $m(p)$ in $\cC$.  Precisely one of the following is true.
\begin{enumerate}
\setlength\itemsep{4pt}

  \item  $\overbrace{ m(1) \xrightarrow{} \cdots \xrightarrow{} m(p) }^{ \mu } \xrightarrow{} n$ is a green path in $\Gamma_{ \cC }$.

  \item  $n \in \greenend( \mu )$.

\end{enumerate}
\end{Lemma}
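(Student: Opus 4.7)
The plan is to use the dichotomy from Proposition \ref{pro:cones1}(iv) to split into two cases, and in each case produce exactly one of (i), (ii). Throughout set $\ell = m(1)$, $P = \silt[\ell]{m(p)}$, $N = \silt[\ell]{n}$. Since $n$ is a mutation of $m(p)$, Lemma \ref{lem:AIR} tells us that $N$ is a mutation of $P$ in $\2silt \cC(\ell,\ell)$, and in particular $N \neq P$.

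By Proposition \ref{pro:cones1}(iv) applied to the mutation $m(p) \to n$ (with respect to $\ell$), exactly one of the following holds:
\begin{enumerate}
\item[(A)] $P > N$, or
\item[(B)] $P < N$.
\end{enumerate}
In case (A), the silting sequence of $\mu$ extended by one step is still strictly decreasing, so $m(1) \xrightarrow{} \cdots \xrightarrow{} m(p) \xrightarrow{} n$ is a green path, establishing (i).

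In case (B), apply Lemma \ref{lem:G12} (using that $\cC$ is maximal rigid finite) to obtain a green path $\nu = m(1) \xrightarrow{} \cdots \xrightarrow{} n$. Its silting sequence ends at $N$. Concatenating with the arrow $n \to m(p)$ (which exists in $\Gamma_{\cC}$ since $n$ and $m(p)$ differ by mutation) extends the silting sequence by the value $P$, and since $N > P$ by (B), the resulting sequence is still strictly decreasing. Hence the concatenation is a green path, so $n \in \greenend(\mu)$, establishing (ii).

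Finally, (i) and (ii) are mutually exclusive: (i) forces $P > N$, whereas (ii) supplies a green path whose penultimate vertex is $n$ and whose last vertex is $m(p)$, which by definition of green path forces $N > P$. These are incompatible by the dichotomy of Proposition \ref{pro:cones1}(iv), so exactly one of (i), (ii) holds. The only real content is ensuring that in case (B) one can \emph{reach} $n$ by a green path from $m(1)$; this is precisely where the maximal rigid finite hypothesis enters, via Lemma \ref{lem:G12}.
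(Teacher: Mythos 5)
Your proof is correct and takes essentially the same route as the paper's: split on the dichotomy $\silt[\ell]{m(p)} > \silt[\ell]{n}$ versus $\silt[\ell]{n} > \silt[\ell]{m(p)}$ (which the paper uses implicitly, you cite via Proposition \ref{pro:cones1}(iv)), deduce (i) directly in the first case, and in the second case invoke Lemma \ref{lem:G12} (where maximal rigid finiteness enters) to build a green path $m(1) \to \cdots \to n$ and append the arrow $n \to m(p)$. The only difference is presentational — you spell out the exclusivity and the citation for the dichotomy more explicitly than the paper does.
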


\begin{proof}
If $\silt[m(1)]{m(p)} > \siltlittle[m(1)]{n}$ then (i) is true and (ii) is false.

If $\siltlittle[m(1)]{n} > \silt[m(1)]{m(p)}$ then (i) is false.  On the other hand, there is a green path $m(1) \xrightarrow{} \cdots \xrightarrow{} n$ by Lemma \ref{lem:G12}, and it can be extended to a green path $m(1) \xrightarrow{} \cdots \xrightarrow{} n \xrightarrow{} m(p)$, so (ii) is true.
\end{proof}

\begin{Lemma}
\label{lem:backwards}
If condition {\rm \hyper} is satisfied, then $\mu = m(1) \xrightarrow{} \cdots \xrightarrow{} m(p)$ is a green path if and only if the {\em opposite path} $\mu^{ \opp } = m(p) \xrightarrow{} \cdots \xrightarrow{} m(1)$ is a green path.
\end{Lemma}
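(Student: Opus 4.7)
The plan is to reduce the statement to the symmetry of minimality via Lemma \ref{lem:H5}. That lemma says that under condition [hyper], a path in $\Gamma_{\cC}$ is green if and only if it is a minimal path. Since being a minimal path is manifestly a symmetric condition on the endpoints, this should yield the claim almost immediately.

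More precisely, the first step is to observe that by Definition \ref{def:Gamma}, the quiver $\Gamma_{\cC}$ has arrows in pairs, one in each direction, whenever two basic maximal rigid objects differ by a mutation. Consequently, for any path $\mu = m(1) \to m(2) \to \cdots \to m(p)$ in $\Gamma_{\cC}$, the formal reversal $\mu^{\opp} = m(p) \to m(p-1) \to \cdots \to m(1)$ is again a valid path, of the same length $p-1$. This gives a length-preserving bijection between paths from $m(1)$ to $m(p)$ and paths from $m(p)$ to $m(1)$.

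The second step is to conclude that minimality is symmetric: if $\mu$ is a minimal path from $m(1)$ to $m(p)$ and $\nu$ were a strictly shorter path from $m(p)$ to $m(1)$, then $\nu^{\opp}$ would be a strictly shorter path from $m(1)$ to $m(p)$, contradicting minimality of $\mu$. Thus $\mu$ is minimal if and only if $\mu^{\opp}$ is minimal.

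Combining these, the proof finishes by invoking the equivalence (i)$\Leftrightarrow$(ii) of Lemma \ref{lem:H5}, applied to both $\mu$ and $\mu^{\opp}$: $\mu$ is green iff $\mu$ is minimal iff $\mu^{\opp}$ is minimal iff $\mu^{\opp}$ is green. I do not expect any real obstacle here; the whole statement is essentially a corollary of Lemma \ref{lem:H5}, since the nontrivial work (relating green-ness to the intrinsic combinatorial notion of minimality via the hyperplane arrangement) has already been done there. The only point worth emphasizing is that without condition [hyper], green-ness is sensitive to the basepoint $m(1)$ through the silting order, and there is no reason for symmetry to hold; the use of [hyper] is essential to the statement.
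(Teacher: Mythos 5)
Your proof is correct and essentially matches the paper's: the paper invokes the equivalence (ii)$\Leftrightarrow$(iii) of Lemma \ref{lem:H5} (green path $\Leftrightarrow$ minimal gallery) and then uses the obvious symmetry of minimal galleries, whereas you invoke (i)$\Leftrightarrow$(ii) (green path $\Leftrightarrow$ minimal path) and use the obvious symmetry of minimal paths in $\Gamma_{\cC}$. Both arguments reduce to the same lemma and the same trivial symmetry of minimality, so this is the same proof in all but phrasing.
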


\begin{proof}
Let $\ell \in \cC$ be a basic maximal rigid object.  It is clear that $B^{ \ell }( \mu )$ is a minimal gallery if and only if so is $B^{ \ell }( \mu^{ \opp } )$, so the lemma follows from Lemma \ref{lem:H5}.
\end{proof}

\begin{Lemma}
\label{lem:H9}
Assume that condition {\rm \hyper} is satisfied.  Let $\mu = m(1) \xrightarrow{} \cdots \xrightarrow{} m(p)$ be a green path in $\Gamma_{ \cC }$, and let $n \in \mrig \cC$ be a mutation of $m(1)$ in $\cC$.  Precisely one of the following is true.
\begin{enumerate}
\setlength\itemsep{4pt}

  \item  $n \xrightarrow{} \overbrace{ m(1) \xrightarrow{} \cdots \xrightarrow{} m(p) }^{ \mu }$ is a green path in $\Gamma_{ \cC }$.

  \item  $n \in \greenstart( \mu )$.

\end{enumerate}
\end{Lemma}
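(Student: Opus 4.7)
The plan is to reduce Lemma \ref{lem:H9} to the already-established Lemma \ref{lem:G13} by running everything backwards along the path, using Lemma \ref{lem:backwards} (which requires condition \hyper) to pass freely between a green path and its reverse.

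First I would note that condition \hyper\ guarantees that $\cC$ is maximal rigid finite (as remarked after Definition \ref{def:condition_hyper}), so Lemma \ref{lem:G13} is applicable here. Given the green path $\mu = m(1) \to \cdots \to m(p)$, Lemma \ref{lem:backwards} produces the opposite green path $\mu^{\opp} = m(p) \to \cdots \to m(1)$. The object $n$ is by hypothesis a mutation of $m(1)$, i.e.\ of the \emph{terminal} vertex of $\mu^{\opp}$, so Lemma \ref{lem:G13} applies to the pair $(\mu^{\opp}, n)$ and yields a dichotomy between two mutually exclusive alternatives:
\begin{enumerate}
  \item[(i$'$)] $m(p) \to \cdots \to m(1) \to n$ is a green path in $\Gamma_{ \cC }$;
  \item[(ii$'$)] $n \in \greenend( \mu^{ \opp } )$, i.e.\ there is a green path $m(p) \to \cdots \to n \to m(1)$.
\end{enumerate}

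Next I would translate each alternative back to a statement about $\mu$ using Lemma \ref{lem:backwards}. For (i$'$), reversing the path $m(p) \to \cdots \to m(1) \to n$ gives the path $n \to m(1) \to \cdots \to m(p)$, which is green by Lemma \ref{lem:backwards}; this is exactly condition (i) of Lemma \ref{lem:H9}. For (ii$'$), reversing the witnessing green path $m(p) \to \cdots \to n \to m(1)$ gives a green path $m(1) \to n \to \cdots \to m(p)$, which per Definition \ref{def:Alpha_Omega} puts $n$ into $\greenstart( \mu )$; this is condition (ii) of Lemma \ref{lem:H9}.

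Finally, the mutual exclusivity of (i) and (ii) is inherited from the mutual exclusivity of (i$'$) and (ii$'$) via the same bijective correspondence $\nu \mapsto \nu^{\opp}$ on green paths supplied by Lemma \ref{lem:backwards}. I do not anticipate a genuine obstacle: the only subtlety is keeping track of which endpoint is being mutated so that Lemma \ref{lem:G13} is invoked correctly, and verifying that the reversal operation sends the configurations defining $\greenend$ to those defining $\greenstart$, which is immediate from the symmetry in Definition \ref{def:Alpha_Omega}.
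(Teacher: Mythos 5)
Your proof is correct and is precisely the detailed unwinding of the paper's one-line proof ("Combine Lemmas \ref{lem:G13} and \ref{lem:backwards}"): you reverse $\mu$ via Lemma \ref{lem:backwards}, apply Lemma \ref{lem:G13} to $\mu^{\opp}$ with $n$ at the new terminal vertex $m(1)$, and transport both alternatives and their mutual exclusivity back through the reversal bijection, noting correctly that $\greenend(\mu^{\opp}) = \greenstart(\mu)$ and that condition \hyper\ supplies the maximal rigid finiteness needed for Lemma \ref{lem:G13}.
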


\begin{proof}
Combine Lemmas \ref{lem:G13} and \ref{lem:backwards}. 
\end{proof}

\begin{Lemma}
\label{lem:H11}
If condition {\rm \hyper} is satisfied, then each path $\mu$ of length $\geqslant 1$ in $\Gamma_{ \cC }$ has the form $\mu = \mu \langle t \rangle \circ \cdots \circ \mu \langle 1 \rangle$ for some $t \geqslant 1$, where $\mu \langle i \rangle$ is a green path and $\greenend( \mu \langle i \rangle ) = \greenstart( \mu \langle i+1 \rangle )$ for each $i$.
\end{Lemma}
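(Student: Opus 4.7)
The plan is to argue by induction on the length $k$ of $\mu$.

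The base case $k=1$ is immediate: a single arrow $m \to m^{\ast}$ is a green path, since by Proposition~\ref{pro:cones1}(iv) applied with $\ell=m$, the positive chamber $B^{m}(m)$ lies in the positive half-space of the mutation $m \to m^{\ast}$, forcing $\silt[m]{m} > \silt[m]{m^{\ast}}$. So one takes $t=1$ and $\mu\langle 1\rangle = \mu$; the junction condition is vacuous.

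For the inductive step with $k \geq 2$, I would construct the decomposition greedily from the left. Let $\mu\langle 1 \rangle$ be the longest initial subpath of $\mu$ that is a green path; this exists since every single arrow is green. If $\mu\langle 1 \rangle = \mu$ we are done. Otherwise, writing $\alpha$ for the arrow of $\mu$ immediately following $\mu\langle 1 \rangle$, maximality and Lemma~\ref{lem:G13} give $t(\alpha) \in \greenend(\mu\langle 1 \rangle)$. I would then select $\mu\langle 2 \rangle$ to be a green initial subpath of the tail of $\mu$ beginning with $\alpha$, with endpoint $t(\mu\langle 2 \rangle)$ chosen so that $\greenstart(\mu\langle 2 \rangle) = \greenend(\mu\langle 1 \rangle)$; the remaining segments $\mu\langle 3 \rangle, \ldots, \mu\langle t \rangle$ are then produced by applying the inductive hypothesis to the tail of $\mu$ following $\mu\langle 2 \rangle$.

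The heart of the argument is producing a $\mu\langle 2 \rangle$ realising the stated set equality at the junction. Fixing a basic maximal rigid object $\ell \in \cC$ and translating via Lemma~\ref{lem:H5}, both $\greenend(\mu\langle 1 \rangle)$ and $\greenstart(\mu\langle 2 \rangle)$ are subsets of the neighbours of the pivot $b = t(\mu\langle 1 \rangle)$, and the required equality is equivalent to the geometric condition that $s(\mu\langle 1 \rangle)$ and $t(\mu\langle 2 \rangle)$ lie on matching sides of each facial hyperplane of the chamber $B^{\ell}(b)$ in the simplicial arrangement $\cH^{\ell}$ from Definition~\ref{def:condition_hyper}. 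The main obstacle is to show that a target realising this octant condition always appears along the tail of $\mu$. For this I would use Lemma~\ref{lem:H9} (the start-side analogue of Lemma~\ref{lem:G13}) together with Lemma~\ref{lem:backwards}, extending $\mu\langle 2 \rangle$ arrow by arrow along $\mu$ and tracking how the position of its current endpoint relative to the facial hyperplanes of $b$ evolves; the simplicial structure of $\cH^{\ell}$ guarantees that an endpoint matching the octant of $s(\mu\langle 1 \rangle)$ is reached before the inductive hypothesis is invoked on what remains.
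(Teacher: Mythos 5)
Your attempt takes a genuinely different route from the paper's: the paper proves this in one line, by observing that Lemma~\ref{lem:H5} turns paths in $\Gamma_{\cC}$ into galleries in the simplicial arrangement $\cH^{\ell}$, after which the claim becomes the statement that every gallery admits a Deligne normal form, cited from \cite[lem.~6.3]{Au2}. You instead try to reconstruct this normal form directly by a greedy induction. That is a legitimate thing to attempt, and your base case is fine (the observation via Proposition~\ref{pro:cones1}(iv) works, though it is simpler to note that $\silt[m]{m}$ is the top element of $\2silt \cC(m,m)$). However, the inductive step has two genuine gaps.

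The first is structural. After fixing $\mu\langle 1\rangle$ and $\mu\langle 2\rangle$ you apply the inductive hypothesis to the tail of $\mu$ following $\mu\langle 2\rangle$, obtaining $\mu\langle 3\rangle,\ldots,\mu\langle t\rangle$ with the junction conditions $\greenend(\mu\langle i\rangle)=\greenstart(\mu\langle i+1\rangle)$ for $3\leqslant i\leqslant t-1$. But the inductive hypothesis gives you no control whatsoever over $\greenstart(\mu\langle 3\rangle)$, so nothing forces $\greenend(\mu\langle 2\rangle)=\greenstart(\mu\langle 3\rangle)$. The junction between the segment you chose by hand and the first segment produced by the inductive hypothesis is simply unverified. (The natural fix --- applying the inductive hypothesis to the tail after $\mu\langle 1\rangle$ and then checking the single new junction --- has the symmetric problem: you would need to know something about the first segment the inductive hypothesis produces.)

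The second gap is the one you yourself flag as the ``main obstacle'': the existence of a $\mu\langle 2\rangle$, taken from the tail of $\mu$, with $\greenstart(\mu\langle 2\rangle)=\greenend(\mu\langle 1\rangle)$. The geometric reformulation you give is correct --- one needs $t(\mu\langle 2\rangle)$ to lie on the same side as $s(\mu\langle 1\rangle)$ of every wall of the pivot chamber $B^{\ell}(b)$ --- but the sentence ``the simplicial structure of $\cH^{\ell}$ guarantees that an endpoint matching the octant of $s(\mu\langle 1\rangle)$ is reached'' is an assertion, not an argument, and it is far from clear that it is even true as stated. The requirement is that, for \emph{every} wall $H$ of $b$, the chamber $s(\mu\langle 1\rangle)$ and the chamber $t(\mu\langle 2\rangle)$ agree on their side of $H$. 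The vertices available for $t(\mu\langle 2\rangle)$ are forced by the path $\mu$ (they must lie on the tail), and the tail may consist of a single arrow; there is nothing in what you have written, nor in Lemmas~\ref{lem:G13}, \ref{lem:H9}, or \ref{lem:backwards}, that forces the tail of $\mu$ to visit a chamber in the required ``local octant'' of $b$ before running out. In fact, what your greedy choice of $\mu\langle 1\rangle$ together with Lemma~\ref{lem:G13} delivers at the junction is only the one-element containment $\{t(\alpha)\}\subseteq\greenend(\mu\langle 1\rangle)\cap\greenstart(\mu\langle 2\rangle)$, which is a far cry from the set-theoretic equality you need. Upgrading this to the full junction condition is precisely the non-trivial content of Deligne's normal form theorem, and it cannot be dispatched with the local, one-arrow-at-a-time reasoning sketched here. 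This is the point at which the argument would need the substantial combinatorial input that the paper delegates to \cite[lem.~6.3]{Au2}.
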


\begin{proof}
Lemma \ref{lem:H5} translates from paths to galleries, where the lemma states a known property of the so-called Deligne normal form of a gallery, see \cite[lem.\ 6.3]{Au2}.
\end{proof}

\section{Homological properties of green paths}
\label{sec:green_paths_homology}

\begin{Definition}
[Simple objects associated to arrows]
\label{def:simple}
Let
\[
  n \xrightarrow{ \beta } m
\]  
be an arrow in $\Gamma_{ \cC }$ and consider the indecomposable objects $n_{ \mut }$ and $m_{ \mut }$ in $\cC$ such that $n \cong n_{ \mut } \oplus \widetilde{ n }$ and $m \cong m_{ \mut } \oplus \widetilde{ n }$ in $\cC$.  There is an indecomposable projective $\cC( m,m )$-right module $\cC( m,m_{ \mut } )$.  Its simple top
\[
  \simple[\beta] = \top_{ \cC( m,m ) }\cC( m,m_{ \mut } ) \in \mod \cC( m,m )
\]
is defined up to isomorphism.  It can also be viewed as a simple $\cE( m,m )$-right module via the canonical surjection $\cE( m,m ) \xrightarrow{} \cC( m,m )$.
\end{Definition}

\begin{Lemma}
\label{lem:simple}
Up to isomorphism, each simple $\cC( m,m )$-right module has the form $\simple[ \beta ]$ for an arrow $n \xrightarrow{ \beta } m$ in $\Gamma_{ \cC }$.
\end{Lemma}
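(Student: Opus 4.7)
The plan is to show that every simple $\cC(m,m)$-right module arises as the simple top of $\cC(m,m_{\mut})$ for some arrow $\beta$ ending at $m$, by exhibiting such an arrow for each indecomposable summand of $m$.

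First, write $m \cong m_1 \oplus \cdots \oplus m_d$ as a decomposition into pairwise non-isomorphic indecomposables (which is possible since $m \in \mrig \cC$ is basic and $\cC$ has split idempotents). Then $\cC(m,m)$ is a basic finite dimensional $k$-algebra with complete set of primitive orthogonal idempotents $e_i = \id_{m_i}$, so its indecomposable projective right modules are the $\cC(m,m_i) = e_i \cC(m,m)$ for $i=1,\dots,d$, and the tops $\top_{\cC(m,m)} \cC(m,m_i)$ form a complete set of representatives of isomorphism classes of simple $\cC(m,m)$-right modules.

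Next I would, for each fixed index $i$, produce an arrow $n \xrightarrow{\beta} m$ in $\Gamma_{\cC}$ realising the simple top of $\cC(m,m_i)$. By the uniqueness of mutation recorded in the Remark following Lemma \ref{lem:AIR} (based on \cite[cor.\ 3.3]{ZZ}), the basic maximal rigid object $m$ admits a one-step mutation at its indecomposable summand $m_i$, which produces a basic maximal rigid object $n$ satisfying $n \cong n_{\mut} \oplus \widetilde{n}$ and $m \cong m_i \oplus \widetilde{n}$ for some indecomposable $n_{\mut} \not\cong m_i$. Replacing $n$ by the unique element of $\mrig \cC$ isomorphic to it in $\cC$, this mutation is recorded by an arrow $n \xrightarrow{\beta} m$ in $\Gamma_{\cC}$ with $m_{\mut} = m_i$. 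Applying Definition \ref{def:simple} gives
\[
  \simple[\beta] \;=\; \top_{\cC(m,m)} \cC(m,m_{\mut}) \;=\; \top_{\cC(m,m)} \cC(m,m_i).
\]

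Combining the two steps, as $i$ ranges over $\{1,\dots,d\}$ the arrows $\beta$ constructed above yield all the simple tops $\top_{\cC(m,m)}\cC(m,m_i)$, and hence every simple $\cC(m,m)$-right module appears up to isomorphism as $\simple[\beta]$ for some arrow $\beta$ in $\Gamma_{\cC}$ ending at $m$. There is no substantial obstacle; the only point that deserves care is ensuring that the mutation of $m$ at its summand $m_i$ actually exists and is unique, which is precisely what the existence-of-mutation theorem guarantees.
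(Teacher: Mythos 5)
Your proof is correct and takes the same route as the paper's: identify the simple $\cC(m,m)$-modules with the tops of the indecomposable projectives $\cC(m,m_i)$, then observe that each indecomposable summand $m_i$ corresponds to an arrow of $\Gamma_{\cC}$ via the mutation of $m$ at $m_i$. You simply spell out in more detail the step the paper leaves implicit, namely that the mutation at each $m_i$ exists, is unique, and yields the required arrow.
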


\begin{proof}
Each simple is the top of an indecomposable projective $\cC( m,m )$-right module.  Up to isomorphism, each such has the form $\cC( m,m_{ \mut } )$ for an indecomposable summand $m_{ \mut }$ of $m$ in $\cC$, corresponding to an arrow $n \xrightarrow{ \beta } m$ in $\Gamma_{ \cC }$.
\end{proof}

\begin{Lemma}
\label{lem:SP}
Let $n \xrightarrow{ \beta } m$ be an arrow in $\Gamma_{ \cC }$ and consider the indecomposable objects $n_{ \mut }$ and $m_{ \mut }$ in $\cC$ such that $n \cong n_{ \mut } \oplus \widetilde{ n }$ and $m \cong m_{ \mut } \oplus \widetilde{ n }$ in $\cC$.  If $m'$ is an indecomposable summand of $m$ in $\cC$, then there is an isomorphism
\[
  \simple[ \beta ] \Tensor{ \cC( m,m ) } \cC( m',m )
  \cong \left\{
          \begin{array}{cl}
            k & \mbox{if $m' \cong m_{ \mut }$,} \\[1mm]
            0 & \mbox{otherwise}
          \end{array}
        \right.
\]
in $\mod k$.
\end{Lemma}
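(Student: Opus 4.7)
The plan is to reduce the statement to a routine idempotent computation in the basic finite dimensional $k$-algebra $A = \cC(m,m)$.

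First, fix a complete set of pairwise orthogonal primitive idempotents $\{ e_{m_i} \}$ in $A$ indexed by the indecomposable summands $m_i$ of $m$ in $\cC$; these exist because $m$ is basic and $\cC$ has split idempotents, and we may choose $e_{m_{\mut}}$ to be the idempotent attached to the distinguished summand $m_{\mut}$ and $e_{m'}$ to the idempotent attached to $m'$. Under these choices one has the standard identifications
\[
  \cC(m,m_{\mut}) \;\cong\; e_{m_{\mut}} A \quad \text{as right $A$-modules}, \qquad
  \cC(m',m) \;\cong\; A\, e_{m'} \quad \text{as left $A$-modules}.
\]
The first isomorphism sends $e_{m_{\mut}} \circ a$ to $\text{(projection to }m_{\mut}) \circ a$, and similarly for the second.

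Next, apply the standard isomorphism $M \otimes_A A e_{m'} \cong M \cdot e_{m'}$ valid for any right $A$-module $M$, to the simple top $\simple[\beta] = \top_A( e_{m_{\mut}} A )$:
\[
  \simple[\beta] \Tensor{A} \cC(m',m) \;\cong\; \simple[\beta] \Tensor{A} A\, e_{m'} \;\cong\; \simple[\beta] \cdot e_{m'}.
\]
Thus it suffices to compute $\simple[\beta] \cdot e_{m'}$.

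Because $m$ is basic in the Krull--Schmidt category $\cC$, the algebra $A$ is basic and finite dimensional over the algebraically closed field $k$, so $A/\rad A \cong k \times \cdots \times k$ with factors indexed by the primitive idempotents. Writing $J = \rad A$, the simple top $\simple[\beta]$ equals $e_{m_{\mut}} A / e_{m_{\mut}} J = e_{m_{\mut}} (A/J)$, which is one dimensional over $k$. Multiplying on the right by $e_{m'}$ gives $e_{m_{\mut}} (A/J) e_{m'}$; by orthogonality of the primitive idempotents modulo $J$, this is $k$ if $m' \cong m_{\mut}$ (so $e_{m'} = e_{m_{\mut}}$) and $0$ otherwise. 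This yields the claimed formula.

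The only real subtlety is the basic-ness of $A$, which justifies one-dimensionality of simples and orthogonality in $A/J$; this is immediate from $m$ being basic and $k$ algebraically closed. Everything else is a packaging of well-known facts about tensoring simples with projective bimodule summands.
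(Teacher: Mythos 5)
Your proof is correct and takes essentially the same route as the paper, which disposes of the lemma in one sentence by observing that $\simple[\beta]$ is the simple top of the indecomposable projective $\cC(m,m_{\mut})$. You simply spell out the underlying idempotent computation ($\cC(m,m_{\mut}) \cong e_{m_{\mut}}A$, $\cC(m',m) \cong Ae_{m'}$, $M \otimes_A Ae_{m'} \cong Me_{m'}$, and $e_{m_{\mut}}(A/J)e_{m'}$ is $k$ or $0$ by basic-ness over the algebraically closed $k$), which is exactly the standard fact the paper is invoking implicitly.
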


\begin{proof}
This holds since $\simple[ \beta ]$ is the simple top of the indecomposable projective $\cC( m,m )$-right module $\cC( m,m_{ \mut } )$.
\end{proof}

\begin{Definition}
[Condition \somefiniteness]
\label{def:somefiniteness}
We say that condition \somefiniteness\ is satisfied if, for $m \in \mrig \cC$, each finitely generated $\cC( m,m )$-module is finitely presented as an $\cE( m,m )$-module.
\end{Definition}

\begin{Lemma}
\label{lem:F22}
Assume that condition {\rm \somefiniteness} is satisfied.  Let 
$
  \xymatrix {
  n
    \ar^{ \beta }[r]<0.5ex> &
    m
    \ar^{ \alpha }[l]<0.5ex>
            }
$
be arrows in $\Gamma_{ \cC }$ and consider
\[
  F( \beta ) = \cE( n,m ),
\]  
which is a two-sided $A$-$B$-tilting complex over $A = \cE( m,m )$ and $B = \cE( n,n )$, see Proposition \ref{pro:G1} and Theorem \ref{thm:G11}.  It satisfies: 
\begin{enumerate}
\setlength\itemsep{4pt}

  \item  $\simple[ \beta ] \LTensor{ A } F( \beta ) \cong \Sigma \simple[ \alpha ]$ in $\cD( B )$. 
  
  \item  $\simple[ \beta' ] \LTensor{ A } F( \beta )$ is concentrated in degree $0$ when $n' \xrightarrow{ \beta' } m$ is an arrow in $\Gamma_{ \cC }$ different from $\beta$. 

\end{enumerate}
\end{Lemma}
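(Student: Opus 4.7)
I will compute $\Tor^A_*(\simple[\beta], \cE(n,m))$ and $\Tor^A_*(\simple[\beta'], \cE(n,m))$ using a short projective resolution of $T := \cE(n,m)$ as a left $A$-module that comes from the Frobenius lift of the mutation exchange triangle.

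Writing $m = m_{\mut} \oplus \widetilde{n}$ and $n = n_{\mut} \oplus \widetilde{n}$, the mutation exchange triangle $n_{\mut} \to \widetilde{m}^0 \to m_{\mut} \to \Sigma n_{\mut}$ in $\cC$ (with $\widetilde{m}^0 \in \add_\cC(\widetilde{n})$) lifts to a conflation $0 \to n_{\mut} \to \widetilde{m}^0 \oplus Q \to m_{\mut} \to 0$ in $\cE$, where $Q$ is projective-injective and $Q \in \add_\cE(r) \subseteq \add_\cE(\widetilde{n})$ by goodness of $m$ and $n$. Adding the identity conflation on the common summand $\widetilde{n}$ produces $0 \to n \to M_0 \to m_{\mut} \to 0$ in $\cE$, with $M_0 := \widetilde{m}^0 \oplus Q \oplus \widetilde{n} \in \add_\cE(m)$. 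Applying $\cE(-, m)$ contravariantly and using $\Ext^1_{\cE}(m_{\mut}, m) \cong \cC(m_{\mut}, \Sigma m) = 0$ (rigidity of $m$, as $m_{\mut} \in \add_\cC(m)$) yields a length-one projective resolution of $T$ as left $A$-module,
\[
Q_\bullet : \quad 0 \to \cE(m_{\mut}, m) \to \cE(M_0, m) \to \cE(n, m) \to 0.
\]
In particular $\Tor^A_i(-, T) = 0$ for all $i \geq 2$.

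For any $X \in \add_\cE(m)$ and any right $\cC(m, m)$-module $S$ (pulled back to a right $A$-module) there is a natural isomorphism $S \otimes_A \cE(X, m) \cong S \otimes_{\cC(m, m)} \cC(X, m)$, obtained from $\cC(m, m) \otimes_A \cE(X, m) \cong \cC(X, m)$. The latter in turn follows from the identity $[\cP](m,m) \cdot \cE(X, m) = [\cP](X, m)$, using goodness of $m$ to factor any map through $r$ as a composition inside $\End_\cE(m)$. Combined with Lemma~\ref{lem:SP}, this yields $\simple[\beta] \otimes_A \cE(m_{\mut}, m) \cong k$, while $\simple[\beta] \otimes_A \cE(M_0, m) = 0$ because every indecomposable summand of $M_0$ lies in $\add_\cC(\widetilde{n})$ and so is not isomorphic to $m_{\mut}$. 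Substituted into $Q_\bullet$, this gives $\Tor^A_0(\simple[\beta], T) = 0$ and $\Tor^A_1(\simple[\beta], T) \cong k$, so $\simple[\beta] \LTensor{A} T$ is concentrated in cohomological degree $-1$ with a one-dimensional underlying vector space. The parallel calculation for $\simple[\beta']$, whose mutated summand $m_{\mut}'$ is an indecomposable summand of $\widetilde{n}$ different from $m_{\mut}$, instead gives $\simple[\beta'] \otimes_A \cE(m_{\mut}, m) = 0$ and $\simple[\beta'] \otimes_A \cE(M_0, m) \neq 0$. Hence $\simple[\beta'] \LTensor{A} T$ is concentrated in degree $0$, proving (ii).

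To finish (i) it remains to identify the one-dimensional simple right $B$-module $\Tor^A_1(\simple[\beta], T)$ as $\simple[\alpha]$. Since $Q_\bullet$ does not naturally carry a right $B$-action, I would redo the computation using a right $A$-projective resolution $P_\bullet \to \simple[\beta]$ (which exists with initial term $\cE(m, m_{\mut})$ by condition \somefiniteness), producing a complex $[\cdots \to \cE(n, N_1) \to \cE(n, m_{\mut})]$ of right $B$-modules via the bimodule Yoneda-type isomorphism $\cE(m, X) \otimes_A \cE(n, m) \cong \cE(n, X)$ for $X \in \add_\cE(m)$. Comparing this with the short exact sequence of right $B$-modules
\[
0 \to \cE(n, n_{\mut}) \to \cE(n, \widetilde{m}^0 \oplus Q) \to \cE(n, m_{\mut}) \to 0,
\]
obtained by applying $\cE(n, -)$ to the setup conflation and using $\Ext^1_{\cE}(n, n_{\mut}) \cong \cC(n, \Sigma n_{\mut}) = 0$ by rigidity of $n$, will identify $\Tor^A_1(\simple[\beta], T)$ with the simple top of the indecomposable projective right $B$-module $\cE(n, n_{\mut})$, which is precisely $\simple[\alpha]$. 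This last step, pinning down the right $B$-module structure rather than merely the underlying vector space, is the main obstacle of the proof.
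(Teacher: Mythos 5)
Your computation for part~(ii) is correct and is essentially the paper's argument, though packaged differently. You apply $\cE(-,m)$ to the lifted conflation $0 \to n \to M_0 \to m_{\mut} \to 0$ to get a length-one left-$A$-projective resolution, then reduce to the stable level via Lemma~\ref{lem:portmanteau}(vi); the paper achieves the same reduction via Lemma~\ref{lem:F21}, whose ${}_{\underline{A}}P$ is exactly your $\cC(m_{\mut},m)\to\cC(b\oplus\widetilde{n},m)$. Both routes then invoke Lemma~\ref{lem:SP} identically.

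For part~(i), however, there is a genuine gap, and you correctly flag it yourself. You establish that $\simple[\beta]\LTensor{A}F(\beta)$ is $\Sigma U$ with $\dim_k U=1$, but the $B$-module structure on $U$ is not determined by what you have written. Your proposed fix — redoing the computation with a right $A$-projective resolution of $\simple[\beta]$ and ``comparing'' with the sequence $0\to\cE(n,n_{\mut})\to\cE(n,\widetilde{m}^0\oplus Q)\to\cE(n,m_{\mut})\to 0$ — is not carried out, and the word ``comparing'' hides the whole point: that second sequence resolves $\cE(n,m_{\mut})$, not $\simple[\beta]\otimes_A T$, and there is no a priori map between the complexes to compare. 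The paper circumvents this entirely with Lemma~\ref{lem:F17} (whose hypotheses are exactly why condition \somefiniteness\ must be assumed). Applied with $x=m$, $y=n$, $z'=\widetilde{n}$, it says that for $M_{\underline{A}}$ finitely presented over $A$, each $\H_i(M\LTensor{A}\cE(n,m))$ is a $\underline{B}$-module and, for $i\geqslant 1$, is annihilated by $f\underline{B}$, where $f$ is the idempotent projecting onto $\widetilde{n}$. A one-dimensional right $\underline{B}$-module annihilated by $f\underline{B}$ must be the simple supported at the complementary summand $n_{\mut}$, i.e.\ $\simple[\alpha]$. That annihilation statement is the missing ingredient in your write-up; without it or an equivalent, part~(i) is not proved.
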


\begin{proof}
Consider the indecomposable objects $n_{ \mut }$ and $m_{ \mut }$ in $\cC$ such that $n \cong n_{ \mut } \oplus \widetilde{ n }$ and $m \cong m_{ \mut } \oplus \widetilde{ n }$ in $\cC$.  Set $\underline{ A } = \cC( m,m )$, $\underline{ B } = \cC( n,n )$, and let $f \in \underline{ B }$ be projection onto $\widetilde{ n }$.

There is an exchange triangle $n_{ \mut } \xrightarrow{} b \xrightarrow{} m_{ \mut } \xrightarrow{} \Sigma n_{ \mut }$ in $\cC$ with $b \in \add_{ \cC }( \widetilde{ n } )$ by \cite[cor.\ 3.3]{ZZ}.  Hence there is a triangle $n_{ \mut } \oplus \widetilde{ n } \xrightarrow{} b \oplus \widetilde{ n } \xrightarrow{} m_{ \mut } \xrightarrow{} \Sigma( n_{ \mut } \oplus \widetilde{ n } )$, that is,
\begin{equation}
\label{equ:F22d}
  n \xrightarrow{} b \oplus \widetilde{ n } \xrightarrow{} m_{ \mut } \xrightarrow{} \Sigma n
\end{equation}
in $\cC$.  Observe that $b \oplus \widetilde{ n }$ and $m_{ \mut }$ are in $\add_{ \cC }( m )$.

Lemma \ref{lem:F21} applies with $x = m$, $y = n$, and the triangle \eqref{equ:F21a} given by \eqref{equ:F22d}.  Then ${}_{ \underline{ A } }P$ of Equation \eqref{equ:F21e} has the form ${}_{ \underline{ A } }P = \cC( m_{ \mut },m ) \xrightarrow{} \cC( b \oplus \widetilde{ n },m )$, and for $M_{ \underline{ A } } \in \cD( \underline{ A } )$ there is an isomorphism in $\cD( k )$:
\begin{equation}
\label{equ:F22f}
  M \LTensor{ A } F( \beta ) \cong M \Tensor{ \underline{ A } } \big( \cC( m_{ \mut },m ) \xrightarrow{} \cC( b \oplus \widetilde{ n },m ) \big).
\end{equation}

Lemma \ref{lem:F17} applies with $x = m$, $y = n$, $z' = \widetilde{ n }$, so if $M_{ \underline{ A } } \in \mod \underline{ A }$ then:
\begin{align}
\label{equ:F22b}
  & \mbox{Each $\H_i\!\big( M \LTensor{ A } F( \beta ) \big)$ is a $\underline{ B }$-module.} \\
\label{equ:F22c}
  & \mbox{$\H_1\!\big( M \LTensor{ A } F( \beta ) \big)$ is annihilated by $f\underline{ B }$.}
\end{align}

To prove (i), note that Equation \eqref{equ:F22f} and Lemma \ref{lem:SP} give isomorphisms in $\cD( k )$:
\[
  \simple[ \beta ] \LTensor{ A } F( \beta ) \cong \simple[ \beta ] \Tensor{ \underline{ A } } \big( \cC( m_{ \mut },m ) \xrightarrow{} \cC( b \oplus \widetilde{ n },m ) \big) \cong ( k \xrightarrow{} 0 ) = \Sigma k.
\]
It follows that there is a $B$-right module $U_B$ such that $\simple[ \beta ] \LTensor{ A } F( \beta ) \cong \Sigma U_B$ in $\cD( B )$ and $\dim_k U_B = 1$.  Equations \eqref{equ:F22b} and \eqref{equ:F22c} imply that $U_B$ is a $\underline{ B }$-module annihilated by $f\underline{ B }$ whence $U_B \cong \simple[ \alpha ]$, proving (i).

To prove (ii), note that Equation \eqref{equ:F22f} and Lemma \ref{lem:SP} give isomorphisms in $\cD( k )$:
\[
  \simple[ \beta' ] \LTensor{ A } F( \beta ) \cong \simple[ \beta' ] \Tensor{ \underline{ A } } \big( \cC( m_{ \mut },m ) \xrightarrow{} \cC( b \oplus \widetilde{ n },m ) \big) \cong ( 0 \xrightarrow{} V ) = V
\]
for some $V \in \mod k$, proving (ii).
\end{proof}

\begin{Observation}
\label{obs:F22}
Assume that condition {\rm \somefiniteness} is satisfied.  Let 
$
  \xymatrix {
  n
    \ar^{ \beta }[r]<0.5ex> &
    m
    \ar^{ \alpha }[l]<0.5ex>
            }
$
be arrows in $\Gamma_{ \cC }$ and consider the path $\mu = \beta\alpha$ from $m$ to $m$ in $\Gamma_{ \cC }$.

Then $F( \mu )$ is an element of infinite order in the group $\DPic\big( \cE( m,m ) \big)$.  This is a consequence of the following computation using the notation of Lemma \ref{lem:F22}.
\[
  \simple[ \beta ] \LTensor{ A } F( \mu )
  \cong \simple[ \beta ] \LTensor{ A } F( \beta ) \LTensor{ B } F( \alpha ) \cong \Sigma \simple[ \alpha ] \LTensor{ B } F( \alpha ) \cong \Sigma^2 \simple[ \beta ]
\]
The two last isomorphisms are by Lemma \ref{lem:F22}  verbatim and with $\alpha$ and $\beta$ interchanged.
\end{Observation}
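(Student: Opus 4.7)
The plan is to verify that the computation sketched in the observation genuinely forces $F(\mu)$ to have infinite order. Concretely, I would show that the iterated derived tensor product $\simple[\beta] \LTensor{A} F(\mu)^{\otimes n}$ is isomorphic to the $2n$-fold suspension $\Sigma^{2n} \simple[\beta]$ in $\cD(A)$, and then use this to distinguish all powers of $F(\mu)$ from the identity element $A$ of $\DPic\big(\cE(m,m)\big)$.

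First, I would set $A = \cE(m,m)$ and $B = \cE(n,n)$, and establish the base computation exactly as displayed in the observation. Step one applies Lemma \ref{lem:F22}(i) to the arrow $\beta$ to get $\simple[\beta] \LTensor{A} F(\beta) \cong \Sigma \simple[\alpha]$ in $\cD(B)$. Step two applies the same lemma with the roles of $\alpha$ and $\beta$ interchanged (the hypotheses are symmetric, since the configuration of two opposite arrows between $m$ and $n$ is symmetric), yielding $\simple[\alpha] \LTensor{B} F(\alpha) \cong \Sigma \simple[\beta]$ in $\cD(A)$. Associativity of the derived tensor product then gives
\[
\simple[\beta] \LTensor{A} F(\mu) \;=\; \simple[\beta] \LTensor{A} F(\beta) \LTensor{B} F(\alpha) \;\cong\; \Sigma \simple[\alpha] \LTensor{B} F(\alpha) \;\cong\; \Sigma^{2} \simple[\beta].
\]

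The next step is a straightforward induction on $n$: tensoring the isomorphism above on the right by $F(\mu)^{\otimes (n-1)}$, which is a two-sided tilting complex and therefore preserves isomorphism classes in the derived category, yields $\simple[\beta] \LTensor{A} F(\mu)^{\otimes n} \cong \Sigma^{2n} \simple[\beta]$ in $\cD(A)$ for every $n \geqslant 1$. Because $\simple[\beta]$ is a nonzero module concentrated in homological degree $0$, its shifts $\Sigma^{2n} \simple[\beta]$ are pairwise non-isomorphic in $\cD(A)$. On the other hand, if $F(\mu)^{\otimes n}$ were isomorphic to the identity element $A$ of $\DPic(A)$, the left-hand side would be isomorphic to $\simple[\beta]$, which is impossible for $n \neq 0$.

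The main technical point to confirm carefully is the symmetric application of Lemma \ref{lem:F22}(i) in step two: one must check that swapping the roles of $\alpha$ and $\beta$ is legitimate, which amounts to the fact that the setup of Lemma \ref{lem:F22} is invariant under exchanging $m \leftrightarrow n$ and $\alpha \leftrightarrow \beta$, as both are single mutations producing the same exchange triangle (up to rotation) between $n_{\mut}$ and $m_{\mut}$. Apart from this symmetry check, the argument is a clean application of the bimodule computations already packaged in Lemma \ref{lem:F22} together with the associativity of $\LTensor{}$.
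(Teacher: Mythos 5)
Your argument is correct and matches the paper's intended reasoning: the displayed isomorphism $\simple[\beta]\LTensor{A}F(\mu)\cong\Sigma^2\simple[\beta]$, iterated via the exactness of the two-sided tilting complex, gives $\simple[\beta]\LTensor{A}F(\mu)^{\otimes n}\cong\Sigma^{2n}\simple[\beta]$, and since a nonzero module is not isomorphic to any of its nontrivial shifts, no positive power of $F(\mu)$ can equal the identity $A$. One small inaccuracy in your final remark: the two exchange triangles $n_{\mut}\to b\to m_{\mut}\to\Sigma n_{\mut}$ and $m_{\mut}\to b'\to n_{\mut}\to\Sigma m_{\mut}$ are \emph{not} in general the same up to rotation (they have middle terms $b$ and $b'$ which coincide only under the extra hypothesis $\Sigma^2\cong\id$, see Proposition \ref{prop:gfan-to-hyperplane2}(ii)); what actually justifies applying Lemma \ref{lem:F22} with $\alpha$ and $\beta$ interchanged is simply that the hypotheses of that lemma are symmetric under swapping $m\leftrightarrow n$, $\alpha\leftrightarrow\beta$, $A\leftrightarrow B$, and the lemma's proof then uses the second exchange triangle rather than the first.
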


\begin{Definition}
[$\inf$ and $\sup$ of a complex]
\label{def:sup}
If $X$ is an object of a derived category, then
\begin{align*}
  \inf X & = \inf \{\, i \mid \H_i(X) \neq 0 \,\}, \\
  \sup X & = \sup \{\, i \mid \H_i(X) \neq 0 \,\}.
\end{align*}
Note that the zero object has $\inf 0 = \infty$ and $\sup 0 = -\infty$, while any non-zero object $X$ has $\inf X \leqslant \sup X$.  
\end{Definition}

\begin{Lemma}
\label{lem:inequality}
If $X \xrightarrow{} Y \xrightarrow{} Z$ a triangle in a derived category, then
\[
  \sup Y \leqslant \max\{ \sup X,\sup Z \}.
\]
\end{Lemma}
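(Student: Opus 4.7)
The plan is to prove this by extracting the standard long exact sequence in homology from the triangle and reading off vanishing of $\H_i(Y)$ in the appropriate range.

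Set $s = \max\{\sup X, \sup Z\}$. If $s = \infty$ the claim is vacuous, and if $s = -\infty$ then $X$ and $Z$ are both zero, so by the triangle $Y$ is zero as well and $\sup Y = -\infty \leqslant -\infty$. Assume henceforth $s \in \mathbb{Z}$. First I would apply the homological functor $\H_i$ to the triangle $X \to Y \to Z \to \Sigma X$ to obtain the long exact sequence
\[
  \cdots \to \H_i(X) \to \H_i(Y) \to \H_i(Z) \to \H_{i-1}(X) \to \cdots.
\]

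Next, I would fix an integer $i > s$. By Definition \ref{def:sup}, since $i > \sup X$ and $i > \sup Z$, both $\H_i(X) = 0$ and $\H_i(Z) = 0$. The three-term segment $\H_i(X) \to \H_i(Y) \to \H_i(Z)$ of the long exact sequence then forces $\H_i(Y) = 0$. Therefore $\sup Y \leqslant s = \max\{\sup X, \sup Z\}$, as required.

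There is no real obstacle here; the only subtlety is handling the edge cases where $\sup X$ or $\sup Z$ equals $\pm\infty$, which are dispatched as above. The argument works in any triangulated category equipped with a cohomological functor to an abelian category, and in particular applies to the derived categories used elsewhere in the paper.
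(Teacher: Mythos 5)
Your proof is correct and takes essentially the same route as the paper, which simply states the result is immediate from the long exact homology sequence; you have spelled out the three-term exactness argument at $\H_i(Y)$ and also handled the infinite edge cases, but the underlying idea is identical.
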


\begin{proof}
Immediate from the long exact homology sequence induced by the triangle.
\end{proof}

\begin{Setup}
\label{set:green_path}
Lemmas \ref{lem:inf_sup} through \ref{lem:preH13} concern a green path in $\Gamma_{ \cC }$:
\[
  \mu = m(1) \xrightarrow{} \cdots \xrightarrow{} m(p).
\]
Recall from Theorem \ref{thm:G11} that
\[
  F( \mu ) = \cE\big( m(1),m(p) \big)
\]  
is a two-sided $A_p$-$A_1$-tilting complex over $A_p = \cE\big( m(p),m(p) \big)$ and $A_1 = \cE\big( m(1),m(1) \big)$.
\end{Setup}

\begin{Lemma}
\label{lem:inf_sup}
\begin{enumerate}
\setlength\itemsep{4pt}

  \item  $\inf F( \mu ) = \sup F( \mu ) = 0$.

  \item  If $M \in \cD( A_p )$, then
\[
  \inf M \leqslant \inf M \LTensor{ A_p } F( \mu ) \;\;,\;\; \sup M \LTensor{ A_p } F( \mu ) \leqslant \sup M + 1.
\]

  \item  If $N \in \cD( A_1 )$, then
\[
  \inf N - 1 \leqslant \inf \RHom_{ A_1 }\!\big( F( \mu ),N \big) \;\;,\;\;
  \sup \RHom_{ A_1 }\!\big( F( \mu ),N \big) \leqslant \sup N.
\]

\end{enumerate}
\end{Lemma}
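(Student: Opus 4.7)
The plan is to dispose of (i) by inspection and reduce (ii) and (iii) to two one-sided projective-dimension bounds on the bimodule $F(\mu) = \cE(m(1),m(p))$, one on the right supplied by Proposition~\ref{pro:G1} and one on the left established by a mirror argument.

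For (i), $F(\mu)$ is a bona fide $\Hom$-module, hence concentrated in degree~$0$ as a complex. It is nonzero because $m(p)$ is good, so the projective-injective $r$ of Setup~\ref{set:blanket} is a summand of $m(p)$, whence $\cE(m(1),r)$ is a nonzero summand of $F(\mu)$.

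For the right $A_1$-module bound, the proof of Proposition~\ref{pro:G1} already constructs, from the conflation $0 \to \ell_1 \to \ell_0 \to m(p) \to 0$ of Lemma~\ref{lem:F9_10} (with $\ell_i \in \add_\cE(m(1))$), the two-term projective resolution
\[
0 \to \cE(m(1),\ell_1) \to \cE(m(1),\ell_0) \to F(\mu) \to 0
\]
of $F(\mu)$ over $A_1$. For the left $A_p$-module bound, I would mirror this argument. Since $\Sigma$ is a self-equivalence of $\cC$, $\Sigma m(1)$ is rigid, so Lemma~\ref{lem:ZZ} applied to the maximal rigid object $m(p)$ and the rigid object $\Sigma m(1)$ supplies a triangle $m_1 \to m_0 \to \Sigma m(1) \to \Sigma m_1$ in $\cC$ with $m_i \in \add_\cC(m(p))$. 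Rotating yields $m(1) \to m_1 \to m_0 \to \Sigma m(1)$, which I would lift via Lemma~\ref{lem:F9_10} (using goodness of $m(p)$, so that $\add_\cE(m(p))$ absorbs projective-injectives) to a conflation
\[
0 \to m(1) \to \widetilde{m}_1 \to m_0 \to 0
\]
in $\cE$ with $\widetilde{m}_1, m_0 \in \add_\cE(m(p))$. Applying $\cE(-,m(p))$ contravariantly gives a long exact sequence whose connecting term is $\Ext^1_\cE(m_0, m(p)) \cong \cC(m_0, \Sigma m(p))$, and this vanishes because $m_0 \in \add_\cC(m(p))$ and $m(p)$ is rigid. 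The remaining short exact sequence
\[
0 \to \cE(m_0, m(p)) \to \cE(\widetilde{m}_1, m(p)) \to F(\mu) \to 0
\]
is then a two-term projective resolution of $F(\mu)$ as a left $A_p$-module.

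With both projective-dimension bounds in hand, parts (ii) and (iii) become standard: the Tor spectral sequence $E^2_{p,q} = \Tor^{A_p}_p(\H_q(M), F(\mu)) \Rightarrow \H_{p+q}(M \LTensor{A_p} F(\mu))$ is supported in $p \in \{0,1\}$ and yields the bounds in~(ii), while the analogous Ext spectral sequence yields those in~(iii). The main step requiring care will be the construction of the left-sided projective resolution, since it involves invoking Lemma~\ref{lem:ZZ} on a shifted rigid object and checking that the relevant $\Ext^1_\cE$ vanishes; once that is done by the rigidity of $m(p)$, the argument is routine and makes no use of the green-path structure beyond what is already encoded in Proposition~\ref{pro:G1}.
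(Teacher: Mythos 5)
Your proof is correct and follows the same line as the paper's: part (i) because $F(\mu)$ is a module, parts (ii) and (iii) because $F(\mu)$ has length-one projective resolutions on both sides; the paper simply cites Lemma~\ref{lem:F9_10}(iii) and its dual rather than spelling out the left-sided resolution, but your reconstruction of that dual via Lemma~\ref{lem:ZZ} applied to $\Sigma m(1)$, rotation, lifting, and the rigidity-induced $\Ext^1$-vanishing is the expected argument.
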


\begin{proof}
Part (i) holds because $F( \mu )$ is isomorphic to a module, parts (ii) and (iii) because $F( \mu )$ has projective resolutions over $A_1$ and $A_p$ which are concentrated in homological degrees $0$ and $1$, see Lemma \ref{lem:F9_10}(iii) and its dual.
\end{proof}

\begin{Lemma}
\label{lem:G14}
Assume that condition {\rm \somefiniteness} is satisfied.  If $n \xrightarrow{ \beta } m(p)$ is an arrow in $\Gamma_{ \cC }$, then
\begin{enumerate}
\setlength\itemsep{4pt}

  \item  $n \in \greenend( \mu ) \Leftrightarrow \simple[ \beta ] \LTensor{ A_p } F( \mu )$ is concentrated in homological degree $1$.
  
  \item  $n \not\in \greenend( \mu ) \Leftrightarrow \simple[ \beta ] \LTensor{ A_p } F( \mu )$ is concentrated in homological degree $0$.

\end{enumerate}
\end{Lemma}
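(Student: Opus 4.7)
I proceed by induction on the number of vertices $p$ of $\mu$. By Lemma~\ref{lem:inf_sup} the homology of $\simple[\beta] \LTensor_{A_p} F(\mu)$ is already concentrated in degrees $0$ and $1$, so it remains to show it lives in exactly one of these and to identify which. The base case $p = 1$ is immediate: $F(\mu) = A_p$, giving $\simple[\beta]$ in degree $0$, and $\greenend(\mu) = \varnothing$ by strict monotonicity of the silting order.

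For the inductive step, factor $\mu = \alpha \circ \mu'$ with $\alpha \colon m(p-1) \to m(p)$ the last arrow and use $F(\mu) \cong F(\alpha) \LTensor F(\mu')$ (Lemma~\ref{lem:G10}, as in the proof of Theorem~\ref{thm:G11}(ii)). If $\beta = \alpha$, so that $n = m(p-1) \in \greenend(\mu)$ (witnessed by $\mu$ itself), Lemma~\ref{lem:F22}(i) gives $\simple[\beta] \LTensor F(\alpha) \cong \Sigma \simple[\alpha^{-1}]$ for the reverse arrow $\alpha^{-1} \colon m(p) \to m(p-1)$; greenness of $\mu$ rules out $m(p) \in \greenend(\mu')$, and the inductive hypothesis applied to $\mu'$ and $\alpha^{-1}$ places $\simple[\alpha^{-1}] \LTensor F(\mu')$ in degree $0$, whence the full complex sits in degree $1$. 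When $\beta \neq \alpha$ but $n \in \greenend(\mu)$, the same reduction applies to a witnessing green path $\nu = \beta \circ \nu'$ ending with $\beta$; the inductive hypothesis is available for $\nu'$ because all green paths from $m(1)$ to $m(p)$ have the same length $p-1$, by the Jordan--H\"older property for the interval $[\silt[m(1)]{m(p)}, \silt[m(1)]{m(1)}]$ in the $2$-term silting poset of $\underline{A_1}$.

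The remaining case, $\beta \neq \alpha$ with $n \notin \greenend(\mu)$, is the main obstacle: the goal is to show $\simple[\beta] \LTensor F(\mu)$ lies in degree $0$. By Lemma~\ref{lem:G13}, $\tilde\mu = \beta^{-1} \circ \mu$ is green, so Theorem~\ref{thm:G11}(ii) gives $\cE(m(p), n) \LTensor_{A_p} F(\mu) \cong \cE(m(1), n)$, concentrated in degree $0$. The plan is to transfer this concentration to $\simple[\beta]$: the exchange conflation between $m(p)$ and $n$ in $\cE$ furnishes a $2$-term projective resolution of the non-projective summand of $\cE(m(p), n)$ as a left $A_p$-module, and condition~\somefiniteness\ lifts the projective presentation of $\simple[\beta]$ over $\underline{A_p}$ to a compatible presentation over $A_p$; matching the two and applying Lemma~\ref{lem:inf_sup} to eliminate the degree-$1$ part yields the claim. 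Conceptually, the whole lemma expresses that $-\LTensor F(\mu)$ realizes the Happel--Reiten--Smal\o\ tilt of $\D(A_1)$ along the torsion pair determined by $\silt[m(1)]{m(p)}$, under which simples of $\underline{A_p}$ split between the torsion class (degree $0$) and its shifted complement (degree $1$) according to the sign of their $c$-vector --- exactly the dichotomy encoded by Lemma~\ref{lem:G13}.
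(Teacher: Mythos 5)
Your strategy diverges structurally from the paper's, and the divergence creates real gaps.

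The paper proves only the $\Rightarrow$ implications, with no induction at all. For (i), given a witnessing green path $\mu' = m(1)\to\cdots\to n\xrightarrow{\beta} m(p)$, it writes $F(\mu) = F(\mu') = F(\beta)\LTensor{\cE(n,n)} F(\mu'')$ where $\mu''$ is the truncation, then bounds directly: $\inf \simple[\beta]\LTensor{A_p}F(\mu) \geqslant \inf\simple[\beta]\LTensor{A_p}F(\beta) = 1$ by Lemmas~\ref{lem:inf_sup}(ii) and \ref{lem:F22}(i), and $\sup \leqslant 1$ by Lemma~\ref{lem:inf_sup}(ii) alone. Crucially this requires \emph{no} control on the length of $\mu'$. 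Your case ``$\beta\neq\alpha$ but $n\in\greenend(\mu)$'' passes to a witnessing path $\nu = \beta\circ\nu'$ and then invokes the inductive hypothesis on $\nu'$, so you need $\nu'$ to be shorter than $\mu$, i.e.\ all green paths between $m(1)$ and $m(p)$ to have the same length. You justify this by a ``Jordan--H\"older property'' of the interval in $\2silt\underline{A}_1$, but no such result is available here: the equivalence green $=$ minimal is precisely Lemma~\ref{lem:H5}, which is proved only under condition \hyper. Lemma~\ref{lem:G14} is stated under \somefiniteness\ alone, so you cannot use it. This is a genuine gap.

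Your treatment of the case ``$\beta\neq\alpha$, $n\notin\greenend(\mu)$'' is the other problem. The paper's argument is short: Lemma~\ref{lem:G13} extends $\mu$ to a green path $\mu''' = \alpha\circ\mu$ with $\alpha\colon m(p)\to n$; part (i) applied to $\mu'''$ places $\simple[\alpha]\LTensor{\cE(n,n)}F(\mu''')$ in degree~$1$; and since $F(\mu''') = F(\alpha)\LTensor{A_p}F(\mu)$ together with Lemma~\ref{lem:F22}(i) gives $\simple[\alpha]\LTensor{\cE(n,n)}F(\mu''') \cong \Sigma\simple[\beta]\LTensor{A_p}F(\mu)$, one reads off that $\simple[\beta]\LTensor{A_p}F(\mu)$ sits in degree~$0$. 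Notice this applies (i) to a \emph{longer} path --- incompatible with your downward induction on $p$ without extra setup. Your proposed alternative (matching a presentation of $\simple[\beta]$ lifted via \somefiniteness\ against the degree-$0$ concentration of $\cE(m(1),n)$) is only sketched and would need substantial work to become a proof; as written it does not establish the vanishing of $\H_1$. The HRS-tilt interpretation at the end is a reasonable heuristic but not a substitute for the argument.

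The base case and the case $\beta=\alpha$ of your induction are fine. I suggest abandoning the induction and adopting the paper's structure: prove only $\Rightarrow$ (the $\Leftarrow$ directions follow because, by Lemma~\ref{lem:G13}, (i) and (ii) have mutually exclusive and jointly exhaustive hypotheses, and the $\Rightarrow$ directions show the two conclusions are also mutually exclusive), bound $\inf$ and $\sup$ directly for (i), and derive (ii) from (i) applied to the one-step extension.
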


\begin{proof}
Set $B = \cE( n,n )$.  We show $\Rightarrow$ in each of (i) and (ii), which is enough to prove the lemma.

(i):  The condition $n \in \greenend( \mu )$ means there is a green path $\mu' = m(1) \xrightarrow{} \cdots \xrightarrow{} n \xrightarrow{ \beta } m(p)$ in $\Gamma_{ \cC }$.  The subpath $\mu'' = m(1) \xrightarrow{} \cdots \xrightarrow{} n$ is also a green path.  Since $\mu \sim \mu' = \beta \circ \mu''$ in $\cF_{ \cC }$, we have $F( \mu ) = F( \mu' ) = F( \beta ) \LTensor{ B } F( \mu'' )$ by Theorem \ref{thm:G11}.  This gives the first equality in the following computation:
\begin{align*}
  \inf \simple[ \beta ] \LTensor{ A_p } F( \mu )
  & = \inf \simple[ \beta ] \LTensor{ A_p } F ( \beta ) \LTensor{ B } F( \mu'' ) \\
  & \geqslant \inf \simple[ \beta ] \LTensor{ A_p } F( \beta ) \\
  & = 1.
\end{align*}
The inequality is by Lemma \ref{lem:inf_sup}(ii), and the second equality by Lemma \ref{lem:F22}(i).  On the other hand,
\[
  \sup \simple[ \beta ] \LTensor{ A_p } F( \mu ) \leqslant 1
\]
by Lemma \ref{lem:inf_sup}(ii).  These estimates imply (i).

(ii):  The condition $n \not\in \greenend( \mu )$ implies that $\mu$ can be extended to the green path $\mu''' = m(1) \xrightarrow{} \cdots \xrightarrow{} m(p) \xrightarrow{ \alpha } n$ by Lemma \ref{lem:G13}.  Since $\mu''' = \alpha \circ \mu$ in $\cF_{ \cC }$, we have $F( \mu''' ) = F( \alpha ) \LTensor{ A_p } F( \mu )$.  This gives the first of the following isomorphisms:
\[
  \simple[ \alpha ] \LTensor{ B } F( \mu''' ) \cong \simple[ \alpha ] \LTensor{ B } F( \alpha ) \LTensor{ A_p } F( \mu ) \cong \Sigma \simple[ \beta ] \LTensor{ A_p } F( \mu ),
\]
and the second isomorphism holds by Lemma \ref{lem:F22}(i).  But the left hand side is concentrated in homological degree $1$ by part (i) applied to $\mu'''$, so $\simple[ \beta ] \LTensor{ A_p }F( \mu )$ is concentrated in homological degree $0$ as claimed.  
\end{proof}

Lemma \ref{lem:G14} has the following dual.  We omit the proof, which is also dual.  However, note that the proof would need to invoke Lemma \ref{lem:H9} instead of Lemma \ref{lem:G13}, so we have to assume condition \hyper.

\begin{Lemma}
\label{lem:H10}
Assume that conditions {\rm \hyper} and {\rm \somefiniteness} are satisfied.  If $n \xrightarrow{ \beta } m(1)$ is an arrow in $\Gamma_{ \cC }$, then
\begin{enumerate}
\setlength\itemsep{4pt}

  \item  $n \in \greenstart( \mu ) \Leftrightarrow \RHom_{ A_1 }\!\big( F( \mu ), \simple[ \beta ] \big)$ is concentrated in homological degree $-1$.
  
  \item  $n \not\in \greenstart( \mu ) \Leftrightarrow \RHom_{ A_1 }\!\big( F( \mu ), \simple[ \beta ] \big)$ is concentrated in homological degree $0$.

\end{enumerate}
\end{Lemma}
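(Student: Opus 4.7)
The plan is to mirror the proof of Lemma \ref{lem:G14}, systematically replacing derived tensor products with $\RHom$ and invoking Lemma \ref{lem:H9} (this is where condition \hyper\ enters) in place of Lemma \ref{lem:G13}. As in that earlier proof, it will suffice to establish the direction $\Rightarrow$ of (i) and (ii) separately, since the two hypotheses are mutually exclusive and jointly exhaustive by Lemma \ref{lem:H9}, and the two conclusions---concentration in homological degree $-1$ versus degree $0$---are likewise mutually exclusive.

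The first preliminary is a dual of Lemma \ref{lem:F22}(i): for any pair of opposite arrows $\alpha\colon m\to n$ and $\beta\colon n\to m$ in $\Gamma_{\cC}$, setting $A = \cE(m,m)$ and $B = \cE(n,n)$, I will show that
\[
\RHom_{A}\!\big(F(\alpha),\,\simple[\beta]\big) \cong \Sigma^{-1}\simple[\alpha]
\]
in $\cD(B)$. This will follow by inverting the tilting equivalence $-\LTensor{B} F(\alpha)\colon \cD(B)\xrightarrow{\sim}\cD(A)$, whose quasi-inverse is $\RHom_{A}(F(\alpha),-)$, applied to the isomorphism $\simple[\alpha]\LTensor{B} F(\alpha) \cong \Sigma\simple[\beta]$ obtained from Lemma \ref{lem:F22}(i) by swapping the roles of $\alpha$ and $\beta$.

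For $\Rightarrow$ of (i), the hypothesis $n\in\greenstart(\mu)$ supplies a green path $\mu' = m(1)\xrightarrow{\alpha} n \xrightarrow{\mu''} m(p)$, so $\mu\sim\mu'$ in $\cF_{\cC}$ and $F(\mu)\cong F(\mu'')\LTensor{B} F(\alpha)$ by Theorem \ref{thm:G11}. Tensor--hom adjunction combined with the preliminary display yields
\[
\RHom_{A_1}\!\big(F(\mu),\,\simple[\beta]\big) \;\cong\; \Sigma^{-1}\RHom_B\!\big(F(\mu''),\,\simple[\alpha]\big).
\]
Lemma \ref{lem:inf_sup}(iii) applied to the tilting complex $F(\mu'')$ forces $\sup\RHom_B(F(\mu''),\simple[\alpha]) \leqslant 0$, so the right-hand side has $\sup\leqslant -1$; Lemma \ref{lem:inf_sup}(iii) applied directly to $F(\mu)$ forces $\inf\RHom_{A_1}(F(\mu),\simple[\beta]) \geqslant -1$. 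Hence the complex is concentrated in degree $-1$. For $\Rightarrow$ of (ii), the hypothesis $n\notin\greenstart(\mu)$ together with Lemma \ref{lem:H9} extends $\mu$ to a green path $\mu''' = n\xrightarrow{\beta} m(1)\xrightarrow{\mu} m(p)$, so $F(\mu''')\cong F(\mu)\LTensor{A_1} F(\beta)$. Applying the already-proved case (i) to $\mu'''$ and the arrow $\alpha\colon m(1)\to n$---whose source $m(1)$ visibly lies in $\greenstart(\mu''')$ by construction---shows that $\RHom_B(F(\mu'''),\simple[\alpha])$ is concentrated in degree $-1$; the same adjunction identifies this complex with $\Sigma^{-1}\RHom_{A_1}(F(\mu),\simple[\beta])$, whence the latter is concentrated in degree $0$.

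The main subtlety is the bookkeeping for the preliminary dual formula: one must correctly identify the quasi-inverse of the tilting equivalence induced by $F(\alpha)$ and track the single suspension that appears when inverting Lemma \ref{lem:F22}(i). Once this is settled, everything else is a direct mirror of the proof of Lemma \ref{lem:G14}, with Lemma \ref{lem:inf_sup}(iii) taking the role there played by Lemma \ref{lem:inf_sup}(ii).
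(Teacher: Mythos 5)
Your proof is correct and is precisely the ``dual'' argument that the paper indicates but omits: you mirror the proof of Lemma~\ref{lem:G14}, invoke Lemma~\ref{lem:H9} in place of Lemma~\ref{lem:G13} (as the paper notes is necessary, this being where condition~\hyper\ enters), and dualize the key formula of Lemma~\ref{lem:F22}(i) by passing through the quasi-inverse $\RHom_{A}(F(\alpha),-)$ of the tilting equivalence. The suspension bookkeeping, the tensor--hom adjunction step, and the use of Lemma~\ref{lem:inf_sup}(iii) in place of part~(ii) are all handled correctly, so this fills in exactly what the paper leaves implicit.
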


\begin{Lemma}
\label{lem:preH13}
Assume that conditions {\rm \hyper} and {\rm \somefiniteness} are satisfied.  If $n \xrightarrow{ \beta } m(p)$ is an arrow in $\Gamma_{ \cC }$ with $n \in \greenend( \mu )$, then there is an arrow $n' \xrightarrow{ \beta' } m(1)$  with $n' \in \greenstart( \mu )$ and a triangle in $\cD( A_1 )$:
\[
  \Sigma \simple[ \beta' ] \xrightarrow{} \simple[ \beta ] \LTensor{ A_p } F( \mu ) \xrightarrow{} Z'
\]
with $Z'$ concentrated in homological degree $1$.
\end{Lemma}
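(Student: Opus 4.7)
Since $n \in \greenend(\mu)$, I will begin by fixing a green path $\mu_n: m(1) \xrightarrow{\tau} n' \to \cdots \to n \xrightarrow{\beta} m(p)$ in $\Gamma_{\cC}$, and letting $\beta': n' \to m(1)$ be the arrow opposite to $\tau$. The path $\mu_n$ itself witnesses $n' \in \greenstart(\mu)$, so $(n',\beta')$ is the pair that will appear in the conclusion. Write $\nu: n' \to \cdots \to m(p)$ for the tail of $\mu_n$ (so $\mu_n = \nu \circ \tau$) and set $B' = \cE(n',n')$.

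Theorem \ref{thm:G11} together with the equivalence $\mu \sim \mu_n$ gives $F(\mu) \cong F(\nu) \LTensor{B'} F(\tau)$, and Lemma \ref{lem:G14}(i) applied to $\nu$ (noting $n \in \greenend(\nu)$) yields $\simple[\beta] \LTensor{A_p} F(\nu) \cong \Sigma N$ for some right $B'$-module $N$. Combining,
\[
\simple[\beta] \LTensor{A_p} F(\mu) \cong \Sigma\bigl( N \LTensor{B'} F(\tau) \bigr),
\]
and Lemma \ref{lem:G14}(i) applied to $\mu$ itself forces $N \LTensor{B'} F(\tau) = N \otimes_{B'} F(\tau)$ (the higher Tor vanishes). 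Constructing the desired triangle now amounts to exhibiting an injective module map $\simple[\beta'] \hookrightarrow N \otimes_{B'} F(\tau)$; its cokernel, suspended by $\Sigma$, then serves as $Z'$.

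Exploiting the derived equivalence $-\LTensor{B'} F(\tau)\colon \D(B') \to \D(A_1)$ with quasi-inverse $\RHom_{A_1}(F(\tau),-)$, combined with the key identity $\simple[\tau] \LTensor{B'} F(\tau) \cong \Sigma \simple[\beta']$ from Lemma \ref{lem:F22}(i) (equivalently $\RHom_{A_1}(F(\tau),\simple[\beta']) \cong \Sigma^{-1}\simple[\tau]$), a standard adjunction produces
\[
\Hom_{A_1}\!\bigl(\simple[\beta'], N \otimes_{B'} F(\tau)\bigr) \;\cong\; \Ext^1_{B'}\!\bigl(\simple[\tau], N\bigr).
\]
The required injection — and hence the triangle — is then furnished by any non-split class in $\Ext^1_{B'}(\simple[\tau], N)$, so the proof reduces to producing such a class.

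The main obstacle is precisely this non-vanishing of $\Ext^1$. I intend to derive it from the iterated description $N \cong \simple[\alpha] \LTensor{B} F(\nu_0)$, where $\alpha: m(p) \to n$ is the arrow opposite to $\beta$ and $\nu_0: n' \to \cdots \to n$ is $\nu$ with its last arrow removed; this follows from Lemma \ref{lem:F22}(i) and Lemma \ref{lem:G14}(ii), the latter applicable because no minimal gallery from $n'$ to $n$ can pass through $m(p)$ (the hyperplane-arrangement hypothesis forces equal lengths of minimal galleries via Lemma \ref{lem:H5}). Transposing via Lemma \ref{lem:H10}(ii) (whose hypotheses $m(1)\not\in\greenstart(\nu)$ and $m(1)\not\in\greenstart(\nu_0)$ are supplied by Lemma \ref{lem:H9} applied to the green extensions $\mu_n$ and $\nu_0 \circ \tau$), the task becomes to show that $\simple[\beta]$ appears in the top of the $A_p$-module $W := \Hom_{B'}(F(\nu),\simple[\tau])$. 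Producing this simple quotient will be the critical step; the planned method is to construct the quotient map from the exchange conflation in $\cE$ associated with the final mutation $\beta$ of $\mu_n$, and to verify its non-triviality using the vanishing statements coming from the "wrong side'' of Lemmas \ref{lem:G13} and \ref{lem:H9}.
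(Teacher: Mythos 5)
Your proposal inverts the logic of the paper's argument, and the inverted route is left incomplete at precisely the step that does all the work.

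The paper's proof is structured so that \emph{no} $\Ext$-computation is needed. Starting from Lemma \ref{lem:G14}(i), one has $\Sigma U \cong \simple[\beta] \LTensor{A_p} F(\mu)$ with $U$ a nonzero $A_1$-module, and Lemma \ref{lem:F17} upgrades $U$ to an $\underline{A}_1$-module. Since $\underline{A}_1$ is finite-dimensional, $U$ automatically has a simple submodule $T$, and by Lemma \ref{lem:simple} one writes $T \cong \simple[\beta']$ for \emph{some} arrow $\beta'\colon n' \to m(1)$. This gives the triangle immediately with $Z' = \Sigma(U/T)$. The only thing left is to check $n' \in \greenstart(\mu)$, and this is done a posteriori by applying the quasi-inverse $\RHom_{A_1}(F(\mu),-)$ to the triangle and reading off homological degree bounds (Lemmas \ref{lem:inf_sup}(iii), \ref{lem:inequality}, then \ref{lem:H10}(i)). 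In short: the paper lets the algebra choose $n'$, then verifies the green condition cheaply.

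You do the opposite: you choose $n'$ in advance from a green path $\mu_n$ (so $n' \in \greenstart(\mu)$ is free), and then you are forced to \emph{prove} that $\simple[\beta']$ embeds into $U = N \otimes_{B'} F(\tau)$, which you correctly reduce to showing $\Ext^1_{B'}(\simple[\tau],N) \neq 0$. This is the genuine gap: you explicitly describe the remaining step as a ``plan'' (``the planned method is to construct the quotient map from the exchange conflation...and to verify its non-triviality...''), so the proof is not finished. Moreover it is not clear that the critical non-vanishing holds for your \emph{particular} choice of green path $\mu_n$; the lemma only asserts existence of some $n'$, and the paper's method is robust precisely because it does not commit to a candidate first. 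Even if your $\Ext^1$ non-vanishing were true, establishing it would likely require reinventing something close to the paper's degree-bound argument anyway, since you would need to show that $W := \Hom_{B'}(F(\nu),\simple[\tau])$ admits $\simple[\beta]$ as a quotient — a socle/top statement with no obvious direct access. The preliminary reductions you make (the factorization $F(\mu) \cong F(\nu)\LTensor{B'}F(\tau)$, the identification of $N$ via Lemma \ref{lem:F22}(i), and the observation that $m(p) \notin \greenend(\nu_0)$ by minimal-gallery length considerations) are all sound, but they do not close the argument.

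One further point worth flagging: the paper needs Lemma \ref{lem:F17} to know that $U$ is an $\underline{A}_1$-module (not merely an $A_1$-module), which is what makes Lemma \ref{lem:simple} applicable. Your writeup works entirely with $A_1$- and $B'$-modules and never explicitly invokes this reduction to the stable endomorphism algebra; if you were to complete the missing step you would need to be careful about this as well.
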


\begin{proof}
Lemma \ref{lem:G14}(i) says that
\begin{equation}
\label{equ:preH13a}
  \Sigma U \cong \simple[ \beta ] \LTensor{ A_p } F( \mu ) 
\end{equation}
in $\cD( A_1 )$ for some non-zero $A_1$-right module $U$.  Lemma \ref{lem:F17} applies with $x = m(p)$, $y = m(1)$ and implies that $U$ is an $\underline{ A }_1$-right module where $\underline{ A }_1 = \cC\big( m(1),m(1) \big)$.

Since $\underline{ A }_1$ is a finite dimensional $k$-algebra, there is a short exact sequence
\begin{equation}
\label{equ:preH13b}
  0 \xrightarrow{} T \xrightarrow{} U \xrightarrow{} V \xrightarrow{} 0
\end{equation}
in $\Mod \underline{ A }_1$ with $T$ simple.  By Lemma \ref{lem:simple} we have
\begin{equation}
\label{equ:preH13c}
  T \cong \simple[ \beta' ]
\end{equation}
for some arrow $n' \xrightarrow{ \beta' } m(1)$ in $\Gamma_{ \cC }$.  We can view \eqref{equ:preH13b} as a short exact sequence in $\Mod A_1$.  It induces a triangle in $\cD( A_1 )$ whose first suspension reads
\begin{equation}
\label{equ:preH13d}
  \Sigma \simple[ \beta' ] \xrightarrow{} \simple[ \beta ] \LTensor{ A_p } F( \mu ) \xrightarrow{} \Sigma V
\end{equation}
by Equations \eqref{equ:preH13a} and \eqref{equ:preH13c}.  Setting $Z' = \Sigma V$ gives the triangle in the lemma.

To complete the proof, we must show $n' \in \greenstart( \mu )$.  Applying to \eqref{equ:preH13d} the functor $\RHom_{ A_1 }\!\big( F( \mu ),- \big)$, which is quasi-inverse to $- \LTensor{ A_p } F( \mu )$, gives a triangle
\[
  \RHom_{ A_1 }\!\big( F( \mu ),\Sigma \simple[ \beta' ] ) \xrightarrow{} \simple[ \beta ] \xrightarrow{} \RHom_{ A_1 }\!\big( F( \mu ),\Sigma V \big).
\]
Rolling one step left and desuspending gives the triangle
\[
  \Sigma^{ -1 }\RHom_{ A_1 }\!\big( F( \mu ),V \big) \xrightarrow{} \RHom_{ A_1 }\!\big( F( \mu ), \simple[ \beta' ] \big) \xrightarrow{} \Sigma^{ -1 } \simple[ \beta ].
\]
The first term satisfies $\sup \Sigma^{ -1 }\RHom_{ A_1 }\!\big( F( \mu ),V \big) \leqslant -1$ by Lemma \ref{lem:inf_sup}(iii), and the third term satisfies $\sup \Sigma^{ -1 } \simple[ \beta ] = -1$, so the middle term satisfies 
\[
  \sup \RHom_{ A_1 }\!\big( F( \mu ), \simple[ \beta' ] \big) \leqslant -1
\]
by Lemma \ref{lem:inequality}.  However,
\[
  -1 \leqslant \inf \RHom_{ A_1 }\!\big( F( \mu ), \simple[ \beta' ] \big)
\]
holds by Lemma \ref{lem:inf_sup}(iii), so $\RHom_{ A_1 }\!\big( F( \mu ), \simple[ \beta' ] \big)$ is concentrated in homological degree $-1$ whence $n' \in \greenstart( \mu )$ by Lemma \ref{lem:H10}(i).  
\end{proof}

\section{Faithfulness}
\label{sec:fidelity}

\begin{Setup}
\label{set:mu}
Lemma \ref{lem:H13} concerns a path $\mu$ of length $\geqslant 1$ in $\Gamma_{ \cC }$ and the isomorphism class of two-sided tilting complexes $F( \mu )$.  By Lemma \ref{lem:H11} we have
\begin{equation}
\label{equ:mub}
  \mu = \mu \langle t \rangle \circ \cdots \circ \mu \langle 1 \rangle
\end{equation}
for some $t \geqslant 1$, where
\[
  \mu \langle i \rangle = m \langle i \rangle \xrightarrow{} \cdots \xrightarrow{} m \langle i+1 \rangle
\]
is a green path in $\Gamma_{ \cC }$, and
\begin{equation}
\label{equ:mua}
  \greenend( \mu \langle i \rangle ) = \greenstart( \mu \langle i+1 \rangle )
\end{equation}
for each $i$.  Set $A_i = \cE( m \langle i \rangle,m \langle i \rangle )$.
\end{Setup}

\begin{Lemma}
\label{lem:H13}
Assume that conditions {\rm \hyper} and {\rm \somefiniteness} are satisfied.  Let $n \xrightarrow{ \beta } m \langle t+1 \rangle$ be an arrow in $\Gamma_{ \cC }$.
\begin{enumerate}
\setlength\itemsep{4pt}

  \item  If $n \in \greenend( \mu \langle t \rangle )$, then $\sup \simple[ \beta ] \LTensor{ A_{ t+1 } } F( \mu ) = t$.  

  \item  If $n \not\in \greenend( \mu \langle t \rangle )$, then $\sup \simple[ \beta ] \LTensor{ A_{ t+1 } } F( \mu ) \leqslant t-1$.  

\end{enumerate}
\end{Lemma}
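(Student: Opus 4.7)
The plan is to induct on $t$, the number of green segments in the Deligne normal form \eqref{equ:mub}. The base case $t=1$ is exactly Lemma~\ref{lem:G14}. For the inductive step with $t \geq 2$, write $\mu = \mu\langle t \rangle \circ \mu'$ where $\mu' = \mu\langle t-1 \rangle \circ \cdots \circ \mu\langle 1 \rangle$ has $t-1$ green segments, so that functoriality of $F$ from Theorem~\ref{thm:G11} yields $F(\mu) \cong F(\mu\langle t \rangle) \LTensor{A_t} F(\mu')$.

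As a running auxiliary I would prove, by the same induction on $t$, a general bound: for any finitely generated $A_{t+1}$-module $M$, $\sup M \LTensor{A_{t+1}} F(\mu) \leq t$. Since $\mu\langle t \rangle$ is green, Lemma~\ref{lem:inf_sup}(ii) puts $X := M \LTensor{A_{t+1}} F(\mu\langle t \rangle)$ in degrees $0$ and $1$; applying $- \LTensor{A_t} F(\mu')$ to the canonical truncation triangle $\Sigma \H_1(X) \to X \to \H_0(X)$ and combining Lemma~\ref{lem:inequality} with the inductive general bound for $\mu'$ (giving sup $\leq t-1$ on each piece) yields the claim for $M \LTensor{A_{t+1}} F(\mu) \cong X \LTensor{A_t} F(\mu')$.

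Part (ii) is then immediate: Lemma~\ref{lem:G14}(ii) presents $W := \simple[\beta] \LTensor{A_{t+1}} F(\mu\langle t \rangle)$ as a module, so $\simple[\beta] \LTensor{A_{t+1}} F(\mu) \cong W \LTensor{A_t} F(\mu')$ and the general bound for $\mu'$ gives sup $\leq t-1$. For part (i), I would apply Lemma~\ref{lem:preH13} to the green path $\mu\langle t \rangle$ and the arrow $n \xrightarrow{\beta} m\langle t+1 \rangle$ with $n \in \greenend(\mu\langle t \rangle)$, producing an arrow $n' \xrightarrow{\beta'} m\langle t \rangle$ with $n' \in \greenstart(\mu\langle t \rangle) = \greenend(\mu\langle t-1 \rangle)$ (via \eqref{equ:mua}) and a triangle
\[
\Sigma \simple[\beta'] \to \simple[\beta] \LTensor{A_{t+1}} F(\mu\langle t \rangle) \to \Sigma V
\]
in $\cD(A_t)$, with $V$ an $A_t$-module. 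Tensoring with $F(\mu')$ over $A_t$ gives a triangle $A \to B \to C$ in $\cD(A_1)$, where $B = \simple[\beta] \LTensor{A_{t+1}} F(\mu)$; the inductive hypothesis (i) for $\mu'$ applied to $\beta'$ gives $\sup A = t$ with $\H_t(A) \neq 0$ and $\H_{t+1}(A) = 0$, and the general bound for $\mu'$ gives $\sup C \leq t$ with $\H_{t+1}(C) = 0$.

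The upper bound $\sup B \leq t$ is Lemma~\ref{lem:inequality}; for the matching lower bound, the long exact homology sequence collapses at degrees $t, t+1$ to an injection $\H_t(A) \hookrightarrow \H_t(B)$, forcing $\H_t(B) \neq 0$ and hence $\sup B = t$. The main obstacle is precisely this exact equality in part (i): obtaining $\leq t$ alone is mechanical, but pinning down the sup at $t$ requires the strict vanishing $\H_{t+1}(C) = \H_t(V \LTensor{A_t} F(\mu')) = 0$, which in turn demands that the general bound deliver $\leq t-1$ for modules tensored with the shorter composition $F(\mu')$. This one-degree strict drop when replacing $\mu$ by $\mu'$ is what closes the induction, and it is what motivates running the three inductions (part (i), part (ii), and the general bound) simultaneously.
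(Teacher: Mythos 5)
Your proof is correct, and it takes a genuinely different route from the paper's. The paper proves part (i) by establishing, by induction on $t$, the stronger statement that there is an arrow $n'' \xrightarrow{\beta''} m\langle 1\rangle$ with $n'' \in \greenstart(\mu\langle 1\rangle)$ and a triangle $\Sigma^t \simple[\beta''] \to \simple[\beta] \LTensor{A_{t+1}} F(\mu) \to Z''$ with $\sup Z'' \leqslant t$; the inductive step assembles this triangle from the Lemma~\ref{lem:preH13} triangle tensored with $F(\mu')$ and the triangle supplied by the inductive hypothesis for $\mu'$, glued via the octahedral axiom. You instead keep the induction to the statement of (i) itself (together with the auxiliary sup bound) and argue directly with the long exact homology sequence of $\Sigma \simple[\beta'] \LTensor{A_t} F(\mu') \to \simple[\beta] \LTensor{A_{t+1}} F(\mu) \to \Sigma V \LTensor{A_t} F(\mu')$: the inductive case (i) for $\mu'$ applied to $\beta'$ (legitimate since $n' \in \greenstart(\mu\langle t\rangle) = \greenend(\mu\langle t-1\rangle)$ by \eqref{equ:mua}) gives a nonzero class in degree $t$ on the left, and the bound $\sup V \LTensor{A_t} F(\mu') \leqslant t-1$ kills $\H_{t+1}$ of the right-hand term, forcing an injection on $\H_t$. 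This dispenses with the octahedral axiom and the carrying of the explicit triangle through the induction, at the cost of losing the paper's more explicit identification of a copy of $\simple[\beta'']$ in top homology. Your part (ii) matches the paper's argument. One small remark: your auxiliary general bound $\sup M \LTensor{A_{t+1}} F(\mu) \leqslant t$ need not be proved by a separate induction via truncation triangles — it is just $t$ successive applications of Lemma~\ref{lem:inf_sup}(ii) along the factorisation $F(\mu) = F(\mu\langle t\rangle) \LTensor{A_t}\cdots\LTensor{A_2}F(\mu\langle 1\rangle)$, with no finite-generation hypothesis on $M$ required.
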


\begin{proof}
(i):  We will prove the following stronger statement by induction on $t$:
\begingroup
\renewcommand{\labelenumi}{(\alph{enumi})}
\begin{enumerate}
\setlength\itemsep{4pt}

  \item  If $n \xrightarrow{ \beta } m \langle t+1 \rangle$ is an arrow in $\Gamma_{ \cC }$ with $n \in \greenend( \mu \langle t \rangle )$, then there is an arrow $n'' \xrightarrow{ \beta'' } m \langle 1 \rangle$  with $n'' \in \greenstart( \mu \langle 1 \rangle )$ and a triangle in $\cD( A_1 )$:
\begin{equation}
\label{equ:H13a}
  \Sigma^t \simple[ \beta'' ] \xrightarrow{} \simple[ \beta ] \LTensor{ A_{ t+1 } } F( \mu ) \xrightarrow{} Z''
\end{equation}
with $\sup Z'' \leqslant t$.

\end{enumerate}
\endgroup
This implies part (i) because, on the one hand, Lemma \ref{lem:inequality} applied to \eqref{equ:H13a} gives $\sup \simple[ \beta ] \LTensor{ A_{ t+1 } } F( \mu ) \leqslant t$, and, on the other hand, the long exact homology sequence of \eqref{equ:H13a} contains
\[
  \H_{ t+1 }Z'' \xrightarrow{} \H_t\!\big( \Sigma^t \simple[ \beta'' ] \big) \xrightarrow{} \H_t\!\big( \simple[ \beta ] \LTensor{ A_{ t+1 } } F( \mu ) \big),
\]
that is
\[
  0 \xrightarrow{} \simple[ \beta'' ] \xrightarrow{} \H_t\!\big( \simple[ \beta ] \LTensor{ A_{ t+1 } } F( \mu ) \big),
\]
proving $\H_t\!\big( \simple[ \beta ] \LTensor{ A_{ t+1 } } F( \mu ) \big) \neq 0$ whence $\sup \simple[ \beta ] \LTensor{ A_{ t+1 } } F( \mu ) \geqslant t$. 

To prove (a), note that for $t=1$ it holds by Lemma \ref{lem:preH13}.

Suppose that $t \geqslant 2$ and that (a) holds at level $t-1$, so applies to 
\begin{equation}
\label{equ:H13e}
  \mu' = \mu \langle t-1 \rangle \circ \cdots \circ \mu \langle 1 \rangle.
\end{equation}
Let $n \xrightarrow{ \beta } m \langle t+1 \rangle$ with $n \in \greenend( \mu \langle t \rangle )$ be an arrow in $\Gamma_{ \cC }$.  By Lemma \ref{lem:preH13} applied to the green path $\mu \langle t \rangle = m \langle t \rangle \xrightarrow{} \cdots \xrightarrow{} m \langle t+1 \rangle$, there is an arrow $n' \xrightarrow{ \beta' } m \langle t \rangle$ with $n' \in \greenstart( \mu \langle t \rangle )$ and a triangle in $\cD( A_t )$:
\[
  \Sigma \simple[ \beta' ] \xrightarrow{} \simple[ \beta ] \LTensor{ A_{ t+1 } } F( \mu \langle t \rangle ) \xrightarrow{} Z'
\]
with $Z'$ concentrated in homological degree $1$, so in particular
\begin{equation}
\label{equ:H13c}
  \sup Z' \leqslant 1.
\end{equation}
We have $\mu \langle t \rangle \circ \mu' = \mu$ whence $F( \mu \langle t \rangle ) \LTensor{ A_t } F( \mu' ) = F( \mu )$, so applying the functor $- \LTensor{ A_t } F( \mu' )$ to the previous triangle gives a triangle
\begin{equation}
\label{equ:H13b}
  \Sigma \simple[ \beta' ] \LTensor{ A_t } F( \mu' ) \xrightarrow{} \simple[ \beta ] \LTensor{ A_{ t+1 } } F( \mu ) \xrightarrow{} Z' \LTensor{ A_t } F( \mu' ).
\end{equation}
Equation \eqref{equ:mua} gives $n' \in \greenend( \mu \langle t-1 \rangle )$ so by (a) applied to $\mu'$ there is an arrow in $\Gamma_{ 
\cC }$,
\[
  \mbox{$n'' \xrightarrow{ \beta'' } m \langle 1 \rangle$ with $n'' \in \greenstart( \mu \langle 1 \rangle )$,}
\]
and a triangle in $\cD( A_1 )$:
\[
  \Sigma^{ t-1 } \simple[ \beta'' ] \xrightarrow{} \simple[ \beta' ] \LTensor{ A_t } F( \mu' ) \xrightarrow{} Z''
\]
with
\begin{equation}
\label{equ:H13d}
  \sup Z'' \leqslant t-1.
\end{equation}
Its first suspension is the top row of the following diagram, and \eqref{equ:H13b} is the middle column.  The diagram exists by the octahedral axiom, and each row and column is a triangle.
\[
\vcenter{
  \xymatrix @+0.2pc {
    \Sigma^t \simple[ \beta'' ] \ar[r] \ar@{=}[d] & \Sigma \simple[ \beta' ] \LTensor{ A_t } F( \mu' ) \ar[r] \ar[d] & \Sigma Z'' \ar[d] \\
    \Sigma^t \simple[ \beta'' ] \ar[r] \ar[d] & \simple[ \beta ] \LTensor{ A_{ t+1 } } F( \mu ) \ar[r] \ar[d] & Z''' \ar[d] \\
    0 \ar[r] & Z' \LTensor{ A_t } F( \mu' ) \ar@{=}[r] & Z' \LTensor{ A_t } F( \mu' ) \\
                    }
        }
\]
The middle row is a triangle in $\cD( A_1 )$, which establishes (a) at level $t$.  To finish the proof of this, we only need to show $\sup Z''' \leqslant t$, which will follow from Lemma \ref{lem:inequality} applied to the last column if we can see $\sup \Sigma Z'' \leqslant t$ and $\sup Z' \LTensor{ A_t } F( \mu' ) \leqslant t$.  The former is immediate by \eqref{equ:H13d}.  The latter can be seen as follows:
\[
  \sup Z' \LTensor{ A_t } F( \mu' )
  \stackrel{\rm (a)}{=} \sup Z' \LTensor{ A_t } F( \mu \langle t-1 \rangle ) \LTensor{ A_{ t-1 } }\cdots \LTensor{ A_2 } F( \mu \langle 1 \rangle )
  \stackrel{\rm (b)}{\leqslant} \sup Z' +t-1
  \stackrel{\rm (c)}{\leqslant} t,
\]
where (a) is by Equation \eqref{equ:H13e} and (b) is by repeated application of Lemma \ref{lem:inf_sup}(ii), while (c) is by \eqref{equ:H13c}.

(ii):  If $n \not\in \greenend( \mu \langle t \rangle )$, then 
\begin{align*}
  \sup \simple[ \beta ] \LTensor{ A_{ t+1 } } F( \mu )
  & \stackrel{ \rm (a) }{=} \sup \big( \simple[ \beta ] \LTensor{ A_{ t+1 } } F( \mu \langle t \rangle ) \big) \LTensor{ A_t } F( \mu \langle t-1 \rangle ) \LTensor{ A_{ t-1 } } \cdots \LTensor{ A_2 } F( \mu \langle 1 \rangle ) \\
  & \stackrel{ \rm (b) }{\leqslant} \sup \big( \simple[ \beta ] \LTensor{ A_{ t+1 } } F( \mu \langle t \rangle ) \big) + t-1 \\
  & \stackrel{ \rm (c) }{=} t-1,
\end{align*}
where (a) is by Equation \eqref{equ:mub} and (b) is by $t-1$ applications of Lemma \ref{lem:inf_sup}(ii), while (c) is by Lemma \ref{lem:G14}(ii).
\end{proof}

\begin{Lemma}
\label{lem:postH13}
Assume that conditions {\rm \hyper} and {\rm \somefiniteness} are satisfied.  If $\mu$ is a path in $\Gamma_{ \cC }$, then
\[
  \mbox{$\mu$ has length $0$ $\Leftrightarrow$ $F( \mu )$ is an identity morphism in $\cP_{ \cE }$.}
\]
\end{Lemma}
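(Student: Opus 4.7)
The implication $\Rightarrow$ is immediate from functoriality of $F$, which must send the identity morphism of $\cF_{ \cC }$ at any vertex $m$ to the identity morphism of $\cP_{ \cE }$ at $m$, namely the isomorphism class of the bimodule ${}_{A}A_A$ with $A = \cE(m,m)$.

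For the non-trivial direction, my plan is to argue by contradiction. Suppose $\mu$ has length $\geqslant 1$ but $F( \mu )$ is an identity in $\cP_{ \cE }$. Then the source and target of $\mu$ must coincide, say at $m$, and $F(\mu) \cong {}_A A_A$ in $\D\big(A^{ \opp } \Tensor{k} A\big)$ where $A = \cE(m,m)$. In particular, for any simple right $A$-module $S$ we have
\[
  S \LTensor{ A } F( \mu ) \;\cong\; S \LTensor{ A } A \;\cong\; S,
\]
which is concentrated in homological degree $0$, so has $\sup = 0$.

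Next I invoke the Deligne normal form of Lemma \ref{lem:H11} to write $\mu = \mu \langle t \rangle \circ \cdots \circ \mu \langle 1 \rangle$ as in Setup \ref{set:mu}, with $t \geqslant 1$ and each $\mu \langle i \rangle$ a green path of length $\geqslant 1$ (this non-triviality of the segments is what makes the compatibility condition $\greenend( \mu\langle i \rangle ) = \greenstart( \mu\langle i+1 \rangle )$ meaningful and is the standard feature of the Deligne normal form). In particular $\mu \langle t \rangle$ has length $\geqslant 1$, so its penultimate vertex $n$ provides an arrow $\beta \colon n \xrightarrow{} m \langle t+1 \rangle = m$ in $\Gamma_{ \cC }$ with $n \in \greenend(\mu\langle t \rangle)$. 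Then Lemma \ref{lem:H13}(i) gives
\[
  \sup \simple[\beta] \LTensor{ A_{t+1} } F( \mu ) \;=\; t \;\geqslant\; 1,
\]
which contradicts the computation of the previous paragraph (applied to $S = \simple[\beta]$). Hence $\mu$ must have length $0$.

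The only subtlety I foresee is the justification that each $\mu \langle i \rangle$ in the Deligne decomposition can be taken of length $\geqslant 1$; this should follow from the standard description of the Deligne normal form as already used in the proof of Lemma \ref{lem:H11} (via \cite[lem.\ 6.3]{Au2}). Once this is in hand, the contradiction is immediate from Lemma \ref{lem:H13}(i), and no further homological analysis is needed.
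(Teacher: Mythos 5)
Your proof is correct and follows essentially the same route as the paper's: the forward direction is by functoriality, and the reverse direction applies the Deligne decomposition of Setup \ref{set:mu} and Lemma \ref{lem:H13}(i) to show $F(\mu)$ cannot be an identity when $\mu$ has length $\geqslant 1$. You have merely made explicit two points the paper leaves implicit — that each segment $\mu\langle i\rangle$ in the Deligne normal form of a nontrivial path has length $\geqslant 1$ (true, by the standard form of Deligne's decomposition), and that the penultimate vertex of $\mu\langle t\rangle$ therefore supplies an arrow $\beta$ with $n \in \greenend(\mu\langle t\rangle)$ — and both of these are needed for the application of Lemma \ref{lem:H13}(i), so spelling them out strengthens rather than deviates from the argument.
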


\begin{proof}
If $\mu$ has length $0$, then it is an identity morphism in $\cF_{ \cC }$, so $F( \mu )$ is an identity morphism in $\cP_{ \cE }$.  If $\mu$ has length $\geqslant 1$, then Setup \ref{set:mu} applies to it, and Lemma \ref{lem:H13}(i) shows that $F( \mu )$ is not an identity morphism in $\cP_{ \cE }$.
\end{proof}

\begin{Theorem}
\label{thm:H18}
If conditions {\rm \hyper} and {\rm \somefiniteness} are satisfied, then the functor $\cG_{ \cC } \xrightarrow{ G } \cP_{ \cE }$ of Theorem \ref{thm:G11}(iii) is faithful.
\end{Theorem}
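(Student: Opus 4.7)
Proof plan for Theorem \ref{thm:H18}:

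By a standard groupoid argument, $G$ is faithful if and only if it is injective on each vertex group $\cG_\cC(m,m)$: given $\phi, \psi \in \cG_\cC(m,m')$ with $G(\phi) = G(\psi)$, the loop $\chi := \psi^{-1} \circ \phi \in \cG_\cC(m,m)$ satisfies $G(\chi) = \id$, and the task reduces to showing $\chi = \id$. I therefore fix such a $\chi$ and aim to show it is trivial.

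The plan is to reduce to the positive-path statement of Lemma \ref{lem:postH13} by first putting $\chi$ into a fractional form. By Theorem \ref{thm:green-Delign} we may identify $\cG_\cC$ with the Deligne groupoid $\cG(\cH^m)$ of the finite simplicial arrangement $\cH^m$ (note that condition \hyper\ forces $\cC$ to be maximal rigid finite). Using Lemma \ref{lem:backwards} (the opposite of a green path is green) together with an Ore-type common-multiple argument in the positive submonoid of this Deligne groupoid, I would express $\chi$ as $\chi = [\nu]^{-1} \circ [\mu]$ for positive paths $\mu, \nu$ in $\Gamma_\cC$ that share a common target $p \in \mrig\cC$.

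With $\chi$ in this form, $G(\chi) = \id$ translates to $F(\mu) \cong F(\nu)$ in $\cP_\cE(m,p)$. To conclude $[\mu] = [\nu]$ in $\cG_\cC$ it suffices to show that $F$ distinguishes positive paths modulo $\sim$. For this I iterate Lemma \ref{lem:H13}: the sup of $\simple[\beta] \LTensor_{A_{t+1}} F(\mu)$, as $\beta$ ranges over arrows $n \to p$, simultaneously identifies the Deligne depth $t$ (as the maximum value of this sup) and the set $\greenend(\mu\langle t \rangle)$ (as the arrows achieving it). Invertibility of the two-sided tilting complex $F(\mu)$ then lets us ``peel off'' the terminal Deligne segment and repeat on the shorter path, reconstructing the whole Deligne normal form of $\mu$ as a sequence of source/target pairs determined by $F(\mu)$. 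Applied to $\mu$ and $\nu$, this forces them to have identical Deligne normal forms, whence $\mu \sim \nu$ by the definition of $\equiv$, giving $[\mu] = [\nu]$ and so $\chi = \id$.

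The main obstacle is producing the fractional form $\chi = [\nu]^{-1}\circ[\mu]$. A representative walk $\tilde\chi \in \cF_\cC(m,m)$ may freely alternate between arrows of $\Gamma_\cC$ and formal inverses, and one must rewrite it modulo $\sim$ as a single quotient of positive paths. Lemma \ref{lem:backwards} is essential here, since replacing a formal inverse of a green arrow by the corresponding opposite green arrow (which is an honest positive move) introduces a positive detour; this detour must then be absorbed into $\nu$ via the Ore-type common-multiple property, which is available precisely because $\cH^m$ is a finite simplicial arrangement under conditions \hyper\ and \somefiniteness.
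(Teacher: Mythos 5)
Your high-level strategy (reduce to vertex groups, express loops as fractions of positive paths, use Lemma \ref{lem:H13} to read Deligne data off $F(\mu)$) is in the right spirit, but it diverges from the paper's proof in a way that introduces a genuine gap. The paper does not pass through vertex groups at all: it cites \cite[lem.\ 2.11]{HW} to reduce faithfulness directly to the statement that for positive paths $\mu,\nu$ in $\Gamma_{\cC}$, $F(\mu)=F(\nu)$ implies $\mu\sim\nu$. This single citation does the work of your Ore-type fractional argument (and for essentially the same Garside/Deligne reason), so that part is a difference of sourcing rather than substance.

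The real divergence is in the induction. You propose to ``peel off the terminal Deligne segment'' $\mu\langle t\rangle$ and thereby reconstruct the entire Deligne normal form of $\mu$ from $F(\mu)$. For this you need $F(\mu)$ to determine $F(\mu\langle t\rangle)$, hence to determine the source $m\langle t\rangle$ of the terminal segment. But Lemma \ref{lem:H13} only tells you the set $\greenend(\mu\langle t\rangle)$, and you have not argued (and the paper never proves) that this set pins down $m\langle t\rangle$. The paper's proof is designed precisely to avoid needing this: it uses only the \emph{equality and nonemptiness} of $\greenend(\mu\langle t\rangle)=\greenend(\nu\langle u\rangle)$, picks an arbitrary common $n$ in this set with an arrow $n\xrightarrow{\beta}m$, replaces $\mu\langle t\rangle$ and $\nu\langle u\rangle$ by $\sim$-equivalent green paths ending with the same $\beta$ (this uses Lemma \ref{lem:H5} to control lengths), cancels the invertible $F(\beta)$, and then recurses on the shorter paths $\mu''$, $\nu''$ by induction on the length of $\nu$ -- re-applying Setup \ref{set:mu} to obtain a fresh Deligne decomposition of the shorter paths rather than trying to reuse the old one. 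Peeling one arrow at a time in this way completely bypasses the ``does $\greenend$ determine the segment?'' question. If you want to pursue your segment-at-a-time version, you would need an additional lemma about Deligne groupoids (e.g.\ that the terminal piece of the normal form is the left gcd with the local Garside element and is therefore intrinsic to the element); the paper's one-arrow induction is more elementary and closes the argument without such machinery. A smaller point: even granting the normal-form reconstruction, ``identical Deligne normal forms $\Rightarrow\mu\sim\nu$'' is not literally ``by the definition of $\equiv$''; you should say that the $i$-th segments are parallel green paths, hence $\equiv$-equivalent, and then compose.
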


In the case where $\cE$ is the category of maximal Cohen--Macaulay modules over the base of a 3-fold flopping contraction,
this theorem first appeared in \cite[thm.\ 6.5]{HW}.

\begin{proof}[Proof of Theorem \ref{thm:H18}]
Recall that the canonical functor $\cF_{ \cC } \xrightarrow{ Q_{ \cC } } \cG_{ \cC }$ of Definition \ref{def:green_groupoid} and the functors $F$ and $G$ of Theorem \ref{thm:G11} are linked by $GQ_{ \cC } = F$.  By the proof of \cite[lem.\ 2.11]{HW} it is enough to let $\mu$ and $\nu$ be paths in $\Gamma_{ \cC }$, assume $GQ_{ \cC }( \mu ) = GQ_{ \cC }( \nu )$, and prove $Q_{ \cC }( \mu ) = Q_{ \cC }( \nu )$; in other words, to assume $F( \mu ) = F( \nu )$ and prove $\mu \sim \nu$.  We do so by induction on the length of $\nu$.

Suppose $\nu$ has length $0$.  Then $\nu = \id_m$ for some $m \in \mrig \cC$ whence $F( \mu ) = F( \nu ) = \id_{ Fm }$.  By Lemma \ref{lem:postH13} this implies that $\mu$ has length $0$ whence $\mu = \id_m = \nu$.  In particular, $\mu \sim \nu$.

Suppose $\nu$ has length $\geqslant 1$.  Since $F( \mu ) = F( \nu )$, Lemma \ref{lem:postH13} implies that $\mu$ has length $\geqslant 1$.  Hence Setup \ref{set:mu} applies to $\mu$ and $\nu$.  We will use the notation from the setup for $\mu$, and analogous notation for $\nu$, writing $\nu = \nu \langle u \rangle \circ \cdots \circ \nu \langle 1 \rangle$ where $\nu \langle i \rangle = n \langle i \rangle \xrightarrow{} \cdots \xrightarrow{} n \langle i+1 \rangle$ is a green path in $\Gamma_{ \cC }$ and $\greenend( \nu \langle i \rangle ) = \greenstart( \nu \langle i+1 \rangle )$.  Note that there is an $m \in \mrig \cC$ such that $m \langle t+1 \rangle = n \langle u+1 \rangle = m$, so
\[
  \mu \langle t \rangle = m \langle t \rangle \xrightarrow{} \cdots \xrightarrow{} m
  \;\;,\;\;
  \nu \langle u \rangle = n \langle u \rangle \xrightarrow{} \cdots \xrightarrow{} m.  
\]
Since $F( \mu ) = F( \nu )$, Lemma \ref{lem:H13} implies that $\greenend( \mu \langle t \rangle ) = \greenend( \nu \langle u \rangle )$, so there is an $n \in \mrig \cC$ and green paths
\[
  \mu \langle t \rangle' = m \langle t \rangle  \xrightarrow{} \cdots \xrightarrow{} n \xrightarrow{ \beta } m
  \;\;,\;\;
  \nu \langle u \rangle' = n \langle u \rangle \xrightarrow{} \cdots \xrightarrow{} n \xrightarrow{ \beta } m.
\]
Definition \ref{def:green_groupoid} implies $\mu \langle t \rangle' \sim \mu \langle t \rangle$ and $\nu \langle u \rangle' \sim \nu \langle u \rangle$.  Setting
\[
  \mu' = \mu \langle t \rangle' \circ \mu \langle t-1 \rangle \circ \cdots \mu \langle 1 \rangle
  \;\;,\;\;
  \nu' = \nu \langle u \rangle' \circ \nu \langle u-1 \rangle \circ \cdots \nu \langle 1 \rangle,
\]
we have
\begin{equation}
\label{equ:H13f}
  \mu' \sim \mu \;\;,\;\; \nu' \sim \nu
\end{equation}
and
\begin{equation}
\label{equ:H18c}
  \mu' = \beta \circ \mu''
  \;\;,\;\;
  \nu' = \beta \circ \nu''
\end{equation}
for certain paths $\mu'',\nu''$ from $m \langle 1 \rangle = n \langle 1 \rangle$ to $n$.  The last two displayed equations imply that it is enough to show $\mu'' \sim \nu''$, and this will follow by induction if we can show that $F( \mu'' ) = F( \nu'' )$ and that $\nu''$ has smaller length than $\nu$.

However, by Theorem \ref{thm:G11}(iii),
Equation \eqref{equ:H13f} implies $F( \mu' ) = F( \mu )$, $F( \nu' ) = F( \nu )$, and since $F( \mu ) = F( \nu )$ we have
\[
  F( \mu' ) = F( \nu' ).
\]
Equation \eqref{equ:H18c} implies 
\[
  F( \mu' ) = F( \beta ) \circ F( \mu'' )
  \;\;,\;\;
  F( \nu' ) = F( \beta ) \circ F( \nu'' ),
\]
and since $F( \beta )$ is invertible, the last two displayed equations imply $F( \mu'' ) = F( \nu'' )$.

Lemma \ref{lem:H5} implies that $\nu \langle u \rangle'$ has the same length as $\nu \langle u \rangle$, so $\nu'$ has the same length as $\nu$.  But $\nu''$ has smaller length than $\nu'$ by Equation \eqref{equ:H18c}, hence smaller length than $\nu$.  
\end{proof}

\section{$g$-vector fans and hyperplane arrangements}
\label{sec:hyperplane}
This section does not use the Frobenius model $\cE$, but works under Setup \ref{set:blanket}(iv) alone (in which case $\mrig \cC$ should just be a set containing one object from each isomorphism class of basic maximal rigid objects of $\cC$).

For a basic maximal rigid object $\ell$ of $\cC$ with indecomposable decomposition $\ell \cong \ell_1 \oplus \cdots \oplus \ell_d$, define its {\em{Cartan matrix}} 
(or, equivalently, the Cartan matrix of the basic algebra $\cC(\ell,\ell)$) to be the $d \times d$-matrix $\Cart_\ell$
with entries 
\begin{equation}
\label{equ:Cartan}
\dim \cC(\ell_i, \ell_j) \; \;  \mbox{for} \; \;  1 \leqslant i,j \leqslant d.
\end{equation}
We identify $\Cart_\ell$ with the linear operator on 
$\Ksp( \add_{ \cC } \ell)_{ \BR }$ having that matrix in the basis $\{ [\ell_1], \ldots, [\ell_d] \}$.

\begin{Theorem}
\label{thm:gfan-to-hyperplane} Assume that the 2-Calabi--Yau category $\cC$ is maximal rigid finite (i.e. has finitely many classes of basic maximal rigid objects) and that 
$\Sigma^2 n \cong n$ for all rigid objects $n$ of $\cC$. Then the following hold:
\begin{enumerate}
\setlength\itemsep{4pt}
\item For every maximal rigid object $\ell$ of $\cC$, the matrix $C_\ell + C_\ell^T$ is positive definite, and as a consequence, 
the Cartan matrix $C_\ell$ is nondegenerate.
\item For all basic maximal rigid objects $\ell$ of $\cC$ and rigid objects $n$ of $\cC$,
\[
\index_\ell( \Sigma n) = - \index_\ell(n).
\]
\item The category $\cC$ satisfies condition  {\rm \hyper}, i.e. the $g$-vector fan decomposition $G^{ \ell }_{ \cC }$ 
comes from a simplicial hyperplane arrangement in $\Ksp( \add_{ \cC }\ell )_\BR$ for all $\ell \in \mrig \cC$.
\end{enumerate}
\end{Theorem}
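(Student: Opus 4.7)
The plan is to prove (ii) first, use it together with an invariance property of a Cartan bilinear form to derive (i), and then deduce (iii) by a reflection-group argument on the $g$-vector fan. For (ii), the base case is immediate: for each indecomposable summand $\ell_i$ of $\ell$, the triangle $\ell_i \to 0 \to \Sigma \ell_i \to \Sigma \ell_i$ satisfies the hypotheses of Definition \ref{def:index} and yields $\index_\ell(\Sigma \ell_i) = -[\ell_i]$. For general rigid $n$, I would start from the defining triangle $\ell_1 \to \ell_0 \to n \to \Sigma \ell_1$, apply $\Sigma$, and use $\Sigma^2 \cong \id$ on rigid objects to obtain a triangle $\Sigma \ell_1 \to \Sigma \ell_0 \to \Sigma n \to \ell_1$ whose three non-$\Sigma n$ entries are rigid. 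I would then invoke the additivity of the index along triangles of rigid objects in a $2$-Calabi--Yau category (Palu's formula, cf.\ \cite{Pa}), combined with the base case, to conclude $\index_\ell(\Sigma n) = [\ell_1] - [\ell_0] = -\index_\ell(n)$.

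For (i), $2$-Calabi--Yau duality yields $\cC(\ell_i, \ell_j) \cong \dual\cC(\ell_j, \Sigma^2 \ell_i)$, and $\Sigma^2 \ell_i \cong \ell_i$ turns this into $\dual\cC(\ell_j, \ell_i)$, so $C_\ell$ is symmetric and $C_\ell + C_\ell^T = 2 C_\ell$. Positive definiteness of $C_\ell + C_\ell^T$ thus reduces to that of the symmetric form $\langle [\ell_i], [\ell_j]\rangle_\ell := \dim \cC(\ell_i, \ell_j)$ on $\Ksp(\add_\cC \ell)_\BR$. The heart of the proof is showing that this form is \emph{invariant} under the Dehy--Keller linear isomorphisms $\Ksp(\add_\cC \ell)_\BR \to \Ksp(\add_\cC m)_\BR$ sending $\index_\ell(m_j) \mapsto [m_j]$, or equivalently
\[
\langle \index_\ell(m_i), \index_\ell(m_j)\rangle_\ell = \dim \cC(m_i, m_j)
\]
for every basic maximal rigid $m = m_1 \oplus \cdots \oplus m_d \in \mrig \cC$. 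I would prove invariance by induction on distance in the mutation graph, reducing to an explicit one-step computation via the exchange triangle $m_k \to e \to m_k^\ast \to \Sigma m_k$, the $2$-Calabi--Yau duality, and the assumption $\Sigma^2 \cong \id$. Granted invariance, positive definiteness is immediate: for any $0 \neq x \in \Ksp(\add_\cC \ell)_\BR$, Proposition \ref{pro:cones1}(v) places $x$ in some chamber $B^\ell(m)$, so $x = \sum_i a_i \index_\ell(m_i)$ with $a_i \geq 0$ and not all zero, whence $\langle x, x\rangle_\ell = \sum_{i,j} a_i a_j \dim \cC(m_i, m_j) \geq \sum_i a_i^2 \dim \cC(m_i, m_i) \geq \sum_i a_i^2 > 0$.

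For (iii), with (i) and (ii) in hand, I would show that mutation corresponds to an orthogonal reflection on $\Ksp(\add_\cC \ell)_\BR$. For each mutation $m \leftrightarrow m^\ast$ at an indecomposable summand $m_k$, let $s_k$ be the reflection with respect to the positive definite form from (i) across the hyperplane $H_k := \linearspan\{\index_\ell(m_j) : j \neq k\}$. Using invariance of the form together with the exchange triangle, one verifies that $s_k$ fixes $\index_\ell(m_j)$ for $j \neq k$ and sends $\index_\ell(m_k)$ to $\index_\ell(m_k^\ast)$, so it maps $B^\ell(m)$ onto $B^\ell(m^\ast)$ and preserves the fan globally. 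The subgroup of $\GL(\Ksp(\add_\cC \ell)_\BR)$ generated by the $s_k$ acts transitively on the finitely many chambers, which forces the walls to assemble into a simplicial hyperplane arrangement (simpliciality of individual chambers being Proposition \ref{pro:cones1}(i)); the identification of $\cG_\cC$ with the Deligne groupoid of this arrangement then follows from Theorem \ref{thm:green-Delign}. The main obstacle throughout is the invariance of the Cartan form in step (i), a delicate homological statement expressing that mutation acts as an orthogonal reflection for the dimension-of-Hom form; it depends essentially on the rigidity of both $m$ and $m^\ast$, $2$-Calabi--Yau duality, and $\Sigma^2 \cong \id$ on rigids, and once established both the positive definiteness in (i) and the hyperplane structure in (iii) follow formally.
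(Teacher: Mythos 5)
Your identification of the invariance of the Cartan bilinear form $\langle -,-\rangle_\ell$ under the Dehy--Keller isomorphisms as the crux of the theorem is correct, and coincides with the paper's Proposition \ref{prop:gfan-to-hyperplane2}(i). The deduction of positive definiteness from invariance that you sketch (any nonzero $x$ lies in some chamber $B^\ell(m)$ of the complete chamber decomposition, so $\langle x,x\rangle_\ell = \sum a_i a_j\dim\cC(m_i,m_j) \geq \sum a_i^2 > 0$) is valid and slightly more direct than the paper's integrality argument. Your remark that $C_\ell$ is already symmetric, by $2$-Calabi--Yau duality plus $\Sigma^2\cong\id$, is also correct, though not exploited in the paper.

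Your proof of part (ii), however, has a gap. You invoke ``additivity of the index along triangles of rigid objects (Palu's formula)''. The index is not additive over triangles: Palu's formula carries correction terms valued in $K_0(\mod\cC(\ell,\ell))$ transported to $\Ksp(\add_\cC\ell)$, and these do not vanish automatically, even when all terms of the triangle are rigid. When unwound on indecomposables, the identity $\index_\ell(\Sigma n)=-\index_\ell(n)$ is \emph{equivalent} to the statement that the two exchange triangles for a mutation have isomorphic middle terms $b\cong b'$ (this is precisely the content of Lemma \ref{lem:equiv-index}). In the paper that fact is \emph{derived from} part (i): nondegeneracy of $C_\ell$ forces $[b]=[b']$ in $\Ksp(\add_\cC\ell)$ by a pairing computation (Proposition \ref{prop:gfan-to-hyperplane2}(ii)). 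So the logical order is (i) $\Rightarrow$ $b\cong b'$ $\Rightarrow$ (ii), and a direct additivity argument is not available.

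Your proof of part (iii) has a more fundamental problem. Granting (i) and (ii), you want the orthogonal reflection $s_k$ across the wall $H_k$ to send $\index_\ell(m_k)$ to $\index_\ell(m_k^*)$ and to preserve the fan globally. The first requirement already needs $\dim\cC(m_k,m_j)=\dim\cC(m_k^*,m_j)$ for all $j\neq k$; the exchange triangles only give $\dim\cC(e,m_j)=\dim\cC(m_k,m_j)+\dim\cC(m_k^*,m_j)$, which does not force equality. More seriously, a complete simplicial chamber decomposition for which the reflections across the walls of a fixed chamber preserve the whole arrangement is necessarily a \emph{Coxeter} arrangement, a strictly stronger conclusion than the theorem asserts; simplicial non-Coxeter arrangements exist in rank $\geq 3$, and the relevant arrangements in the flop setting of \cite{DW,HW,W} are not Coxeter in general. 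The paper establishes condition [hyper] by a genuinely different route (Proposition \ref{prop:gfan-to-hyperplane3}): an induction on rank using Iyama--Yoshino Calabi--Yau reduction of $\cC$ at an indecomposable summand of $\ell$, together with a local bouquet criterion (Lemma \ref{lem:bouquet}) for a chamber decomposition to come from a hyperplane arrangement. The hypotheses pass to the reduced category (Lemma \ref{lem:index-CY-red}), and the bouquet criterion is checked using the inductive hypothesis. This argument is independent of any reflection symmetry and appears to be essential.
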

Here and below, $A^T$ denotes the transpose of a matrix $A$. For a basic maximal rigid object $\ell$ of $\cC$, denote the bilinear form
\begin{equation}
\label{equ:ell-form}
\langle -, - \rangle_\ell : \Ksp( \add_{ \cC } \ell )_{\BR} \times \Ksp( \add_{ \cC } \ell )_{\BR}  \to \BR
\end{equation}
given by $\langle [\ell_i], [\ell_j] \rangle_\ell = \dim C (\ell_i, \ell_j)$
for $1 \leqslant j,k \leqslant d$. In other words, $\langle \xi, \eta \rangle_\ell = \xi^T C_\ell \eta$ for $\xi, \eta \in \Ksp( \add_{ \cC } \ell )_{\BR}$.

Along the way to proving Theorem \ref{thm:gfan-to-hyperplane}, we will establish the following facts which are 
of independent interest:

\begin{Proposition}
\label{prop:gfan-to-hyperplane2} Assume that the 2-Calabi--Yau category $\cC$ is such that
$\Sigma^2 n \cong n$ for all rigid objects $n$ of $\cC$. Then the following hold:
\begin{enumerate}
\setlength\itemsep{4pt}
\item For every pair of basic maximal rigid objects $\ell, m$ of $\cC$ which are reachable from each other 
by mutation, we have 
\[
\langle \index_\ell(n_1) , \index_\ell(n_2) \rangle_\ell =  \langle \index_m(n_1) , \index_m(n_2) \rangle_m
\]
for all rigid objects $n_1, n_2$ of $\cC$.
\item For every mutation between basic maximal rigid objects,
\begin{equation}
\label{equ:mutation}
\ell \cong \oplus_j \ell_j \mapsto \ell^* \cong (\oplus_{j \neq i} \ell_j) \oplus \ell^*_i, 
\end{equation}
the corresponding exchange triangles
\begin{equation}
\label{equ:ex-triangles}
\ell_i \rightarrow b \rightarrow \ell^*_i  \rightarrow \Sigma \ell_i \quad \mbox{and} \quad 
\ell^*_i \rightarrow b' \rightarrow \ell_i  \rightarrow \Sigma \ell^*_i
\end{equation}
are such that $b \cong b'$.
\end{enumerate}
\end{Proposition}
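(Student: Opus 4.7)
My plan is to prove (ii) first, then to deduce (i) from it. The hypothesis $\Sigma^2 n\cong n$ on rigid $n$ enters both parts via the symmetric form of the $2$-Calabi--Yau duality, namely $\cC(X,Y)\cong D\cC(Y,X)$ for rigid $X,Y$, where $D=\Hom_k(-,k)$.

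\emph{Proof of (ii).}  Set $\bar\ell=\bigoplus_{j\neq i}\ell_j$ and $\Lambda=\cC(\bar\ell,\bar\ell)$. Apply $\cC(\bar\ell,-)$ to both exchange triangles. Using rigidity of $\ell$ to kill $\cC(\bar\ell,\Sigma\ell_i)=0$, and rigidity of $\ell^*$ together with $\Sigma^{-1}\ell_i^*\cong\Sigma\ell_i^*$ (which follows from $\Sigma^2\ell_i^*\cong\ell_i^*$) to kill $\cC(\bar\ell,\Sigma^{-1}\ell_i^*)=0$, together with the analogous vanishings from the second triangle, one extracts short exact sequences of right $\Lambda$-modules
\[
0\to\cC(\bar\ell,\ell_i)\to\cC(\bar\ell,b)\to\cC(\bar\ell,\ell_i^*)\to 0,\qquad 0\to\cC(\bar\ell,\ell_i^*)\to\cC(\bar\ell,b')\to\cC(\bar\ell,\ell_i)\to 0.
\]
Minimality of the Iyama--Yoshino approximations $b\to\ell_i^*$ and $b'\to\ell_i$ upgrades both right-hand surjections to projective covers, so $b\cong b'$ (equivalent to $\cC(\bar\ell,b)\cong\cC(\bar\ell,b')$ by Yoneda on $\add_{\cC}\bar\ell$) reduces to the identity $\top\cC(\bar\ell,\ell_i^*)\cong\top\cC(\bar\ell,\ell_i)$ of semisimple right $\Lambda$-modules. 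The symmetric $2$-CY duality $\cC(\bar\ell,X)\cong D\cC(X,\bar\ell)$ for rigid $X$ converts this into a comparison of socles of the left $\Lambda$-modules $\cC(\ell_i,\bar\ell)$ and $\cC(\ell_i^*,\bar\ell)$, which is then forced by the parallel short exact sequences obtained by applying $\cC(-,\bar\ell)$ to the same two exchange triangles and a diagram chase in the periodic splicing that alternates between these two modules.

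\emph{Proof of (i).}  By induction on the length of a mutation sequence $\ell\to\cdots\to m$ it suffices to treat a single mutation $\ell\to\ell^*$ at $\ell_i$. Define $\phi\colon\Ksp(\add_{\cC}\ell)_{\BR}\to\Ksp(\add_{\cC}\ell^*)_{\BR}$ by $\phi([\ell_j])=[\ell_j]$ for $j\neq i$ and $\phi([\ell_i])=[b]-[\ell_i^*]$; part (ii) ensures that the a priori alternative formula $\phi([\ell_i])=[b']-[\ell_i^*]$ read off the second exchange triangle is consistent, so $\phi$ is well defined and agrees with the Dehy--Keller isomorphism (Lemma \ref{lem:equiv-index}). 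I then verify $\langle\phi(e_j),\phi(e_k)\rangle_{\ell^*}=\langle e_j,e_k\rangle_\ell$ on basis vectors: the cases $j,k\neq i$ are immediate, the off-diagonal cases $\{j,k\}\ni i$ follow from the identity $\dim\cC(b,\ell_j)=\dim\cC(\ell_i,\ell_j)+\dim\cC(\ell_i^*,\ell_j)$ supplied by the short exact sequences above and from its transpose, and the diagonal case $j=k=i$ follows from running the full long exact sequence obtained by applying $\cC(-,\ell_i^*)$ to the first exchange triangle, whose $1$-dimensional connecting term $\cC(\Sigma\ell_i,\ell_i^*)\cong k$ (non-zero and $1$-dimensional by $2$-CY duality plus $\Sigma^2\cong\id$ on $\ell_i$) contributes symmetrically on both sides of the identity to be proved.

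\emph{Main obstacle.}  The hardest step is the top-matching in the proof of (ii). Even after invoking the symmetric form of the $2$-CY duality, transferring module structures through the isomorphism $\cC(\bar\ell,X)\cong D\cC(X,\bar\ell)$ is delicate because the pointwise identifications $\Sigma^2 X\cong X$ on rigid $X$ are not given as a natural transformation; one must verify that the resulting isomorphism $b\cong b'$ is independent of these non-canonical choices. This is the precise point at which the hypothesis $\Sigma^2 n\cong n$ for rigid $n$ is used in an essential, non-removable way.
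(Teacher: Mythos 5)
Your proposed logical order is the reverse of the paper's, and the reversal is precisely where the argument breaks down. The paper proves (i) directly by an explicit computation with the long exact sequences obtained by applying $\cC(r,-)$ and $\cC(-,r)$ to the two exchange triangles (this produces the identities corresponding to your off-diagonal and diagonal cases), then uses (i) to establish that $C_\ell+C_\ell^T$ is positive definite and hence that $C_\ell$ is nondegenerate (Theorem~\ref{thm:gfan-to-hyperplane}(i)), and only then derives (ii): one shows $\langle\eta,[b]-[b']\rangle_\ell=0$ for all $\eta$, and nondegeneracy of the form forces $[b]=[b']$ in $\Ksp(\add_\cC\ell)$, whence $b\cong b'$. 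Your plan is to get (ii) first and then feed it into a Dehy--Keller-type computation for (i), but you never actually supply a proof of (ii): the key claim, that $\top\cC(\bar\ell,\ell_i^*)\cong\top\cC(\bar\ell,\ell_i)$ as semisimple $\Lambda$-modules, is only described as following from ``a diagram chase in the periodic splicing,'' and you yourself flag this as the step you cannot control. This is not a cosmetic omission. The isomorphism of tops is equivalent, via your projective-cover setup, to the conclusion $b\cong b'$ itself, so the ``diagram chase'' is carrying the entire weight of (ii); and the paper's route shows that this conclusion is extracted from the nondegeneracy of the Euler/Cartan pairing, which is a consequence of (i), not something available before it.

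Put differently: your argument for (i) from (ii) is essentially a streamlined version of the paper's direct computation (the $j,k\neq i$, mixed, and $j=k=i$ cases are exactly the identities the paper derives from the five long exact sequences together with $\Sigma^{-1}\ell_i^*\cong\Sigma\ell_i^*$), so reversing the order buys you nothing unless you can prove (ii) independently --- and I do not see how to close that gap without invoking nondegeneracy of $\langle-,-\rangle_\ell$. The $2$-CY duality $\cC(\bar\ell,x)\cong D\cC(x,\bar\ell)$ transports tops to socles and the two dual short exact sequences do splice periodically, but this periodicity alone does not identify the socles of $\cC(\ell_i,\bar\ell)$ and $\cC(\ell_i^*,\bar\ell)$; that identification is exactly the point where more input is needed. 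You should follow the paper's order: prove (i) by the long-exact-sequence computation, deduce positivity of $C_\ell+C_\ell^T$, and then obtain (ii) from nondegeneracy.
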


We will need several auxiliary results for the proofs of Theorem \ref{thm:gfan-to-hyperplane} and Proposition \ref{prop:gfan-to-hyperplane2}.
\begin{Definition}
\label{def:cones} \hfill 
\begin{enumerate}
\setlength\itemsep{4pt}

\item For the simplicial cone $\{\, a_1 v_1 + \cdots + a_d v_d \mid a_i \in \BR_{ \geqslant 0 } \,\} \subset \BR^d$, where $\{ v_1, \ldots, v_d \}$ in $\BR^d$ are linearly independent vectors, the rays $\BR_{ \geqslant 0 } v_i$ are called {\em{extremal rays}} of the cone.

\item For a chamber decomposition $S = \{ B_1, \ldots, B_N \}$ of $\BR^d$ and an extremal ray $\eta$ of one of the chambers, 
the {\em{bouquet $Y(\eta)$ of $\eta$}} is the union of the cones of $S$ for which $\eta$ is an extremal ray.
By a face of $Y(\eta)$ we will mean a face of one of the cones $B_i$ contained in $Y(\eta)$. 

\end{enumerate}
\end{Definition}

Analogously to condition {\rm \hyper}, we will say that a  chamber decomposition $S$ of $\BR^d$ comes from a 
hyperplane arrangement $\cH$ if the cones $B_1, \ldots, B_N$ of $S$ are those of $\cH$. If this is the case, then 
(a) the hyperplanes of $\cH$ are unions of codimension one faces of $S$ and (b) 
each hyperplane of $\cH$ is the hyperplane defined by a codimension one face of $S$. Moreover we have:

{\em{A chamber decomposition $S= \{ B_1,\ldots , B_N \}$ of $\BR^d$ comes from a hyperplane arrangement $\cH$ if and only if each hyperplane defined by a
codimension one face of $S$ is a union of codimension one faces of $S$.}}

The implication ``only if'' is obvious. For the ``if'', denote by $\cH$ the hyperplane arrangement consisting of all hyperplanes defined 
by codimension one faces of $S$. By the assumption, the union of those planes is precisely the union of the codimension one faces of $S$.
The boundary of every cone of $\cH$ is a subset of the union of codimension one faces of $S$. Hence each 
cone of $\cH$ is a union of some of the cones $B_1, \ldots, B_N$. It equals exactly one of those cones, because, by the construction of $\cH$, its interior does not intersect
the union of codimension one faces of $S$. Therefore the cones of $\cH$ are precisely $B_1, \ldots, B_N$.  

We recall the following, which was proved in \cite{DIJ} in the case of cluster tilting subcategories.  The case of basic maximal rigid objects is analogous, see Proposition \ref{pro:cones1}(v) and Lemma \ref{lem:G12}.   

\begin{Lemma}
\label{lem:gfan-decomp} If $\cC$ is maximal rigid finite ($|\mrig \cC|< \infty$), then 
the $g$-vector fan $G^{ \ell }_{ \cC }$ defines a chamber decomposition 
of $\Ksp( \add_{ \cC }\ell )_{ \BR }$ for all $\ell \in \mrig \cC$. 
\end{Lemma}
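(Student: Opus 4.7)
The statement is essentially the ``only if'' direction of Proposition \ref{pro:cones1}(v), transposed to the setting of basic maximal rigid objects, so the plan is to reduce to the silting side via Lemma \ref{lem:K}(ii) and then assemble the four axioms of a chamber decomposition one by one.

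First I would fix a basic maximal rigid object $\ell \in \cC$, set $\underline{A} = \cC(\ell,\ell)$, and invoke Lemma \ref{lem:AIR} to obtain a bijection $\mrig \cC \xrightarrow{\sim} \2silt \underline{A}$, $m \mapsto \silt[\ell]{m}$. Since $\cC$ is maximal rigid finite, the set $\2silt \underline{A}$ is finite, so $\underline{A}$ is $\tau$-tilting finite in the sense of \cite{DIJ}. Lemma \ref{lem:K}(ii) then says that the linear isomorphism $\Phi$ of \eqref{equ:Ka} carries the collection $G^{\ell}_{\cC} = \{ B^{\ell}(m) \mid m \in \mrig \cC \}$ onto the $g$-vector fan $\{ C(\silt[\ell]{m}) \mid m \in \mrig \cC \}$ of $\underline{A}$. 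Hence it suffices to verify the four defining properties of a chamber decomposition for the latter, since $\Phi$ is a linear isomorphism.

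Next, I would check the four conditions individually. Property (1), that each $B^{\ell}(m)$ is a simplicial cone, is Proposition \ref{pro:cones1}(i). Property (2), that distinct chambers have disjoint interiors, is Proposition \ref{pro:cones1}(ii). Property (3), that each codimension one face is shared by exactly one other chamber, is Proposition \ref{pro:cones1}(iii) together with the uniqueness of mutation for basic maximal rigid objects (\cite[cor.\ 3.3]{ZZ}): a codimension one face corresponds to removing one indecomposable summand, and there is precisely one other basic maximal rigid object obtained from $m$ by mutating at that summand. Property (4), that the chambers cover all of $\Ksp(\add_{\cC}\ell)_\BR$, is the main point and follows from \cite[thm.\ 4.7]{Asai} applied to $\underline{A}$: $\tau$-tilting finiteness of $\underline{A}$ forces the $g$-vector fan of $\underline{A}$ to be complete, i.e.\ to cover $\K0(\Kb(\proj \underline{A}))_\BR$.

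The main obstacle is property (4), the covering; properties (1)--(3) are essentially local statements about individual chambers and their shared faces, while covering is a global statement that genuinely uses the finiteness hypothesis. The cited result \cite[thm.\ 4.7]{Asai} does exactly this work for $\tau$-tilting finite algebras, and via the bijection of Lemma \ref{lem:AIR} the hypothesis ``$\cC$ is maximal rigid finite'' translates precisely to ``$\underline{A}$ is $\tau$-tilting finite,'' so the invocation is legitimate. A brief remark would note that this is why the ``if and only if'' of Proposition \ref{pro:cones1}(v) holds: completeness of the fan is equivalent to finiteness of $\2silt \underline{A}$, hence to maximal rigid finiteness of $\cC$. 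The present lemma only uses the ``only if'' direction, which is all that is needed in the sequel.
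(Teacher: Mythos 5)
Your proposal is correct and takes essentially the same route as the paper. The paper's own proof is a one-line pointer to Proposition \ref{pro:cones1}(v) (whose proof in turn cites \cite[thm.\ 4.7]{Asai}) with a remark that the cluster-tilting argument of \cite{DIJ} carries over to maximal rigid objects; you unpack the same chain of references into the four axioms, using Lemma \ref{lem:K}(ii) and Lemma \ref{lem:AIR} to transport to the silting side and Proposition \ref{pro:cones1}(i)--(iii) plus Asai for the individual properties, which is a correct and somewhat more explicit version of what the paper leaves implicit.
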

%
\begin{Corollary} 
\label{cor:rigid} 
Assume that $\cC$ is maximal rigid finite and that $\ell$ is a basic maximal rigid object of $\cC$. 
There is a bijection between the set of rigid objects of $\cC$ and $\Ksp( \add_{ \cC }\ell )_{ \BR }$ given by
\[
n \mapsto \index_\ell(n).
\]
\end{Corollary}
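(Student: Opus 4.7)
The plan is to realize the map $n \mapsto \index_\ell(n)$ --- which in fact lands in the integer lattice $\Ksp(\add_\cC \ell) \subset \Ksp(\add_\cC \ell)_\BR$, and so I interpret the target as that lattice --- as a bijection, by exploiting the simplicial chamber decomposition $\{B^\ell(m) : m \in \mrig \cC\}$ of $\Ksp(\add_\cC \ell)_\BR$ from Proposition \ref{pro:cones1}(v), combined with the $\BZ$-basis property of the extremal rays of each chamber.

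For surjectivity, let $v \in \Ksp(\add_\cC \ell)$. Proposition \ref{pro:cones1}(v) yields some $m \in \mrig \cC$, with indecomposable decomposition $m \cong m_1 \oplus \cdots \oplus m_e$, such that $v \in B^\ell(m)$. Since $B^\ell(m)$ is simplicial by Proposition \ref{pro:cones1}(i), the vectors $\{\index_\ell(m_i)\}_i$ form an $\BR$-basis of $\Ksp(\add_\cC \ell)_\BR$; via Lemma \ref{lem:K}(i) and the fact that $\{[\siltlittle[\ell]{m_i}]\}_i$ is a $\BZ$-basis of $\K0(\Kb(\proj\,\cC(\ell,\ell)))$ (Remark \ref{rmk:K}), they are in fact a $\BZ$-basis of $\Ksp(\add_\cC \ell)$. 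Hence $v = \sum_i a_i \index_\ell(m_i)$ with unique $a_i \in \BZ_{\geqslant 0}$, and $n = \bigoplus_i m_i^{a_i}$ is a rigid object with $\index_\ell(n) = v$.

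For injectivity, suppose that rigid $n, n'$ satisfy $\index_\ell(n) = \index_\ell(n') = v$. Write the indecomposable decompositions $n \cong \bigoplus_i n_i^{a_i}$ and $n' \cong \bigoplus_j (n'_j)^{b_j}$ with positive multiplicities, and complete the basic rigid objects $\bigoplus_i n_i$ and $\bigoplus_j n'_j$ to basic maximal rigid objects $m, m' \in \mrig \cC$, using that in our 2-Calabi--Yau setting any basic rigid object extends to a basic maximal rigid one (see \cite{ZZ}). Then $v \in B^\ell(m) \cap B^\ell(m')$. Because each of these cones is simplicial, the smallest face of $B^\ell(m)$ containing $v$ is exactly the cone generated by $\{\index_\ell(n_i)\}_i$ (these are the extremal rays picking up positive coefficient in the unique expansion of $v$ in the basis of $B^\ell(m)$), and similarly for $m'$ with $\{\index_\ell(n'_j)\}_j$. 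Since the chambers have pairwise disjoint interiors (Proposition \ref{pro:cones1}(ii)), this minimal face depends only on $v$, not on the chosen completion, so $\{\BR_{\geqslant 0}\index_\ell(n_i)\}_i = \{\BR_{\geqslant 0}\index_\ell(n'_j)\}_j$ as sets of extremal rays. The $\BZ$-basis vectors $\index_\ell(n_i)$ and $\index_\ell(n'_j)$ are then primitive generators of common rays, hence equal; matching them up and using linear independence then forces the multiplicities $a_i = b_j$, so $n \cong n'$.

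The main obstacle is the final identification within injectivity, i.e.\ that distinct indecomposable rigid objects have distinct images under $\index_\ell$. The clean resolution uses the Dehy--Keller bijection \cite{DK}, imported to the present maximal-rigid setting through Lemma \ref{lem:AIR} and Lemma \ref{lem:K}(ii): indices of basic rigid objects correspond bijectively to classes of $2$-term presilting objects in $\Kb(\proj\,\cC(\ell,\ell))$, and each extremal ray of the $g$-vector fan is therefore attached to a single indecomposable rigid summand, regardless of which maximal rigid completion is used to reach it.
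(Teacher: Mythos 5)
Your surjectivity argument is essentially the paper's: cover $\Ksp(\add_\cC\ell)$ by the chambers $B^\ell(m)$ using Proposition \ref{pro:cones1}(v) (i.e.\ Lemma \ref{lem:gfan-decomp}), and use that $\{\index_\ell(m_1),\dots,\index_\ell(m_d)\}$ is a $\BZ$-basis of $\Ksp(\add_\cC\ell)$. The paper cites \cite[thm.~2.6]{DK} for this basis fact, whereas you route it through the $2$-term silting side via Lemma \ref{lem:K}; either is fine, though note that Remark \ref{rmk:K} literally only states the basis property for $\{[P_i]\}$, so you should really be invoking \cite[thm.~2.27]{AI} applied to the silting object $\silt[\ell]{m}$.

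Your injectivity argument is where you diverge from the paper, and where there is a real gap as written. You reduce to indecomposables via fan geometry and then conclude ``$\index_\ell(n_i)=\index_\ell(n'_j)$, hence equal; \ldots so $n\cong n'$.'' But $\index_\ell(n_i)=\index_\ell(n'_j)$ does not by itself give $n_i\cong n'_j$; that implication is precisely the injectivity of the index map on indecomposable rigid objects, which is the content of \cite[thm.~2.3]{DK}. So the fan-geometric middle of your injectivity paragraph is circular unless you already import Dehy--Keller, and once you do, the fan argument buys nothing: \cite[thm.~2.3]{DK} gives injectivity of $\index_\ell$ on \emph{all} rigid objects at once, with no need to pass through indecomposables or extremal rays. (There is also a subtler wrinkle you glossed over: ``the minimal face containing $v$ is independent of the completion'' requires that the $g$-vector cones intersect in common faces, i.e.\ the fan property, which is slightly stronger than the chamber-decomposition condition actually recorded in Proposition \ref{pro:cones1}(v).) Your closing paragraph does correctly identify \cite{DK} as the missing ingredient, which is exactly what the paper does: it cites \cite[thm.~2.3]{DK} directly for injectivity and is done. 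In short, same underlying approach, but your injectivity reduction is both gap-ridden as stated and unnecessary.
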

\begin{proof}
This map is injective by \cite[thm 2.3]{DK}. (In \cite{DK} this fact was proved in the presence of cluster-tilting 
subcategories of $\cC$, but the same proof works for basic maximal rigid objects.) 

Each chamber $B^\ell(m)$ of $G^{ \ell }_{ \cC }$ satisfies
\[
\{\, \index_\ell (n) \mid n \in \add_\cC m \,\} = B^\ell(m) \cap \Ksp( \add_\cC \ell ), 
\]
because $\{\index_\ell(m_1), \ldots, \index_\ell(m_d) \}$ is a basis of the free abelian group $\Ksp( \add_\cC \ell )$ by \cite[thm 2.6]{DK}. 
(Once again the proof of \cite{DK} works for basic maximal rigid objects.) 
It follows from Lemma \ref{lem:gfan-decomp} that
\[
\{\, \index_\ell (n)  \mid n \in \cC \; \mbox{rigid} \,\} = \Ksp( \add_\cC \ell ).
\qedhere
\]
\end{proof}

\begin{Lemma} 
\label{lem:bouquet} \hfill
\begin{enumerate}
\setlength\itemsep{4pt}
\item Let $d >2$. A chamber decomposition $S$ of $\BR^d$ comes from a hyperplane arrangement if and only if it has the property:

{\rm (*)} For every extremal ray $\eta$ of $S$ and codimension one face $F$ of $S$ containing $\eta$, 
the intersection of the bouquet $Y(\eta)$ with the hyperplane determined by $F$ is a union of faces of $S$. 

\item  Let $d =2$. A chamber decomposition $S$ of $\BR^d$ comes from a hyperplane arrangement if and only if it has the property:

{\rm (**)} For every extremal ray $\eta$ of $S$, $-\eta$ is also an extremal ray of $S$.
\end{enumerate}
\end{Lemma}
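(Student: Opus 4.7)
Part (ii) I will handle directly. The ``only if'' direction follows because any line $H \in \cH$ splits as $\nu \cup \{0\} \cup (-\nu)$ for any open ray $\nu$ lying on $H$, so whenever $H$ is a union of codim 1 faces of $S$, both $\nu$ and $-\nu$ must be such faces. For ``if'', condition (**) together with the fact that $\{0\}$ is a face of every simplicial cone writes each $\linearspan_\BR(\nu)$ as $\nu \cup \{0\} \cup (-\nu)$, a union of faces of $S$; the characterization preceding the lemma then applies. The ``only if'' direction of Part (i) is analogous: if $H := \linearspan_\BR(F) \in \cH$, each chamber $B_i$ sits in one of the two closed half-spaces bounded by $H$, so $B_i \cap H$ is a face of $B_i$, hence of $S$, and taking the union over chambers in $Y(\eta)$ gives (*).

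For the ``if'' direction of Part (i), the plan is to use the characterization recalled before the lemma, so it suffices to prove, for every codim 1 face $F$ of $S$, that $H := \linearspan_\BR(F)$ is a union of codim 1 faces of $S$. I set
\[
  H' := \bigcup \{F' \mid F' \text{ is a codim 1 face of } S,\; F' \subseteq H\}.
\]
This is non-empty (it contains $F$), closed in $H$, and a union of cones with apex $0$; in particular $0 \in H'$. Because $d-1 \geq 2$, the punctured hyperplane $H \setminus \{0\}$ is connected, so it will be enough to show that $H' \setminus \{0\}$ is open in $H \setminus \{0\}$, whence $H' \setminus \{0\} = H \setminus \{0\}$ and $H' = H$.

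Fix $y \in H' \setminus \{0\}$ and let $\sigma$ be the minimal face of $S$ containing $y$. Since $y \in H'$, some codim 1 face $F_1 \subseteq H$ of $S$ contains $y$, and hence contains $\sigma$. Choose an extremal ray $\eta$ of $\sigma$; simpliciality forces $\eta$ to be an extremal ray of $S$, and $\eta \subseteq \sigma \subseteq F_1$, so property (*) applied to the pair $(\eta,F_1)$ gives that $Y(\eta) \cap H$ is a finite union of faces of $S$, all contained in $H$. The key geometric observation I will use is that every chamber $B$ of $S$ containing $y$ has $\sigma$ as a face (by minimality of $\sigma$), so every extremal ray of $\sigma$---in particular $\eta$---appears as a $1$-dimensional face of $B$, hence as an extremal ray of the simplicial cone $B$. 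Therefore the union of chambers containing $y$, which is a closed neighborhood of $y$ in $\BR^d$, is contained in $Y(\eta)$; intersecting with $H$ produces an open $(d-1)$-dimensional neighborhood $N$ of $y$ in $H$ with $N \subseteq Y(\eta) \cap H$.

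The final step is a density argument. Among the finitely many faces of $S$ covering $Y(\eta) \cap H$, those of dimension $d-1$ are codim 1 faces of $S$ lying in $H$ and therefore belong to $H'$, while the faces of dimension at most $d-2$ form a nowhere dense subset of $N$. Hence $H' \cap N$ is closed in $N$ and contains the open dense complement of this nowhere dense subset, forcing $H' \cap N = N$, which establishes openness and completes the argument. The principal obstacle is verifying this covering: one must rule out the possibility that the faces produced by (*) are all of dimension $< d-1$ near $y$, leaving a neighborhood uncovered by codim 1 faces in $H$. The simplicial identification ``$1$-dimensional face of a chamber = extremal ray'' is precisely what guarantees that each chamber meeting a small ball around $y$ lies in $Y(\eta)$, so that $Y(\eta) \cap H$ is genuinely $(d-1)$-dimensional near $y$ and the density argument goes through.
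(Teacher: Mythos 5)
Your proof is correct, and it takes a genuinely different route from the paper's for the main ``if'' direction of part (i). The paper argues by contradiction: supposing $S$ does not come from an arrangement, it produces (via the characterization preceding the lemma) a hyperplane $\alpha$ whose union $\beta$ of contained faces of $S$ is a proper subset of $\alpha$, then claims an extremal ray $\eta$ of $S$ can be found on the topological boundary of $\beta$ inside $\alpha$, together with a codimension one face $F \subseteq \beta$ through $\eta$, and shows property (*) must fail for $(\eta, F)$ since $Y(\eta)\cap\alpha$ is a closed $(d-1)$-dimensional neighborhood of $\eta\setminus\{0\}$ that would have to lie inside $\beta$. You instead work directly: fixing $H = \linearspan_{\BR}(F)$ you show $H' := \bigcup\{\text{codim one faces of }S\text{ in }H\}$ equals $H$ by proving $H'\setminus\{0\}$ is open and closed in the connected set $H\setminus\{0\}$ (using $d-1\geqslant 2$). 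Both arguments pivot on the same geometric observation—that in a simplicial fan the bouquet $Y(\eta)$ covers a full neighborhood of $\eta\setminus\{0\}$, because any chamber meeting a point near $\eta$ must have $\eta$ as an extremal ray—but your connectedness-plus-density argument avoids the paper's (unjustified in detail) claim that a boundary extremal ray exists, at the cost of a somewhat longer local openness step. Both handle $d=2$ separately for the same underlying reason ($H\setminus\{0\}$ disconnected versus $\partial\beta$ degenerating to $\{0\}$). Your treatment of (ii) and of the ``only if'' direction in (i) is essentially the same as what the paper implicitly relies on.
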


It is easy to see that for $d>2$, property (**) is a necessary condition for a chamber decomposition $S$ of $\BR^d$ to come from a 
hyperplane arrangement, but not a sufficient one. 

\begin{proof} (i) Obviously, a chamber decomposition $S$ which comes from a hyperplane arrangement has property (*). 
Assume that $S$ does not come from a hyperplane arrangement. This means that there exists a hyperplane $\alpha$ 
in $\BR^d$ that contains a face of $S$ but is not a union of faces of $S$. Denote by $\beta \subset \alpha$ the union of those faces of $S$ 
that are subsets of $\alpha$. Since $\beta \neq \alpha$ there exists an extremal ray $\eta$ of $S$  that is on the boundary 
of $\beta$ inside $\alpha \cong \BR^{d-1}$. By construction, there exists a codimension one face $F$ of $S$, that lies inside $\beta$ and contains 
$\eta$. The property (*) for the ray $\eta$ and the face $F$ is violated: $Y(\eta) \cap \alpha$ is not a union of faces of $S$
because this property would imply that $\eta \backslash \{ 0 \}$ is in the interior of $\beta$ inside $\alpha$. 

Part (ii) is direct and is left to the reader.
\end{proof}
Consider a mutation $\ell \mapsto \ell^*$ between basic maximal rigid objects as in \eqref{equ:mutation} and the corresponding exchange triangles  
\eqref{equ:ex-triangles}. 
Following \cite[sec. 3]{DK}, consider the linear isomorphisms $\phi_\pm^{\ell^*, \ell} :  \Ksp( \add_{ \cC }\ell )_{\BR} \rightarrow \Ksp( \add_{ \cC }\ell^*)_{\BR}$ such that
$\phi_+^{\ell^*, \ell} ([\ell_j]) = \phi^{\ell^*, \ell}_- ([\ell_j])  = [\ell_j]$ for $j \neq i$ and 
\[
\phi_+^{\ell^*, \ell} ([\ell_i]) = [b'] - [\ell^*_i],
\quad
\phi_-^{\ell^*, \ell} ([\ell_i]) = [b] - [\ell^*_i].
\]
We will repeatedly make use of the following formula from \cite[thm. 3]{DK}:
\begin{equation}
\label{phi+-}
\index_{\ell^*} (n) = 
\begin{cases}
\phi_+^{\ell^*, \ell}\big( \index_\ell (n) \big) &\mbox{if} \; \; [\index_\ell(n) : [\ell_i]] \geqslant 0,
\\[1.5mm]
\phi_-^{\ell^*, \ell}\big( \index_\ell (n) \big) &\mbox{if} \; \; [\index_\ell(n) : [\ell_i]] \leqslant 0
\end{cases}
\end{equation}
for all rigid objects $n$ of $\cC$.
In \cite{DK} this fact is proved for mutations of cluster tilting subcategories, but the same proof works for
mutations of basic maximal rigid objects. 
\begin{proof}[Proof of Proposition \ref{prop:gfan-to-hyperplane2}(i)] Denote $ \Ksp( \cC )_{ \BR } = \Ksp( \cC ) \Tensor{ \BZ } \BR$ where 
$\Ksp(  \cC )$ is the split Grothendieck group of the additive category $\cC$.  Consider the bilinear form
\[
\langle -, - \rangle : \Ksp(  \cC )_\BR \times \Ksp(  \cC )_\BR \to \BR
\]
given by $\langle [r], [s] \rangle = \dim \cC(r,s)$ for $r, s \in \cC$. Clearly, for every basic maximal rigid object 
$\ell$ of $\cC$, the form $\langle -, - \rangle_\ell$ equals the restriction of $\langle -, - \rangle$ 
to $\Ksp( \add_\cC  \ell )_\BR \hookrightarrow \Ksp(  \cC )_\BR$.

It is sufficient to prove the identity when $m = \ell^*$ for a 
mutation $\ell \mapsto \ell^*$ as in \eqref{equ:mutation} with mutation triangles \eqref{equ:ex-triangles}.
Denote $\widetilde{\ell}= \oplus_{j \neq i} \ell_j$. 

For $r \in \add_{\cC} \widetilde{\ell}$, applying the functor $\cC(r, -)$ to the first and second triangles in \eqref{equ:ex-triangles} gives
\[
0 = \cC(r, \Sigma^{-1} \ell_i^*) \rightarrow \cC(r, \ell_i) \rightarrow \cC(r, b) \rightarrow \cC(r, \ell_i^*) \rightarrow \cC(r, \Sigma \ell_i) =0
\]
and 
\[
0 = \cC(r, \Sigma^{-1} \ell_i) \rightarrow \cC(r, \ell_i^*) \rightarrow \cC(r, b') \rightarrow \cC(r, \ell_i) \rightarrow \cC(r, \Sigma \ell_i^*) =0.
\]
Thus,
\begin{equation}
\label{equ:3-4}
\langle \eta, [\ell_i] \rangle = \langle \eta, [b] - [\ell_i^*] \rangle = \langle \eta, [b'] - [\ell_i^*] \rangle
\quad \mbox{for $\eta \in \Ksp( \add_\cC  \widetilde{\ell} )_\BR \subset \Ksp(  \cC )_\BR$}.
\end{equation}
Similarly, applying the functor $\cC(-, r)$ to the triangles in \eqref{equ:ex-triangles} for $r \in \add_{\cC} \widetilde{\ell}$, gives
\[
0 = \cC(\Sigma \ell_i, r) \rightarrow \cC(\ell_i^*, r) \rightarrow \cC(b, r) \rightarrow \cC(\ell_i, r) \rightarrow \cC(\Sigma^{-1} \ell_i^*, r) =0
\]
and 
\[
0 = \cC(\Sigma \ell^*_i, r) \rightarrow \cC(\ell_i, r) \rightarrow \cC(b', r) \rightarrow \cC(\ell^*_i, r) \rightarrow \cC(\Sigma^{-1} \ell_i, r) =0.
\]
Therefore, 
\begin{equation}
\label{equ:5-6}
\langle [\ell_i], \eta \rangle = \langle [b] - [\ell_i^*], \eta \rangle = \langle [b'] - [\ell_i^*], \eta \rangle
\quad \mbox{for $\eta \in \Ksp( \add_\cC  \widetilde{\ell} )_\BR \subset \Ksp(  \cC )_\BR$}.
\end{equation}
Applying the functor $\cC(\ell_i, -)$ to the triangles in \eqref{equ:ex-triangles} yields
\[
0 = \cC(\ell_i, \Sigma^{-1} b) \rightarrow  \cC(\ell_i, \Sigma^{-1} \ell_i^*) \rightarrow 
\cC(\ell_i, \ell_i) \rightarrow \cC(\ell_i, b) \rightarrow \cC(\ell_i, \ell_i^*) \rightarrow \cC(\ell_i, \Sigma \ell_i) =0
\]
and 
\[
0 = \cC(\ell_i, \Sigma^{-1} \ell_i) \rightarrow \cC(\ell_i, \ell_i^*) 
\rightarrow \cC(\ell_i, b') \rightarrow \cC(\ell_i, \ell_i) \rightarrow \cC(\ell_i, \Sigma \ell_i^*)  \rightarrow \cC(\ell_i, \Sigma b')  =0.
\]
These sequences and the isomorphism $\Sigma^{-1} \ell^*_i \cong \Sigma \ell^*_i$ give
\begin{equation}
\label{equ:9-10}
\langle [\ell_i], [\ell_i] - [\Sigma \ell^*_i] \rangle = 
\langle [\ell_i], [b] - [\ell^*_i] \rangle = \langle [\ell_i], [b'] - [\ell_i^*] \rangle. 
\end{equation}
Similarly, by applying the functor $\cC(-, \ell^*_i)$ to the triangles in \eqref{equ:ex-triangles} we obtain
\begin{equation}
\label{equ:7-8}
\langle [\ell_i] - [\Sigma \ell_i], [\ell^*_i]  \rangle = 
\langle [b] - [\ell^*_i], [\ell^*_i] \rangle = \langle [b'] - [\ell_i^*], [\ell^*_i] \rangle. 
\end{equation}
Hence, for all $b_1, b_2 \in \{ b, b' \}$, we have 
\begin{align}
&\langle [b_1] - [\ell^*_i], [b_2] - [\ell^*_i] \rangle = \langle [b_1] - [\ell^*_i], [b_2] \rangle - \langle [b_1] - [\ell^*_i], [\ell^*_i] \rangle \stackrel{\eqref{equ:5-6}, \eqref{equ:7-8}}{=} 
\nonumber
\\
&\langle [\ell_i], [b_2] \rangle - \langle [\ell_i] - [\Sigma \ell_i], [\ell^*_i] \rangle = \langle [\ell_i], [b_2] - [\ell^*_i] \rangle 
+ \langle [\Sigma \ell_i], [\ell^*_i ] \rangle \stackrel{\eqref{equ:9-10}}{=} 
\nonumber
\\ 
&\langle [\ell_i], [\ell_i] - [\Sigma \ell^*_i] \rangle + \langle [\Sigma \ell_i], [\ell^*_i ] \rangle =  \langle [\ell_i], [\ell_i] \rangle.
\label{equ:long}
\end{align}
Suppose 
\[
\index_\ell(n_s) = \eta_s + \alpha_s [\ell_i] \quad
\mbox{for $s=1,2$ and some $\eta_s \in \Ksp( \add_\cC  \widetilde{\ell})$, $\alpha_i \in \BZ$}.  
\]
It follows from \eqref{phi+-} that
\[
\index_{\ell^*}(n_s) = \eta_s + \alpha_s ( [b_s] - [\ell^*_i]) 
\]
for some $b_1, b_2 \in \{b, b' \}$. Therefore, 
\begin{align*}
&\langle \index_{\ell^*}(n_1), \index_{\ell^*}(n_2) \rangle_{\ell^*} = 
\\
& \langle \eta_1, \eta_2 \rangle + \alpha_2 \langle \eta_1,  [b_2] - [\ell^*_i] \rangle + \alpha_1 \langle  [b_1] - [\ell^*_i], \eta_2 \rangle + 
\alpha_1 \alpha_2 \langle  [b_1] - [\ell^*_i],  [b_2] - [\ell^*_i] \rangle \stackrel{\eqref{equ:3-4}, \eqref{equ:5-6}, \eqref{equ:long}}{=}
\\
& \langle \eta_1, \eta_2 \rangle + \alpha_2 \langle \eta_1, [\ell_i] \rangle +  \alpha_1 \langle  [\ell_i], \eta_2 \rangle
+ \alpha_1 \alpha_2 \langle [\ell_i], [\ell_i] \rangle = \langle \index_\ell(n_1), \index_\ell(n_2) \rangle_\ell.
\end{align*}
\end{proof}
\begin{proof}[Proof of Theorem \ref{thm:gfan-to-hyperplane}(i)] For all nonzero rigid objects $n$ of $\cC$, we have 
\begin{equation}
\label{equ:positive-inner}
\langle \index_\ell(n), \index_\ell(n) \rangle_\ell = \dim \cC(n,n) >0.
\end{equation}
To see this, we choose a basic maximal rigid object $m$ of $\cC$ such that $n \in \add_\cC m$, so $\index_m(n) = [n]$.  
We apply Proposition \ref{prop:gfan-to-hyperplane2}(i) for this choice of $m$, using the fact that $m$ is reachable from $\ell$
by mutations by Lemma \ref{lem:G12}.  

We claim that the symmetric bilinear form on $\Ksp( \add_\cC \ell )_\BR$, given by 
\[
(\eta_1, \eta_2) \mapsto \langle \eta_1, \eta_2 \rangle_\ell + \langle \eta_2, \eta_1 \rangle_\ell,
\] 
is positive definite. Since $\langle [\ell_i] , [\ell_i] \rangle_\ell >0$ and the matrix $C_\ell + C^T_\ell$ of the form 
in the standard basis of $\Ksp( \add_\cC \ell )_\BR$ is integral, if the form is not positive definite, then 
$\langle \eta, \eta \rangle_\ell =0$ for some $\eta \in \Ksp(\add_\cC \ell)$. However, this contradicts 
\eqref{equ:positive-inner} because Corollary \ref{cor:rigid} says $\eta = \index_\ell(n)$ for some nonzero rigid object $n$.  

Therefore the matrix $C_\ell + C^T_\ell$ is positive definite. As a consequence, the matrix $C_\ell$ is 
nondegenerate, because $\eta^T C_\ell \eta = \langle \eta, \eta \rangle_\ell >0$ for all 
$\eta \in \Ksp(\add_\cC \ell)_\BR$, $\eta \neq 0$. 
\end{proof}

\begin{proof}[Proof of Proposition \ref{prop:gfan-to-hyperplane2}(ii)] The identity \eqref{equ:3-4} implies that
\[
\langle \eta, [b]- [b'] \rangle_\ell =0 \quad \mbox{for all $\eta \in \Ksp( \add_\cC  \widetilde{\ell} )_\BR \subset \Ksp( \add_\cC \ell )_\BR $}. 
\]
Similarly, \eqref{equ:9-10} gives 
\[
\langle [\ell_i], [b] - [b'] \rangle_\ell =0.  
\]
Hence, 
\[
\langle \eta, [b]- [b'] \rangle_\ell =0 \quad \mbox{for all $\eta \in \Ksp( \add_\cC \ell )_\BR $}. 
\]
Since the matrix $C_\ell$ is nondegenerate by Theorem \ref{thm:gfan-to-hyperplane}(i), the bilinear form \eqref{equ:ell-form} is nondegenerate. 
Thus $[b] = [b']$ as elements of $\Ksp( \add_\cC \ell )$, so $b \cong b'$. 
\end{proof}

We will say that a {\em{rigid object $n$ of $\cC$ is in the mutation class of the maximal rigid object $\ell$}} if 
is in the additive envelope of a basic maximal rigid object which is in the mutation class of $\ell$.
\begin{Lemma}
\label{lem:equiv-index} Let $\ell$ be a basic maximal rigid object of the 2-Calabi--Yau category $\cC$. The following are equivalent:
\begin{enumerate}
\setlength\itemsep{4pt}
\item For all rigid objects $n$ of $\cC$ in the mutation class of $\ell$ and all maximal rigid objects $m$
in the mutation class of $\ell$,
\[
\index_m( \Sigma n) = - \index_m(n).
\]
\item For every mutation between basic maximal rigid objects $m \cong \oplus_j m_j \mapsto m^* \cong (\oplus_{j \neq i} m_j) \oplus m^*_i$
in the mutation class of $\ell$, the corresponding exchange triangles
\[
m_i \rightarrow c \rightarrow m^*_i  \rightarrow \Sigma m_i \quad \mbox{and} \quad 
m^*_i \rightarrow c' \rightarrow m_i  \rightarrow \Sigma m^*_i
\]
satisfy $c \cong c'$.
\item For every pair of maximal rigid objects $m, m'$ in the mutation class of $\ell$, there exists a linear isomorphism 
$\phi^{m', m} :  \Ksp( \add_{ \cC } m) \rightarrow \Ksp( \add_{ \cC } m')$ such that
\[
\index_{m'} (n) =  \phi^{m', m}( \index_{m} (n))
\]
for all rigid objects $n$ of $\cC$ in the mutation class of $\ell$.
\end{enumerate}
\end{Lemma}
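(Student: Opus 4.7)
The plan is to establish (i) $\Leftrightarrow$ (ii) and (ii) $\Leftrightarrow$ (iii) by short calculations with the exchange triangles \eqref{equ:ex-triangles} and the Dehy--Keller formula \eqref{phi+-}. The central observation driving everything is that the $\pm$-dichotomy in \eqref{phi+-} collapses to a single linear map precisely when $b \cong b'$ in \eqref{equ:ex-triangles}; this fact links all three conditions.

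For (i) $\Rightarrow$ (ii), fix a mutation $m \mapsto m^*$ in the mutation class of $\ell$ as in \eqref{equ:mutation} with exchange triangles \eqref{equ:ex-triangles}. The first triangle gives $\index_m(m_i^*) = [b] - [m_i]$ directly from Definition \ref{def:index}. Rotating the second triangle to $b' \to m_i \to \Sigma m_i^* \to \Sigma b'$ and noting that $b', m_i \in \add_\cC m$ gives $\index_m(\Sigma m_i^*) = [m_i] - [b']$. Condition (i) applied to the rigid object $n = m_i^*$ (which lies in the mutation class of $\ell$ since $m_i^* \in \add_\cC m^*$) equates these up to sign and forces $[b] = [b']$ in $\Ksp(\add_\cC m)$, whence $b \cong b'$. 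Conversely, for (ii) $\Rightarrow$ (i), if $b \cong b'$ for every mutation in the mutation class then the two branches of \eqref{phi+-} agree and give a single linear isomorphism $\phi^{m^*,m}$ satisfying $\index_{m^*}(n) = \phi^{m^*,m}(\index_m(n))$ for \emph{every} rigid $n$ of $\cC$. Composing along a mutation sequence connecting $m$ and $m'$ produces a linear isomorphism $\phi^{m',m}$ with the analogous property. Given a rigid $n$ in the mutation class of $\ell$, pick a basic maximal rigid $m'$ with $n \in \add_\cC m'$; the distinguished triangle $n \xrightarrow{\id} n \to 0 \to \Sigma n$ rotated to $n \to 0 \to \Sigma n \to \Sigma n$ yields $\index_{m'}(\Sigma n) = -[n] = -\index_{m'}(n)$, and applying the linear map $\phi^{m,m'}$ delivers $\index_m(\Sigma n) = -\index_m(n)$.

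The equivalence (ii) $\Leftrightarrow$ (iii) is handled by a twin argument. The implication (ii) $\Rightarrow$ (iii) is immediate from the construction of $\phi^{m',m}$ just sketched. For (iii) $\Rightarrow$ (ii), fix a mutation $m \mapsto m^*$ and let $\phi^{m^*,m}$ be the single linear isomorphism promised by (iii). Applying (iii) to the two rigid objects $m_i$ and $m_i^*$ (both in the mutation class of $\ell$), noting that $\index_m(m_i) = [m_i]$ has positive $i$-th coordinate while $\index_m(m_i^*) = [b] - [m_i]$ has negative $i$-th coordinate, and using \eqref{phi+-} together with $\index_{m^*}(m_i^*) = [m_i^*]$, gives the two identities $\phi^{m^*,m}([m_i]) = [b'] - [m_i^*]$ and $\phi^{m^*,m}([b] - [m_i]) = [m_i^*]$. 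Linearity of $\phi^{m^*,m}$ combines them to force $[b] = [b']$, i.e.\ $b \cong b'$.

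The argument is essentially direct; the main care required is bookkeeping which objects of the exchange triangles lie in $\add_\cC m$ versus $\add_\cC m^*$ after rotation, so that each rotated triangle serves as a valid $\add_\cC m$- (resp.\ $\add_\cC m^*$-) resolution in the sense of Lemma \ref{lem:ZZ}, and verifying that the two test vectors $\index_m(m_i)$ and $\index_m(m_i^*)$ sit on opposite sides of the mutating hyperplane so that the $\phi_+^{m^*,m}$ and $\phi_-^{m^*,m}$ branches of \eqref{phi+-} are both actually probed.
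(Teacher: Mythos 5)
Your argument is a reorganization of the paper's: the paper proves the cyclic chain (i) $\Rightarrow$ (ii) $\Rightarrow$ (iii) $\Rightarrow$ (i), while you prove (i) $\Leftrightarrow$ (ii) and (ii) $\Leftrightarrow$ (iii), including a direct (iii) $\Rightarrow$ (ii). The key mechanisms are the same (testing condition (i) at the mutated summand, collapsing the $\phi_\pm$-dichotomy of \eqref{phi+-} once $c \cong c'$, and the trivial triangle $n \to 0 \to \Sigma n \to \Sigma n$); your (i) $\Rightarrow$ (ii) test at the pair $(m, m_i^*)$ is the mirror image of the paper's test at $(m^*, m_i)$, and your (ii) $\Rightarrow$ (i) is the paper's (ii) $\Rightarrow$ (iii) $\Rightarrow$ (i) run together.

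There is one small but real gap, in your argument for (iii) $\Rightarrow$ (ii). From $\phi^{m^*,m}([m_i]) = [c'] - [m_i^*]$ and $\phi^{m^*,m}([c] - [m_i]) = [m_i^*]$, linearity gives only $\phi^{m^*,m}([c]) = [c']$; since $\phi^{m^*,m}$ is the \emph{a priori} unknown linear map promised by (iii) (not one of the explicit maps $\phi_\pm$), this does not by itself say $[c] = [c']$. You must also apply (iii) to the common summands $m_j$, $j \neq i$: since $\index_m(m_j) = [m_j] = \index_{m^*}(m_j)$ for $j \neq i$, condition (iii) forces $\phi^{m^*,m}([m_j]) = [m_j]$, and as $c \in \add_\cC(\oplus_{j \neq i} m_j)$ this gives $\phi^{m^*,m}([c]) = [c]$. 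Only after combining this with $\phi^{m^*,m}([c]) = [c']$ does one obtain $[c] = [c']$, hence $c \cong c'$. The paper avoids this extra bookkeeping by closing the cycle via (iii) $\Rightarrow$ (i) instead of (iii) $\Rightarrow$ (ii).
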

\begin{proof}[Proof of Lemma \ref{lem:equiv-index}] (i) $\Rightarrow$ (ii). Applying property (i) for
the basic maximal rigid object $m^*$ and $n=m_i$ gives
\[
-([c] - [m^*_i]) = \index_{m^*} (\Sigma m_i) = - \index_{m^*} ( m_i) = -([c'] - [m^*_i])
\]
in $\Ksp( \add_{ \cC } m^* )$. So $[c'] = [c]$, and thus, $c' \cong c$. 

(ii) $\Rightarrow$ (iii).  Property (ii) implies that $\phi_+^{m^*, m} = \phi_-^{m^*, m}$ for all basic maximal rigid objects $m$
in the mutation class of $\ell$. Property (iii) for $m' = m^*$ follows from \eqref{phi+-}. In the general case, 
the isomorphims $\phi^{m',m}$ are defined to be compositions of the isomorphisms $\phi_+$ for one step mutations
and one iterates the index formulas for one-step mutations.

(iii) $\Rightarrow$ (i). In the setting of (i), $n$ is in the additive envelope of a basic maximal rigid object $\ell'$ 
which is in the mutation class of $\ell$. Clearly, $\index_{\ell'}( \Sigma n) = - \index_{\ell'}(n)$. The statement 
in (i) follows by applying $\phi^{\ell, \ell'}$ to this identity and using the property (iii).
\end{proof}
Theorem \ref{thm:gfan-to-hyperplane}(ii) follows from Theorem \ref{thm:gfan-to-hyperplane}(i), Proposition \ref{prop:gfan-to-hyperplane2}(ii), and Lemmas \ref{lem:G12} 
and \ref{lem:equiv-index}.
\begin{Corollary}
\label{Cor:congruent} Assume that the 2-Calabi--Yau category $\cC$ has the property that $\Sigma^2 n \cong n$ for all rigid objects $n$ of $\cC$.
\begin{enumerate}
\setlength\itemsep{4pt}
\item The Cartan matrices of each two maximal rigid objects $\ell, m \in \cC$ are congruent. More precisely, 
\begin{equation}
\label{equ:congru}
\Cart_m = P^T \Cart_\ell P
\end{equation}
where $P$ is the matrix of the linear isomorphism $\phi^{\ell,m}$ in the standard bases of $\Ksp( \add_{ \cC } m)$ and $\Ksp( \add_{ \cC } \ell)$.
The matrix $P$ has determinant $\pm 1$, and, as a consequence, $\det \Cart_{\ell}$ is independent on the choice of maximal rigid object $\ell$.
\item Let $K_0^{\mathrm{tri}}( \cC)$ denote the Grothendieck group of the triangulated category $\cC$. 
The canonical map $\Ksp( \add_{ \cC } \ell) \to K_0^{\mathrm{tri}}( \cC)$, given by $[n] \mapsto [n]$ for $n \in \add_{\cC} \ell$, 
is an isomorphism.
\end{enumerate}
\end{Corollary}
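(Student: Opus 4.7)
The plan for part~(i) is to recognize the identity of Proposition~\ref{prop:gfan-to-hyperplane2}(i) as a matrix congruence. By Proposition~\ref{prop:gfan-to-hyperplane2}(ii) combined with Lemma~\ref{lem:equiv-index}, for each pair of basic maximal rigid objects $\ell, m$ in the same mutation class there is a $\BZ$-linear isomorphism $\phi^{\ell,m}\colon\Ksp(\add_\cC m)\to\Ksp(\add_\cC\ell)$ characterized by $\index_\ell(n)=\phi^{\ell,m}(\index_m(n))$ for every rigid $n$. Applied to the basis $[m_j]=\index_m(m_j)$ of $\Ksp(\add_\cC m)$ and combined with Proposition~\ref{prop:gfan-to-hyperplane2}(i), this gives
\[
(\Cart_m)_{ij}=\langle[m_i],[m_j]\rangle_m=\langle\phi^{\ell,m}([m_i]),\phi^{\ell,m}([m_j])\rangle_\ell=(P^T\Cart_\ell P)_{ij},
\]
which is the congruence \eqref{equ:congru}. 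Because $\phi^{\ell,m}$ and its inverse $\phi^{m,\ell}$ are both $\BZ$-linear isomorphisms of free abelian groups of rank $d$, the matrix $P$ lies in $\GL_d(\BZ)$, so $\det P=\pm1$. Taking determinants in \eqref{equ:congru} yields $\det\Cart_m=\det\Cart_\ell$.

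For part~(ii), the plan is to prove that the canonical map $\iota\colon\Ksp(\add_\cC\ell)\to K_0^{\mathrm{tri}}(\cC)$ is bijective. For surjectivity: every rigid object $n\in\cC$ fits into a triangle $\ell_1\to\ell_0\to n\to\Sigma\ell_1$ with $\ell_i\in\add_\cC\ell$ by Lemma~\ref{lem:ZZ}, so $[n]=[\ell_0]-[\ell_1]$ lies in $\iota(\Ksp(\add_\cC\ell))$; a general object is handled by iteratively forming triangles from right $\add_\cC\ell$-approximations and arguing that the process terminates using Corollary~\ref{cor:rigid}. For injectivity: I would adapt Palu's computation of $K_0^{\mathrm{tri}}(\cC)$ from the cluster tilting setting, which identifies $\ker\iota$ with the subgroup of $\Ksp(\add_\cC\ell)$ generated by the differences $[b_k]-[b_k']$ arising from the two exchange triangles at each mutation of $\ell$. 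Under our hypothesis, Proposition~\ref{prop:gfan-to-hyperplane2}(ii) forces $b_k\cong b_k'$, so these generators all vanish and $\ker\iota=0$.

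The principal obstacle will be the surjectivity half of part~(ii): verifying that every object of $\cC$, not merely the rigid ones, has its class in $K_0^{\mathrm{tri}}(\cC)$ expressible as a $\BZ$-combination of the $[\ell_i]$. This requires either showing that in our setting every basic maximal rigid object is in fact cluster tilting (so that the Palu framework applies verbatim), or giving a direct argument that $\add_\cC\ell$ generates $\cC$ sufficiently as a triangulated category. The nondegeneracy of $\Cart_\ell$ from Theorem~\ref{thm:gfan-to-hyperplane}(i) and the equality $b\cong b'$ from Proposition~\ref{prop:gfan-to-hyperplane2}(ii) are exactly the ingredients that make the overall Palu-style computation collapse to the clean isomorphism asserted.
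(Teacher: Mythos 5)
Your part (i) is exactly the paper's argument: rewriting the invariance statement of Proposition~\ref{prop:gfan-to-hyperplane2}(i) in coordinates gives \eqref{equ:congru}, and $\det P=\pm1$ because $\phi^{\ell,m}$ is an isomorphism of free $\BZ$-modules of finite rank (the paper grounds this in the Dehy--Keller fact that $\{\index_\ell(m_1),\ldots,\index_\ell(m_d)\}$ is a basis of $\Ksp(\add_\cC\ell)$). For part (ii) you also arrive at the paper's route: invoke Palu's presentation of $K_0^{\mathrm{tri}}(\cC)$ --- the paper cites \cite[thm.\ 10]{Pa} --- whose relation subgroup is generated by the differences $[b]-[b']$ from exchange triangles, all of which vanish by Proposition~\ref{prop:gfan-to-hyperplane2}(ii) together with Lemma~\ref{lem:equiv-index}. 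The paper transfers Palu's theorem from cluster tilting subcategories to maximal rigid ones simply by asserting the proof carries over; you correctly flag this transfer as the delicate point. One caution: your back-up sketch of surjectivity --- iterating right $\add_\cC\ell$-approximations and invoking Corollary~\ref{cor:rigid} for termination --- does not work as written, since Corollary~\ref{cor:rigid} concerns only rigid objects and the successive cones in such an iteration need not be rigid when $\ell$ is merely maximal rigid; this is precisely why surjectivity is the nontrivial part of carrying Palu's argument over, and the paper settles it by citation rather than by a termination argument.
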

\begin{proof} (i) The identity \eqref{equ:congru} follows from parts (i) and (ii) of Proposition \ref{prop:gfan-to-hyperplane2} and Lemma \ref{lem:equiv-index}.
The determinant of $P$ equals $\pm 1$ because $\{\index_\ell(m_1), \ldots, \index_\ell(m_d) \}$ is a basis of the free abelian group $\Ksp( \add_\cC \ell )$
by \cite[thm 2.6]{DK}. 

(ii) Proposition \ref{prop:gfan-to-hyperplane2}(ii) and Lemma \ref{lem:equiv-index} imply that $\cC$ has the property (ii) in Lemma \ref{lem:equiv-index}.
Now the isomorphism follows \cite[thm. 10]{Pa}. 
In \cite{Pa} the last fact is proved in the presence of cluster tilting subcategories, but the same proof works for basic maximal rigid objects. 
\end{proof}
 
For a basic maximal rigid object $\ell$ of $\cC$ and $\ell_\circ \in \add_\cC \ell$, denote $\cD = \add_{\cC} \ell_\circ$, 
$\cZ = {}^\perp \Sigma \cD$, and
consider the Calabi--Yau reduction of \cite[sec. 4]{IY}:
\[
\cU = \cZ/ [\add_{ \cC }( \ell_\circ )].
\]
It satisfies Setup \ref{set:blanket}(iv) by \cite[thm. 4.7]{IY}. 
There is an inclusion-preserving bijection $\cV \mapsto \cV/[\cD]$ between 
\begin{align}
&\{ \mbox{additive subcategories $\cV \subseteq \cZ$ such that $\cD \subseteq \cV$} \}  
\nonumber
\\
&\mbox{and} \quad \{ \mbox{additive subcategories of $\cU$} \}.
\label{equ:bij0}
\end{align}
Note that, if $\cV$ is an additive rigid subcategory of $\cC$ such that $\cD \subseteq \cV$, then $\cV \subseteq \cZ$, because
$\cV \subseteq {}^\perp \Sigma \cV \subseteq {}^\perp \Sigma \cD = \cZ$. 
By \cite[thm. 4.9(ii)]{IY}, the bijection \eqref{equ:bij0} restricts to an inclusion preserving bijection
\begin{align}
&\{ \mbox{(additive) rigid subcategories $\cV \subseteq \cC$ such that $\cD \subseteq \cV$} \}  
\nonumber
\\
&\mbox{and} \quad \{ \mbox{(additive) rigid subcategories of $\cU$} \}.
\label{equ:bij}
\end{align}
Denote the image in $\cU$ of an object $m$ of $\cZ$ by $\overline{m}$. The canonical projection $\pi : \Ksp( \add_{ \cC }\ell )_\BR \to \Ksp( \add_{ \cC } \ell /[\cD] )_\BR$ 
given by $\pi([m]) = [\overline{m}]$ has kernel $\Ksp( \cD )_{\BR} \hookrightarrow  \Ksp( \add_{ \cC }\ell )_{\BR}$.

\begin{Lemma}
\label{lem:index-CY-red} In the above setting the following hold:
\begin{enumerate}
\setlength\itemsep{4pt}
\item For every rigid object $n$ of $\cC$ that lies in $\cZ$, 
\[
\index_{\overline{\ell}} (\overline{n}) = \pi\big( \index_{\ell}(n) \big) ,\;\;
\index_{\overline{\ell}} (\Sigma_\cU \overline{n} ) = \pi \big( \index_\ell( \Sigma_\cC n) \big).
\]
\item If 
\[
\index_\ell( \Sigma_\cC n) = -  \index_\ell( n) \quad  \mbox{for every rigid object $n$ of $\cC$}, 
\]
then 
\[
\index_{ \overline{\ell} } ( \Sigma_\cU n') = -  \index_{\overline{\ell} } ( n') \quad \mbox{for every rigid object $n'$ of $\cU$}. 
\]
\end{enumerate}
\end{Lemma}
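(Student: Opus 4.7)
The plan is to first establish both statements in Part~(i), then to deduce Part~(ii) immediately using the bijection \eqref{equ:bij} between rigid subcategories of $\cC$ containing $\cD$ and rigid subcategories of $\cU$.

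For the first formula of Part~(i), take the triangle $\ell_1 \to \ell_0 \to n \to \Sigma \ell_1$ in $\cC$ provided by Lemma~\ref{lem:ZZ}, with $\ell_i \in \add_\cC \ell$.  Since $\add_\cC \ell \subseteq \cZ$ and $n \in \cZ$ by hypothesis, all three terms lie in $\cZ$.  The defining property of $\cZ$ gives $\cC(n,\Sigma D) = 0$ for every $D \in \cD$, which allows the connecting morphism $n \to \Sigma \ell_1$ to be lifted through the canonical map $\ell_1^{[1]} \to \Sigma \ell_1$.  By the construction of the triangulated structure on $\cU$ in \cite[thm.\ 4.7]{IY}, the sequence thereby descends to a triangle $\overline{\ell}_1 \to \overline{\ell}_0 \to \overline{n} \to \Sigma_\cU \overline{\ell}_1$ in $\cU$ with $\overline{\ell}_i \in \add_\cU \overline{\ell}$.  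Reading off the index gives $\index_{\overline{\ell}}(\overline{n}) = [\overline{\ell}_0] - [\overline{\ell}_1] = \pi(\index_\ell(n))$.

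For the second formula of Part~(i), the IY construction represents $\Sigma_\cU \overline{n}$ by an object $n^{[1]} \in \cZ$ fitting in a triangle $n \to D \to n^{[1]} \to \Sigma n$ in $\cC$ with $D \in \cD$ and $n \to D$ a left $\cD$-approximation.  The object $n^{[1]}$ is rigid in $\cC$: since $\overline{n^{[1]}} = \Sigma_\cU \overline{n}$ is rigid in $\cU$, the bijection \eqref{equ:bij} yields rigidity of $\add_\cC(n^{[1]} \oplus \ell_\circ)$ in $\cC$.  Applying the first formula to $n^{[1]}$ gives $\index_{\overline{\ell}}(\Sigma_\cU \overline{n}) = \pi(\index_\ell(n^{[1]}))$, so the claim reduces to proving $\index_\ell(n^{[1]}) - \index_\ell(\Sigma n) \in \Ksp(\cD)$.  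For this, take an index-defining triangle $\ell_1' \to \ell_0' \to n^{[1]} \to \Sigma \ell_1'$ for $n^{[1]}$ and apply the octahedral axiom to the composition $\ell_0' \to n^{[1]} \to \Sigma n$ (the second arrow coming from the rotation $n^{[1]} \to \Sigma n \to \Sigma D \to \Sigma n^{[1]}$).  The fourth triangle of the octahedron can be desuspended to $\ell_1' \to W \to D \to \Sigma \ell_1'$ for some $W$.  Because $D, \ell_1' \in \add_\cC \ell$ and $\cC(\ell,\Sigma \ell) = 0$ by rigidity of $\ell$, the connecting morphism $D \to \Sigma \ell_1'$ vanishes and this triangle splits: $W \cong \ell_1' \oplus D$.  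A backward rotation of the third octahedron triangle then reads $\ell_1' \oplus D \to \ell_0' \to \Sigma n \to \Sigma(\ell_1' \oplus D)$, which is an index-defining triangle for $\Sigma n$ and gives $\index_\ell(\Sigma n) = [\ell_0'] - [\ell_1'] - [D] = \index_\ell(n^{[1]}) - [D]$.  Applying $\pi$ kills $[D]$, yielding the required equality.

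Part~(ii) then follows immediately: any rigid $n' \in \cU$ equals $\overline{n}$ for some rigid $n \in \cZ$ by \eqref{equ:bij}, and the second formula of Part~(i) together with the hypothesis and the first formula give $\index_{\overline{\ell}}(\Sigma_\cU n') = \pi(\index_\ell(\Sigma n)) = -\pi(\index_\ell(n)) = -\index_{\overline{\ell}}(n')$.  The main obstacle is the octahedral step in Part~(i) second formula: the octahedron has to be set up so that its fourth triangle pairs $D$ with $\ell_1'$, the rigidity of $\ell$ has to be invoked to split it, and the diagram reassembled to read off an index-defining triangle for $\Sigma n$ whose index differs from $\index_\ell(n^{[1]})$ by exactly $[D] \in \Ksp(\cD)$.
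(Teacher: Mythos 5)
Your proof is correct, and the core mechanism — an octahedral comparison between the index triangle and the Iyama--Yoshino triangle $n \to D \to n^{[1]} \to \Sigma_\cC n$, with the connecting map killed by rigidity of $\ell$ — is the same as the paper's. The difference is which end of the octahedron you start from. The paper takes an index-defining triangle $\ell' \to \ell'' \to \Sigma_\cC n$ (which exists immediately since $\Sigma_\cC n$ is trivially rigid), runs the octahedron to produce a triangle $\ell' \to \ell'' \oplus D \to n^{[1]}$ in $\cC$ with terms in $\cZ$, and then projects that whole triangle to $\cU$ and reads off $\index_{\overline{\ell}}(\Sigma_\cU \overline{n})$ directly. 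You instead take an index triangle for $n^{[1]}$, apply the already-proved first formula of (i) to $n^{[1]}$, and then separately derive the index relation $\index_\ell(\Sigma_\cC n) = \index_\ell(n^{[1]}) - [D]$. This is perfectly valid, but it obliges you to first verify that $n^{[1]}$ is rigid in $\cC$ — which you do correctly via the bijection \eqref{equ:bij} — whereas the paper's orientation sidesteps this check entirely. Both arguments amount to computing the same relation $\index_\ell(n^{[1]}) = \index_\ell(\Sigma_\cC n) + [D]$ and then applying $\pi$; the paper's version is a touch more economical, but yours is also a clean argument.

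One small point worth making explicit in your write-up: when you say the rigidity of $n^{[1]}$ follows from \eqref{equ:bij}, you are implicitly using that $\Sigma_\cU \overline{n}$ is rigid in $\cU$ (which holds since $\Sigma_\cU$ is an autoequivalence and $\overline{n}$ is rigid) and then pulling back $\add_\cU(\Sigma_\cU \overline{n})$ through the bijection to a rigid additive subcategory of $\cC$ containing $n^{[1]}$. The paper reaches for $\Sigma_\cC n$ precisely to avoid this extra bookkeeping.
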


\begin{proof} (i): Since $n$ is rigid in $\cC$, there is a triangle $\ell' \xrightarrow{} \ell'' \xrightarrow{} n$ in $\cC$ with $\ell',\ell'' \in \add_{ \cC }\ell$.  The terms of the triangle are in $\cZ$, so it induces a triangle $\overline{\ell'} \xrightarrow{} \overline{\ell''} \xrightarrow{} \overline{n}$ in $\cU$.  The two triangles show
\[
  \index_{\overline{\ell}}( \overline{n} ) = [\overline{\ell''}] - [\overline{\ell'}] = \pi( [\ell''] - [\ell'] ) = \pi\big( \index_{\ell}( n ) \big),
\]
proving the first equation in (i).

There is a triangle $n \xrightarrow{} d \xrightarrow{} z$ in $\cC$ with $d \in \cD$, $z \in \cZ$,
and in terms of it $\Sigma_\cU \overline{n} \cong \overline{z}$, see \cite[def. 4.1]{IY}.  Hence there is a triangle $z \xrightarrow{} \Sigma_{ \cC }n \xrightarrow{} \Sigma_{ \cC }d$.
Since $\Sigma_\cC n$ is also a rigid object, there is a triangle
$\ell' \xrightarrow{} \ell'' \xrightarrow{} \Sigma_\cC n$ in $\cC$ with $\ell', \ell'' \in \add_{ \cC } \ell$.  In the following diagram, this gives the lower right square, which is commutative because $\cC( \ell'',\Sigma_{ \cC }d ) = 0$ since $\ell'',d \in \add_{ \cC } \ell$.  The diagram exists by the octahedral axiom, and each row and column is a triangle.
\[
\vcenter{
  \xymatrix @+0.2pc {
    \ell'     \ar@{=}[r] \ar[d] &  \ell'    \ar[r] \ar[d] & 0 \ar[d]  \\
    \ell'' \oplus d     \ar[r] \ar[d] &  \ell''    \ar^0[r] \ar[d] & \Sigma_\cC d \ar@{=}[d]  \\
    z   \ar[r]  &  \Sigma_\cC n    \ar[r]  & \Sigma_\cC d           \\
                    }
        }
\]
Hence we have a triangle $\ell' \xrightarrow{}  \ell'' \oplus d \xrightarrow{} z$ 
in $\cC$, which gives rise to the triangle $\overline{\ell'} \xrightarrow{}  \overline{\ell''} \xrightarrow{} \overline{z}$ in $\cU$ 
because $\overline{d} \cong 0$. Taking into account that $\Sigma_\cU \overline{n} \cong \overline{z}$ gives
\[
\index_{\overline{\ell}} ( \Sigma_\cU \overline{n} ) = [ \overline{\ell''}] - [\overline{\ell'}] =
\pi( [\ell''] - [\ell']) = \pi \big(  \index_\ell( \Sigma_\cC n)  \big),   
\]
where the last equality is obtained from the triangle $\ell' \xrightarrow{} \ell'' \xrightarrow{} \Sigma_\cC n$.  This proves the second equation in (i).

Part (ii) directly follows from the first part and the bijection \eqref{equ:bij}.
\end{proof}

Theorem \ref{thm:gfan-to-hyperplane}(iii) follows from the following fact, Proposition \ref{prop:gfan-to-hyperplane2}(ii) and Lemmas 
\ref{lem:G12} and \ref{lem:equiv-index}.

\begin{Proposition}
\label{prop:gfan-to-hyperplane3} Assume that $\cC$ is maximal rigid finite (i.e. $|\mrig \cC| < \infty$) and that it has the property 
\[
\index_\ell( \Sigma n) = - \index_\ell(n) 
\]
for all rigid objects $n$ of $\cC$ and $\ell \in \mrig \cC$. 
Then  $\cC$ satisfies condition  {\rm \hyper}, i.e. the $g$-vector fan decomposition $G^{ \ell }_{ \cC }$ 
comes from a simplicial hyperplane arrangement in $\Ksp( \add_{ \cC }\ell )_{ \BR }$ for
all $\ell \in \mrig \cC$.
\end{Proposition}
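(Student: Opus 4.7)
The plan is to prove the proposition by induction on $d$, the number of indecomposable summands of $\ell$. By Lemma \ref{lem:G12}, maximal-rigid-finiteness places all basic maximal rigid objects in a single mutation class, so our hypothesis is exactly condition~(i) of Lemma \ref{lem:equiv-index}. That lemma therefore supplies Dehy--Keller linear isomorphisms $\phi^{\ell',\ell}$ satisfying $\phi^{\ell',\ell}(\index_\ell(n)) = \index_{\ell'}(n)$ for every rigid $n$; these map $B^\ell(m)$ to $B^{\ell'}(m)$, identifying the fans $G^\ell_\cC$ and $G^{\ell'}_\cC$ up to linear isomorphism. Hence condition~{\rm \hyper} for one $\ell \in \mrig\cC$ is equivalent to {\rm \hyper} for any other, and I may change $\ell$ mid-argument at will.

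For $d = 1$ the statement is trivial, and for $d = 2$ Lemma \ref{lem:bouquet}(ii) applies: every extremal ray $\BR_{\geq 0}\index_\ell(n)$ has negative $\BR_{\geq 0}\index_\ell(\Sigma n)$ by hypothesis, which is again an extremal ray because $\Sigma n$ is indecomposable rigid. Now assume $d \geq 3$; by Lemma \ref{lem:bouquet}(i) the task reduces to verifying property~(*). Fix an extremal ray $\eta = \BR_{\geq 0}\index_\ell(n)$; replacing $\ell$ if necessary, I may assume $n$ is a summand of $\ell$. Form the Iyama--Yoshino Calabi--Yau reduction $\cU = \cZ/[\cD]$ with $\cD = \add_\cC n$ and $\cZ = {}^\perp \Sigma \cD$. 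Then $\cU$ satisfies Setup~\ref{set:blanket}(iv), is maximal rigid finite via the bijection \eqref{equ:bij}, carries $\overline{\ell}$ as a basic maximal rigid object with $d-1$ indecomposable summands, and inherits our hypothesis by Lemma \ref{lem:index-CY-red}(ii). The inductive hypothesis then furnishes a simplicial hyperplane arrangement $\cH^{\overline{\ell}}$ in $\Ksp(\add_\cU \overline{\ell})_\BR$ whose chambers are those of $G^{\overline{\ell}}_\cU$.

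To verify~(*), let $\pi \colon \Ksp(\add_\cC \ell)_\BR \to \Ksp(\add_\cU \overline{\ell})_\BR$ be the canonical projection, with kernel $\BR[n]$. Take a codimension-one face $F$ of a chamber $B^\ell(m) \in Y(\eta)$ with $\eta \subset F$, and let $H := \linearspan_\BR F$. Since $\eta \subset F$, we have $\BR[n] \subset H$, so $H = \pi^{-1}(\overline{H})$ for $\overline{H} := \pi(H)$. By Lemma \ref{lem:index-CY-red}(i), $\pi(F)$ is a codimension-one face of $B^{\overline{\ell}}(\overline{m})$, whence $\overline{H}$ is a hyperplane of $\cH^{\overline{\ell}}$ and hence a union of faces of $G^{\overline{\ell}}_\cU$. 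For each basic maximal rigid $m'$ with $n$ as a summand, the chamber $B^{\overline{\ell}}(\overline{m'})$ lies in one closed half-space of $\overline{H}$, so $B^{\overline{\ell}}(\overline{m'}) \cap \overline{H}$ is the face of $B^{\overline{\ell}}(\overline{m'})$ spanned by those extremal rays lying on $\overline{H}$; lifting this face through $\pi$ via Lemma \ref{lem:index-CY-red}(i) and adjoining the extremal ray $\BR_{\geq 0}[n]$ exhibits $B^\ell(m') \cap H$ as a face of $B^\ell(m')$ containing $\eta$. Taking the union over all such $m'$ presents $Y(\eta) \cap H$ as a union of faces of $G^\ell_\cC$, establishing~(*); simpliciality then follows from Proposition \ref{pro:cones1}(i). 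The hardest step is this lift, which relies on the bijection \eqref{equ:bij} combined with the index-compatibility of Lemma \ref{lem:index-CY-red}(i) to show that $\pi$ induces an isomorphism between the face poset of $Y(\eta)$ rooted at $\eta$ and the face poset of $G^{\overline{\ell}}_\cU$.
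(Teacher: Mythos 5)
Your proof is correct and follows the same route as the paper: reduce to verifying the criterion of Lemma \ref{lem:bouquet}, dispose of the low-dimensional cases directly, and for $d>2$ run induction on $d$ via the Iyama--Yoshino Calabi--Yau reduction at the indecomposable summand spanning $\eta$, using Lemma \ref{lem:index-CY-red} and the bijection \eqref{equ:bij} to transport property (*) through $\pi$. The only variation is cosmetic: you establish (*) by noting that each $B^\ell(m') \cap H$ is a face of $B^\ell(m')$ because the cone sits on one side of $H = \pi^{-1}(\overline H)$, whereas the paper records the same conclusion via the preimage identity $\pi^{-1}\bigl(B^{\overline\ell}(\overline{m_s})\bigr)\cap Y(\eta) = B^\ell(m_s)$.
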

In addition to being a stronger statement, Proposition \ref{prop:gfan-to-hyperplane3} 
has an advantage over Theorem \ref{thm:gfan-to-hyperplane}(iii) in that the former can be proved by 
induction on the rank of the category $\cC$ because its assumptions behave well under Calabi--Yau reduction.
\begin{proof} Lemma \ref{lem:G12} and Proposition \ref{prop:gfan-to-hyperplane2}(ii) imply that all basic maximal rigid objects 
of $\cC$ are reachable from $\ell$ and that conditions (i)--(iii) in Lemma \ref{lem:equiv-index} are satisfied. Denote by $d$ 
the rank of $\Ksp( \add_{ \cC }\ell )$.

Case $d=2$. The proposition follows from Lemma \ref{lem:bouquet}(ii) because condition (i) in Lemma \ref{lem:equiv-index} 
is satisfied.

Case $d>2$. We argue by induction on $d$, assuming that the statement of the proposition holds for ranks $\leqslant  d-1$.
We verify that the $g$-vector fan $G^{ \ell }_{ \cC }$ satisfies condition (*) in  Lemma \ref{lem:bouquet}. Every extremal ray of $G^{ \ell }_{ \cC }$
has the form $\eta = \BR_{ \geqslant 0 } \index_\ell(n)$ where $n$ is an indecomposable summand of a basic maximal rigid 
object of $\cC$. After applying the linear isomorphisms from Lemma \ref{lem:equiv-index}(iii) we can assume that $n = \ell_i$ 
for some $1 \leqslant i \leqslant n$. Fix a face of codimension one of $Y(\eta)$ containing $\eta$. It has the
form  $F=B^{ \ell }( m )$ where $m$ is a direct summand of a basic maximal
rigid object $\ell'$ such that $m$ contains $\ell_i$ and has $d-1$
indecomposable summands. After applying the linear isomorphism
$\phi^{\ell',\ell}$ from Lemma \ref{lem:equiv-index}(iii), we can assume
without loss of generality that $\ell' = \ell$. Denote by $\alpha$ the hyperplane in $\Ksp( \add_{ \cC }\ell )_{ \BR }$ determined by $F$. 

Consider the Calabi-Yau reduction for $\ell_\circ = \ell_i$; that is $\cU = \cZ/ [\cD]$, where 
$\cD = \add_{\cC} \ell_i$, $\cZ = {}^\perp \Sigma \ell_i$. The corresponding projection 
$\pi : \Ksp( \add_{ \cC }\ell )_\BR \to \Ksp( \add_{ \cU } \ell /[\cD] )_\BR$ 
has kernel $\BR [\ell_i] \subset \Ksp( \add_{ \cC }\ell )$. The 2-Calabi--Yau category $\cU$ satisifies
the assumptions of the proposition by Lemma \ref{lem:index-CY-red}(ii) and the bijection \eqref{equ:bij}.

The hyperplane $\pi(\alpha)$ in $\Ksp( \add_{ \cU } \ell /[\cD] )_\BR$ contains the 
face $B^{ \overline{\ell} }( \overline{m} )$ of the $g$-vector fan $G^{ \overline{\ell} }_{ \cU }$. 
The statement of the proposition holds in dimension $d-1$, so $\pi(\alpha)$ is a union of codimension one faces of 
$G^{ \overline{\ell} }_{ \cU }$. By the bijection \eqref{equ:bij}, these faces are of the form 
$B^{ \overline{\ell} }( \overline{m_s} )$, $1 \leqslant s \leqslant t$ where $m_s \in \cZ$ is a direct 
summand of a basic maximal rigid object of $\cC$ with the properties
that $\ell_i$ is a direct summand of $m_s$ and $m_s$ has $d-1$ indecomposable direct summands. 
We have
\[
\pi(\alpha) = \cup_{s=1}^t B^{ \overline{\ell} }( \overline{m_s} ) \quad \mbox{and} \quad  
\pi^{-1} \big( B^{ \overline{\ell} }( \overline{m_s} ) \big) \cap Y(\eta) = B^\ell(m_s).
\]
Hence
\[
\alpha \cap Y(\eta) = \cup_{s=1}^t  B^\ell(m_s). 
\]
Since $B^\ell(m_s)$, $1 \leqslant s \leqslant t$ are faces of the $g$-vector fan $G^{ \ell }_{ \cC }$,  
this proves that $G^{ \ell }_{ \cC }$ satisfies condition (*) in  Lemma \ref{lem:bouquet}
and the proposition now follows from Lemma \ref{lem:bouquet}(i).
\end{proof}

\section{Example: The cluster category of Dynkin type $A_2$}
\label{sec:A2}

This section demonstrates our theory for the cluster category of Dynkin type $A_2$.

\subsection{The quiver $Q$}

Set $k = \BC$ and consider the quiver $Q$ in Figure \ref{fig:JKSquiver}, which comes from \cite[sec.\ 2]{JKS}.  In the notation of \cite{JKS} we set $n=5$, $k=2$.  This clashes with our use of $k$ to denote the ground field, but we will not need to repeat it.
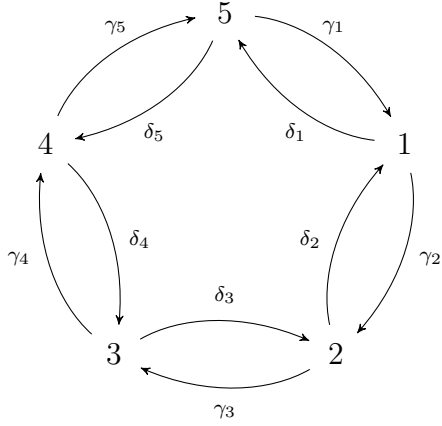
\begin{figure}
  \centering
    \begin{tikzpicture}[>=stealth']
      \node[name=s, color=white, shape=regular polygon, regular polygon sides=5, minimum size=5cm, draw] {};
      \draw[shift=(s.corner 1)] node {$5$};
      \draw[shift=(s.corner 2)] node {$4$};
      \draw[shift=(s.corner 3)] node {$3$};
      \draw[shift=(s.corner 4)] node {$2$};
      \draw[shift=(s.corner 5)] node {$1$};
      \draw[shorten >=0.4cm,shorten <=0.4cm,->] (s.corner 1) to[bend left] node[above=1mm] {$\scriptstyle \gamma_1$} (s.corner 5);
      \draw[shorten >=0.4cm,shorten <=0.4cm,->] (s.corner 5) to[bend left] node[below=1mm] {$\scriptstyle \delta_1$} (s.corner 1);
      \draw[shorten >=0.4cm,shorten <=0.4cm,->] (s.corner 5) to[bend left] node[right=1mm] {$\scriptstyle \gamma_2$} (s.corner 4);
      \draw[shorten >=0.4cm,shorten <=0.4cm,->] (s.corner 4) to[bend left] node[left=1mm] {$\scriptstyle \delta_2$} (s.corner 5);
      \draw[shorten >=0.4cm,shorten <=0.4cm,->] (s.corner 4) to[bend left] node[below=1mm] {$\scriptstyle \gamma_3$} (s.corner 3);
      \draw[shorten >=0.4cm,shorten <=0.4cm,->] (s.corner 3) to[bend left] node[above=1mm] {$\scriptstyle \delta_3$} (s.corner 4);
      \draw[shorten >=0.4cm,shorten <=0.4cm,->] (s.corner 3) to[bend left] node[left=1mm] {$\scriptstyle \gamma_4$} (s.corner 2);
      \draw[shorten >=0.4cm,shorten <=0.4cm,->] (s.corner 2) to[bend left] node[right=1mm] {$\scriptstyle \delta_4$} (s.corner 3);
      \draw[shorten >=0.4cm,shorten <=0.4cm,->] (s.corner 2) to[bend left] node[above=1mm] {$\scriptstyle \gamma_5$} (s.corner 1);
      \draw[shorten >=0.4cm,shorten <=0.4cm,->] (s.corner 1) to[bend left] node[below=1mm] {$\scriptstyle \delta_5$} (s.corner 2);
    \end{tikzpicture} 
  \caption{The quiver $Q$.}
\label{fig:JKSquiver}
\end{figure}
Let $\widehat{ kQ }$ be the completion of the path algebra at the arrow ideal, and set $R = \widehat{ kQ }/\fr$, where $\fr$ is the completion of the ideal defined by the ten relations which can be written in shorthand as $\gamma\delta = \delta\gamma$ and $\gamma^2 = \delta^3$.

\subsection{The Frobenius category $\cE$}

Let
\[
  \cE = \operatorname{CM}( R ) = \{\, x \in \mod\,R \mid \Ext_R^{ >0 }( x,R ) = 0 \,\}
\]
be the category of maximal Cohen--Macaulay modules.  By \cite[cor.\ 3.7]{JKS}, the category $\cE$, with conflations defined to be the sequences of objects of $\cE$ which are short exact in $\mod\,R$, is a Frobenius category in which the projective-injective objects coincide with the projective $R$-right modules, so $\cE$ satisfies Setup \ref{set:blanket}(ii) with $r = R$.

By \cite[rmk.\ 3.3]{JKS}, the category $\cE$ is Krull--Schmidt, and by \cite[exa.\ 5.3]{JKS} its isomorphism classes of indecomposable objects can be  indexed by two-element subsets of $\{ 1, \ldots, 5 \}$.  We write $ij$ for a suitable object in the isomorphism class indexed by $\{ i,j \}$, with the stipulation $ij = ji$.  Hence a set of representatives of the isomorphism classes of indecomposable objects of $\cE$ is given by $\{ 12, 23, 34, 45, 51, 13, 14, 24, 25, 35 \}$.  The first five, $12, 23, 34, 45, 51$, are projective and $R = 12 \oplus 23 \oplus 34 \oplus 45 \oplus 51$.

\subsection{The $2$-Calabi--Yau triangulated category $\cC$}

By \cite[exa.\ 5.3]{JKS} the stable category is
\[
  \cC = \underline{ \cE } = \cC( A_2 ),
\]
the cluster category of Dynkin type $A_2$, which satisfies Setup \ref{set:blanket}(iv), see \cite[sec.\ 1]{BMRRT}.  The AR quiver of $\cC$ is shown in Figure \ref{fig:CA2quiver}; note that it is cyclic.
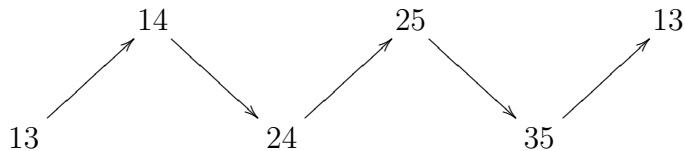
\begin{figure}
\[
\vcenter{
  \xymatrix @+0.5pc {
    & 14 \ar[dr] & & 25 \ar[dr] & & 13 \\
    13 \ar[ur] & & 24 \ar[ur] & & 35 \ar[ur] \\
                    }
        }
\]
  \caption{The Auslander--Reiten quiver of the cluster category $\cC = \cC( A_2 )$ of Dynkin type $A_2$.}
\label{fig:CA2quiver}
\end{figure}

The maximal rigid objects of $\cC$ are cluster tilting; up to isomorphism they are $\{ 13 \oplus 14, 14 \oplus 24, 24 \oplus 25, 25 \oplus 35, 35 \oplus 13 \}$, see \cite[rmk.\ 5.3]{CCS}.  To get $\mrig \cC$ we must add the module $R$ to each of these, so
\[
  \mrig \cC = \{ 13 \oplus 14 \oplus R, 14 \oplus 24 \oplus R, 24 \oplus 25 \oplus R, 25 \oplus 35 \oplus R, 35 \oplus 13 \oplus R \}.
\]

\subsection{The green groupoid $\cG_{ \cC }$}

Recall from Definition \ref{def:green_groupoid} the quiver $\Gamma_{ \cC }$, shown in Figure \ref{fig:GammaCA2}, and its free groupoid $\cF_{ \cC }$.
\begin{figure}
  \centering
    \begin{tikzpicture}[scale=3.5]
      \draw (90:1cm) node[fill=white,rectangle,inner sep=0.07cm] {$13 \oplus 14 \oplus R$};
      \draw (162:1cm) node[fill=white,rectangle,inner sep=0.07cm] {$14 \oplus 24 \oplus R$};      
      \draw (234:1cm) node[fill=white,rectangle,inner sep=0.07cm] {$24 \oplus 25 \oplus R$};
      \draw (306:1cm) node[fill=white,rectangle,inner sep=0.07cm] {$25 \oplus 35 \oplus R$};
      \draw (378:1cm) node[fill=white,rectangle,inner sep=0.07cm] {$35 \oplus 13 \oplus R$};

      \draw[->] (381:1.1cm) to[bend right] (71:1.1cm);      
      \draw[->] (74:0.9cm) to[bend right] (386:0.9cm);      

      \draw[->] (109:1.1cm) to[bend right] (159:1.1cm);      
      \draw[->] (154:0.9cm) to[bend right] (106:0.9cm);      

      \draw[->] (168:1.1cm) to[bend right] (222:1.1cm);
      \draw[->] (233:0.9cm) to[bend right] (165:0.9cm);      

      \draw[->] (238:1.05cm) to[bend right] (302:1.05cm);      
      \draw[->] (304:0.85cm) to[bend right] (236:0.85cm);      

      \draw[->] (318:1.1cm) to[bend right] (372:1.1cm);      
      \draw[->] (375:0.9cm) to[bend right] (309:0.9cm);
      
      \draw (46:1.24cm) node{$\scriptstyle \alpha_5$};
      \draw (53:0.64cm) node{$\scriptstyle \beta_5$};

      \draw (134:1.24cm) node{$\scriptstyle \alpha_1$};
      \draw (127:0.64cm) node{$\scriptstyle \beta_1$};

      \draw (198:1.23cm) node{$\scriptstyle \alpha_2$};
      \draw (198:0.51cm) node{$\scriptstyle \beta_2$};

      \draw (270:1.13cm) node{$\scriptstyle \alpha_3$};
      \draw (270:0.49cm) node{$\scriptstyle \beta_3$};

      \draw (342:1.23cm) node{$\scriptstyle \alpha_4$};
      \draw (342:0.51cm) node{$\scriptstyle \beta_4$};
                  
    \end{tikzpicture}
  \caption{The quiver $\Gamma_{ \cC }$.  Its free groupoid is $\cF_{ \cC }$, and the green groupoid $\cG_{ \cC }$ is obtained from $\cF_{ \cC }$ by identifying parallel green paths.  The resulting relations can be written $\alpha^3 = \beta^2$.}
\label{fig:GammaCA2}
\end{figure}
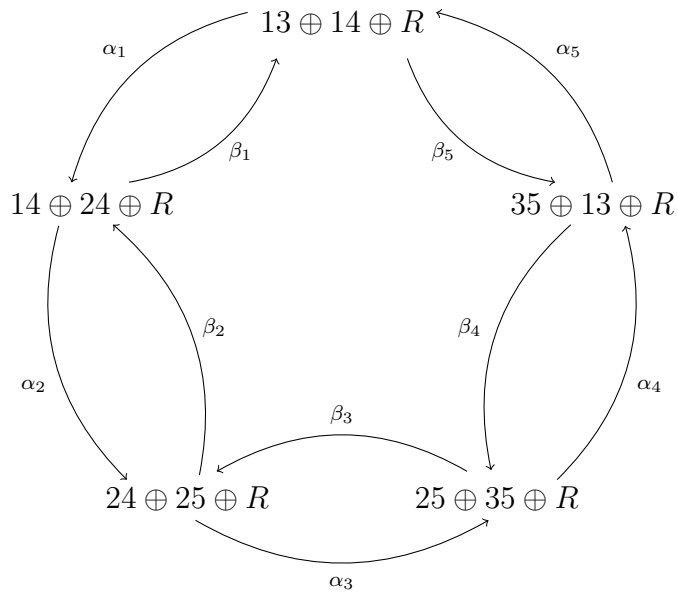
Note that the resemblance between Figures \ref{fig:JKSquiver} and \ref{fig:GammaCA2} is an artefact of the small size of the example: If, in the notation of \cite{BKM}, we set $n=6$, then Figure \ref{fig:JKSquiver} would grow by one vertex in an obvious way, but Figure \ref{fig:GammaCA2} would become far more complicated, with a vertex for each of the $14$ cluster tilting subcategories of $\cC( A_3 )$.

To obtain the green groupoid $\cG_{ \cC } = \cF_{ \cC }/\sim$, we must set equal certain pairs of green paths in $\cF_{ \cC }$.
\begin{figure}
  \centering
    \begin{tikzpicture}[scale=5.0]
      \draw[->] (0,0) to (1cm,0);
      \draw[->] (0,0) to (0,1cm);
      \draw[->] (0,0) to (-1cm,1cm);
      \draw[->] (0,0) to (-1cm,0);      
      \draw[->] (0,0) to (0,-1cm);

      \draw (1.21cm,0) node{$\index_{ \ell }( 13 )$};
      \draw (0,1.07cm) node{$\index_{ \ell }( 14 )$};
      \draw (-1.01cm,1.07cm) node{$\index_{ \ell }( 24 )$};
      \draw (-1.20cm,0) node{$\index_{ \ell }( 25 )$};
      \draw (0,-1.07cm) node{$\index_{ \ell }( 35 )$};

      \draw (0.5cm,0.51cm) node{$B^{ \ell }( 13 \oplus 14 \oplus R ) = B^{ \ell }_+$};
      \draw (-0.33cm,0.72cm) node{$B^{ \ell }( 14 \oplus 24 \oplus R )$};
      \draw (-0.75cm,0.37cm) node{$B^{ \ell }( 24 \oplus 25 \oplus R )$};
      \draw (-0.5cm,-0.51cm) node{$B^{ \ell }( 25 \oplus 35 \oplus R ) = B^{ \ell }_-$};
      \draw (0.5cm,-0.51cm) node{$B^{ \ell }( 35 \oplus 13 \oplus R )$};

    \end{tikzpicture}
  \caption{The $g$-vector fan of $\cC$ with respect to $\ell = 13 \oplus 14 \oplus R$.}
\label{fig:CA2fan}
\end{figure}
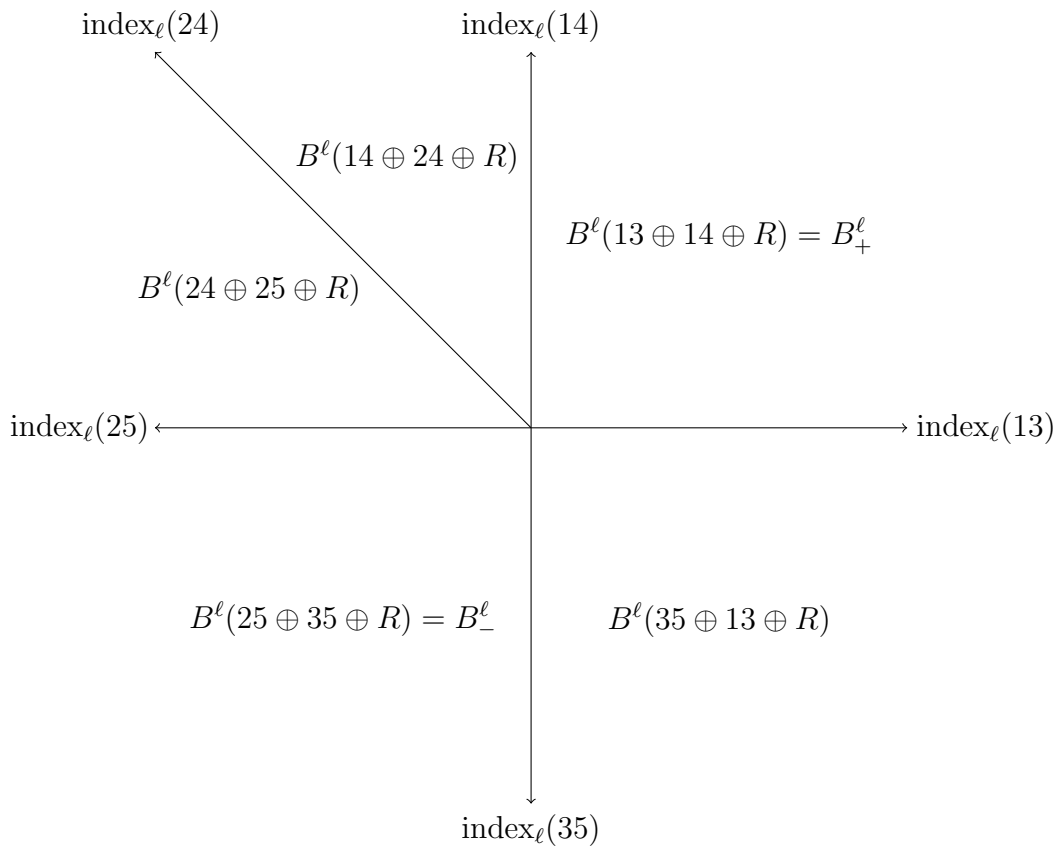
The $g$-vector fan of $\cC$ with respect to $\ell = 13 \oplus 14 \oplus R$ is shown in Figure \ref{fig:CA2fan}, and Lemma \ref{lem:G8} shows that there are green paths
\[
  13 \oplus 14 \oplus R \xrightarrow{ \alpha_1 } 14 \oplus 24 \oplus R \xrightarrow{ \alpha_2 } 24 \oplus 25 \oplus R \xrightarrow{ \alpha_3 } 25 \oplus 35 \oplus R
\]
and
\[
  13 \oplus 14 \oplus R \xrightarrow{ \beta_5 } 35 \oplus 13 \oplus R \xrightarrow{ \beta_4 } 25 \oplus 35 \oplus R,
\]
which must be set equal in $\cG_{ \cC }$.  We get four other relations by computing the $g$-vector fan with respect to each of the other four objects in $\mrig \cC$ and using Lemma \ref{lem:G8}.  The resulting five relations in $\cG_{ \cC }$ are
\begin{equation}
\label{equ:alphabeta}
  \alpha_{i+3} \alpha_{i+2} \alpha_{i+1}  = \beta_{i-1} \beta_i
\end{equation}
for $1 \leq i \leq 5$.
Here and below the index $i$ is taken modulo $5$. Denote by $i$ the vertex on Figure \ref{fig:GammaCA2} which is the head of $\alpha_i$ and the tail of $\beta_i$. 
Denote the morphisms from $i$ to $i-2$
\[
a_i = \alpha_{i-1}^{-1} \beta_i,
\quad
b_i = \beta_{i+3}^{-1} \alpha_{i+2} \alpha_i.
\]
In terms of those 
\[
\alpha_i = a_{i-3} b_{i-1}, \quad \beta_i = a_{i-4} b_{i-2} a_i.
\]
Therefore, the free groupoid with generators $\alpha_i, \beta_i$ is isomorphic to the free groupoid with generators $a_i, b_i$. 
In terms of the morphisms $a_i, b_i$, the relations \eqref{equ:alphabeta} become
\begin{equation}
\label{equ:ab}
a_{i-4} b_{i-2} a_i = b_{i-4} a_{i-2} b_i.
\end{equation}
In other words, the green groupoid $\cC(A_2)$ is the groupoid with generators
\[
5 \xrightarrow{a_5,b_5} 3 \xrightarrow{a_3,b_3} 1 \xrightarrow{a_1, b_1} 4 \xrightarrow{a_4,b_4} 2 \xrightarrow{a_2, b_2} 5
\]
and relations given by \eqref{equ:ab} for $1 \leq i \leq 5$. It can be viewed is a 5 point blow-up of the braid group $\cB_2$.
For $m \in \mrig \cC$, the green group $\cG_{ \cC(A_2) }( m,m )$ can be computed using \cite[thm.\ 7]{Higgins}; in each case it
is isomorphic to the braid groups $\cB_2$
\[
  \cG_{ \cC(A_2) }( m,m ) \cong \cB_2 = \langle a,b \mid aba = bab \rangle.
\]

\subsection{The endomorphism algebras $\cE( m,m )$}

Recall from Theorem \ref{thm:G11}(iii) the functor $\cG_{ \cC } \xrightarrow{ G } \cP_{ \cE }$ where $\cP_{ \cE }$ is the derived Picard groupoid with
\[
  \cP_{ \cE }( \ell,m ) = \{ \mbox{isomorphism classes of two-sided tilting complexes ${}_{ \cE( m,m ) }T_{ \cE( \ell,\ell ) }$} \}.
\]
We will determine the endomorphism algebras $\cE( m,m )$.  Since the example is small, they are isomorphic for all $m \in \mrig \cC$, and by \cite{BKM}, they are dimer algebras.  Specifically, consider $m = 14 \oplus 24 \oplus R$ and let $D$ be the corresponding $(2,5)$-Postnikov diagram shown in black in Figure \ref{fig:dimer}, see \cite[p.\ 213 and def.\ 2.1]{BKM}.
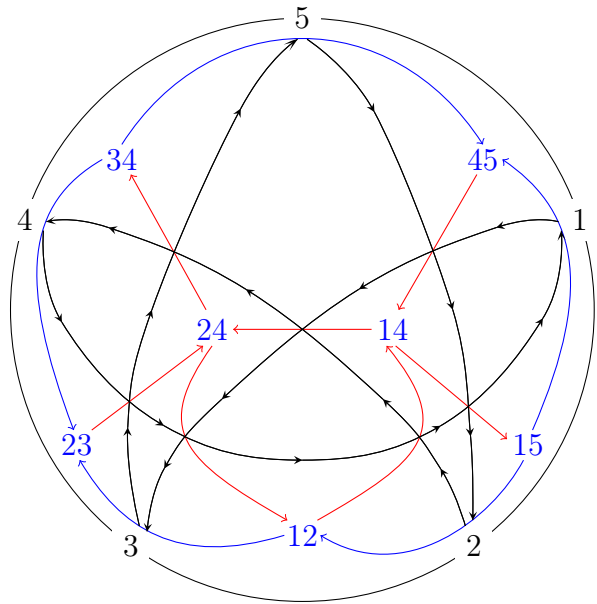
\begin{figure}
  \centering
    \begin{tikzpicture}[scale=4]

      \draw (0,0) circle (0.97cm);
      
      \draw (18:0.97cm) node[fill=white] {1};
      \draw (306:0.97cm) node[fill=white] {2};
      \draw (234:0.97cm) node[fill=white] {3};
      \draw (162:0.97cm) node[fill=white] {4};
      \draw (90:0.97cm) node[fill=white] {5};                  

	  \draw (0.3,-0.065) node[blue] {14};
	  \draw (-0.3,-0.065) node[blue] {24};
	  \draw (0.6,0.5) node[blue] {45};
	  \draw (-0.6,0.5) node[blue] {34};
	  \draw (-0.75,-0.45) node[blue] {23};
	  \draw (0,-0.75) node[blue] {12};
	  \draw (0.75,-0.45) node[blue] {15};

      \draw[->,blue] (-0.6,0.55) .. controls (-0.3,1.02) and (0.3,1.02) .. (0.6,0.55);
      \draw[->,blue] (0.765,-0.40) .. controls (6:0.99cm) and (21:0.96cm) .. (0.665,0.5);
      \draw[->,blue] (-0.665,0.5) .. controls (159:0.985cm) and (174:0.955cm) .. (-0.76,-0.40);
      \draw[->,blue] (-0.06,-0.75) .. controls (244:0.955cm) and (230:0.945cm) .. (-0.74,-0.5);
      \draw[->,blue] (0.735,-0.5) .. controls (312:0.94cm) and (290:0.985cm) .. (0.06,-0.75);

      \draw[->,red] (0.23,-0.065) to (-0.23,-0.065);
      \draw[->,red] (0.58,0.45) to (0.32,0);
      \draw[->,red] (-0.32,0) to (-0.57,0.45);
      \draw[->,red] (-0.7,-0.4) to (-0.33,-0.12);
      \draw[->,red] (0.31,-0.12) to (0.68,-0.43);

      \draw[->,red] (-0.3,-0.12) .. controls (-0.5,-0.4) and (-0.4,-0.5) .. (-0.05,-0.7);
      \draw[->,red] (0.05,-0.7) .. controls (0.4,-0.5) and (0.51,-0.4) .. (0.28,-0.12);

	  \begin{scope}

        \draw[black,thin] plot [smooth,tension=0.7]
        coordinates {(19:0.9cm) (23:0.7cm) (20:0.2cm) (228:0.4cm) (229:0.7cm) (235:0.9cm)}
        [postaction={on each segment={draw,-{stealth[black,bend]}}}];

        \draw[black,thin] plot [smooth,tension=0.7]
        coordinates {(307:0.9cm) (311:0.7cm) (312:0.4cm) (160:0.2cm) (157:0.7cm) (161:0.9cm)}
        [postaction={on each segment={draw,-{stealth[black,bend]}}}];

        \draw[black,thin] plot [smooth,tension=0.7]
        coordinates {(233:0.9cm) (214:0.7cm) (180:0.5cm) (107:0.7cm) (91:0.9cm)}
        [postaction={on each segment={draw,-{stealth[black,bend]}}}];

        \draw[black,thin] plot [smooth,tension=0.7]
        coordinates {(163:0.9cm) (183:0.8cm) (220:0.6cm) (270:0.5cm) (-40:0.6cm) (-2:0.8cm) (17:0.9cm)}
        [postaction={on each segment={draw,-{stealth[black,bend]}}}];

        \draw[black,thin] plot [smooth,tension=0.6]
        coordinates {(89:0.9cm) (70:0.7cm) (0:0.5cm) (-37:0.7cm) (-51:0.9cm)}
        [postaction={on each segment={draw,-{stealth[black,bend]}}}];

	  \end{scope}
	      
    \end{tikzpicture} 
  \caption{A $(2,5)$-Postnikov diagram $D$ (black) and its associated quiver $Q(D)$ (blue and red).}
\label{fig:dimer}
\end{figure}
Its associated quiver $Q(D)$ is shown in blue and red with red signifying internal arrows, see \cite[def.\ 2.4]{BKM}.  Figure \ref{fig:Emm} shows $Q(D)$ on its own.
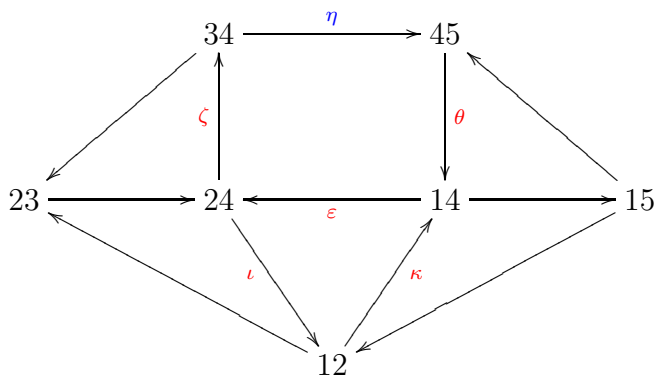
\begin{figure}
\[
\vcenter{
  \xymatrix @R=4pc @C=2pc {
    && *+[blue]{34} \ar@[blue][dll] \ar@[blue]^{\color{blue}\eta}[rr] && *+[blue]{45} \ar@[red]^{\color{red}\theta}[d] \\
    *+[blue]{23} \ar@[red][rr] && *+[blue]{24} \ar@[red]^{\color{red}\zeta}[u] \ar@[red]_<<<<<<<{\color{red}\iota}[dr] && *+[blue]{14} \ar@[red]^{\color{red}\varepsilon}[ll] \ar@[red][rr] && *+[blue]{15} \ar@[blue][llu] \ar@[blue][llld] \\
    &&& *+[blue]{12} \ar@[blue][lllu] \ar@[red]_>>>>>>>{\color{red}\kappa}[ru] \\
                    }
        }
\]
  \caption{The quiver $Q(D)$ on its own.}
\label{fig:Emm}
\end{figure}
Let $\widehat{ kQ(D) }$ be the completion of the path algebra at the arrow ideal.  Then \cite[thm.\ 11.2]{BKM} says that $\cE( m,m )$ is the dimer algebra of $Q(D)$, that is,
\[
  \cE( m,m ) \cong \widehat{ kQ(D) }/\fc
\]
where $\fc$ is the completion of the ideal defined by certain commutativity relations.  Each internal arrow gives such a relation, stating that the two alternative routes from target to source are equal.  For instance, the arrow $\varepsilon$ gives the relation $\zeta\eta\theta = \iota\kappa$, see \cite[def.\ 3.5]{BKM}.

\appendix

\numberwithin{equation}{section}
\renewcommand{\theequation}{\Alph{section}.\arabic{equation}}

\renewcommand{\thesubsection}{\Alph{section}.\Roman{subsection}}

\section{Frobenius categories}
\label{sec:Frobenius}

This appendix collects results about the Frobenius category $\cE$ and its relation to the stable category $\cC = \underline{ \cE }$, still assuming Setup \ref{set:blanket}.  Recall that if $\alpha$ is a morphism in $\cE$, then $\underline{ \alpha }$ denotes its image in $\cC$.  The following lemma is a portmanteau of well-known results, and we omit the proof.

\begin{Lemma}
\label{lem:portmanteau}
\begin{enumerate}
\setlength\itemsep{4pt}

  \item  Given $x,x' \in \cE$ we have $x \cong x'$ in $\cC$ if and only if there exist projective-injective objects $p,p' \in \cE$ such that $x \oplus p \cong x' \oplus p'$ in $\cE$.  

  \item  If $x \in \cC$ is good then $\add_{ \cC }( x ) = \add_{ \cE }( x )$ (the equation makes sense because $\cC$ and $\cE$ have the same objects).

  \item  If $x \xrightarrow{ \xi } z$ is a morphism, $y \xrightarrow{ \upsilon } z$ a deflation in $\cE$, then $x \oplus y \xrightarrow{ ( \xi,\upsilon ) } z$ is a deflation in $\cE$.  

  \item  Given $x \in \cE$, set $A = \cE( x,x )$.  The functor $\cE( x,- )$ induces a $k$-linear map
\[
  \cE( y,z ) \xrightarrow{} \Hom_A\!\big( \cE(x,y),\cE(x,z) \big),
\]
which is natural in $y,z \in \cE$, and is bijective for $y \in \add_{ \cE }( x )$.  

  \item  Given $y \in \cE$, set $B = \cE( y,y )$.  Composition of morphisms induces a $k$-linear map
\[
  \cE( y,z ) \Tensor{B} \cE( x,y ) \xrightarrow{} \cE( x,z ),
\]
which is natural in $x,z \in \cE$, and is bijective for $x \in \add_{ \cE }( y )$ or $z \in \add_{ \cE }( y )$.  

  \item  Given $x \in \cE$, set $A = \cE( x,x )$ and $\underline{ A } = \cC( x,x )$, and observe that there is a canonical surjection $A \xrightarrow{} \underline{ A }$.  There is an isomorphism of $\underline{ A }$-left modules
\[
  {}_{ \underline{ A } }\underline{ A }_A \Tensor{ A } \cE( x^0,x ) \xrightarrow{\cong} \cC( x^0,x ),
\]
which is natural in $x^0 \in \add_{ \cE }( x )$.  

  \item  Each triangle in $\cC$,
\[  
  x' \xrightarrow{} y' \xrightarrow{} z' \xrightarrow{} \Sigma x',
\]
is isomorphic to the triangle induced by a conflation in $\cE$ of the form
\[
  0 \xrightarrow{} x' \xrightarrow{} y \xrightarrow{} z \xrightarrow{} 0,
\]
and to the triangle induced by a conflation in $\cE$ of the form
\[
  0 \xrightarrow{} x \xrightarrow{} y \xrightarrow{} z' \xrightarrow{} 0.
\]

  \item  Given a conflation
\[
  0 \xrightarrow{} x \xrightarrow{ \xi } y \xrightarrow{ \upsilon } z \xrightarrow{} 0
\]
in $\cE$, consider the induced triangle
\[
  x \xrightarrow{ \underline{\xi} } y \xrightarrow{ \underline{\upsilon} } z \xrightarrow{} \Sigma x
\]  
in $\cC$.  For each $u \in \cC$ we have
\begin{align*}
  \mbox{$\cE( \xi,u )$ surjective } &
  \mbox{$\; \Leftrightarrow \;\cC( \underline{\xi},u )$ surjective, } \\[1mm]
  \mbox{$\cE( u,\upsilon )$ surjective } &
  \mbox{$\; \Leftrightarrow \; \cC( u,\underline{\upsilon} )$ surjective. }
\end{align*}

\end{enumerate}
\end{Lemma}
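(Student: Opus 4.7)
Since the lemma is a catalogue of standard facts about a Frobenius category $\cE$ and its stable quotient $\cC = \underline{\cE}$, I would break the argument into three blocks. Parts (i) and (ii) concern objects. For (i), the ``if'' direction is immediate because objects of $\cP$ become zero in $\cC$. For the ``only if'' direction, I would lift mutually quasi-inverse morphisms $\underline{\alpha},\underline{\beta}$ to $\alpha,\beta$ in $\cE$, note that the defects $\beta\alpha - 1_x$ and $\alpha\beta - 1_{x'}$ factor through objects $p,p' \in \cP$, and assemble an explicit isomorphism $x \oplus p \xrightarrow{\cong} x' \oplus p'$ in $\cE$ by the classical $2\times 2$-matrix construction (see \cite[Lem.~I.2.8]{H}). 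Part (ii) is then immediate: if $x$ is good, every projective-injective lies in $\add_\cE(x)$, so any summand in $\cC$ of a finite sum of copies of $x$ lifts, by (i), to a summand in $\cE$ after absorbing the necessary projective-injective correction.

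Part (iii) is a feature of any exact category. The map $\phi\colon x\oplus y \to x\oplus z$, $(a,b) \mapsto (a, \xi(a)+\upsilon(b))$, factors as the deflation $1_x\oplus \upsilon$ followed by the shear isomorphism $(a,c)\mapsto (a,\xi(a)+c)$, hence is a deflation; composing with the split projection $x\oplus z \twoheadrightarrow z$ then yields $(\xi,\upsilon)$, which is therefore a deflation. Parts (iv) and (v) are Yoneda-type statements: both sides are additive in the running variable and agree tautologically on $x$ (respectively on $y$), so the natural map is an isomorphism on all of $\add_\cE(x)$ (respectively $\add_\cE(y)$) by $k$-linearity and passage to retracts. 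Part (vi) is the most substantive point. Since $A \twoheadrightarrow \underline{A}$ has kernel $[\cP](x,x)$, one must identify $[\cP](x^0,x)$ with $[\cP](x,x)\cdot \cE(x^0,x)$. The nontrivial inclusion uses $x^0 \in \add_\cE(x)$: choose a split pair $\iota\colon x^0 \to x^n$, $\pi\colon x^n \to x^0$, write $\gamma = (\gamma\pi)\iota$, and observe that each component $x \to x$ of $\gamma\pi$ factors through the same $p \in \cP$ as $\gamma$, so that $\gamma$ emerges as a sum of products of elements of $[\cP](x,x)$ with elements of $\cE(x^0,x)$.

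Part (vii) uses the standard construction of $\Sigma$ via a fixed conflation $x \rightarrowtail i(x) \twoheadrightarrow \Sigma x$ with $i(x) \in \cP$: given a triangle in $\cC$, a representing conflation is produced by the usual pushout/pullback procedure, and the freedom to prescribe the first or third vertex up to isomorphism in $\cC$ follows from (i) by absorbing suitable projective-injective summands at either end without altering the induced triangle. Part (viii) invokes the defining lifting properties of $\cP$: if $\underline{\alpha} = \underline{h}\,\underline{\xi}$, then $\alpha - h\xi\colon x \to u$ factors through some $p \in \cP$, and since $\xi$ is an inflation and $p$ is injective this factor extends along $\xi$, producing an honest lift of $\alpha$ in $\cE$; the dual argument using projectivity of $\cP$ handles surjectivity of $\cE(u,\upsilon)$. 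The main obstacle is part (vi), where one must carefully track the module structures to confirm that tensoring with ${}_{\underline{A}}\underline{A}_A$ kills precisely the ideal of morphisms factoring through $\cP$ at the source; once this is established, parts (iv), (v), and (viii) assemble cleanly into the stable-category consequences repeatedly invoked in the paper.
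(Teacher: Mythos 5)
The paper explicitly omits the proof, describing the lemma as ``a portmanteau of well-known results,'' so there is no authors' argument to compare against; your task was to supply the standard proofs, and you have done so essentially correctly. Parts (iii), (iv), (v) are exactly the standard shear/Yoneda/tensor arguments. Part (vi) is the right key computation: identifying $[\cP](x,x)\cdot\cE(x^0,x)$ with $[\cP](x^0,x)$ by inserting a split pair $\iota,\pi$ witnessing $x^0\in\add_\cE(x)$, and noting that each $A$-component of $\gamma\pi$ inherits the factorisation of $\gamma$ through $p\in\cP$. Part (viii) correctly uses injectivity of $\cP$ against the inflation $\xi$ (and, dually, projectivity against the deflation $\upsilon$) to promote a lift modulo $\cP$ to an honest lift. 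Parts (i) and (ii) are standard; your argument for (ii) depends on every projective--injective lying in $\add_\cE(x)$ when $x$ is good, which is exactly why ``good'' is part of the hypothesis.

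The one place I would press you is (vii). You produce a representing conflation $0\to a\to b\to c\to 0$ and then invoke (i) to ``absorb suitable projective--injective summands at either end.'' But (i) only yields an isomorphism $a\oplus p\cong x'\oplus p'$ with $p,p'\in\cP$, while the statement requires a conflation whose first term is \emph{literally} $x'$. Passing from $0\to x'\oplus p'\to b\oplus p\to c\to 0$ to a conflation $0\to x'\to b''\to c\to 0$ is not merely ``absorbing'': one must split off the $p'$ summand, which uses injectivity of $p'$ to split the restricted inflation $p'\to b\oplus p$ followed by a matrix reduction to make the conflation decompose as a direct sum. This works, but it is an extra nontrivial step. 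A cleaner route avoids (i) entirely: lift the first morphism $\underline{\alpha}\colon x'\to y'$ of the given triangle to $\alpha\in\cE(x',y')$, pick an inflation $\iota\colon x'\rightarrowtail i(x')$ with $i(x')\in\cP$, and take the conflation $0\to x'\xrightarrow{(\alpha,\iota)}y'\oplus i(x')\to c\to 0$. Its induced triangle is a cone on $\underline{\alpha}$, hence isomorphic to the given one by the uniqueness of cones, and its first term is $x'$ on the nose; the statement about the third term follows dually from a lift of $y'\to z'$ and a deflation from a projective--injective onto $z'$.
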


\begin{Lemma}
\label{lem:F9_10}
Let $x \in \cC$ be a good maximal rigid object, $y \in \cC$ a rigid object.
\begin{enumerate}
\setlength\itemsep{4pt}

  \item  There exists a conflation in $\cE$ with $x_i \in \add_{ \cE }( x )$:
\begin{equation}
\label{equ:F9_10a}
  0 \xrightarrow{} x_1 \xrightarrow{ \xi_1 } x_0 \xrightarrow{ \xi_0 } y \xrightarrow{} 0.
\end{equation}  
  
  \item  Each such conflation induces an augmented projective resolution of $\cE( x,y )$ as an $\cE( x,x )$-right module:
\begin{equation}
\label{equ:F9_10b}
  0 \xrightarrow{} \cE( x,x_1 ) \xrightarrow{ \cE( x,\xi_1 ) } \cE( x,x_0 )\xrightarrow{ \cE( x,\xi_0 ) } \cE( x,y ) \xrightarrow{} 0.
\end{equation}  

  \item  $\cE( x,y )$ is a finitely presented $\cE( x,x )$-right module of projective dimension at most $1$.

\end{enumerate}
\end{Lemma}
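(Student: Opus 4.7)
The plan is to bootstrap from the triangle provided by Lemma \ref{lem:ZZ} to a conflation in $\cE$ whose image in $\cC$ is that triangle, and then to extract the projective resolution by applying $\cE(x,-)$.

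For (i), I would start with the triangle $x_1 \to x_0 \to y \to \Sigma x_1$ in $\cC$ furnished by Lemma \ref{lem:ZZ}, with $x_i \in \add_\cC x$. Apply Lemma \ref{lem:portmanteau}(vii) in the form that preserves the last object, obtaining a conflation $0 \to a \to b \to y \to 0$ in $\cE$ whose induced triangle in $\cC$ is isomorphic to the one above; in particular $a \cong x_1$ and $b \cong x_0$ in $\cC$. The delicate point — which I expect to be the main obstacle — is upgrading these stable-category isomorphisms to the claim that $a, b \in \add_\cE x$. Here the goodness of $x$ is essential: by Lemma \ref{lem:portmanteau}(ii), $\add_\cC x = \add_\cE x$, so $x_0, x_1 \in \add_\cE x$. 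Lemma \ref{lem:portmanteau}(i) then provides projective-injective objects $p, p'$ with $a \oplus p \cong x_1 \oplus p'$ in $\cE$. Since $x$ contains $r$ as a summand, both $p$ and $p'$ lie in $\add_\cE x$, whence $x_1 \oplus p' \in \add_\cE x$ and therefore $a \in \add_\cE x$ as a direct summand. The same reasoning gives $b \in \add_\cE x$, and we relabel $a = x_1$, $b = x_0$.

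For (ii), apply the left exact functor $\cE(x,-)$ to any given conflation as in (i). Since $x_0, x_1 \in \add_\cE x$, the modules $\cE(x, x_0)$ and $\cE(x, x_1)$ are summands of free $\cE(x,x)$-right modules of finite rank, hence finitely generated projective. Left exactness gives that $0 \to \cE(x,x_1) \xrightarrow{\cE(x,\xi_1)} \cE(x,x_0) \xrightarrow{\cE(x,\xi_0)} \cE(x,y)$ is exact, so it remains to establish surjectivity of $\cE(x, \xi_0)$. By Lemma \ref{lem:portmanteau}(viii), this is equivalent to surjectivity of $\cC(x, \underline{\xi_0})$. The conflation induces a triangle $x_1 \to x_0 \xrightarrow{\underline{\xi_0}} y \to \Sigma x_1$ in $\cC$, and applying $\cC(x,-)$ produces the exact sequence $\cC(x, x_0) \to \cC(x, y) \to \cC(x, \Sigma x_1)$. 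The rightmost term vanishes because $x$ is rigid and $x_1 \in \add_\cC x$, yielding the required surjectivity.

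For (iii), the resolution constructed in (ii) has finitely generated projective terms of length one, so $\cE(x,y)$ is finitely presented with projective dimension at most $1$. As flagged above, the sole nontrivial step is the transfer in (i) between $\cC$ and $\cE$; once goodness has been exploited to pin down the terms in $\add_\cE x$, parts (ii) and (iii) follow routinely from left exactness of $\cE(x,-)$, the portmanteau lemma, and the rigidity of $x$.
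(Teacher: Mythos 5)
Your proof is correct and follows essentially the same route as the paper: invoke Lemma \ref{lem:ZZ}, lift the triangle to a conflation ending at $y$ via Lemma \ref{lem:portmanteau}(vii), move between $\add_\cC x$ and $\add_\cE x$ using the goodness of $x$, then apply $\cE(x,-)$ and use rigidity plus Lemma \ref{lem:portmanteau}(viii) to get surjectivity. The only inessential detour is in (i): once you have $a \cong x_1$ in $\cC$ with $x_1 \in \add_\cC x$, you already have $a \in \add_\cC x$ (since $\add$ is closed under isomorphism), and Lemma \ref{lem:portmanteau}(ii) gives $a \in \add_\cE x$ directly; the extra appeal to Lemma \ref{lem:portmanteau}(i) and the explicit manipulation with projective-injective summands re-derives a special case of (ii) rather than simply citing it. This is harmless, just slightly longer than the paper's argument.
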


\begin{proof}
(i):  By Lemma \ref{lem:ZZ} there exists a triangle $x'_1 \xrightarrow{} x'_0 \xrightarrow{} y \xrightarrow{} \Sigma x'_1$ in $\cC$ with $x'_i \in \add_{ \cC }( x )$.  By Lemma \ref{lem:portmanteau}(vii) there exists a conflation \eqref{equ:F9_10a} in $\cE$ with $x_i \cong x'_i$ in $\cC$.  In particular, $x_i \in \add_{ \cC }( x )$ whence $x_i \in \add_{ \cE }( x )$ by Lemma \ref{lem:portmanteau}(ii).

(ii):  The conflation \eqref{equ:F9_10a} induces a triangle $x_1 \xrightarrow{} x_0 \xrightarrow{ \underline{ \xi }_0 } y \xrightarrow{} \Sigma x_1$ in $\cC$.  We have $\cC( x,\Sigma x_1 ) = 0$ because $x$ is rigid in $\cC$ while $x_1 \in \add_{ \cE }( x )$ whence $x_1 \in \add_{ \cC }( x )$ by Lemma \ref{lem:portmanteau}(ii).  Hence $\cC( x, \underline{ \xi }_0 )$ is surjective whence $\cE( x,\xi_0 )$ is surjective by Lemma \ref{lem:portmanteau}(viii).  This implies (ii).

(iii):  Immediate by (ii).
\end{proof}

\begin{Lemma}
\label{lem:F8}
If $x \in \cE$ is good, then each $v_i \in \cE$ permits a conflation $0 \xrightarrow{} v_{ i+1 } \xrightarrow{} x_i \xrightarrow{ \varphi_i } v_i \xrightarrow{} 0$ where $\varphi_i$ is an $\add_{ \cE }( x )$-precover.
\end{Lemma}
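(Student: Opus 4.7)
My plan is to construct $\varphi_i$ as the sum of a projective deflation onto $v_i$ together with finitely many maps $x \to v_i$ that generate $\cE( x, v_i )$ over the endomorphism ring $\cE( x, x )$, and then to verify the two required properties (deflation and $\add_{ \cE }( x )$-precover).

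First I would use that $x$ is good: an isomorphism $x \cong x' \oplus r$ in $\cE$ gives $r \in \add_{ \cE }( x )$, and hence $\add_{ \cE }( r ) \subseteq \add_{ \cE }( x )$. Since $\cE$ is Frobenius with $\cP = \add_{ \cE }( r )$ and has enough projectives, there exists a deflation $\rho \colon r^n \to v_i$ for some $n \geqslant 0$. Next I would pick a finite set of generators $\alpha_1, \ldots, \alpha_k \in \cE( x, v_i )$ of $\cE( x, v_i )$ as a right $\cE( x, x )$-module, and set
\[
  x_i = x^k \oplus r^n \;\in\; \add_{ \cE }( x ), \qquad \varphi_i = ( \alpha_1, \ldots, \alpha_k, \rho ) \colon x_i \longrightarrow v_i.
\]

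Two short checks then finish the proof. For the conflation: because $\rho$ is a deflation and appears as a component of $\varphi_i$, Lemma \ref{lem:portmanteau}(iii) implies $\varphi_i$ is itself a deflation, so its kernel $v_{ i+1 }$ completes a conflation $0 \to v_{ i+1 } \to x_i \xrightarrow{ \varphi_i } v_i \to 0$ in $\cE$. For the precover property: it is enough to show that $\cE( x, \varphi_i )\colon \cE( x, x_i ) \to \cE( x, v_i )$ is surjective, since for any $y \in \add_{ \cE }( x )$, $\cE( y, - )$ is a direct summand of $\cE( x, - )^N$ for some $N$ and surjectivity is inherited. Surjectivity of $\cE( x, \varphi_i )$ is then tautological: every element of $\cE( x, v_i )$ has the form $\sum_j f_j \alpha_j = \varphi_i \circ ( f_1, \ldots, f_k, 0 )^T$ for suitable $f_j \in \cE( x, x )$.

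The hard part will be the finite-generation input used to choose the $\alpha_j$, namely that $\cE( x, v_i )$ is a finitely generated right $\cE( x, x )$-module. This is automatic in all the motivating Frobenius categories of the paper (categories of maximal Cohen--Macaulay modules over a complete local Noetherian Gorenstein isolated singularity, the Frobenius models for cluster categories of \cite{BKM,BIRS,BIKR,GLS2}, etc.), since there $\cE$ has finite-dimensional Hom-spaces over $k$; in that case the construction above goes through verbatim. In the abstract setting the lemma should be read with the implicit understanding that the $v_i$ under consideration satisfy this finiteness, which is precisely what the iterative use of the lemma elsewhere in the appendix provides inductively.
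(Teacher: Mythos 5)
Your approach is genuinely different from the paper's, and the gap you flag at the end is the real one — but you resolve it the wrong way. You do not need an extra ``implicit'' finiteness hypothesis: the finite generation of $\cE(x,v_i)$ as a right $\cE(x,x)$-module follows from the standing assumptions. Indeed, fix any deflation $\pi\colon p\to v_i$ with $p$ projective-injective; then $p\in\cP=\add_{\cE}(r)\subseteq\add_{\cE}(x)$ because $x$ is good, so $\cE(x,p)$ is a finitely generated $\cE(x,x)$-module. Since any projective-injective $q$ lifts maps along the deflation $\pi$, a morphism $x\to v_i$ factoring through some $q\in\cP$ also factors through $p$; hence the image of $\cE(x,\pi)$ is exactly $[\cP](x,v_i)$, which is therefore finitely generated. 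On the other hand $\cC(x,v_i)=\cE(x,v_i)/[\cP](x,v_i)$ is finite-dimensional over $k$ by Setup~\ref{set:blanket}(iv), so also finitely generated over $\cE(x,x)$. An extension of finitely generated modules is finitely generated, so $\cE(x,v_i)$ is finitely generated, and your construction then goes through for every $v_i\in\cE$ with no further hypothesis. (Your parenthetical justification — that $\cE$ has finite-dimensional $\Hom$-spaces in the motivating examples — is not actually true for CM modules over a complete local ring, so that escape hatch would not have worked even heuristically.)

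The paper takes a different route that sidesteps finite generation altogether. It uses $\Hom$-finiteness of $\cC$ to build an $\add_{\cC}(x)$-precover $\underline{\theta}_i\colon y_i\to v_i$ \emph{in the stable category}, completes it to a triangle, and then lifts that triangle to a conflation $0\to v_{i+1}\to x_i\xrightarrow{\varphi_i}v_i\to 0$ in $\cE$ via Lemma~\ref{lem:portmanteau}(vii). Surjectivity of $\cC(x,\underline{\theta}_i)$ transfers to surjectivity of $\cE(x,\varphi_i)$ by Lemma~\ref{lem:portmanteau}(viii), and $x_i\in\add_{\cE}(x)$ follows from $x_i\cong y_i$ in $\cC$ together with goodness of $x$ and Lemma~\ref{lem:portmanteau}(i),(ii). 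Both routes are correct and of similar length once your gap is filled; the paper's lift-from-$\cC$ argument has the advantage of never touching the $\cE$-$\Hom$-space directly, which keeps the proof closer to the portmanteau toolbox already assembled in the appendix.
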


\begin{proof}
Since $\cC$ is $\Hom$-finite, there is a triangle $w_{ i+1 } \xrightarrow{} y_i \xrightarrow{ \underline{ \theta }_i } v_i \xrightarrow{} \Sigma w_{ i+1 }$ in $\cC$ where $\underline{ \theta }_i$ is an $\add_{ \cC }( x )$-precover.  By Lemma \ref{lem:portmanteau}(vii) it is isomorphic to the triangle induced by a conflation $0 \xrightarrow{} v_{ i+1 } \xrightarrow{} x_i \xrightarrow{ \varphi_i } v_i \xrightarrow{} 0$ in $\cE$.

Observe that $\cC( x,\underline{ \theta }_i )$ is surjective, so $\cE( x,\varphi_i )$ is surjective by Lemma \ref{lem:portmanteau}(viii), so $\cE( x',\varphi_i )$ is surjective for $x' \in \add_{ \cE }( x )$.  Moreover, since $x_i \cong y_i$ in $\cC$, we have $x_i \oplus p \cong y_i \oplus q$ for certain projective-injective objects $p,q \in \cE$ by Lemma \ref{lem:portmanteau}(i).  By definition, $y_i \in \add_{ \cC }( x )$, so since $x$ is good, $y_i \in \add_{ \cE }( x )$.  Also, since $x$ is good, $q \in \add_{ \cE }( x )$.  Hence $y_i \oplus q \in \add_{ \cE }( x )$, so $x_i \oplus p \in \add_{ \cE }( x )$, so $x_i \in \add_{ \cE }( x )$.  
\end{proof}

\begin{Lemma}
\label{lem:F16}
Let $x \in \cE$ be a good object.  Set $A = \cE( x,x )$, $\fa = [\cP]( x,x )$, $\underline{ A } = \cC( x,x ) = A/\fa$. 

If $M_{ \underline{ A }} \in \Mod \underline{ A }$ is finitely presented over $A$, then there is a sequence $\cdots \xrightarrow{} x_2 \xrightarrow{ \partial_2 } x_1 \xrightarrow{ \partial_1 } x_0$ in $\cE$ with the following properties.
\begin{enumerate}
\setlength\itemsep{4pt}

  \item  There are conflations $0 \xrightarrow{} v_2 \xrightarrow{ \psi_2 } x_1 \xrightarrow{ \partial_1 } x_0 \xrightarrow{} 0$ and $0 \xrightarrow{} v_{ i+1 } \xrightarrow{ \psi_{ i+1 } } x_i \xrightarrow{ \varphi_i } v_i \xrightarrow{} 0$ for $i \geqslant 2$, such that $\partial_i = \psi_i \varphi_i$ for $i \geqslant 2$ and each $\varphi_i$ is an $\add_{ \cE }( x )$-precover.

  \item  $x_i \in \add_{ \cE }( x )$ for each $i$.

  \item  There is an augmented projective resolution $\cdots \xrightarrow{} \cE( x,x_2 ) \xrightarrow{ ( \partial_2 )_* } \cE( x,x_1 ) \xrightarrow{ ( \partial_1 )_* } \cE( x,x_0 ) \xrightarrow{} M_A \xrightarrow{} 0$ of $A$-right modules.

\end{enumerate}
\end{Lemma}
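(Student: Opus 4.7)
The plan is to build the sequence in three parts: a degree-zero surjection $\cE(x,x_0) \to M$, a degree-one deflation $\partial_1$, and higher degrees by iterated application of Lemma \ref{lem:F8}.

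For degree zero, since $M$ is finitely generated as an $A$-module (equivalently, as an $\underline{A}$-module), choose generators $m_1, \dots, m_n$ and set $x_0 = x^n \in \add_{\cE}(x)$. The $A$-linear map $\cE(x, x_0) = A^n \to M$ sending the $i$-th standard generator to $m_i$ is surjective. Because $M$ is an $\underline{A}$-module, $\fa \subseteq A$ annihilates it, so the kernel $N$ contains $[\cP](x, x_0) = \fa^n$.

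For degree one, $N$ is finitely generated as an $A$-module (because $M$ is finitely presented), so pick generators $\alpha_1, \dots, \alpha_m \in N \subseteq \cE(x, x_0)$, viewed as morphisms $\alpha_i \colon x \to x_0$. The combined morphism $\gamma = (\alpha_1, \dots, \alpha_m) \colon x^m \to x_0$ satisfies $\mathrm{image}(\cE(x, \gamma)) = N$, but $\gamma$ need not be a deflation. To remedy this, choose any deflation $\beta \colon p \to x_0$ with $p$ projective-injective; since $x$ is good, $p \in \cP \subseteq \add_{\cE}(x)$. Set $x_1 = x^m \oplus p$ and $\partial_1 = (\gamma, \beta) \colon x_1 \to x_0$, which is a deflation by Lemma \ref{lem:portmanteau}(iii). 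For any $\varphi \colon x \to p$ the composite $\beta\varphi$ factors through $p \in \cP$, so $\beta\varphi \in [\cP](x, x_0) \subseteq N$; hence $\mathrm{image}(\cE(x, \partial_1)) = N$, and the conflation $0 \to v_2 \to x_1 \xrightarrow{\partial_1} x_0 \to 0$ produces the required exact sequence $\cE(x, x_1) \to \cE(x, x_0) \to M \to 0$.

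For higher degrees, inductively apply Lemma \ref{lem:F8} to $v_i$ (starting with $i = 2$) to obtain a conflation $0 \to v_{i+1} \to x_i \xrightarrow{\varphi_i} v_i \to 0$ with $x_i \in \add_{\cE}(x)$ and $\varphi_i$ an $\add_{\cE}(x)$-precover. Define $\partial_i = \psi_i \varphi_i$ using the inflation $\psi_i \colon v_i \to x_{i-1}$ from the previous conflation. Exactness of the augmented resolution at $\cE(x, x_i)$ follows because $\cE(x, \varphi_{i+1})$ is surjective (as $\varphi_{i+1}$ is a precover with source in $\add_{\cE}(x)$) with kernel $\cE(x, v_{i+2})$, while $\cE(x, \psi_{i+1})$ is injective.

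The main obstacle is the degree-one step, where one must realise a finitely generated $A$-submodule $N \subseteq \cE(x, x_0)$ as the $\cE(x, -)$-image of a deflation out of an object in $\add_{\cE}(x)$. This is precisely where the hypothesis that $M$ is an $\underline{A}$-module (and not merely an $A$-module) is essential: it forces $[\cP](x, x_0) \subseteq N$, which is what allows one to enlarge $x_1$ by a projective-injective summand and upgrade the generator map $\gamma$ to a deflation without enlarging the image past $N$.
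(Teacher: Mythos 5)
Your proof is correct and follows essentially the same route as the paper. You build the degree-zero and degree-one terms by choosing explicit generators for $M$ and for the kernel $N$ of $A^n\twoheadrightarrow M$ and then adding a projective-injective deflation onto $x_0$ to upgrade the generator map to a deflation, whereas the paper invokes a finite presentation of $M_A$ abstractly (via the equivalence $\add_{\cE}(x)\simeq\proj A$) before enlarging $x_1$ by a projective-injective; the key observation in both versions is that $x$ being good makes $p\in\add_{\cE}(x)$ and that $M$ being an $\underline{A}$-module forces the extra image $[\cP](x,x_0)$ into the kernel, so exactness is preserved. The inductive higher-degree step using Lemma~\ref{lem:F8} is identical.
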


\begin{proof}
{\em Step 1: Constructing the conflation $0 \xrightarrow{} v_2 \xrightarrow{ \psi_2 } x_1 \xrightarrow{ \partial_1 } x_0 \xrightarrow{} 0$. }  Since $M_A$ is finitely presented, there is a morphism $x'_1 \xrightarrow{} x'_0$ in $\add_{ \cE }( x )$ such that
\begin{equation}
\label{equ:F16a}
  \cE( x,x'_1 ) \xrightarrow{} \cE( x,x'_0 ) \xrightarrow{} M_A \xrightarrow{} 0
\end{equation}
is an exact sequence of $A$-right modules.  Pick a deflation $p_1 \xrightarrow{} x'_0$ with $p_1$ projective-injective.  Set $x_1 \xrightarrow{ \partial_1 } x_0$ equal to $x'_1 \oplus p_1 \xrightarrow{} x'_0$.  Then:
\begingroup
\renewcommand{\labelenumi}{(\roman{enumi})'}
\begin{enumerate}
\setlength\itemsep{4pt}

  \item  There is a conflation $0 \xrightarrow{} v_2 \xrightarrow{ \psi_2 } x_1 \xrightarrow{ \partial_1 } x_0 \xrightarrow{} 0$ because $x_1 \xrightarrow{ \partial_1 } x_0$ is a deflation by Lemma \ref{lem:portmanteau}(iii).

  \item  $x_0,x_1 \in \add_{ \cE }( x )$ because $x'_0,x'_1 \in \add_{ \cE }( x )$ by construction and $p_1 \in \add_{ \cE }( x )$ since $x$ is good.  

  \item  There is an exact sequence
\[
  \cE( x,x_1 ) \xrightarrow{ ( \partial_1 )_* } \cE( x,x_0 ) \xrightarrow{} M_A \xrightarrow{} 0
\]
of $A$-right modules:  The sequence reads $\cE( x,x'_1 \oplus p_1 ) \xrightarrow{} \cE( x,x'_0 ) \xrightarrow{} M_A \xrightarrow{} 0$.  It is exact since \eqref{equ:F16a} is exact and since $\cE( x,p_1 ) \xrightarrow{} \cE( x,x'_0 )$ maps to $[\cP]( x,x'_0 )$, which is easily seen to be contained in $\cE( x,x'_0 ) \cdot \fa$, while $M_A$ is annihilated by $\fa$ because it is an $\underline{ A }$-module.

\end{enumerate}
\endgroup

{\em Step 2:  Constructing the conflation $0 \xrightarrow{} v_{ i+1 } \xrightarrow{ \psi_{ i+1 } } x_i \xrightarrow{ \varphi_i } v_i \xrightarrow{} 0$ for $i \geqslant 2$.}  Suppose that $v_i$ has already been constructed.  Then:
\begingroup
\renewcommand{\labelenumi}{(\roman{enumi})''}
\begin{enumerate}
\setlength\itemsep{4pt}

  \item  There is a conflation $0 \xrightarrow{} v_{ i+1 } \xrightarrow{ \psi_{ i+1 } } x_i \xrightarrow{ \varphi_i } v_i \xrightarrow{} 0$ where $\varphi_i$ is an $\add_{ \cE }( x )$-precover by Lemma \ref{lem:F8}. 
  
  \item  $x_i \in \add_{ \cE }( x )$ by construction.
  
  \item  There is a short exact sequence
\[  
  0 \xrightarrow{} \cE( x,v_{ i+1 } ) \xrightarrow{ ( \psi_{ i+1 } )_* } \cE( x,x_i ) \xrightarrow{ ( \varphi_i )_* } \cE( x,v_i ) \xrightarrow{} 0
\]
of $A$-right modules by construction.
\end{enumerate}
\endgroup
Combining the primed and double primed items establishes (i) and (ii) in the lemma.  It also gives the exact sequence in (iii), which is an augmented projective resolution over $A$ because $x_i \in \add_{ \cE }( x )$ for each $i$ by (ii), whence $\cE( x,x_i )$ is a projective $A$-right module.
\end{proof}

\begin{Lemma}
\label{lem:F17}
Let $x,y \in \cE$ be good objects
and set
\begin{align*}
  & A = \cE( x,x ) \;,\; \fa = [\cP]( x,x ) \;,\; \underline{ A } = \cC( x,x ) = A/\fa, \\
  & B = \cE( y,y ) \;,\; \fb = [\cP]( y,y ) \;,\; \underline{ B } = \cC( y,y ) = B/\fb.
\end{align*}
Suppose that $x \cong x' \oplus z'$, $y \cong y' \oplus z'$ in $\cC$ for some $z' \in \add_{ \cC }( x ) \cap \add_{ \cC }( y )$, and let $e \in \underline{ A }$, $f \in \underline{ B }$ be the projections onto $z'$.

Consider the $A$-$B$-bimodule ${}_{ A }X_B = \cE( y,x )$.  
\begin{enumerate}
\setlength\itemsep{4pt}

  \item  If $M_{ \underline{ A } } \in \Mod \underline{ A }$ is finitely presented over $A$, then:
\begin{enumerate}
\setlength\itemsep{4pt}

  \item  $\H_i( M \LTensor{ A } {}_{ A }X_B )$ is annihilated by $\fb$ for each $i$, so is a $\underline{ B }$-right module.

  \item  $\H_i( M \LTensor{ A } {}_{ A }X_B )$ is annihilated by $f\underline{ B }$ for each $i \geqslant 1$.

\end{enumerate}

  \item  If $N_{ \underline{ B } } \in \mod \underline{ B }$ is such that $\dual\!N$ is finitely presented over $B^{ \opp }$, then:
\begin{enumerate}
\setlength\itemsep{4pt}

  \item  $\H^i \RHom_B( {}_{ A }X_B,N )$ is annihilated by $\fa$ for each $i$, so is an $\underline{ A }$-right module.

  \item  $\H^i \RHom_B( {}_{ A }X_B,N )$ is annihilated by $e\underline{ A }$ for each $i \geqslant 1$.

\end{enumerate}
\end{enumerate}
\end{Lemma}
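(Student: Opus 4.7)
The plan is to compute $M \LTensor{A} X$ and $\RHom_B(X, N)$ using the explicit resolutions provided by Lemma \ref{lem:F16}, and then to exploit the structure of those resolutions---conflations in $\cE$ combined with $\add_{\cE}(x)$-precovers---to produce explicit boundaries and coboundaries for every element of the (co)homology one wishes to kill.

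For part (i), the first step is to invoke Lemma \ref{lem:F16} to obtain a projective resolution $P_\bullet = \cE(x, x_\bullet) \to M$ of $A$-right modules equipped with a compatible tower of conflations $0 \to v_{i+1} \xrightarrow{\psi_{i+1}} x_i \xrightarrow{\varphi_i} v_i \to 0$ in $\cE$, each $x_i \in \add_{\cE}(x)$, each $\varphi_i$ an $\add_{\cE}(x)$-precover, and $\partial_i = \psi_i \varphi_i$. Lemma \ref{lem:portmanteau}(v) then identifies $P_i \Tensor{A} \cE(y, x) \cong \cE(y, x_i)$, so $M \LTensor{A} X$ is computed by the complex $\cE(y, x_\bullet)$ of right $B$-modules. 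For (i)(a) I would take $b \in \fb$, written as $y \to p \to y$ with $p$ projective-injective, and a cycle $g \colon y \to x_i$: the cycle condition together with the fact that $\psi_{i+1}$ is an inflation forces $g = \psi_{i+1} g'$ when $i \geqslant 1$, and then projectivity of $p$ lifts the resulting map $p \to v_{i+1}$ through the deflation $\varphi_{i+1}$, exhibiting $gb$ as a boundary. The case $i = 0$ is analogous, with $p \to x_0$ lifting directly through the deflation $\partial_1$.

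For (i)(b) the idea is to pick a particularly good lift of the idempotent $f \in \underline{B}$ to a morphism of $\cE$. By Lemma \ref{lem:portmanteau}(i) one can write $y \oplus p \cong y' \oplus z' \oplus q$ in $\cE$ for some $p, q \in \cP$ and extract morphisms $\tilde{\pi} \colon y \to z'$, $\tilde{\iota} \colon z' \to y$ in $\cE$ whose composite $\tilde{f} = \tilde{\iota}\tilde{\pi}$ reduces modulo $\fb$ to $f$. For a cycle $g = \psi_{i+1} g'$ with $i \geqslant 1$, the morphism $g' \tilde{\iota} \colon z' \to v_{i+1}$ has source $z' \in \add_{\cC}(x) = \add_{\cE}(x)$ (using the hypothesis on $z'$ together with Lemma \ref{lem:portmanteau}(ii), applicable since $x$ is good), hence factors through the $\add_{\cE}(x)$-precover $\varphi_{i+1}$. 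This produces an $h$ with $g\tilde{f} = \partial_{i+1} h$, whence $\H_i \cdot f = 0$ and consequently $\H_i$ is annihilated by $f\underline{B}$.

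Part (ii) is proved dually: apply Lemma \ref{lem:F16} in $\cE^{\opp}$ to $\dual N$ (which is finitely presented over $B^{\opp}$) to obtain a compatible coresolution of $y$ via conflations and $\add_{\cE}(y)$-preenvelopes, then repeat the same two-step argument on the cocomplex computing $\RHom_B(X, N)$ with the roles of inflations/deflations and of projectives/injectives swapped. The hardest part will be (i)(b): choosing $\tilde{f}$ that simultaneously represents $f$ in $\underline{B}$ and admits a genuine factorization through $z'$ in $\cE$, and then pushing the ensuing morphism $z' \to v_{i+1}$ through $\varphi_{i+1}$---the hypothesis $z' \in \add_{\cC}(x)$, combined with Lemma \ref{lem:portmanteau}(ii), is precisely what makes this step work.
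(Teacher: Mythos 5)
Your proof is correct and takes essentially the same route as the paper. The one genuine structural difference is that you treat (i)(a) and (i)(b) by two separate liftings --- projectivity of $p$ to lift through the deflation $\varphi_{i+1}$ for elements of $\fb$, and the $\add_{\cE}(x)$-precover property to lift $z'\to v_{i+1}$ for $f$ --- whereas the paper notes that a lift $\beta\in B$ of $f$ and any element of $\fb$ both factor in $\cE$ through an object of $\add_{\cE}(x)$ (since $z'$ and the projective-injectives all lie in $\add_{\cE}(x)$ when $x$ is good), so a single precover lifting handles both (a) and (b) uniformly for $i\geqslant 1$, reserving the projectivity argument only for $i=0$. For part (ii), be careful with the phrase ``cocomplex computing $\RHom_B(X,N)$'' obtained by dualizing: since $B=\cE(y,y)$ need not be finite-dimensional over $k$, the $k$-duals $\dual\cE(y_i,y)$ are not obviously injective $B$-modules, so the clean way to execute your dualization is exactly the adjunction $\RHom_B(X,N)\cong\RHom_k(X\LTensor{B}\dual N,k)$ that the paper invokes, after which the opposite version of part (i) gives the result directly --- no injective coresolution is needed.
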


\begin{proof}
First note that by definition, $f \in \underline{ B }$ has a factorisation $y \xrightarrow{} z' \xrightarrow{} y$ in $\cC$.  If we lift $f$ to an element $\beta \in B$, this means that $\beta$ is the sum of an element with a factorisation $y \xrightarrow{} z' \xrightarrow{} y$ in $\cE$ and an element which factors through a projective-injective object.  But $x$ is good, so $\beta$ has a factorisation
\begin{equation}
\label{equ:F17a}
\vcenter{
  \xymatrix @+0.5pc {
    y \ar^{ \beta' }[r] \ar@/_1.5pc/[rr]_{ \beta } & z'' \ar^{ \beta'' }[r] & y \\
                    }
        }
\end{equation}
in $\cE$ with $z'' \in \add_{ \cE }( x )$.

(i) and (ii):  Lemma \ref{lem:F16} applies to $M_{ \underline{ A } }$, and using the notation from there we have
\begin{align*}
  M \LTensor{ A } {}_{ A }X_B & \cong \big( \cdots \xrightarrow{} \cE( x,x_2 ) \xrightarrow{ ( \partial_2 )_* } \cE( x,x_1 ) \xrightarrow{ ( \partial_1 )_* } \cE( x,x_0 ) \big) \Tensor{ A } \cE( y,x ) \\
  & \cong \cdots \xrightarrow{} \cE( y,x_2 ) \xrightarrow{ ( \partial_2 )_* } \cE( y,x_1 ) \xrightarrow{ ( \partial_1 )_* } \cE( y,x_0 ),
\end{align*}
where the second isomorphism is by Lemma \ref{lem:portmanteau}(v).  Consider a homology class in the module $\H_i( M \LTensor{ A } {}_{ A }X_B )$.  It is represented by a morphism $\upsilon \in \cE( y,x_i )$.  

If $i \geqslant 1$ then $\upsilon \in \Ker ( \partial_i )_*$, and Lemma \ref{lem:F16}(i) shows that $\upsilon$ factors $y \xrightarrow{ \upsilon' } v_{ i+1 } \xrightarrow{ \psi_{ i+1 } } x_i$.  Let $\beta \in B$ be either the lift of $f$ from the start of the proof, or an element of $\fb$.  In the former case, we saw that $\beta$ has the factorisation \eqref{equ:F17a} with $z'' \in \add_{ \cE }( x )$.  In the latter case, $\beta$ has the same factorisation because it factors through a projective-injective object, and such an object is in $\add_{ \cE }( x )$ since $x$ is good.  We must show that $\beta$ annihilates the homology class of $\upsilon$, that is, $\upsilon\beta$ is zero in homology.  This holds by the following diagram, using that $\varphi_{ i+1 }$ is an $\add_{ \cE }( x )$-precover.
\[
\vcenter{
  \xymatrix @+0.5pc {
    & y \ar^{ \beta' }[d] \ar@/_-2.5pc/[dd]^{ \beta } \\
    & z'' \ar^{ \beta'' }[d] \ar@{-->}[ddl] \\
    & y \ar^>>>>{ \upsilon' }[d] \ar^{ \upsilon }[dr] \\
    x_{ i+1 } \ar_{ \varphi_{ i+1 } }[r] \ar@/_2.5pc/[rr]_{ \partial_{ i+1 } } & v_{ i+1 }  \ar_{ \psi_{ i+1 } }[r] & x_i \\
                    }
        }
\]

If $i = 0$ then let $\beta \in \fb$ be given.  Then $\beta \in \cE( y,y )$ has a factorisation $y \xrightarrow{ \beta' } p \xrightarrow{ \beta'' } y$ with $p$ projective-injective.  We must again show  that $\beta$ annihilates the homology class of $\upsilon$.  This holds by the following simpler diagram, using that $\partial_1$ is a deflation.
\[
\vcenter{
  \xymatrix @+0.5pc {
    & y \ar^{ \beta' }[d] \ar@/_-2.5pc/[dd]^{ \beta } \\
    & p \ar^{ \beta'' }[d] \ar@{-->}[ddl] \\
    & y \ar^{ \upsilon }[d] \\
    x_1 \ar_{ \partial_1 }[r] & x_0 \\
                    }
        }
\]

(iii) and (iv): We have $N \cong \dual\!\dual\!N \cong \RHom_k( \dual\!N,k )$ so by adjointness,
\[
  \RHom_B( {}_{ A }X_B,N ) \cong \RHom_B\big( {}_{ A }X_B, \RHom_k( \dual\!N,k ) \big) \cong \RHom_k( {}_{ A }X_B \LTensor{ B } \dual\!N,k ).
\]
Combine this with the opposite versions of (i) and (ii), where $x$ and $y$ have been interchanged.  
\end{proof}

\begin{Lemma}
\label{lem:F21}
Let $x \in \cC$ be a good maximal rigid object, $y \in \cC$ a rigid object, set
\begin{align*}
  & A = \cE( x,x ) \;,\; \underline{ A } = \cC( x,x ), \\
  & B = \cE( y,y ) \;,\; \underline{ B } = \cC( y,y ),
\end{align*}
and let
\begin{equation}
\label{equ:F21a}
  y \xrightarrow{ \underline{ \upsilon } } x^0 \xrightarrow{ \underline{ \xi } } x^1 \xrightarrow{} \Sigma y
\end{equation}
be a triangle in $\cC$ with $x^i \in \add_{ \cC }( x )$, see Lemma \ref{lem:ZZ}.  

Consider the $A$-left module ${}_{ A }X = \cE( y,x )$ and the complex of $\underline{ A }$-left modules:
\begin{equation}
\label{equ:F21e}
  {}_{ \underline{ A } }P = \cC( x^1,x ) \xrightarrow{ \cC( \underline{ \xi },x ) } \cC( x^0,x ).
\end{equation}
Then:
\begin{enumerate}
\setlength\itemsep{4pt}

  \item  For $M_{ \underline{ A } } \in \cD( \underline{ A } )$ there is an isomorphism $M_A \LTensor{ A } {}_{ A }X \cong M_{ \underline{ A } } \Tensor{ \underline{ A } } {}_{ \underline{ A } }P$ in $\cD( k )$.

  \item  For ${}_{ \underline{ A } }N \in \cD( \underline{ A }^{ \opp } )$ there is an isomorphism $\RHom_{ A^{ \opp } }( {}_{ A }X,{}_{ A }N ) \cong \Hom_{ \underline{ A }^{ \opp } }( {}_{ \underline{ A } }P,{}_{ \underline{ A } }N )$ in $\cD( k )$.

\end{enumerate}
\end{Lemma}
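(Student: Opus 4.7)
The strategy is to build a length-one projective resolution of the left $A$-module ${}_{A}X = \cE(y,x)$ whose terms are representable, and then derive both isomorphisms by applying $M \LTensor{A} -$ and $\RHom_{A^{\opp}}(-, N)$ to that resolution and identifying the outcome with the two-term complex ${}_{\underline{A}}P$ via Lemma \ref{lem:portmanteau}(vi).

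To construct the resolution, first invoke Lemma \ref{lem:portmanteau}(vii) to realise the triangle \eqref{equ:F21a} as arising from a conflation
\[
  0 \to y \to \widetilde{x}^0 \to \widetilde{x}^1 \to 0
\]
in $\cE$ with $\widetilde{x}^i \cong x^i$ in $\cC$. Since $x$ is good, Lemma \ref{lem:portmanteau}(ii) forces $\widetilde{x}^i \in \add_{\cE}(x)$. Applying the contravariant functor $\cE(-,x)$ produces a long exact $\Ext$-sequence; the standard identification $\Ext^1_{\cE}(\widetilde{x}^1,x) \cong \cC(\widetilde{x}^1, \Sigma x)$ for a Frobenius category, combined with rigidity of $x$, kills the obstruction to exactness on the right and gives a short exact sequence
\[
  0 \to \cE(\widetilde{x}^1, x) \to \cE(\widetilde{x}^0, x) \to \cE(y, x) \to 0
\]
of left $A$-modules. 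Its first two terms are projective over $A$, since $\widetilde{x}^i \in \add_{\cE}(x)$ makes $\cE(\widetilde{x}^i, x)$ a summand of some $A^n$. Viewing the sequence as an unaugmented complex ${}_{A}P^{\bullet}$ in degrees $-1, 0$ furnishes the required projective resolution of ${}_{A}X$.

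For part (i), view $M$ as a right $A$-module via the canonical surjection $A \twoheadrightarrow \underline{A}$ and compute $M \LTensor{A} {}_{A}X \cong M \Tensor{A} {}_{A}P^{\bullet}$. Termwise, associativity of tensor gives $M \Tensor{A} \cE(\widetilde{x}^i, x) \cong M \Tensor{\underline{A}} \big( \underline{A} \Tensor{A} \cE(\widetilde{x}^i, x) \big)$, and Lemma \ref{lem:portmanteau}(vi) identifies the inner factor with $\cC(\widetilde{x}^i, x) = \cC(x^i, x)$. Naturality of that identification in the map $\widetilde{x}^0 \to \widetilde{x}^1$ promotes the termwise isomorphisms to an isomorphism of two-term complexes, yielding $M \LTensor{A} {}_{A}X \cong M \Tensor{\underline{A}} {}_{\underline{A}}P$. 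For part (ii), since ${}_{A}P^{\bullet}$ is a bounded complex of projective left $A$-modules, $\RHom_{A^{\opp}}({}_{A}X, {}_{A}N)$ is the termwise $\Hom$; for $N$ a left $\underline{A}$-module, adjunction combined with Lemma \ref{lem:portmanteau}(vi) yields
\[
  \Hom_{A^{\opp}}\!\big( \cE(\widetilde{x}^i, x), N \big) \cong \Hom_{\underline{A}^{\opp}}\!\big( \cC(x^i, x), N \big),
\]
and assembling these with the induced differential gives $\RHom_{A^{\opp}}({}_{A}X, {}_{A}N) \cong \Hom_{\underline{A}^{\opp}}({}_{\underline{A}}P, {}_{\underline{A}}N)$.

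The main obstacle is the resolution step, namely the need to produce a genuine projective resolution of ${}_{A}X$ over $A$ even though the triangle \eqref{equ:F21a} is only given up to isomorphism in $\cC$. Goodness of $x$ is essential here: without it, the lift provided by Lemma \ref{lem:portmanteau}(vii) would not land inside $\add_{\cE}(x)$ and the resulting sequence would fail to consist of projective $A$-modules, breaking the argument. Once the resolution is in hand, the two identifications reduce to bookkeeping via the portmanteau lemma together with tensor-hom adjunction.
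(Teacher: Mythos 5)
Your proof is correct and reaches the result by a genuinely different, shorter route than the paper. The paper does \emph{not} lift the given triangle~\eqref{equ:F21a} directly: instead it invokes the dual of Lemma~\ref{lem:F9_10} to build a fresh conflation $0 \to y \to \widetilde{x}^0 \to \widetilde{x}^1 \to 0$ with $\widetilde{x}^i \in \add_{\cE}(x)$, and then must show that the triangle this conflation induces in $\cC$ coincides with~\eqref{equ:F21a}. That comparison is done by observing that both $y \to \widetilde{x}^0$ and $y \to x^0$ are $\add_{\cC}(x)$-preenvelopes of $y$, so the two triangles agree up to trivial summands of the form $0 \to x' \xrightarrow{=} x' \to 0$, yielding an isomorphism ${}_{\underline{A}}P \cong {}_{\underline{A}}\widetilde{P}$ in $\Kb(\proj\,\underline{A}^{\opp})$. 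You instead apply Lemma~\ref{lem:portmanteau}(vii) to the given triangle and get a conflation whose induced triangle is already isomorphic to~\eqref{equ:F21a}; goodness together with Lemma~\ref{lem:portmanteau}(ii) places $\widetilde{x}^i \in \add_{\cE}(x)$, and rigidity gives exactness. This bypasses the preenvelope comparison entirely and gives an isomorphism of complexes rather than only in $\Kb$. The one small imprecision is writing $\cC(\widetilde{x}^i,x) = \cC(x^i,x)$: since $\widetilde{x}^i$ and $x^i$ are only isomorphic in $\cC$ (not literally equal as objects), you should say the triangle isomorphism from Lemma~\ref{lem:portmanteau}(vii) induces an isomorphism of two-term complexes $\cC(\widetilde{x}^1,x) \to \cC(\widetilde{x}^0,x)$ with ${}_{\underline{A}}P$. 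Also, your appeal to the identification $\Ext^1_{\cE}(\widetilde{x}^1,x) \cong \cC(\widetilde{x}^1,\Sigma x)$ is fine, though Lemma~\ref{lem:portmanteau}(viii) would do the same job with machinery already present in the paper. Both parts (i) and (ii) then go through exactly as you describe, by termwise application of Lemma~\ref{lem:portmanteau}(vi) and tensor--Hom adjunction.
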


\begin{proof}
By the dual of Lemma \ref{lem:F9_10} there exists a conflation in $\cE$ with $\widetilde{ x }^i \in \add_{ \cE }( x )$,
\[
  0 \xrightarrow{} y \xrightarrow{ \widetilde{ \upsilon } } \widetilde{ x }^0 \xrightarrow{ \widetilde{ \xi } } \widetilde{ x }^1 \xrightarrow{} 0,
\]
which induces an augmented projective resolution $0 \xrightarrow{} \cE( \widetilde{ x }^1,x ) \xrightarrow{ \cE( \widetilde{ \xi },x ) } \cE( \widetilde{ x }^0,x ) \xrightarrow{ \cE( \widetilde{ \upsilon },x ) } \cE( y,x ) \xrightarrow{} 0$ of $A$-left modules.  Set
\[
  {}_{ A }\widetilde{ Q } = \cE( \widetilde{ x }^1,x ) \xrightarrow{ \cE( \widetilde{ \xi },x ) } \cE( \widetilde{ x }^0,x ).
\]
The conflation also induces a triangle in $\cC$,
\begin{equation}
\label{equ:F21d}
  y \xrightarrow{ \underline{ \widetilde{ \upsilon } } }\widetilde{ x }^0 \xrightarrow{ \underline{ \widetilde{ \xi } } }\widetilde{ x }^1 \xrightarrow{} \Sigma y.
\end{equation}
Applying the functor $\cC( -,x )$ gives a long exact sequence, and $\cC( \Sigma^{ -1 }\widetilde{ x }^1,x ) = 0$ since $x$ is rigid in $\cC$ and $\widetilde{ x }^1 \in \add_{ \cE }( x )$ whence $\widetilde{ x }^1 \in \add_{ \cC }( x )$.  Hence the morphism $y \xrightarrow{ \underline{ \widetilde{ \upsilon } } }\widetilde{x}^0$ is an $\add_{ \cC }( x )$-preenvelope.  The same applies to the morphism $y \xrightarrow{ \underline{ \upsilon } } x^0$ from \eqref{equ:F21a}.  It follows that these two morphisms agree up to trivial summands of the form $0 \xrightarrow{} x'$ with $x' \in \add_{ \cC }( x )$, whence the triangles \eqref{equ:F21a} and \eqref{equ:F21d} agree up to trivial summands of the form $0 \xrightarrow{} x' \xrightarrow{ = } x' \xrightarrow{} 0$.  This implies that if we set
\[
  {}_{ \underline{ A } }\widetilde{ P } = \cC( \widetilde{ x }^1,x ) \xrightarrow{ \cC( \underline{ \widetilde{ \xi } },x ) } \cC( \widetilde{ x }^0,x ),
\]
then 
\begin{equation}
\label{equ:F21f}
  {}_{ \underline{ A } }P \cong {}_{ \underline{ A } }\widetilde{ P }
\end{equation}
in $\Kb( \proj \underline{ A }^{ \opp } )$.  Moreover, Lemma \ref{lem:portmanteau}(vi) implies that
\begin{equation}
\label{equ:F21g}
  {}_{ \underline{ A } }\underline{ A }_A \Tensor{ A } {}_{ A }\widetilde{ Q } \cong {}_{ \underline { A } }\widetilde{ P }.
\end{equation}

Part (i) now follows:
\[
  M_A \LTensor{ A } {}_{ A }X
  \cong M_A \Tensor{ A } {}_{ A }\widetilde{ Q }
  \cong M_{ \underline{ A } } \Tensor{ \underline{ A } } {}_{ \underline{ A } }A_A \Tensor{ A } {}_{ A }\widetilde{ Q }
  \stackrel{ \rm (a) }{ \cong } M_{ \underline{ A } } \Tensor{ \underline{ A } } {}_{ \underline{ A } }\widetilde{ P }
  \stackrel{ \rm (b) }{ \cong } M_{ \underline{ A } } \Tensor{ \underline{ A } } {}_{ \underline{ A } }P,
\]
where (a) and (b) are by Equations \eqref{equ:F21g} and \eqref{equ:F21f}.  Part (ii) similarly follows:
\begin{align*}
  \RHom_{ A^{ \opp } }( {}_{ A }X,{}_{ A }N )
  & \cong \Hom_{ A^{ \opp } }( {}_{ A } \widetilde{ Q },{}_{ A }N ) \\
  & \cong \Hom_{ A^{ \opp } }\big( {}_{ A }\widetilde{ Q },\Hom_{ \underline{ A }^{ \opp } }( {}_{ \underline{ A } }\underline{ A }_A,{}_{ \underline{ A } }N ) \big) \\
  & \stackrel{ \rm (a) }{ \cong } \Hom_{ \underline{ A }^{ \opp } }( {}_{ \underline{ A } }\underline{ A }_A \Tensor{ A } {}_{ A }\widetilde{ Q },{}_{ \underline{ A } }N ) \\
  & \stackrel{ \rm (b) }{ \cong } \Hom_{ \underline{ A }^{ \opp } }( {}_{ \underline{ A } }\widetilde{ P },{}_{ \underline{ A } }N ) \\
  & \stackrel{ \rm (c) }{ \cong } \Hom_{ \underline{ A }^{ \opp } }( {}_{ \underline{ A } }P,{}_{ \underline{ A } }N ),
\end{align*}
where (a) is by adjointness, (b) and (c) by Equations \eqref{equ:F21g} and \eqref{equ:F21f}. 
\end{proof}

\medskip
\noindent
{\bf Acknowledgement.}
We thank Jenny August and Matthew Pressland for discussions during the preparation of this paper, and Martin Kalck and Amnon Yekutieli for comments on the first version.

This work was supported by EPSRC grant EP/P016014/1 ``Higher Dimensional Ho\-mo\-lo\-gi\-cal Algebra'', NSF grant DMS-1901830, and Bulgarian Science Fund grant DN02/05.

\end{document}